\theoremstyle{plain} \numberwithin{equation}{section}
\newtheorem{theorem}{Theorem}[section]
\newtheorem{corollary}[theorem]{Corollary}
\newtheorem{lemma}[theorem]{Lemma}
\newtheorem{proposition}[theorem]{Proposition}
\theoremstyle{definition}
\newtheorem{definition}[theorem]{Definition}
\newtheorem{remark}[theorem]{Remark}
\newtheorem{example}[theorem]{Example}
\def \f-{f^{-1}}
\def \fp-{f^{-1}_\partial}
\title{Half grid diagrams and Thompson links}
\author{Yangxiao Luo, Shunyu Wan}
\date{}
\begin{document}

\maketitle

\begin{abstract}
We define half grid diagrams and prove every link is half grid presentable by constructing a canonical half grid pair (which gives rise to a grid diagram of some special type) associated with an element in the oriented Thompson group. We show that this half grid construction is equivalent to Jones' construction of oriented Thompson links in \cite{MR3589908}. Using this equivalence, we relate the (oriented) Thompson index to several classical topological link invariants, and give both the lower and upper bounds of the maximal Thurston-Bennequin number of a knot in terms of the oriented Thompson index. Moreover, we give a one-to-one correspondence between half grid diagrams and elements in symmetric groups and give a new description of link group using two elements in a symmetric group.
\end{abstract}

\section{Introduction} 
Grid diagrams have many interesting applications to knot theory and low dimensional topology (for example, relation to Legendrian and transverse knots \cite{MR2890458}), and they provide a combinatorial way to compute some knot invariants such as knot Floer homology \cite{MR2480614} and Khovanov homology \cite{MR2520400}. In recent years, there have been plenty of studies about slicing a ``closed" object into two or more ``bordered" objects to obtain information from pieces, including bordered Heegaard Floer homology \cite{MR3827056}, bordered knot Floer homology \cite{szabo2019algebras}, \cite{ozsvath2022pong}, and bordered Khovanov homology \cite{MR3584271}, \cite{MR3474320}. It is well-known that every link has a grid diagram representing it, so slicing a link can be realized by slicing a grid diagram.

It is natural to ask what a ``good" bordered grid diagram could be and whether we can split grid diagrams and get information from pieces. To answer these questions, let's first recall some definitions related to grid diagrams.

\begin{definition} \label{def: grid diagram}
	A grid diagram $\mathbb{G}$ is an $m \times m$ grid of squares, each of which is either empty or contains an $X$ or an $O$. We require that in each row and column there are exactly two nonempty squares, one containing an $X$ and the other containing an $O$.
\end{definition}

Any grid diagram $\mathbb{G}$ has an associated link $\mathcal{L}(\mathbb{G})$ defined as follows.

\begin{definition}
	Let $\mathbb{G}$ be an $m \times m$ grid diagram. The oriented link $\mathcal{L}(\mathbb{G})$ associated to $\mathbb{G}$ is obtained by connecting $X$ to $O$ in each row and connecting $O$ to $X$ in each column such that horizontal segments are always above vertical segments.
\end{definition}

See Figure \ref{fig: grid diagram example} for an example of a grid diagram and its associated link.

\begin{remark}
	The orientation and crossing conventions we are using on paper are slightly different from the usual conventions in \cite{MR2480614}. The usual orientation convention is to connect $O$ to $X$ in each row and connect $X$ to $O$ in each column, with the crossing convention that vertical segments are always above horizontal segments. However, if we rotate a grid diagram with an associated link in our conventions by 90 degrees, it will become a grid diagram with an associated link in the usual conventions. The reason for using our conventions is to make the over and under crossing compatible with the ones from Thompson links (Section \ref{sec: jones construction}).
\end{remark}

We now define half grid diagrams, which give rise to oriented tangles.

\begin{definition} \label{def: half grid diagram}
	A half grid diagram $\mathbb{H}$ is an $n \times 2n$ grid of squares, each of which is either empty or contains an $X$ or an $O$. We require that in each row there are exactly two nonempty squares, one containing an $X$ and the other containing an $O$, and each column has exactly one nonempty square (which may contain either an $X$ or an $O$). 
\end{definition}

\begin{definition}
	Let $\mathbb{H}$ be an $n \times 2n$ half grid diagram. The $(0,2n)$-oriented tangle $\mathcal{T}(\mathbb{H})$ associated to $\mathbb{H}$ is defined by first connecting $X$ to $O$ in each row, then connecting each $X$ or $O$ to the bottom of the grid by vertical arcs, oriented upward in the case of an $X$ and downward in the case of an $O$. We again use the crossing convention that horizontal segments are always above vertical segments.
\end{definition}

See Figure \ref{fig: half grid diagram example} for an example of a half grid diagram and its associated tangle.

\begin{figure} 
    \centering
    \begin{subfigure}{0.35\textwidth}
        \centering
        \includegraphics[width=\textwidth]{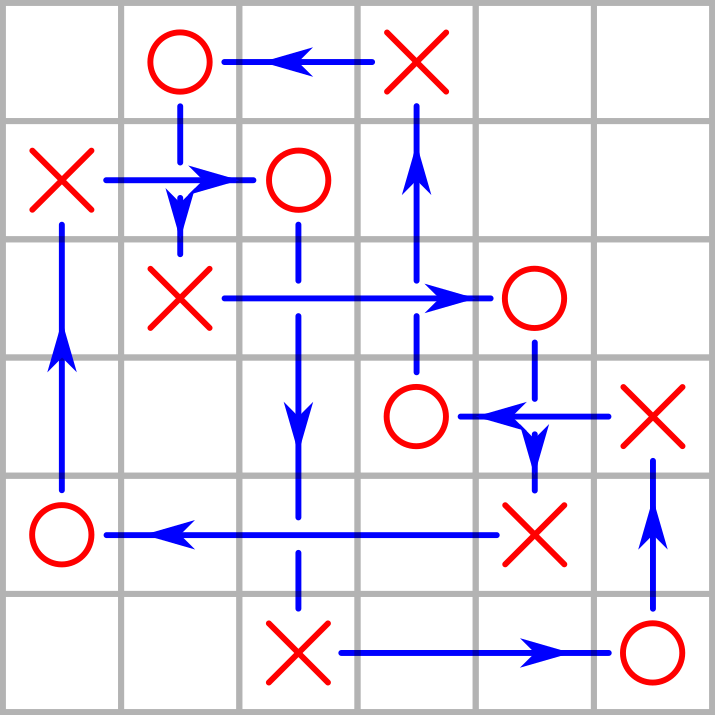}
        \caption{}
        \label{fig: grid diagram example}
    \end{subfigure}
    \hfill
    \begin{subfigure}{0.45\textwidth}
        \centering
        \includegraphics[width=\textwidth]{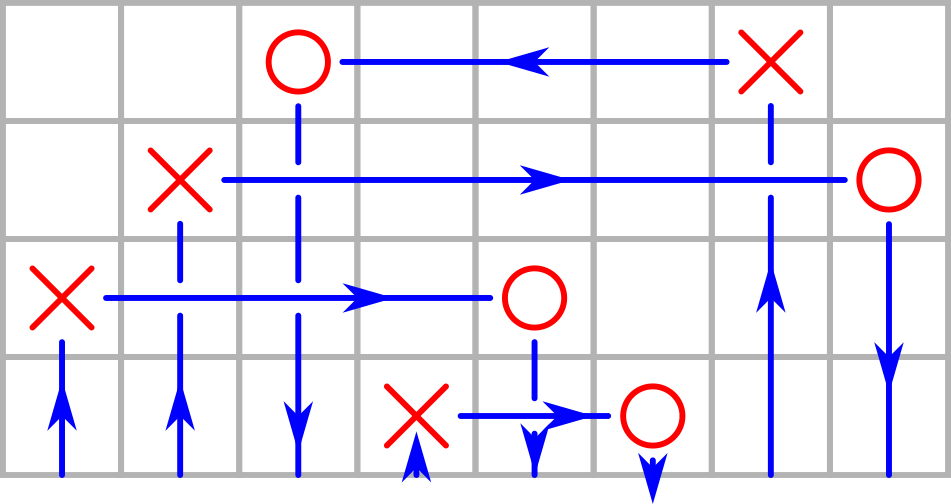}
        \caption{}
        \label{fig: half grid diagram example}
    \end{subfigure}
    \caption{(a) A $6 \times 6$ grid diagram and its associated link. (b) A $4 \times 8$ half grid diagram and its associated tangle.}   
\end{figure}

\begin{remark}
	Throughout the paper, we will view a grid or a half grid diagram as a subset of lattice points $\mathbb{Z}^2 \in \mathbb{R}^2$ where the grid at the lower left corner corresponds to point $(1,1)$. To describe a grid diagram or half grid diagram we just need to specify the coordinates of $X$'s and $O$'s on lattice points.
\end{remark}

Next, we give a compatible condition describing when two half grid diagrams can reproduce a grid diagram.

\begin{definition}
	Let $\mathbb{H}_+$ and $\mathbb{H}_-$ be a pair of $n \times 2n$ half grid diagrams. We say they are compatible, or say $(\mathbb{H}_+,\mathbb{H}_-)$ is a compatible half grid pair, if the $i^{th}$ column of $\mathbb{H}_+$ and the $i^{th}$ column of $\mathbb{H}_-$ contain the same symbol for each $i$.
\end{definition}

Given two compatible $n \times 2n$ half grid diagrams $\mathbb{H}_+$ and $\mathbb{H}_-$, we can obtain a $2n \times 2n$ grid diagram as follows: We first flip $\mathbb{H}_-$ upside down and change each $X$ to $O$ and $O$ to $X$ to get $-\overline{\mathbb{H}_-}$, then we stack $\mathbb{H}_+$ over $-\overline{\mathbb{H}_-}$ (see Figure \ref{fig: compatible half grid pair example} for an example). The resulting diagram, still denoted as $(\mathbb{H}_+, \mathbb{H}_-)$, is a $2n \times 2n$ grid diagram, because the the $i^{th}$ column of $\mathbb{H}_+$ and the $i^{th}$ column of $-\overline{\mathbb{H}_-}$ contain opposite markings for each $i$. It is easy to observe that $\mathcal{L}(\mathbb{H}_+, \mathbb{H}_-) = (-\overline{\mathcal{T}({\mathbb{H}_-})}) \mathcal{T}(\mathbb{H}_+)$, where $-\overline{\mathcal{T}({\mathbb{H}_-})}$ is the vertical mirror of $\mathcal{T}({\mathbb{H}_-})$ with reversed orientation.

\begin{figure}
	\centering
	\includegraphics[width = 0.5\textwidth]{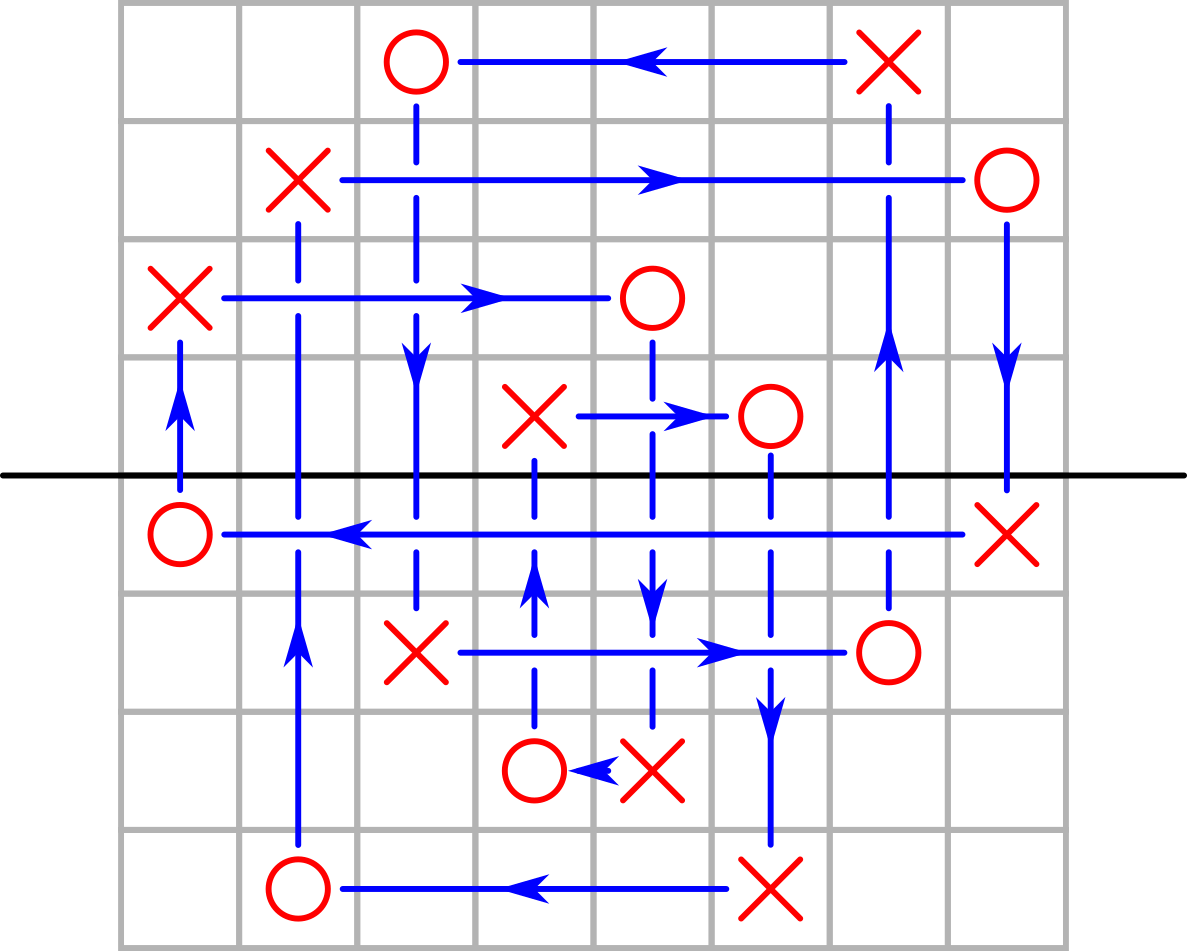}
    \put(0, 40){$-\overline{\mathbb{H}_-}$}
    \put(0, 130){$\mathbb{H}_+$}
	\caption{A pair of compatible half grid diagrams $\mathbb{H}_+$ and $\mathbb{H}_-$ can reproduce a grid diagram $(\mathbb{H}_+, \mathbb{H}_-)$.}
	\label{fig: compatible half grid pair example}
\end{figure}

\begin{remark} \label{rmk: reverse orientation of mirror}
	Notice that $(\mathbb{H}_+,\mathbb{H}_-)$ and $(\mathbb{H}_-,\mathbb{H}_+)$ are not the same grid diagram, so the order of $\mathbb{H}_+$ and $\mathbb{H}_-$ matters. The relation between $\mathcal{L}(\mathbb{H}_+,\mathbb{H}_-)$ and $\mathcal{L}(\mathbb{H}_-,\mathbb{H}_+)$ is $\mathcal{L}(\mathbb{H}_+,\mathbb{H}_-) = -\overline{\mathcal{L}(\mathbb{H}_-,\mathbb{H}_+)}$.
\end{remark}

We just saw that any compatible half grid pair $(\mathbb{H}_+,\mathbb{H}_-)$ generates an oriented link $\mathcal{L}(\mathbb{H}_+,\mathbb{H}_-)$. One can ask about the converse:  can every oriented link be generated by a compatible half grid pair? We give an affirmative answer to it.

\begin{definition} \label{def: half grid presentable}
	An oriented link $L$ is half grid presentable if there exists compatible half grid diagrams $\mathbb{H}_+$ and $\mathbb{H}_-$ such that $\mathcal{L}(\mathbb{H}_+,\mathbb{H}_-)=L$, and we call $(\mathbb{H}_+,\mathbb{H}_-)$ a half grid representative of $L$.
\end{definition}

\begin{theorem} \label{thm: link is half grid presentable}
	Every oriented link is half grid presentable.
\end{theorem}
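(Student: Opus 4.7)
The plan is to take any grid diagram of $L$ and modify it via grid moves (stabilizations and commutations) so that the resulting grid diagram has the \emph{half-split property}: it is of even size $2n \times 2n$, and in each column one marking lies in the top half (rows $n+1, \ldots, 2n$) and the other lies in the bottom half (rows $1, \ldots, n$). Such a grid diagram decomposes tautologically into a compatible half grid pair $(\mathbb{H}_+, \mathbb{H}_-)$ with $\mathcal{L}(\mathbb{H}_+, \mathbb{H}_-) = L$, which is exactly what the theorem asks. Since every oriented link admits a grid presentation, the problem reduces to a purely combinatorial normalization of a given grid.

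Starting from any grid diagram $G$ of $L$ of size $m \times m$, I would first stabilize if necessary to make $m = 2n$ even, and then call a column \emph{bad} if both its markings lie on the same side of the horizontal midline. A simple counting argument (each half of a $2n \times 2n$ grid contains exactly $2n$ markings) shows that bad columns come in equal numbers of ``bad-top'' and ``bad-bottom'' types, so they can be paired. For each such pair I would perform a pair of stabilizations: one introduces a new row and column near a marking of the bad-top column, chosen (from the four flavors $X$-SW, $X$-NE, $O$-SW, $O$-NE) so that a new marking ends up below the midline; the other does the analogous operation in the bad-bottom column. Since each individual stabilization preserves the link, the resulting grid still represents $L$.

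The main obstacle is that each stabilization shifts the position of the midline (a new row is added), so fixing one bad column could in principle create new bad columns elsewhere. I would control this by always performing stabilizations near the top or bottom boundary of the grid so that the midline shift happens only across a controlled set of columns, and by iterating in an order (outermost-first, or pairing a bad-top with a bad-bottom) that makes the total number of bad columns strictly decrease at each step. A conceptually cleaner alternative is to start from an $n$-bridge presentation of $L$ (which exists by Schubert's theorem): each of the $n$ upper arcs becomes a single row of $\mathbb{H}_+$ with its $X$ and $O$ in the columns corresponding to the two endpoint intersection points on the bridge plane (with the $X/O$ assignment dictated by the orientation of $L$ at those points), and likewise each lower arc becomes a row of $\mathbb{H}_-$. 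Compatibility follows automatically from the consistency of $L$'s orientation at each intersection point, and any arc crossings not realized by the immediate row-column nesting can be accommodated by additional stabilizations.
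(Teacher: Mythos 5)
Your key reduction is correct and is a nice observation: if a $2n \times 2n$ grid diagram $\mathbb{G}$ has the ``half-split'' property that every column has one marking in rows $1,\dots,n$ and one in rows $n+1,\dots,2n$, then cutting along the horizontal midline, taking the top block as $\mathbb{H}_+$, and taking the bottom block (flipped and with $X\leftrightarrow O$ swapped) as $\mathbb{H}_-$ automatically yields a compatible half grid pair with $\mathcal{L}(\mathbb{H}_+,\mathbb{H}_-)=\mathcal{L}(\mathbb{G})$; compatibility is forced by the fact that each column of a grid diagram holds exactly one $X$ and one $O$. This is a genuinely different route from the paper's: the paper invokes the Alexander theorem for the oriented Thompson group $\vec{F}$ and then applies its canonical construction $\mathcal{I}\mapsto\mathbb{H}_{\mathcal{I}}$ together with the equivalence $\mathcal{L}(\mathbb{H}_{\mathcal{I}},\mathbb{H}_{\mathcal{J}})=L_{(\mathcal{I},\mathcal{J})}$ (Theorem 4.13); your proposal is purely combinatorial on grid diagrams and bypasses Thompson groups entirely.

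That said, the step that actually needs proving --- that every link admits a half-split grid diagram --- is not established. In the stabilization approach you correctly identify the obstruction (each stabilization enlarges the grid by one in both directions, which moves the midline and can turn previously good columns bad), but the proposed remedy is asserted rather than proved: you never show that your iteration order makes the count of bad columns strictly decrease, and the newly created row and column can themselves be bad. In the bridge-presentation alternative, the sentence ``any arc crossings not realized by the immediate row-column nesting can be accommodated by additional stabilizations'' is precisely the content that would have to be supplied. A bridge presentation gives two trivial $n$-string tangles, while the tangles $\mathcal{T}(\mathbb{H}_\pm)$ arising from half grid diagrams are drawn with the specific crossing convention (horizontal over vertical) and a specific nesting/order structure on the $\cap$-shaped arcs; converting an arbitrary trivial tangle diagram into this rectilinear form, while simultaneously keeping the opposite tangle in half-grid form and preserving the column matching across the midline, is the nontrivial combinatorial heart of the argument and is not spelled out. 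Until one of these two routes is made rigorous, the proposal remains a plausible strategy with a real gap, whereas the paper's proof, though less elementary, is complete because it rests on an explicit canonical construction (which the paper also needs anyway for its later quantitative applications relating $ind_{\vec F}$ to $\overline{tb}$, $\overline{sl}$, and $grid(L)$).
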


We will prove Theorem \ref{thm: link is half grid presentable} by referring to the result in \cite{MR4071378} that every oriented link is isomorphic to $L_g$ (called an oriented Thompson link) corresponding to some $g$ in the oriented Thompson group $\vec{F}$, then showing that every oriented Thompson link is half grid presentable (Section \ref{sec: grid construction}).

\bigskip

Half grid construction of links expands the bridge between classical link theory and Thompson link theory established by Jones. For example, our first application relates the number of link components to leaf number of Thompson group element. We denote the number of link components of oriented link $L$ as $|L|$.

\begin{theorem} \label{thm: parity between link component and leaf number}
    If $g \in \vec{F}$ has leaf number $n$, then we have
	$$
	|L_g|=n \pmod{2}
	$$
\end{theorem}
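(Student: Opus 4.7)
The plan is to leverage the half grid representative for $L_g$ constructed in Section~\ref{sec: grid construction} and translate the parity count of link components into a sign computation for a permutation. By that construction, $g \in \vec{F}$ with leaf number $n$ produces a compatible half grid pair $(\mathbb{H}_+,\mathbb{H}_-)$ whose combined $2n \times 2n$ grid diagram $\mathbb{G}$ satisfies $\mathcal{L}(\mathbb{G}) = L_g$. Define $\pi_X,\pi_O \in S_{2n}$ by letting $\pi_X(i)$ (resp.\ $\pi_O(i)$) be the column of the $X$ (resp.\ $O$) in row $i$ of $\mathbb{G}$. Tracing the link through the alternating horizontal ($X \to O$, in each row) and vertical ($O \to X$, in each column) segments shows that $|L_g|$ equals the number of cycles of the permutation $\pi_X^{-1}\pi_O \in S_{2n}$.

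Since any $\sigma \in S_{2n}$ satisfies $\mathrm{sgn}(\sigma) = (-1)^{2n - c(\sigma)} = (-1)^{c(\sigma)}$ (using that $2n$ is even), we obtain
$$
\mathrm{sgn}(\pi_X)\,\mathrm{sgn}(\pi_O) \;=\; \mathrm{sgn}(\pi_X^{-1}\pi_O) \;=\; (-1)^{|L_g|}.
$$
Thus the statement $|L_g| \equiv n \pmod{2}$ is equivalent to the sign identity $\mathrm{sgn}(\pi_X)\,\mathrm{sgn}(\pi_O) = (-1)^{n}$.

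To verify this identity, I would split the permutations into contributions from the top half $\mathbb{H}_+$ (rows $n+1,\dots,2n$) and the bottom half $-\overline{\mathbb{H}_-}$ (rows $1,\dots,n$), each of which records the column positions of $X$ and $O$ dictated by the tree $T_+$ or $T_-$ of $g$ via the Jones construction of Section~\ref{sec: jones construction}. I would then argue by induction on the leaf number $n$: the base case of the smallest element of $\vec{F}$ gives the unknot directly and satisfies the parity, and for the inductive step I would use a generator of $\vec{F}$ (or a tree-modification move preserving $\vec{F}$-membership) to relate the change in $\mathrm{sgn}(\pi_X)\,\mathrm{sgn}(\pi_O)$ to the change in $n$.

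The main obstacle will be carrying out this sign calculation, since the identity is special to grid diagrams coming from $\vec{F}$: for arbitrary pairs of non-crossing matchings on $\{1,\dots,2n\}$ one can produce grid diagrams with $|L| \not\equiv n \pmod{2}$. Hence the orientability condition defining $\vec{F}$ must enter essentially in the computation, most likely through a restriction on the admissible placements of $X$'s and $O$'s within $\mathbb{H}_\pm$, and managing this restriction while carrying an induction through is the delicate step of the argument.
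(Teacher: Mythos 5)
Your reformulation via the permutation $\pi_X^{-1}\pi_O$ is correct --- the cycles of $\pi_X^{-1}\pi_O$ do enumerate the components of $\mathcal{L}(\mathbb{G})$, and the reduction of the theorem to the identity $\mathrm{sgn}(\pi_X)\,\mathrm{sgn}(\pi_O) = (-1)^n$ is a valid translation. However, that identity \emph{is} the entire content of the theorem, and you explicitly leave it unproved. Your proposed route (``induction on leaf number'' using some unspecified generating set or tree move) is not sketched in enough detail to be checkable, and there are real obstacles: leaf number is not monotone under multiplication in $\vec{F}$, the half grid diagram $\mathbb{H}_{\mathcal{I}}$ is built globally from the orderings $\leq_{mid}$ and $\leq_{len}$ on $\mathcal{S}(\mathcal{I})$ so a local caret addition does not produce a simple local change in $\pi_X$ and $\pi_O$, and --- as you yourself point out --- the identity genuinely fails for arbitrary $2n\times 2n$ grids, so the $\vec{F}$-compatibility condition must enter, but you do not identify the mechanism by which it does. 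As written, this is a correct reduction followed by a statement of intent, not a proof.

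For comparison, the paper's argument also passes through the grid diagram $(\mathbb{H}_{\mathcal{I}},\mathbb{H}_{\mathcal{J}})$ but then tilts it $45^\circ$ into a Legendrian front and computes $tb=-n$ (from $2n$ cusps and writhe $0$, the latter by Lemma~\ref{lem: writhe of Thompson link is 0}) and $rot=0$ (from the sign-balance on $\mathcal{S}_R(\mathcal{I})$ and $\mathcal{S}_R(\mathcal{J})$, Lemma~\ref{lem: right interval sign compatibility}), and invokes the general fact $tb-rot\equiv |L|\pmod 2$. That route isolates exactly where $\vec{F}$-compatibility is used (in Lemma~\ref{lem: writhe of Thompson link is 0} and Lemma~\ref{lem: right interval sign compatibility}) and sidesteps any direct sign computation on the permutations $\pi_X,\pi_O$. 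If you want to salvage your approach, the most promising direction is not an induction but a direct computation of $\mathrm{sgn}(\pi_X)$ and $\mathrm{sgn}(\pi_O)$ from the $\leq_{mid}$/$\leq_{len}$ description of $\mathbb{H}_{\mathcal{I}}$ and $\mathbb{H}_{\mathcal{J}}$, together with the sign-preserving bijection of Theorem~\ref{thm: equivalent definition of oriented Thompson group}; this would in effect rederive Lemma~\ref{lem: right interval sign compatibility} in the permutation language, so the work is comparable, but the proposal as it stands has not done it.
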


Grid diagrams are closely related to front projection of the Legendrian links in $S^3$ with the standard tight contact structure. See section \ref{sec: Legendrian knot and half grid diagram}. In the second application, we provide a lower bound for both the maximum Thurston-Bennequin number $\overline{tb}(K)$ and the maximum self-linking number $\overline{sl}(K)$ of oriented knots $K$ in $(S^3,\xi_{std})$ using the oriented Thompson index $ind_{\vec{F}}(K)$.

\begin{theorem} \label{thm: inequality between Thompson index and max tb number}
	For any oriented knot $K$ in $(S^3,\xi_std)$, we have
	\begin{equation} \label{eq: short inequality}
		-ind_{\vec{F}}(K) \leq \overline{tb}(K).
	\end{equation}
	More generally,
	\begin{equation} \label{eq: generalized inequality}
		\max{(-ind_{\vec{F}}(K), -ind_{\vec{F}}(-K))} \leq  \min{(\overline{tb}(K), \overline{tb}(\overline{K}))}
	\end{equation}
	where $-K$ has reversed orientation of $K$, and $\overline{K}$ is the mirror of $K$. Similarly, we have \[-ind_{\vec{F}}(K) \leq \overline{sl}(K),\] and \[\max{(-ind_{\vec{F}}(K), -ind_{\vec{F}}(-K))} \leq  \min{(\overline{sl}(K), \overline{sl}(\overline{K}))}.\]
\end{theorem}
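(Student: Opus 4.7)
The plan is to realize $K$ as the closure of a specific grid diagram coming from a minimal Thompson presentation, convert that grid diagram into a Legendrian front in $(S^3,\xi_{std})$, and read off the desired estimates from the standard formulas for $tb$ and $sl$. First I would fix $g \in \vec{F}$ with leaf number $n = ind_{\vec{F}}(K)$ realizing $L_g \cong K$, which exists by definition of the oriented Thompson index. Applying the half grid construction, $g$ yields a compatible half grid pair $(\mathbb{H}_+^g, \mathbb{H}_-^g)$ and hence a $2n \times 2n$ grid diagram $\mathbb{G}_g$ with $\mathcal{L}(\mathbb{G}_g) \cong K$.

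Next I would turn $\mathbb{G}_g$ into a Legendrian representative $\Lambda_g$ of $K$ via the standard $45^\circ$ rotation and corner-smoothing procedure (cf.\ \cite{MR2890458}). After accounting for our rotated grid convention, this yields an identity of the form $tb(\Lambda_g) = w(\mathbb{G}_g) - 2n$, together with an analogous expression for $sl(\Lambda_g)$ involving a cusp-orientation correction. The inequalities $\overline{tb}(K) \geq tb(\Lambda_g)$ and $\overline{sl}(K) \geq sl(\Lambda_g)$ are then automatic.

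The heart of the argument is the writhe lower bound $w(\mathbb{G}_g) \geq n$. The grid $\mathbb{G}_g$ is very rigid: its top $n$ rows come from $\mathbb{H}_+^g$ and its bottom $n$ rows from $-\overline{\mathbb{H}_-^g}$, with opposite markings in each matched column, so every column supports a single vertical arc spanning the entire grid, and every crossing occurs between such a full-height vertical arc and a horizontal arc contained in one of the two halves. I would exploit this rigid structure, together with Jones' orientation convention on oriented Thompson links and our horizontal-over-vertical crossing rule, to single out a canonical family of crossings whose signs sum to at least $+n$, yielding $w(\mathbb{G}_g) \geq n$. Combined with the $tb$ formula this gives $\overline{tb}(K) \geq -n = -ind_{\vec{F}}(K)$, and the same mechanism handles $\overline{sl}$.

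To promote the base inequality to the $\max$--$\min$ form, I would apply it to each of $-K$, $\overline{K}$, $-\overline{K}$ separately, using that mirroring the grid (respectively swapping every $X$ with the corresponding $O$) produces a half grid representative for $\overline{K}$ (respectively $-K$) of the same size. Together with the orientation-independence of $\overline{tb}$, and the sign reversal of $sl$ under $K \mapsto -K$, the four resulting inequalities collapse to the stated bounds. The main obstacle I anticipate is precisely the writhe lower bound in the previous paragraph: for a generic grid the writhe is unconstrained, so this step must genuinely invoke the combinatorial rigidity of half grid pairs coming from $\vec{F}$, and I expect the cleanest route is an induction on the number of carets of the planar tree pair encoding $g$, with base case verified on the identity element of $\vec{F}$.
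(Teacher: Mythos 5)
Your overall strategy (fix a minimal $g\in\vec F$, build the half grid pair, stack into a $2n\times 2n$ grid, rotate $45^\circ$ to a Legendrian front, read off $tb$) is the same as the paper's. But the key numerical facts are off in a way that would sink the argument if you tried to carry it through.

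The central issue is your proposed writhe estimate $w(\mathbb{G}_g)\geq n$. This is false. The paper proves (Lemma~\ref{lem: writhe of Thompson link is 0}) that the $(0,2n)$-tangle $\widehat{\vec{T}_{\mathcal{I}}}$ coming from the top half has exactly $n-1$ crossings, all \emph{positive}, while the reflected bottom half $-\overline{\widehat{\vec{T}_{\mathcal{J}}}}$ contributes $n-1$ crossings, all \emph{negative}, so the writhe is exactly $0$. There is no ``canonical family of crossings summing to at least $+n$'' to be found: the construction forces the positive and negative crossings to cancel exactly, and an attempt to show $w\geq n$ will fail. Relatedly, your $tb$ formula is off by a factor: the special structure of the half grid pair puts all $n$ NE corners in the top half and all $n$ SW corners in the bottom half, so there are $2n$ cusps and $tb(\Lambda_g)=w-\tfrac{1}{2}(2n)=w-n$, not $w-2n$. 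Your two errors ($w\geq n$ and $tb=w-2n$) compensate, so you land on $-n$ for the wrong reason; the actual computation is $tb=0-n=-n$.

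You also gloss over the $\overline{sl}$ bound. To get $sl=-n$ from the transverse push-off you need $rot(\Lambda_g)=0$, which does not follow from cusp-counting alone: it requires matching the numbers of upward and downward cusps, which the paper establishes via Lemma~\ref{lem: right interval sign compatibility} (a counting argument balancing the $+$-signed right s.d.\ intervals of $\mathcal{I}$ against those of $\mathcal{J}$, using the sign-preserving property $\hat F=\vec F$ from Theorem~\ref{thm: equivalent definition of oriented Thompson group}). Your ``cusp-orientation correction'' remark hides a nontrivial step. Finally, for the $\max$--$\min$ version, the cleaner route used in the paper is to observe that $L_{(\mathcal{J},\mathcal{I})}=-\overline{L_{(\mathcal{I},\mathcal{J})}}$ (Remark~\ref{rmk: reverse orientation of mirror}), apply the same $tb=-n$, $rot=0$ computation to the swapped pair, and use that $\overline{tb}$ is orientation-independent; your suggestion of mirroring the grid and swapping markings can be made to work but needs to be squared with the constraint that the result must still come from a compatible half grid pair (i.e.\ from some element of $\vec F$).
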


Apart from the relation between contact geometry invariants and the oriented Thompson index, we also have purely topological relationships between some knot invariants and the (oriented) Thompson index.

\begin{theorem} \label{thm: Euler charactersitic of Thompson link}
    For any oriented link $L$, we have  $$\chi(L)\geq -ind_{\Vec{F}}+2,$$ where $\chi(L)$ is the maximal Euler characteristic number along all Seifert surfaces of $L$. As in the special case when $L$ is a knot, we have $$g(L) \leq \frac{ind_{\Vec{F}}-1}{2},$$ where $g(L)$ is the Seifert genus of $L$.
\end{theorem}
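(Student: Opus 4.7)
My plan is to establish the Euler characteristic bound by constructing, for every $g \in \vec{F}$ of leaf number $n$, an oriented Seifert surface $F$ for the oriented Thompson link $L_g$ with $\chi(F) \geq 2-n$. Specializing to a $g$ realizing $ind_{\vec{F}}(L)$ then yields the first inequality, and the knot case $g(L) \leq (ind_{\vec{F}}(L)-1)/2$ follows immediately from $\chi(L) = 1 - 2g(L)$.

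Fix a half grid representative $(\mathbb{H}_+, \mathbb{H}_-)$ of $L_g$ with each $\mathbb{H}_\pm$ of size $n \times 2n$, guaranteed by Theorem~\ref{thm: link is half grid presentable}. I would split $L_g$ along the equatorial plane into the composition of the two $(0,2n)$-tangles $\mathcal{T}(\mathbb{H}_+)$ (upper) and $-\overline{\mathcal{T}(\mathbb{H}_-)}$ (lower); each contains exactly $n$ arc components, one per horizontal arc of the corresponding half grid, obtained by concatenating that horizontal arc with the two vertical arcs descending from its $X$ and $O$. In each half-ball I cap these $n$ arcs by pairwise-disjoint oriented embedded disks, stacked at heights determined by the row index of the corresponding horizontal arc, so that each disk is bounded by one tangle arc together with a chord on the equator. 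The compatibility of $(\mathbb{H}_+, \mathbb{H}_-)$ arranges the two chord families to meet coherently at the $2n$ tangle endpoints on the equator, so that after smoothing the chord systems through those endpoints one obtains a disjoint union of $k$ oriented simple closed curves on the equatorial disk. Filling each of these curves by an innermost disk and gluing to the capping disks produces an oriented Seifert surface $F$ for $L_g$ with $\chi(F) = k$.

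The remaining step --- and the main obstacle --- is the combinatorial bound $k \geq 2-n$ on the number of equator circles. This is a planar statement about how the chord diagrams of $\mathbb{H}_+$ and $\mathbb{H}_-$ interact; it can be analyzed via a planar Euler characteristic argument on the graph with $2n$ equator endpoints as vertices and the $2n$ combined chords as edges, together with orientation bookkeeping exploiting that the compatibility of $\mathbb{H}_+$ and $\mathbb{H}_-$ pairs columns containing matching $X/O$ markings. The principal difficulties are ensuring that $F$ is genuinely orientable and that the capping disks can be realized disjointly in each half-ball, particularly when two horizontal arcs of the same half grid project near the same column. An alternative route I would pursue in parallel is to argue directly from Jones' construction via the tree pair $(T_+, T_-)$ of $g$: that tree pair supports a natural spanning surface obtained by thickening the tree edges and capping the leaves, whose Euler characteristic can be computed from the leaf and internal-node counts of the trees and should also yield $\chi \geq 2-n$.
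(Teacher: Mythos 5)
Your first route breaks down at the very first step. The capping disks for the $n$ arcs of $\mathcal{T}(\mathbb{H}_+)$ cannot in general be taken pairwise disjoint: whenever a horizontal segment of one arc crosses a vertical segment of another (which happens at exactly $n-1$ places in the upper half-ball, by Lemma~\ref{lem: writhe of Thompson link is 0}), the two associated disks must intersect, and likewise the corresponding chords on the equator disk overlap or cross, so the "circles" obtained by smoothing are immersed, not embedded. The tell-tale sign that the argument has gone wrong is that the bound you identify as the remaining obstacle, $k \geq 2-n$, is vacuously true ($k$ is a count of circles, so $k \geq 0 \geq 2-n$ for $n\geq 2$, and $n=1$ is the unknot); if $\chi(F)=k$ with all disks embedded and disjoint really held, every $L_g$ of leaf number $n$ would bound a planar surface of Euler characteristic $\geq 0$, which is false, e.g.\ for the trefoil. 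The genus has to come from somewhere, and in this picture it comes precisely from the crossings you are not resolving. To repair the route you would need to cut-and-tube the disk intersections (effectively running Seifert's algorithm), at which point $\chi(F)$ drops by the number of crossings, and you have rederived $\chi = (\text{\# Seifert disks}) - 2(n-1)$; but then you would still owe an argument that there are $n$ Seifert disks, which is the actual content here.

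Your second, alternative route is essentially the paper's proof. The paper takes Jones' canonical Seifert surface $X_{(\mathcal{I},\mathcal{J})}$ built from the signed regions of the tree-pair diagram (not a thickening of the tree, but the complementary regions, thickened to disks), checks that when $(\mathcal{I},\mathcal{J})$ represents an element of $\vec{F}$ the signing $\sigma_{\mathcal{I}}=\sigma_{\mathcal{J}}$ forces the surface to be orientable, and then observes that cutting at the $2(n-1)$ crossings decomposes the surface into $n$ disks. This gives $\chi(X_{(\mathcal{I},\mathcal{J})}) = n - 2(n-1) = 2-n$ on the nose, whence $\chi(L)\geq 2-ind_{\vec{F}}(L)$ and, in the knot case, $g(L)\leq (ind_{\vec{F}}(L)-1)/2$ from $\chi = 1-2g$. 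If you pursue your second route, you should replace the vague "leaf and internal-node counts" with the precise disk/band decomposition, and you must address orientability, which is exactly where the $\vec{F}$ hypothesis (equality of the two $n$-signs) enters.
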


Using the Thurston-Bennequin inequality and Theorem \ref{thm: inequality between Thompson index and max tb number}, we immediate have the following corollary which gives both upper and lower bound for $\overline{tb}(K)$ and $\overline{sl}(K)$.

\begin{corollary}
    For any oriented knot $K$, we have \[-ind_{\vec{F}}(K) \leq \overline{tb}(K)\leq ind_{\vec{F}}(K)-2,\] and \[-ind_{\vec{F}}(K) \leq \overline{sl}(K)\leq ind_{\vec{F}}(K)-2.\] 
\end{corollary}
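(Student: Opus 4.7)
The plan is to assemble the stated corollary from three ingredients already in hand: Theorem \ref{thm: inequality between Thompson index and max tb number}, Theorem \ref{thm: Euler charactersitic of Thompson link}, and the classical Bennequin-type (Thurston-Bennequin/self-linking) inequality for knots in $(S^3,\xi_{std})$.

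First, the lower bounds $-ind_{\vec{F}}(K)\leq \overline{tb}(K)$ and $-ind_{\vec{F}}(K)\leq \overline{sl}(K)$ are literally the first assertions of Theorem \ref{thm: inequality between Thompson index and max tb number}, so no further argument is required for them.

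For the upper bounds, I would invoke the Bennequin inequality in its standard form: for any Legendrian representative $\mathcal{K}$ of $K$ in $(S^3,\xi_{std})$, $tb(\mathcal{K})+|rot(\mathcal{K})|\leq 2g(K)-1$, and analogously for any transverse representative $\mathcal{T}$, $sl(\mathcal{T})\leq 2g(K)-1$. Taking the maximum over all Legendrian (respectively transverse) representatives yields
\[
\overline{tb}(K)\leq 2g(K)-1, \qquad \overline{sl}(K)\leq 2g(K)-1.
\]
Then I would plug in the genus bound from Theorem \ref{thm: Euler charactersitic of Thompson link}, namely $g(K)\leq \tfrac{ind_{\vec{F}}(K)-1}{2}$, to conclude
\[
\overline{tb}(K)\leq 2g(K)-1 \leq (ind_{\vec{F}}(K)-1)-1 = ind_{\vec{F}}(K)-2,
\]
and identically $\overline{sl}(K)\leq ind_{\vec{F}}(K)-2$. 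Combined with the lower bounds from the previous paragraph, this is exactly the content of the corollary.

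There is essentially no obstacle here: the statement is a direct chaining of two theorems in the paper with a standard classical inequality. The only thing one should be mildly careful about is citing the correct form of the Bennequin-type inequality (for $\overline{sl}$, one uses the transverse version, which is a theorem of Bennequin refined by Eliashberg), and remembering that $\chi(K)=1-2g(K)$ for a knot so that Theorem \ref{thm: Euler charactersitic of Thompson link} indeed gives $2g(K)-1\leq ind_{\vec{F}}(K)-2$. The argument therefore occupies only a few lines.
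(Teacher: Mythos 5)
Your proof is correct and matches the paper's intended (but unwritten) argument exactly: the lower bounds are read off from Theorem \ref{thm: inequality between Thompson index and max tb number}, and the upper bounds follow by chaining the Bennequin inequality $\overline{tb}(K)\leq -\chi(K)=2g(K)-1$ (and its transverse analogue for $\overline{sl}$) with the genus bound $g(K)\leq \tfrac{ind_{\vec{F}}(K)-1}{2}$ from Theorem \ref{thm: Euler charactersitic of Thompson link}. The only minor point worth flagging is that the paper's preamble names only Theorem \ref{thm: inequality between Thompson index and max tb number} alongside the Thurston--Bennequin inequality, but you correctly recognized that Theorem \ref{thm: Euler charactersitic of Thompson link} is the other essential ingredient for the upper bound.
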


After we see the unoriented version of half grid construction in Section \ref{sec: sym rep and unoriented grid}, we can give an upper bound of the minimal grid number $grid(L)$ using unoriented Thompson index $ind_{F}(L)$, as our third application.

\begin{theorem} \label{thm: grid number inequality}
	For any link $L$, we have
	$$    
	grid(L) \leq 2 \cdot ind_{F}(L).
	$$ As a consequence of the relation between grid number $grid(L)$, braid index $braid(L)$, and bridge number $bridge(L)$, we have
 \[bridge(L)\leq braid(L)\leq grid(L)/2 \leq ind_F(L).\]

\end{theorem}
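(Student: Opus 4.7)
The plan is to reduce the main inequality $grid(L) \le 2 \cdot ind_F(L)$ to the unoriented half grid construction, and then handle the chain at the end by classical results.

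First, appealing to the unoriented variant of the half grid construction to be developed in Section \ref{sec: sym rep and unoriented grid} (the analog of the construction for the oriented Thompson links already used for Theorem \ref{thm: link is half grid presentable}), I would show that any element $g \in F$ whose tree-pair representative has $n$ leaves produces a compatible pair of $n \times 2n$ half grid diagrams $(\mathbb{H}_+, \mathbb{H}_-)$ whose stacking is a $2n \times 2n$ grid diagram of the corresponding unoriented Thompson link $L_g$. The point is that, just as in the oriented setting, each leaf contributes exactly one column to each of $\mathbb{H}_+$ and $\mathbb{H}_-$ and one row to each, so the output size is controlled linearly by the leaf number with no extra stabilization needed.

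Second, by the definition of the unoriented Thompson index, there is some $g \in F$ with leaf number $n = ind_F(L)$ such that $L_g = L$. Feeding this $g$ into the construction yields a grid diagram of $L$ of size $2n \times 2n$, so $grid(L) \le 2n = 2 \cdot ind_F(L)$, which is the first half of the theorem.

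Third, for the chain $bridge(L) \le braid(L) \le grid(L)/2 \le ind_F(L)$, each inequality is already known or just proved: $bridge(L) \le braid(L)$ is the standard fact that a braid closure on $k$ strands exhibits $L$ as a $k$-bridge presentation; $braid(L) \le grid(L)/2$ follows from the classical procedure (due to Cromwell) of smoothing the diagonal of an $m \times m$ grid diagram so as to realize $L$ as the closure of a braid on at most $m/2$ strands; and the final inequality $grid(L)/2 \le ind_F(L)$ is exactly the first half.

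The main obstacle is the bookkeeping in the first step: verifying that the unoriented half grid construction truly produces a diagram of size precisely $2n \times 2n$ when the input tree pair has $n$ leaves, with no hidden padding and with the correct compatibility along the middle edge. Once that construction is properly set up in Section \ref{sec: sym rep and unoriented grid}, everything else in the theorem is immediate from the definitions and from classical inequalities relating grid number, braid index, and bridge number.
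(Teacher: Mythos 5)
Your proposal follows essentially the same route as the paper: pick $g \in F$ with leaf number $n = ind_F(L)$ realizing $L$, form the half grid diagrams $\mathbb{H}_{\mathcal{I}}, \mathbb{H}_{\mathcal{J}}$ from the reduced s.d.\ partition pair, obtain a $2n \times 2n$ grid diagram of $L$ by Proposition \ref{prop: equivalence of unoriented link}, and conclude $grid(L) \le 2n$; the chain of inequalities then follows from classical facts relating grid, braid, and bridge numbers.

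One point needs fixing, though. You assert that $g \in F$ with an $n$-leaved tree pair yields a \emph{compatible} pair of $n \times 2n$ half grid diagrams, but compatibility of the half grid pair in the paper's sense is exactly what corresponds to $g \in \vec{F}$ (Proposition \ref{prop: compatible half grid diagrams} and Theorem \ref{thm: equivalent definition of oriented Thompson group}). For a generic $g \in F \setminus \vec{F}$ the pair $(\mathbb{H}_{\mathcal{I}}, \mathbb{H}_{\mathcal{J}})$ is \emph{not} compatible, so stacking them as you describe would produce columns with two $X$'s or two $O$'s and hence not a legitimate oriented grid diagram. The resolution (which you gesture at by invoking the ``unoriented variant'') is to replace every $X$ and $O$ by $\bigotimes$ and stack $\mathbb{H}_{\mathcal{I}}$ over $\overline{\mathbb{H}_{\mathcal{J}}}$ to form an \emph{unoriented} grid diagram; this works for any pair of half grid diagrams with no compatibility requirement, and Proposition \ref{prop: equivalence of unoriented link} shows the associated unoriented link is still $L_{(\mathcal{I},\mathcal{J})}$. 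Since the minimal grid number is orientation-independent, this suffices. A smaller slip: each leaf contributes one row and two columns to each $n \times 2n$ half grid diagram, not one column to each, though this does not change the $2n$ count.
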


Apart from its natural connection to Thompson links, the half grid presentation itself gives a ``symmetric" way to describe links. The first set is to encode any $n \times 2n$ half grid diagram by an element in symmetric group $Sym(2n)$.

\begin{theorem} \label{thm: half grid and element of symmetric group}
	There is a one-to-one correspondence between $n \times 2n$ half grid diagrams and elements in $Sym(2n)$. 
\end{theorem}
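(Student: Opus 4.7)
The plan is to exhibit an explicit bijection $\Phi$ from $n \times 2n$ half grid diagrams to $Sym(2n)$ together with an inverse $\Psi$. First I would verify that the two sets have the same (finite) cardinality: building a half grid diagram row by row, for each row $i$ we choose an ordered pair (column of the $X$, column of the $O$) of distinct columns not yet used in earlier rows, giving
\[
(2n)(2n-1) \cdot (2n-2)(2n-3) \cdots 2 \cdot 1 = (2n)!
\]
possibilities, matching $|Sym(2n)|$.

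To define $\Phi$, from a half grid diagram $\mathbb{H}$ I extract, for each row $i \in \{1, \ldots, n\}$, the column $x_i$ containing the $X$ and the column $o_i$ containing the $O$; the conditions of Definition \ref{def: half grid diagram} force $\{x_1, o_1, x_2, o_2, \ldots, x_n, o_n\} = \{1, 2, \ldots, 2n\}$ as sets. Setting
\[
\Phi(\mathbb{H})(2i-1) = x_i, \qquad \Phi(\mathbb{H})(2i) = o_i
\]
therefore defines a permutation of $\{1, \ldots, 2n\}$. Conversely, for $\sigma \in Sym(2n)$ the diagram $\Psi(\sigma)$ obtained by placing an $X$ at the lattice point $(\sigma(2i-1), i)$ and an $O$ at $(\sigma(2i), i)$ for $i = 1, \ldots, n$ is an $n \times 2n$ half grid diagram: since $\sigma$ is a bijection, the column indices are all distinct, so each column is marked exactly once, while each row contains one $X$ and one $O$ by construction. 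A routine unwinding shows $\Phi$ and $\Psi$ are mutually inverse.

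The only real obstacle is a matter of convention rather than content: the pairing $i \leftrightarrow (2i-1, 2i)$ is natural but not canonical, and the authors may well prefer, for instance, indexing the $X$-columns by $\{1, \ldots, n\}$ and the $O$-columns by $\{n+1, \ldots, 2n\}$ so as to mesh with the ``two elements in a symmetric group'' description of the link group alluded to in the abstract. Once a convention is fixed, the proof reduces to checking that the defining conditions of a half grid diagram correspond exactly to $\sigma$ being a bijection on $\{1, \ldots, 2n\}$, which is immediate.
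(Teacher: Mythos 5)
Your proof is correct and takes essentially the same approach as the paper: both encode the $X$- and $O$-column positions of the $i$th row into the values $\sigma(2i-1)$ and $\sigma(2i)$ and observe that the half grid conditions are exactly equivalent to the resulting assignment being a permutation of $\{1,\dots,2n\}$. The cardinality count you include at the start is harmless but redundant once you have exhibited the explicit two-sided inverse.
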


The correspondence is very simple, we just record where the $X$ and $O$ are in each row from bottom to top. For example, the half grid diagram in Figure \ref{fig: half grid diagram example} corresponds to
$$
\sigma =
\begin{pmatrix}
	1 & 2 & 3 & 4 & 5 & 6 & 7 & 8 \\
	4 & 6 & 1 & 5 & 2 & 8 & 7 & 3
\end{pmatrix}
$$

For a more detailed description , see Section \ref{sec: sym rep and unoriented grid}.

\bigskip

Theorem \ref{thm: link is half grid presentable} tells us that every oriented link can be generated by a pair of half grid diagrams. It turns out that we have a parallel result for link groups, stating that every link group has a ``half grid presentation" encoded by a pair of symmetric group elements $\sigma_+$ and $\sigma_-$.

\begin{theorem} \label{thm: half grid presentation of link group}
    A group $G$ is a link group ($G \cong \pi_1(S^3 \backslash L)$ for some link $L$ in $S^3$) if and only if for some positive integer $n$ and two elements  $\sigma_+$, $\sigma_-$ in $Sym(2n)$, $G$ has the following presentation, called a half grid presentation.
    \begin{equation} \label{eq: half grid presentation}
		\left< \scalebox{0.8}{$x_1,x_2,...,x_{2n}$} \middle\vert
        \begin{array}{l}
            \scalebox{0.8}{$X=x_1x_2...x_{2n},$}\\
            \scalebox{0.8}{$X(x_{\sigma_+(1)},x_{\sigma_+(2)}), X(x_{\sigma_+(1)}, x_{\sigma_+(2)},x_{\sigma_+(3)},x_{\sigma_+(4)}), ... ,X(x_{\sigma_+(1)},...,x_{\sigma_+(2n-2)}),$}\\
            \scalebox{0.8}{$X(x_{\sigma_-(1)},x_{\sigma_-(2)}), X(x_{\sigma_-(1)},x_{\sigma_-(2)},x_{\sigma_-(3)},x_{\sigma_-(4)}), ... ,X(x_{\sigma_-(1)},...,x_{\sigma_-(2n-2)})$}
        \end{array}\right> 
	\end{equation}
    where $X(x_{i_1}, x_{i_2}, ..., x_{i_j}) = x_1...\hat{x}_{j_1}...\hat{x}_{j_2}...\hat{x}_{j_k}...x_n$, with $\{j_1 < j_2 < ...< j_k\} = \{i_1, i_2, ..., i_k\}$. In other words, we take away $x_{i_1}, x_{i_2}, ..., x_{i_k}$ one by one from $X = x_1 x_2 ... x_{2n}$.
\end{theorem}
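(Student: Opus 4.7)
The plan is to combine the half grid construction of Theorem~\ref{thm: link is half grid presentable} with the symmetric-group encoding of Theorem~\ref{thm: half grid and element of symmetric group}, and derive the presentation from a Wirtinger-style computation on the associated grid diagram. For the forward direction, let $G = \pi_1(S^3 \setminus L)$; apply Theorem~\ref{thm: link is half grid presentable} to obtain a compatible half grid pair $(\mathbb{H}_+, \mathbb{H}_-)$ representing $L$, and Theorem~\ref{thm: half grid and element of symmetric group} to record it as $\sigma_\pm \in Sym(2n)$. Viewing $(\mathbb{H}_+, \mathbb{H}_-)$ as a planar projection of $L$, let $x_1, \ldots, x_{2n}$ be the meridians of the $2n$ vertical arcs crossing the horizontal equator separating $\mathbb{H}_+$ from $-\overline{\mathbb{H}_-}$, ordered by column. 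Every other arc-meridian in the full Wirtinger presentation of $L$ arises from some $x_i$ by conjugation across horizontal overcrossings, so Tietze elimination leaves $\{x_1, \ldots, x_{2n}\}$ as the generating set.

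The key geometric input is the following. For each $1 \le k \le n-1$, the horizontal plane $P_k$ drawn just above the $k$-th row of $\mathbb{H}_+$ meets $L$ transversally in exactly the $2n-2k$ vertical arcs at columns $\{1,\ldots,2n\}\setminus\{\sigma_+(1),\ldots,\sigma_+(2k)\}$. Compactifying $P_k$ to a $2$-sphere $\hat{P}_k$ in $S^3$ via $\infty$, any simple loop $\gamma \subset P_k$ encircling all these punctures bounds the disk component of $\hat{P}_k \setminus \gamma$ that contains $\infty$ and no punctures; since that disk lies in $S^3 \setminus L$, the loop $\gamma$ is null-homotopic there. Expressing $\gamma$ as the ordered product of meridians at the slice and sliding it down to the equator, the accumulated Wirtinger conjugations from the horizontal arcs in rows $1,\ldots,k$ should telescope to produce exactly the word $X(x_{\sigma_+(1)},\ldots,x_{\sigma_+(2k)})$, yielding the desired relation. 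The symmetric argument below the equator gives the $\sigma_-$-indexed relations, and the equator itself produces the global relation $x_1 x_2 \cdots x_{2n} = 1$, which is the role of the listed equation $X = x_1 x_2 \cdots x_{2n}$.

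For the converse, given arbitrary $\sigma_\pm \in Sym(2n)$, Theorem~\ref{thm: half grid and element of symmetric group} manufactures half grids $\mathbb{H}_\pm$; adjusting the $X/O$ convention within each row (which does not alter the form of the induced relations) ensures compatibility. The resulting grid diagram $(\mathbb{H}_+, \mathbb{H}_-)$ then represents some link $L$, and the forward direction identifies its group with the group presented as above.

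The main obstacle will be the telescoping claim: verifying that the Wirtinger conjugations picked up by sliding $\gamma$ from $P_k$ down to the equator collapse to leave the unconjugated, column-ordered word $x_1 \cdots \hat{x}_{\sigma_+(1)} \cdots \hat{x}_{\sigma_+(2k)} \cdots x_{2n}$ rather than a nontrivial conjugate of it. I expect to handle this by induction on $k$, using the level-$(k-1)$ relation together with the Wirtinger conjugation introduced by the single horizontal arc in row $k$ to produce the level-$k$ relation. A secondary task is to confirm that these $2n-1$ relations capture every individual Wirtinger crossing relation of the grid, which should follow once one observes that the local relation at the horizontal arc in row $k$ is exactly the discrepancy between the slice relations at $P_{k-1}$ and $P_k$.
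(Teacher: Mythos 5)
Your overall plan is sound, but it diverges from the paper's proof in a way that creates unnecessary work and leaves a step you yourself flag as unresolved. The paper's forward direction is essentially a bookkeeping exercise: it cites the standard \emph{grid presentation of the link group} (Lemma~3.5.1 of \cite{MR3381987}, quoted here as Lemma~\ref{lem: grid presentation of link group}), which already says that for any grid diagram the link group is generated by one meridian per vertical segment and has one relation per interior horizontal line, each relation being the unconjugated left-to-right product of the generators whose vertical segments cross that line. Given this, identifying $r_0=X$ and peeling off $x_{\sigma_\pm(2k-1)}, x_{\sigma_\pm(2k)}$ to get $r_{\pm k}=X(x_{\sigma_\pm(1)},\dots,x_{\sigma_\pm(2k)})$ is immediate. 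You instead try to re-derive this lemma from the classical Wirtinger presentation via Tietze moves, and you correctly identify the resulting ``telescoping'' of conjugations as the main obstacle. That obstacle does have a clean resolution you should make explicit rather than defer to an induction: because every horizontal segment of a grid diagram lies strictly above every vertical segment, the open region below the diagram contains no arcs of $L$, so all meridian basepaths can be routed through it. With that choice of basepaths, the meridian of a vertical segment is a single well-defined class (independent of which sub-arc of the column you encircle), and the loop $\gamma\subset P_k$ is \emph{already} the unconjugated ordered product $X(x_{\sigma_+(1)},\dots,x_{\sigma_+(2k)})$ at the slice — no telescoping is needed. Either cite the lemma as the paper does, or replace your Wirtinger/Tietze argument with this basepoint observation.

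Your converse also contains a detour absent from the paper. You propose swapping $X\leftrightarrow O$ within rows to force compatibility of $(\mathbb{H}_+,\mathbb{H}_-)$ before applying the forward direction. That swap claim is in fact true (on each cycle of the graph on columns whose edges are the rows of $\mathbb{H}_+$ and $\mathbb{H}_-$, both $f_+$ and $f_-$ carry exactly $m$ X's out of $2m$ columns, so the two marking functions disagree at an even number of columns, which a sequence of row-swaps can annihilate), and you are right that it does not change the relations because $X(x_{\sigma(1)},\dots,x_{\sigma(2k)})$ depends only on the unordered pairs $\{\sigma(2i-1),\sigma(2i)\}$. But the paper sidesteps the whole issue: link groups are orientation-blind, so it passes to \emph{unoriented} grid diagrams, where any two half grids stack without a compatibility hypothesis (Section~\ref{sec: sym rep and unoriented grid}, Proposition~\ref{prop: unoriented link is half grid presentable}). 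Adopting that viewpoint removes the need for both the swapping argument in the converse and the appeal to Theorem~\ref{thm: link is half grid presentable} (rather than its unoriented counterpart) in the forward direction.
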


\subsection*{Acknowledgement}
The authors would like to thank Slava Krushkal and Tom Mark for useful suggestions. 
Yangxiao Luo was supported in part by NSF grant DMS-2105467 to Slava Krushkal. Shunyu Wan was supported in part by grants from the NSF (RTG grant DMS-1839968) and the Simons Foundation (grants 523795 and 961391 to Thomas Mark).

\section{Thompson group and standard dyadic partition}

\subsection{Background of Thompson group}
\label{sec: thompson_group_and_sdp}

We first give a brief introduction to Thompson group $F$ and related concepts.

\begin{definition} \label{s.d. interval and partition}
	A standard dyadic interval (s.d. interval) is an interval of the form $[\frac{k}{2^m}, \frac{k+1}{2^m}]$ for some non-negative integer $m$ and some non-negative integer $k \leq 2^m - 1$. A standard dyadic $n$-partition (s.d. $n$-partition) on $[0,1]$ is a partition $0 = a_0 < a_1 < ... < a_n = 1$ such that $[a_i, a_{i+1}]$ is an s.d. interval for any $i$.

	We call $a_i$ a breakpoint of the s.d. partition, and call $[a_i, a_{i+1}]$ a subinterval of the s.d. partition. Suppose $\mathcal{K}$ and $\mathcal{I}$ are s.d. partitions, we denote $\mathcal{K} \leq \mathcal{I}$ if $\mathcal{I}$ is a refinement of $\mathcal{K}$ as partitions. In other words, the breakpoints of $\mathcal{K}$ is a subset of the breakpoints of $\mathcal{I}$.
\end{definition}

We denote the collection of all s.d. intervals as $\mathcal{S}$, and denote $\delta$ to be the trivial partition $0=a_0 < a_1 = 1$.

\begin{definition} \label{def: Thompson group}
	Thompson group $F$ is the group (under composition) of homeomorphism $g$'s from $[0,1]$ to itself satisfying the following conditions:	
	\begin{enumerate}
		\item $g$ is piecewise linear and orientation-preserving.
		\item In the pieces where $g$ linear, the slope is always a power of $2$.
		\item The breakpoints are dyadic rational numbers, i.e. $\frac{k}{2^m}$ for some $m \in \{0,1,2,...\}$ and $k \in \{0,1,2,...,2^m\}$.
	\end{enumerate}
\end{definition}

\begin{definition} \label{def: s.d. partition pair representation}
	Suppose that $\mathcal{I}$ is a s.d. $n$-partition with breakpoints $0 = a_0 < a_1 < ... < a_n = 1$, and $\mathcal{J}$ is a s.d. $n$-partition with breakpoints $0 = b_0 < b_1 < ... < b_n = 1$. We define $\gamma^{\mathcal{I}}_{\mathcal{J}}: [0,1] \to [0,1]$ to be the piecewise linear function determined by letting $\gamma^{\mathcal{I}}_{\mathcal{J}}(a_i)=b_i$ for all $i \in \{0,1,...,n\}$ and letting $\gamma^{\mathcal{I}}_{\mathcal{J}}$ be linear on $[a_i,a_{i+1}]$ for all $i$.
	
	Note that $\gamma^\mathcal{I}_\mathcal{J}$ is an element in $F$. We say $(\mathcal{I}, \mathcal{J})$ is a pair of s.d. partitions representing $\gamma^\mathcal{I}_\mathcal{J}$.
\end{definition}

There is a bijection between the set of s.d. partitions and the set of binary trees. To illustrate that, first we label edges of the infinite binary tree $F_{\infty}$ (with an extra edge attached to the root) by standard dyadic intervals, such that the uppermost edge is labeled by $[0, 1]$ and each trivalent vertex represents a cutting at the midpoint (see Figure \ref{fig: infinite tree}). Note that this labeling gives a bijection $\epsilon: \mathcal{S} \to \mathcal{E}(F_{\infty})$, where $\mathcal{E}(F_{\infty})$ is the collection of edges of $F_{\infty}$.

\begin{figure}[ht]
	\centering
	\includegraphics[width = 0.5\textwidth]{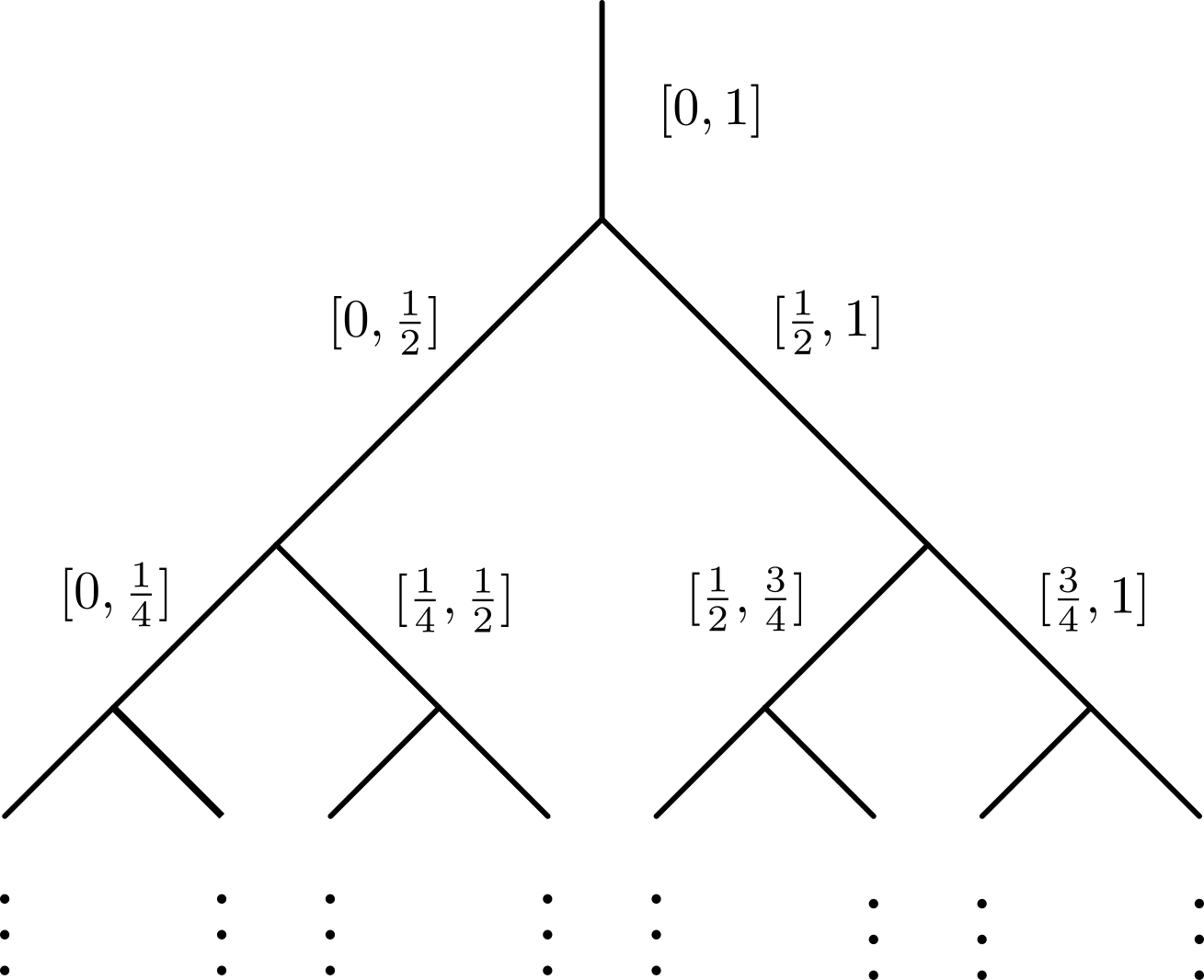}
	\put(-210, 150){$F_{\infty}$}
	\caption{The infinite tree $F_{\infty}$ with edges labeled by standard dyadic intervals.}
	\label{fig: infinite tree}
\end{figure}

Given an s.d. partition $\mathcal{I}$, we can find a unique binary tree $F_{\mathcal{I}}$ as a labeled subtree of $F_{\infty}$ whose uppermost edges are labeled by $[0,1]$, and lowermost edges are labeled by subintervals of $\mathcal{I}$ (see Figure \ref{fig: sdp and tree example} for an example). More generally, given a pair of s.d. partitions $\mathcal{K} \leq \mathcal{I}$, we can find a unique forest $F^{\mathcal{K}}_{\mathcal{I}}$ as a labeled subgraph of $F_{\infty}$ whose uppermost edges are labeled by subintervals of $\mathcal{K}$, and lowermost edges are labeled by subintervals of $\mathcal{I}$ (see Figure \ref{fig: refinement and forest example} for an example).

\begin{figure}
    \centering
    \begin{subfigure}{0.35\textwidth}
        \centering
        \includegraphics[width = \textwidth]{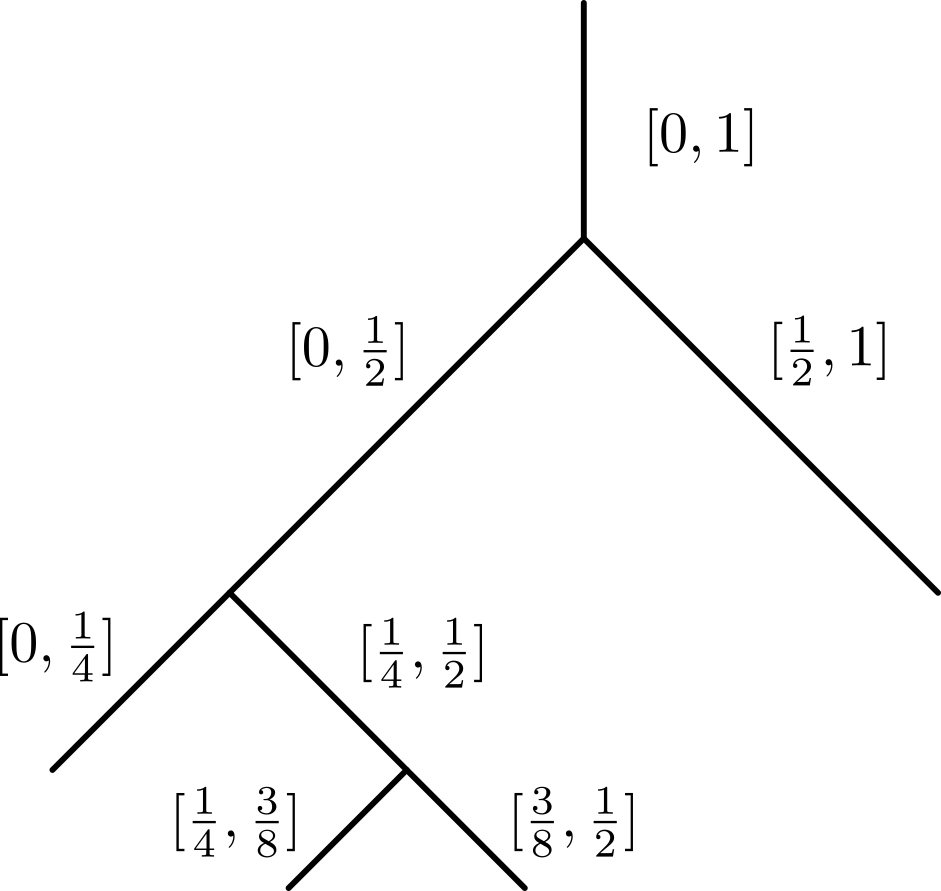}
		\put(-150, 120){$F_{\mathcal{I}}$}
		\caption{}
		\label{fig: sdp and tree example}
    \end{subfigure}
    \hfill
    \begin{subfigure}{0.4\textwidth}
        \centering
        \includegraphics[width=\textwidth]{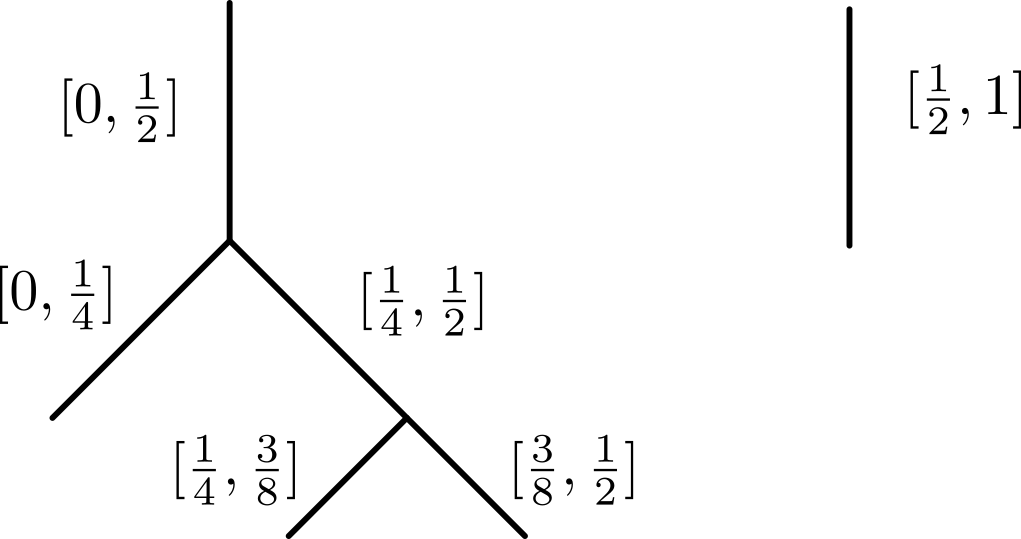}
		\put(-90, 120){$F^{\mathcal{K}}_{\mathcal{I}}$}
        \caption{}
        \label{fig: refinement and forest example}
    \end{subfigure}
    \caption{(a) $F_{\mathcal{I}}$ is the tree representing s.d. partition $\mathcal{I}$ with breakpoints $0 < \frac{1}{4} < \frac{3}{8} < \frac{1}{2} < 1$. (b) $F^{\mathcal{K}}_{\mathcal{I}}$ is the forest representing a pair of s.d. partitions $\mathcal{K} \leq \mathcal{I}$, where $\mathcal{K}$ has breakpoints $0 < \frac{1}{2} < 1$.}
\end{figure}

Using the notion of binary trees, we can visualize any pair of s.d. $n$-partitions $(\mathcal{I}, \mathcal{J})$ as a pair of binary trees $(F_{\mathcal{I}}, F_{\mathcal{J}})$ with $n$ leaves. We often take the vertical mirror of $F_{\mathcal{J}}$ and put it below $F_{\mathcal{I}}$, and call the resulting diagram a tree diagram of $(\mathcal{I}, \mathcal{J})$, also denoted as $(F_{\mathcal{I}}, F_{\mathcal{J}})$. Figure \ref{fig: tree diagram example} is an example of a tree diagram.

Given two pairs of s.d. partitions $(\mathcal{I}, \mathcal{J})$ and $(\mathcal{I}', \mathcal{J}')$, we say that $(\mathcal{I}', \mathcal{J}')$ is a refinement of $(\mathcal{I}, \mathcal{J})$ if $(F_{\mathcal{I}'}, F_{\mathcal{J}'})$ can be obtained by adding some ``carets" to $(F_{\mathcal{I}}, F_{\mathcal{J}})$, as Figure \ref{fig: carets example} shows. Equivalently, $\mathcal{I}', \mathcal{J}'$ are refinements of $\mathcal{I}, \mathcal{J}$ respectively, and these two refinements are linearly compatible, i.e. $\gamma^{\mathcal{I}'}_{\mathcal{J}'}=\gamma^{\mathcal{I}}_{\mathcal{J}}$.

We say a pair of s.d. partitions $(\mathcal{I}, \mathcal{J})$ is reduced if it is not a refinement of any other pairs. In other words, $(F_\mathcal{I}, F_{\mathcal{J}})$ contains no carets.

\begin{figure}[ht]
	\centering
	\begin{subfigure}{0.3\textwidth}
		\centering
		\includegraphics[width=\textwidth]{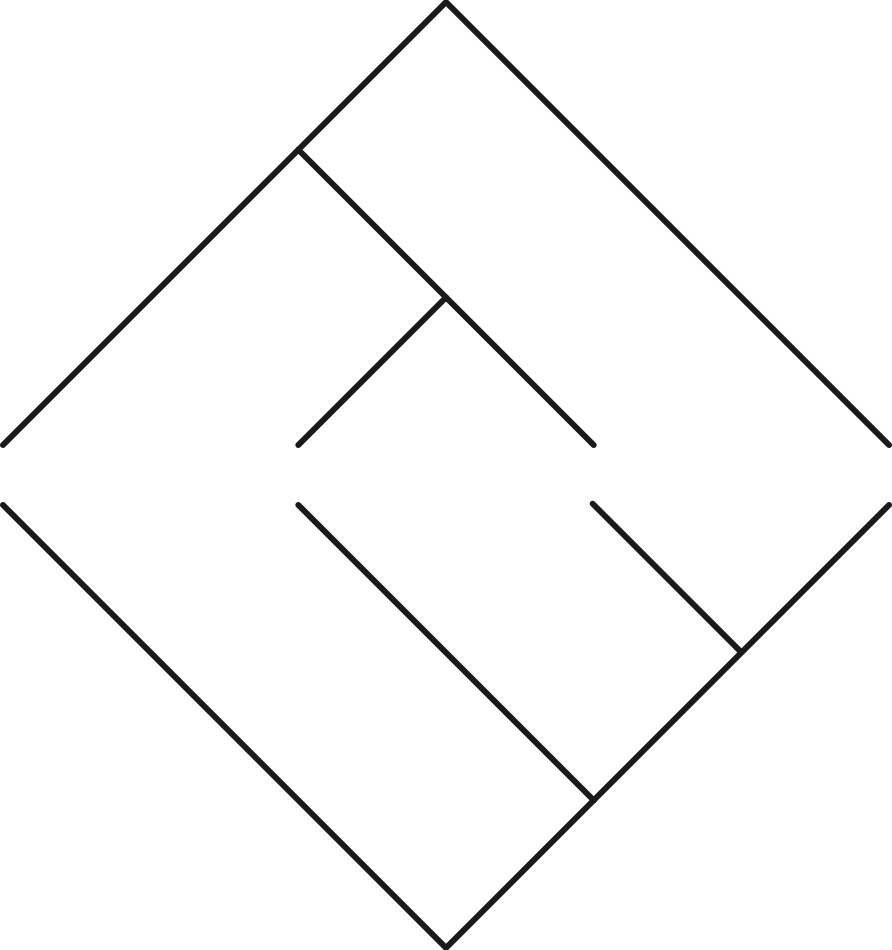}
		\put(0, 30){$F_{\mathcal{J}}$}
        \put(0, 100){$F_{\mathcal{I}}$}
		\caption{}
		\label{fig: tree diagram example}
	\end{subfigure}
	\hspace{4cm}
	\begin{subfigure}{0.3\textwidth}
		\centering
		\includegraphics[width=\textwidth]{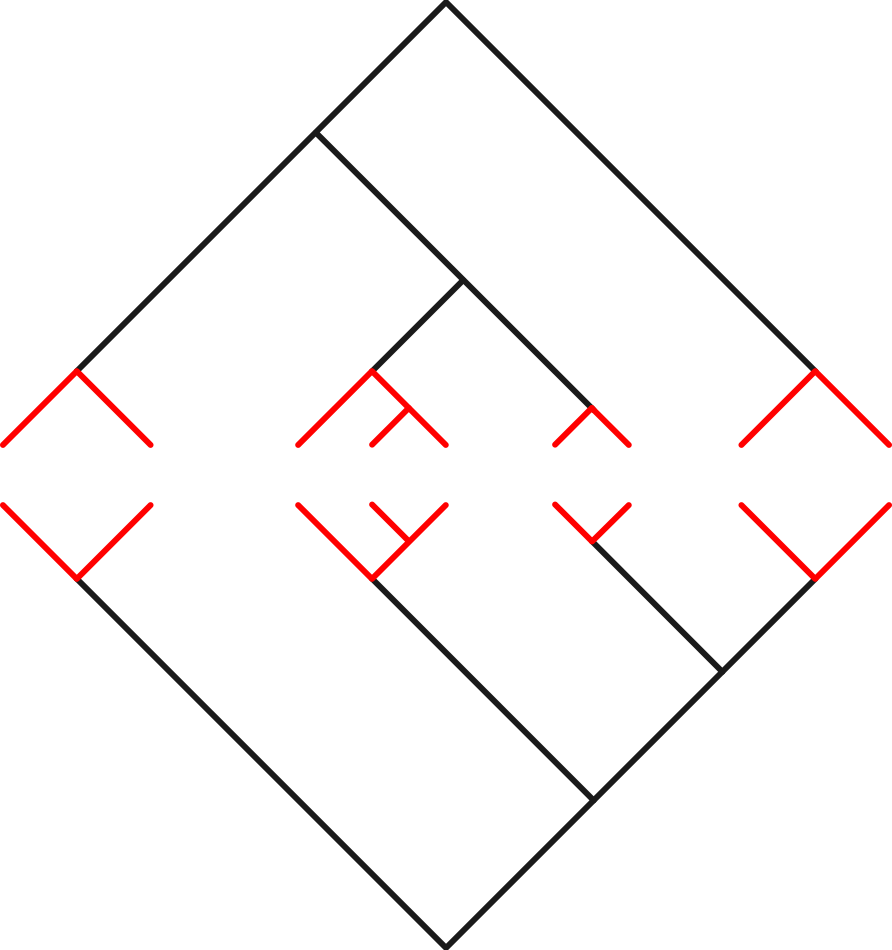}
		\put(-140, 30){$F_{\mathcal{J}'}$}
        \put(-140, 100){$F_{\mathcal{I}'}$}
		\caption{}
		\label{fig: carets example}
	\end{subfigure}
	\caption{(a) A tree diagram $(F_\mathcal{I}, F_{\mathcal{J}})$. As a convention, we don't draw extra edges attached to roots in a tree diagram. (b) The tree diagram $(F_{\mathcal{I}'}, F_{\mathcal{J}'})$ is obtained by adding some carets (marked in red) to $(F_\mathcal{I}, F_{\mathcal{J}})$.}
\end{figure}

\begin{lemma} \cite[Lemma 2.2]{MR1426438}
	Any $g\in F$ has a unique reduced pair of s.d. partitions $(\mathcal{I}, \mathcal{J})$ representing it. Furthermore, any pair $(\mathcal{I}', \mathcal{J}')$ representing $g$ is a refinement of $(\mathcal{I}, \mathcal{J})$.
\end{lemma}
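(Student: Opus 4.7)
The plan is to prove existence by iterated caret removal and to prove uniqueness, together with the refinement clause, by constructing common refinements and invoking confluence of caret removal.

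For existence, take any representative pair $(\mathcal{I}, \mathcal{J})$ of $g$. If the tree diagram $(F_\mathcal{I}, F_\mathcal{J})$ contains a caret at adjacent leaves $i, i+1$, then the s.d. subintervals $[a_{i-1}, a_i]$ and $[a_i, a_{i+1}]$ have equal length and fuse into an s.d. parent interval, and likewise $[b_{i-1}, b_i]$ and $[b_i, b_{i+1}]$. Equal lengths on both sides force the two slopes of $g$ on the merged interval to coincide, so $g$ is linear across it; deleting the breakpoints $a_i$ from $\mathcal{I}$ and $b_i$ from $\mathcal{J}$ produces a coarser representative with one fewer leaf. Since the number of leaves is a non-negative integer, this process terminates in a reduced pair.

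For uniqueness, the essential tool is that any representative $(\mathcal{I}, \mathcal{J})$ of $g$ can be refined by bisecting any single s.d. subinterval $[a_{i-1}, a_i]$: the two halves are s.d., the image $[b_{i-1}, b_i]$ is bisected at its midpoint (which is $g$ of the new breakpoint, since $g$ has dyadic slope on the piece), and the halves on the $\mathcal{J}$-side are s.d. as well. Iterating this operation lets us refine any representative so that its $\mathcal{I}$-side equals any prescribed s.d. partition refining the original. In particular, given pairs $(\mathcal{I}_1, \mathcal{J}_1)$ and $(\mathcal{I}_2, \mathcal{J}_2)$ both representing $g$, choose an s.d. partition $\mathcal{I}$ refining both $\mathcal{I}_1$ and $\mathcal{I}_2$; refining each starting pair so that its $\mathcal{I}$-side becomes $\mathcal{I}$ forces the $\mathcal{J}$-sides to agree (both equal the unique piecewise-linear image of $\mathcal{I}$ under $g$), producing a common refinement $(\mathcal{I}'', \mathcal{J}'')$ of the two pairs.

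Now run caret removals on $(\mathcal{I}'', \mathcal{J}'')$: two distinct removable carets are supported on disjoint adjacent leaves (each leaf has a unique sibling), so the corresponding removals commute, and by a standard diamond-lemma argument the reduced form of $(\mathcal{I}'', \mathcal{J}'')$ is independent of the order of removals. In particular, starting with reduced pairs $(\mathcal{I}_1, \mathcal{J}_1)$ and $(\mathcal{I}_2, \mathcal{J}_2)$, both are reachable from $(\mathcal{I}'', \mathcal{J}'')$ by caret removals and both are reduced, so both equal the unique reduced form and hence agree. The refinement clause then follows: for any representative $(\mathcal{I}', \mathcal{J}')$, iterated caret removal yields a reduced pair which by uniqueness equals $(\mathcal{I}, \mathcal{J})$, and tracing the removals in reverse exhibits $(\mathcal{I}', \mathcal{J}')$ as obtained from $(\mathcal{I}, \mathcal{J})$ by a sequence of caret insertions, i.e., as a refinement. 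The main obstacle is the confluence check — verifying that nondisjoint removable carets do not coexist and that removing one caret does not alter the removability of an unrelated caret at a disjoint position; this is a short tree-combinatorial verification, after which everything reduces to bookkeeping.
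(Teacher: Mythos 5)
Your argument is correct, and it fills in a proof that the paper itself does not supply: the lemma is cited from Cannon--Floyd--Parry (reference \cite{MR1426438}) without proof. Your structure --- existence of a reduced representative by iterated caret removal with termination, the observation that any domain-side refinement propagates to a compatible codomain-side refinement so that any two representatives of $g$ have a common refinement, and then uniqueness of the reduced pair via local confluence (disjointness of removable carets) plus Newman's lemma --- is exactly the standard argument for this fact. The one place to be careful in writing it up formally is the local confluence check: you correctly note that two distinct removable carets occupy disjoint leaf-pairs $(i,i+1)$ and $(j,j+1)$ (a shared leaf would have two siblings), but one should also record explicitly that removing the caret at $(i,i+1)$ leaves the caret at $(j,j+1)$ intact (at shifted index), since Newman's lemma needs a single common reduct reachable in one further step from each side; you gesture at this as ``bookkeeping,'' and it is indeed routine, but it is the load-bearing step. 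The refinement clause then follows as you say by reversing the removal sequence. No gaps.
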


If $(\mathcal{I}, \mathcal{J})$ is a pair of s.d. partitions representing $g$, we say $(F_{\mathcal{I}}, F_{\mathcal{J}})$ is a tree diagram of $g$. If $(\mathcal{I}, \mathcal{J})$ is the reduced pair of s.d. partitions representing $g$, we say $(F_{\mathcal{I}}, F_{\mathcal{J}})$ is the reduced tree diagram of $g$. In the latter case, the leaf number of $F_{\mathcal{I}}$ or $F_{\mathcal{J}}$ is called the leaf number of $g$.

\subsection{Some general theories of standard dyadic partition}
\label{sec: some general theories of sdp}

In later sections, we will construct half grid diagrams based on s.d. partitions. First we need to introduce some useful notations and properties related to s.d. partitions.

\begin{definition}
	A left (resp. right) s.d. interval is an s.d. interval of the form $[\frac{a}{2^n}, \frac{a+1}{2^n}]$ for some even (resp. odd) number $a$ and some integer $n \geq 1$. We denote the collection of left (resp. right) s.d. intervals as $\mathcal{S}_L$ (resp. $\mathcal{S}_R$).
\end{definition}

According to the above definition, $[0, 1]$ is neither a left s.d. interval nor a right s.d. interval. However, we will regard $[0, 1]$ as a right s.d. interval. With this convention, any s.d. interval is either a left s.d. interval or a right s.d. interval, but cannot be both simultaneously. As a consequence, $\mathcal{S}$ is the disjoint union of $\mathcal{S}_L$ and $\mathcal{S}_R$.

\begin{definition}
	Let $A=[\frac{a}{2^n}, \frac{a+1}{2^n}] \in \mathcal{S} \setminus \{[0, 1]\}$, we define the conjugate of $A$ to be
	$$
	\overline{A}=
	\begin{cases}
		[\frac{a+1}{2^n}, \frac{a+2}{2^n}] ,  & \text{if } A \in    \mathcal{S}_L \\
		[\frac{a-1}{2^n}, \frac{a}{2^n}], & \text{if } A \in \mathcal{S}_R
	\end{cases}
	$$
	
	$\overline{A}$ is called the conjugate of $A$.
\end{definition}

In the previous section we described a bijection $\epsilon: \mathcal{S} \to \mathcal{E}(F_{\infty})$. We can easily see that for any $A \in \mathcal{S} \setminus \{[0, 1]\}$, $A$ is a left (resp. right) interval if and only if $\epsilon(A)$ is a left (resp. right) edge. Furthermore, $\epsilon(A \cup \overline{A})$ splits into  $\epsilon(A)$ and $\epsilon (\overline{A})$ at a trivalent vertex.

\bigskip

Next definition will play the key role in the construction of half grid diagrams.

\begin{definition}
	Let $\mathcal{I}$ be an s.d. partition with breakpoints $0 = a_0 <  a_1 <... < a_n = 1$. We denote $\mathcal{S}(\mathcal{I})$ to be the collection of $[a_i, a_j]$'s such that $[a_i, a_j]$ is a s.d. interval. Denote $\mathcal{S}_L(\mathcal{I}) = \mathcal{S}(\mathcal{I}) \cap \mathcal{S}_L$ and $\mathcal{S}_R(\mathcal{I}) = \mathcal{S}(\mathcal{I}) \cap \mathcal{S}_R$.
\end{definition}

Recall that $\mathcal{I}$ can be represented by a binary tree $F_{\mathcal{I}}$ as a subtree of $F_{\infty}$. If we restrict the bijection $\epsilon: \mathcal{S} \to \mathcal{E}(F_{\infty})$ on $\mathcal{S}(\mathcal{I})$, we have a bijection $\epsilon_{\mathcal{I}}: \mathcal{S}(\mathcal{I}) \to \mathcal{E}(F_{\mathcal{I}})$, where $\mathcal{E}(F_{\mathcal{I}})$ is the collection of edges of $F_{\mathcal{I}}$. This bijection sends subintervals of $\mathcal{I}$ to the lowermost edges of $F_{\mathcal{I}}$, and other s.d. intervals to non-lowermost edges.

Observe that each non-uppermost edge $\epsilon(A)$ must be a split of $\epsilon(A \cup \overline{A})$, and each non-lowermost edge $\epsilon(B)$ can split into $\epsilon(A)$ and $\epsilon(\overline{A})$ for some $A \in \mathcal{S}(\mathcal{I})$, then we have the following lemma.

\begin{lemma} \label{lem: conjugate in SDI}
	Suppose $A \in \mathcal{S}(\mathcal{I}) \setminus \{[0,1]\}$, then $A \cup \overline{A} \in \mathcal{S}(\mathcal{I})$. Furthermore, if $B \in \mathcal{S}(\mathcal{I})$ is not a subinterval of $\mathcal{I}$, then $B = A \cup \overline{A}$ for some $A \in \mathcal{S}(\mathcal{I})$.
\end{lemma}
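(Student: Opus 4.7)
The approach is to translate both claims into statements about the finite binary tree $F_{\mathcal{I}}$ via the bijection $\epsilon_{\mathcal{I}}: \mathcal{S}(\mathcal{I}) \to \mathcal{E}(F_{\mathcal{I}})$ introduced above, and then to exploit the structural fact that $F_{\mathcal{I}}$ is a \emph{full} connected binary subtree of $F_{\infty}$: it contains the uppermost edge $\epsilon([0,1])$, and whenever a trivalent vertex of $F_\infty$ lies in $F_{\mathcal{I}}$, both of its descending edges lie in $F_{\mathcal{I}}$ as well. This fullness is immediate from the recursive construction of $F_{\mathcal{I}}$, in which each cut of the interval corresponds to attaching both children simultaneously to a trivalent vertex.

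For the first assertion, I would fix $A \in \mathcal{S}(\mathcal{I}) \setminus \{[0,1]\}$. Then $\epsilon(A)$ is a non-uppermost edge of $F_{\mathcal{I}}$, so its upper endpoint is a trivalent vertex $v$ of $F_\infty$ that already lies in $F_{\mathcal{I}}$. By the definition of the labeling $\epsilon$ on $F_\infty$, the edge above $v$ is the unique s.d. interval whose halving at the midpoint produces $A$ and its sibling $\overline{A}$, namely $A \cup \overline{A}$. Connectedness of $F_{\mathcal{I}}$ up to the root then forces this parent edge to be in $\mathcal{E}(F_\mathcal{I})$, which yields $A \cup \overline{A} \in \mathcal{S}(\mathcal{I})$.

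For the second assertion, I would take $B \in \mathcal{S}(\mathcal{I})$ that is not a subinterval of $\mathcal{I}$. By the description of $\epsilon_\mathcal{I}$ (lowermost edges correspond exactly to subintervals of $\mathcal{I}$), the edge $\epsilon(B)$ is non-lowermost, so its lower endpoint $w$ is a trivalent internal vertex of $F_{\mathcal{I}}$. Fullness of $F_{\mathcal{I}}$ at $w$ then places both descending edges in $\mathcal{E}(F_\mathcal{I})$; these are $\epsilon(A)$ and $\epsilon(\overline{A})$ for the unique $A \in \mathcal{S}$ with $A \cup \overline{A} = B$. In particular $A \in \mathcal{S}(\mathcal{I})$ and $B = A \cup \overline{A}$, as required.

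The only step that needs a line of justification is the fullness of $F_{\mathcal{I}}$ as a subtree of $F_\infty$, but this is transparent from the construction (the leaves are prescribed to be the subintervals of $\mathcal{I}$, and every non-leaf vertex branches into both halves). Accordingly, I do not anticipate any real obstacle: the lemma is essentially a dictionary translation between $\mathcal{S}(\mathcal{I})$ and $\mathcal{E}(F_{\mathcal{I}})$, with the local tree structure at a trivalent vertex encoding the conjugation operation $A \mapsto \overline{A}$.
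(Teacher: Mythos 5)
Your argument is essentially the paper's own: the paper establishes this lemma by observing that under $\epsilon_{\mathcal{I}}$, each non-uppermost edge $\epsilon(A)$ is a split of $\epsilon(A\cup\overline{A})$ and each non-lowermost edge $\epsilon(B)$ splits into $\epsilon(A)$ and $\epsilon(\overline{A})$, which is exactly your translation through the bijection $\epsilon_{\mathcal{I}}:\mathcal{S}(\mathcal{I})\to\mathcal{E}(F_{\mathcal{I}})$ together with fullness and connectedness of $F_{\mathcal{I}}$. Your version just spells out the subtree-structure hypotheses more explicitly; the content is the same.
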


By counting the number of edges of $F_{\mathcal{I}}$, we get the following lemma.

\begin{lemma} {\label{lem: cardinality of SDI}}
	Let $\mathcal{I}$ be an s.d. $n$-partition, then
	\begin{enumerate}
		\item $|\mathcal{S}(\mathcal{I})| = 2n - 1$
		\item $|\mathcal{S}_L(\mathcal{I})| = |\mathcal{S}_R(\mathcal{I})|-1 = n - 1$
	\end{enumerate}
\end{lemma}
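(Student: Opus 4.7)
The plan is to exploit the bijection $\epsilon_{\mathcal{I}} : \mathcal{S}(\mathcal{I}) \to \mathcal{E}(F_{\mathcal{I}})$ constructed just before the lemma, so that both cardinality statements become combinatorial counts on the binary tree $F_{\mathcal{I}}$. This tree has $n$ leaves (corresponding to the subintervals of $\mathcal{I}$, i.e.\ the lowermost edges), a distinguished uppermost edge labeled $[0,1]$, and every other vertex trivalent with one edge going up and two going down.

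For part (1), I would count edges of $F_{\mathcal{I}}$ by bookkeeping trivalent vertices. A full binary tree whose leaves are the $n$ lowermost edges has exactly $n-1$ trivalent internal vertices; each such vertex contributes two downward edges, accounting for $2(n-1)$ edges. Adding the single uppermost edge labeled $[0,1]$ yields $2(n-1)+1 = 2n-1$ edges, hence $|\mathcal{S}(\mathcal{I})| = 2n-1$ via $\epsilon_{\mathcal{I}}$. For part (2), I would use the observation already recorded in the text: for $A \in \mathcal{S}\setminus\{[0,1]\}$, the interval $A$ is left (resp.\ right) precisely when $\epsilon(A)$ is the left (resp.\ right) child edge at the trivalent vertex where it meets its conjugate. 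Therefore at each of the $n-1$ trivalent vertices the two downward edges form one left/right pair, giving $n-1$ left edges and $n-1$ right edges. The uppermost edge $[0,1]$ is not a child at any trivalent vertex but is declared a right s.d.\ interval by convention, so in total there are $n-1$ left edges and $n$ right edges. Under $\epsilon_{\mathcal{I}}$ this becomes $|\mathcal{S}_L(\mathcal{I})| = n-1$ and $|\mathcal{S}_R(\mathcal{I})| = n$, which is exactly the claimed equality $|\mathcal{S}_L(\mathcal{I})| = |\mathcal{S}_R(\mathcal{I})|-1 = n-1$.

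I expect no serious obstacle; the only subtlety is the asymmetric role of the uppermost edge $[0,1]$, which is exactly what produces the ``$-1$'' in $|\mathcal{S}_L(\mathcal{I})| = |\mathcal{S}_R(\mathcal{I})|-1$. As a sanity check I would also verify everything inductively: the base case $n=1$ gives $\mathcal{S}(\mathcal{I}) = \{[0,1]\}$ with $|\mathcal{S}_L|=0$ and $|\mathcal{S}_R|=1$, and the inductive step removes a pair of sibling subintervals $[a_{i-1},a_i], [a_i,a_{i+1}]$ (one left, one right by Lemma 2.11) from an s.d.\ $n$-partition $\mathcal{I}$ to obtain an s.d.\ $(n-1)$-partition $\mathcal{I}'$; this deletes the two leaves from $\mathcal{S}(\mathcal{I})$ while their union $[a_{i-1},a_{i+1}]$ becomes a subinterval of $\mathcal{I}'$, yielding the recurrences $|\mathcal{S}(\mathcal{I})| = |\mathcal{S}(\mathcal{I}')|+2$, $|\mathcal{S}_L(\mathcal{I})| = |\mathcal{S}_L(\mathcal{I}')|+1$, and $|\mathcal{S}_R(\mathcal{I})| = |\mathcal{S}_R(\mathcal{I}')|+1$, from which both claims follow immediately.
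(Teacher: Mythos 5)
Your proof is correct and takes essentially the same route as the paper, which simply remarks that the lemma follows ``by counting the number of edges of $F_{\mathcal{I}}$'' and leaves the details implicit. You have filled those details in accurately: the bijection $\epsilon_{\mathcal{I}}$ converts the count to one about edges of the binary tree, each of the $n-1$ trivalent vertices contributes one left edge and one right edge, and the lone uppermost edge (which is declared right by convention) produces the asymmetry $|\mathcal{S}_R(\mathcal{I})| = |\mathcal{S}_L(\mathcal{I})| + 1$; the inductive sanity check is a clean independent confirmation.
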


Next, we turn to dyadic rational numbers and their connection to s.d. intervals and the Thompson group.

\begin{definition}
	We denote $E = \{\frac{k}{2^m}: m \geq 0 \text{ and } 1 \leq k \leq 2^m - 1 \}$, the set of dyadic rational numbers except $0$ and $1$. Given an s.d. partition $\mathcal{I}$, denote $M(\mathcal{I})$ to be the set of midpoints of subintervals of $\mathcal{I}$. Denote $E(\mathcal{I})$ to be the union of $M(\mathcal{I})$ and all of breakpoints of $\mathcal{I}$ except $0$ and $1$.
\end{definition}

Note that $M(\mathcal{I})$ and $E(\mathcal{I})$ are both subsets of $E$.

\begin{lemma} \label{lem: taking midpoint is a bijection from S to E}
	The map $m: \mathcal{S} \to E$, sending s.d. intervals to their midpoints, is a bijection. Furthermore, $m|_{\mathcal{S}(\mathcal{I})}: \mathcal{S}(\mathcal{I}) \to E(\mathcal{I})$ is a bijection for any s.d. partition $\mathcal{I}$.
\end{lemma}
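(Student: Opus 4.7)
The plan is to handle the two statements separately: the global bijection $m: \mathcal{S} \to E$ comes from writing down an explicit formula for the midpoint and inverting it using the lowest-terms representation of a dyadic rational, and then the restricted version follows by confirming the image of $m|_{\mathcal{S}(\mathcal{I})}$ lies in $E(\mathcal{I})$ and running a cardinality count via Lemmas \ref{lem: conjugate in SDI} and \ref{lem: cardinality of SDI}. I expect no real obstacle; the entire proof is elementary bookkeeping with dyadic denominators, and the structural content has already been established in the previous lemmas.

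For the first claim, I would note that the midpoint of $A = [k/2^m, (k+1)/2^m]$ is $(2k+1)/2^{m+1}$. Conversely, any $x \in E$ can be written uniquely in lowest terms $p/2^q$ with $p$ odd and $1 \leq p \leq 2^q - 1$ (allowing $q = 1, p = 1$ for $x = 1/2$, which matches the convention that $[0,1] \in \mathcal{S}$); setting $m = q - 1$ and $k = (p-1)/2$ recovers the unique s.d. interval whose midpoint is $x$. This produces a two-sided inverse, so $m$ is a bijection.

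For the second claim, I would first check that $m(\mathcal{S}(\mathcal{I})) \subseteq E(\mathcal{I})$. If $[a_i, a_{i+1}] \in \mathcal{S}(\mathcal{I})$ is a subinterval of $\mathcal{I}$, its midpoint lies in $M(\mathcal{I}) \subseteq E(\mathcal{I})$ by definition. Otherwise, for $[a_i, a_j] \in \mathcal{S}(\mathcal{I})$ with $j > i+1$, Lemma \ref{lem: conjugate in SDI} gives $[a_i, a_j] = A \cup \overline{A}$ for some $A \in \mathcal{S}(\mathcal{I})$; writing $A = [a_i, a_k]$ and $\overline{A} = [a_k, a_j]$, the common endpoint $a_k$ is simultaneously the midpoint of $[a_i, a_j]$ and a breakpoint of $\mathcal{I}$ strictly between $0$ and $1$, hence in $E(\mathcal{I})$. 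Since $m|_{\mathcal{S}(\mathcal{I})}$ inherits injectivity from $m$, and since $|\mathcal{S}(\mathcal{I})| = 2n - 1$ by Lemma \ref{lem: cardinality of SDI} while $|E(\mathcal{I})| = |M(\mathcal{I})| + (n - 1) = n + (n - 1) = 2n - 1$, an injection between equal-size finite sets must be a bijection, completing the proof.
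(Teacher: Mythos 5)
Your proposal is correct and follows essentially the same route as the paper: an explicit two-sided inverse (the lowest-terms trick for a dyadic rational) establishes the global bijection $m: \mathcal{S} \to E$, and the restriction argument uses Lemma \ref{lem: conjugate in SDI} to show $m(\mathcal{S}(\mathcal{I})) \subseteq E(\mathcal{I})$ and Lemma \ref{lem: cardinality of SDI} for the cardinality count $|\mathcal{S}(\mathcal{I})| = |E(\mathcal{I})| = 2n-1$. The only cosmetic difference is that you phrase the finishing step as ``injection between equal-size finite sets'' while the paper says ``onto,'' but both amount to the same pigeonhole observation.
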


\begin{proof}
	Note that $m$ has inverse $m^{-1}: E \to \mathcal{S}$ sending any dyadic number $\frac{a}{2^n}$ with some $a$ not divided by 2, to an s.d. interval  $[\frac{a-1}{2^n}, \frac{a+1}{2^n}] = [\frac{\frac{a-1}{2}}{2^{n-1}}, \frac{\frac{a-1}{2}+1}{2^{n-1}}]$. So $m$ is a bijection.

	To show $m|_{\mathcal{S}(\mathcal{I})}: \mathcal{S}(\mathcal{I}) \to E(\mathcal{I})$ is a bijection, we just need to show $m$ sends $\mathcal{S}(\mathcal{I})$ onto $E(\mathcal{I})$, then the statement follows since $|\mathcal{S}(\mathcal{I})| = |E(\mathcal{I})| = 2n-1$ if $\mathcal{I}$ is an $n$-partition. Indeed, for any $B \in \mathcal{S}(\mathcal{I})$, if $B$ is a subinterval of $\mathcal{I}$, then $m(B) \in M(\mathcal{I}) \subseteq E(\mathcal{I})$. If $B$ is not a subinterval of $\mathcal{I}$, then $B = A \cup \overline{A}$ for some $A \in \mathcal{S}(\mathcal{I})$ by the last part of Lemma \ref{lem: conjugate in SDI}, so $m(B)$ is a breakpoint of $\mathcal{I}$.
\end{proof}

In the next lemma, we will consider elements in the Thompson group $F$ as homeomorphisms from $[0, 1]$ to itself.

\begin{lemma} \label{lem: g restricted on E is bijection}
	For any $g\in F$, we have that $g|_E: E \to E$ is a bijection. Furthermore, if $g$ is represented by a pair of s.d. partitions $(\mathcal{I}, \mathcal{J})$, then $g|_{E(\mathcal{I})}: E(\mathcal{I}) \to E(\mathcal{J})$ is a bijection.
\end{lemma}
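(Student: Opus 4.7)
The plan is to exploit the very rigid piecewise-linear structure of elements of $F$. First I would prove the stronger statement that $g(E) \subseteq E$: on each linear piece of $g$, say $[a_i, a_{i+1}]$ where both endpoints are dyadic by Definition of $F$, $g$ has the form $x \mapsto 2^k(x - a_i) + g(a_i)$ for some integer $k$, and since $g(a_i)$ is dyadic (as it is a breakpoint of $g^{-1}$, which also lies in $F$), we have $g(x) \in \mathbb{Z}[1/2]$ whenever $x \in \mathbb{Z}[1/2]$. Because $g(0) = 0$ and $g(1) = 1$, the set $E$ of dyadic rationals strictly between $0$ and $1$ is preserved, so $g|_E \colon E \to E$ is well-defined. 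Applying the same argument to $g^{-1} \in F$ gives an inverse to $g|_E$, so it is a bijection.

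For the second, refined statement, suppose $g$ is represented by $(\mathcal{I}, \mathcal{J})$ with breakpoints $0 = a_0 < a_1 < \cdots < a_n = 1$ and $0 = b_0 < b_1 < \cdots < b_n = 1$. By Definition \ref{def: s.d. partition pair representation} we have $g(a_i) = b_i$, so $g$ carries the interior breakpoints of $\mathcal{I}$ bijectively to the interior breakpoints of $\mathcal{J}$. Moreover, since $g$ is linear on each subinterval $[a_i, a_{i+1}]$, it sends the midpoint of $[a_i, a_{i+1}]$ to the midpoint of $[b_i, b_{i+1}]$, so $g(M(\mathcal{I})) = M(\mathcal{J})$. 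Putting these two observations together yields $g(E(\mathcal{I})) = E(\mathcal{J})$.

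To conclude that $g|_{E(\mathcal{I})} \colon E(\mathcal{I}) \to E(\mathcal{J})$ is a bijection, I would combine surjectivity onto $E(\mathcal{J})$ (just established) with injectivity (inherited from $g|_E$), or alternatively invoke the counting from Lemma \ref{lem: cardinality of SDI} together with Lemma \ref{lem: taking midpoint is a bijection from S to E}, which gives $|E(\mathcal{I})| = |E(\mathcal{J})| = 2n - 1$, making bijectivity automatic from surjectivity between finite sets of equal size.

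I do not expect any serious obstacle: the argument is essentially a bookkeeping exercise using the piecewise-linear normal form of elements of $F$. The only point that requires a small amount of care is checking that $g$ preserves dyadicity on each linear piece; once the slope-$2^k$ and dyadic-breakpoint conditions are explicitly used, everything else follows from the defining property $g(a_i) = b_i$ of the pair representation and linearity on subintervals.
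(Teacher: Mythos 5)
Your proof is correct, and the second half (sending breakpoints to breakpoints and midpoints to midpoints via linearity, then concluding by counting) is essentially identical to the paper's. The first half takes a genuinely different, though equally valid, route: you argue directly from the piecewise-linear normal form of $g$ --- dyadic breakpoints, slopes that are powers of $2$, and the fact that $g(a_i)$ is dyadic (justified via $g^{-1}\in F$) --- to show $g$ preserves $\mathbb{Z}[1/2]\cap(0,1)$. The paper instead stays entirely within the combinatorics of s.d.~partitions: given $a\in E$, it refines the representing pair $(\mathcal{I},\mathcal{J})$ to a pair $(\mathcal{I}',\mathcal{J}')$ in which $a$ is a breakpoint of $\mathcal{I}'$, and observes that $g$ then carries $a$ to a breakpoint of $\mathcal{J}'$. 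The paper's refinement argument is slightly more self-contained given the surrounding machinery (it never needs to unwind the slope-$2^k$ condition, and it sidesteps the small lemma you need about $g(a_i)$ being dyadic). Your version is more ``bare-hands'' and would read well in isolation from the tree/partition formalism; if you keep it, you may want to justify ``$g(a_i)$ is dyadic'' by the inductive computation from $g(0)=0$ rather than appealing to $g^{-1}$, since the latter implicitly assumes the $a_i$ are the \emph{essential} breakpoints of $g$ so that their images are essential breakpoints of $g^{-1}$.
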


\begin{proof}
	First of all, we have $g(E) \subseteq E$. To show it, we take the reduced s.d. partition pair $(\mathcal{I}, \mathcal{J})$ representing $g$. For any $a \in E$, we can always choose refinement $(\mathcal{I}', \mathcal{J}') \geq (\mathcal{I}, \mathcal{J})$ such that $a$ is a breakpoint of $\mathcal{I}'$. Notice that $(\mathcal{I}', \mathcal{J}')$ also represents $g$, so $g$ sends $a$ to a breakpoint of $\mathcal{J}'$, which is in $E$.
	
	Notice that $g^{-1}|_E : E \to E$ is the inverse of $g|_E$, so $g|_E$ is a bijection.

	To show $g|_{E(\mathcal{I})}: E(\mathcal{I}) \to E(\mathcal{J})$ is a bijection, we just need to show $g$ sends $E(\mathcal{I})$ into $E(\mathcal{J})$, then the statement follows since $|E(\mathcal{I})| = |E(\mathcal{J})|$. Indeed, by Definition \ref{def: s.d. partition pair representation}, we know that $g$ sends breakpoints of $\mathcal{I}$ to breakpoints of $\mathcal{J}$, sends $M(\mathcal{I})$ to $M(\mathcal{J})$ due to linearality of $g$ on each subinterval.
\end{proof}

\section{Thompson link}
\label{sec: thompson link}

\subsection{Jones' construction of links from Thompson group}
\label{sec: jones construction}

Let $C = (C_n)$ be the planar algebra of Conway tangles, where $C_n$ has basis Conway tangles with $2n$ boundary points. For Conway tangles, we identify two tangles if they differ by a family of distant unlinked unknots. For example, $C_0$ is spanned by links up to distant unlinked unknots, $C_1$ is spanned by tangles with two boundary points, up to distant unlinked unknots. We denote the class of trivial tangle in $C_1$ as $\xi$.

Moreover, $C_n$ has a $C_0$-valued inner product structure $\langle x, y \rangle$, given by connecting $2n$ boundary points of $x$ to $2n$ boundary points of $y$ one by one (see Figure \ref{fig: inner product example}).

\begin{figure}
	\centering
	\includegraphics[width = 0.4\textwidth]{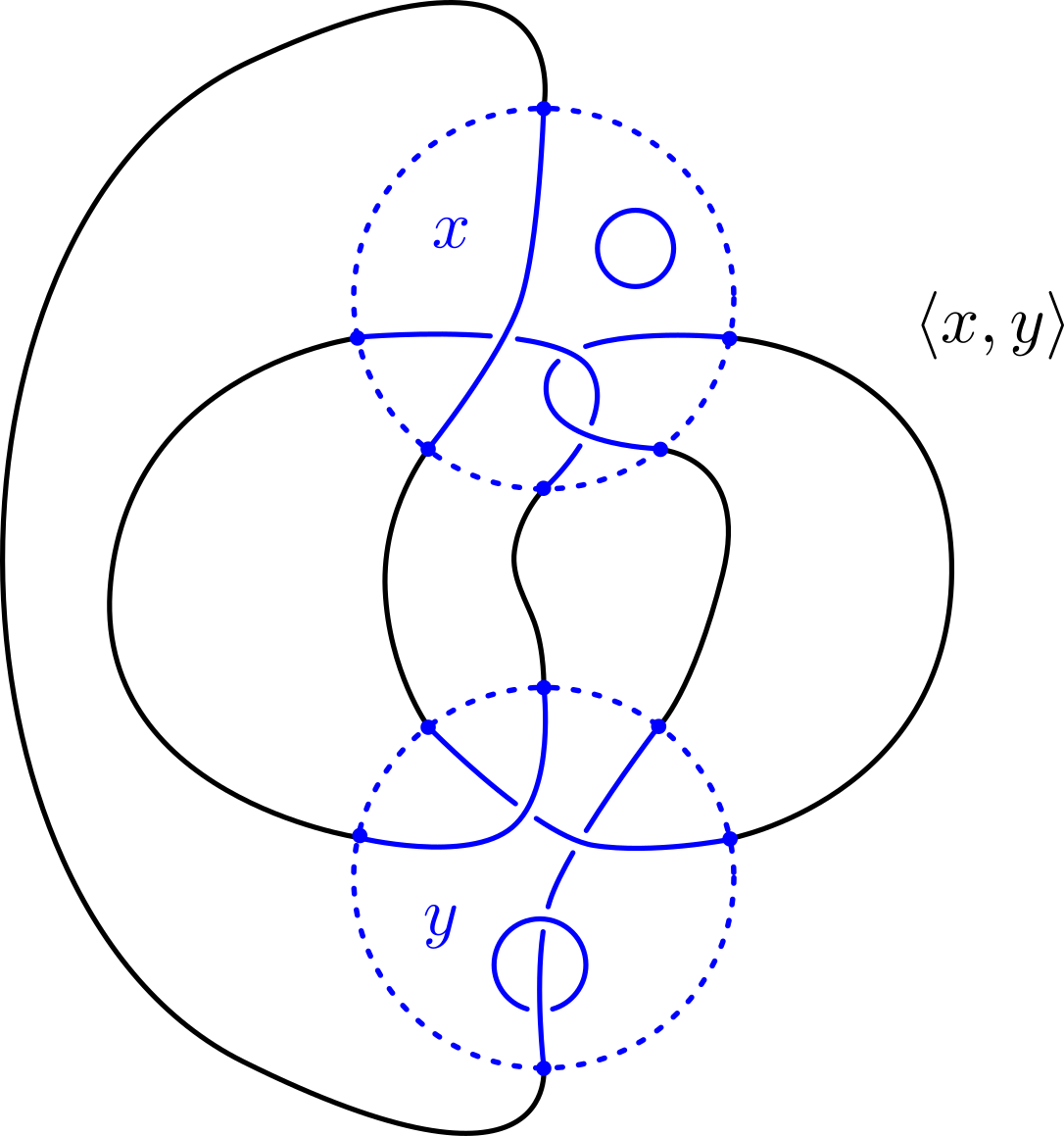}
	\caption{$x, y \in C_3$ are tangles with $6$ boundary points. Their inner product $\langle x, y \rangle \in C_0$ is obtained by connecting boundary points of $x$ to boundary points of $y$ one by one.}
	\label{fig: inner product example}
\end{figure}

\begin{definition}
	Let $\mathfrak{T}$ be the category of tangles, with objects s.d. partitions, and morphisms $Hom(\mathcal{I}, \mathcal{J})$ for $\mathcal{I} \leq \mathcal{J}$ being tangles with marked boundary points $E(\mathcal{I})$ on the top and $E({\mathcal{J}})$ on the bottom. For a tangle $T\in Hom(\mathcal{I}, \mathcal{J})$, we denote $\overline{T}\in Hom(\mathcal{J}, \mathcal{I})$ to be the vertical mirror of $T$.
\end{definition}

Suppose that we have a pair of s.d. partitions $\mathcal{K} \leq \mathcal{I}$ in $\mathfrak{T}$. Recall that we have a forest $F^{\mathcal{K}}_{\mathcal{I}}$ such that each lowermost edge is labeled by subintervals of $\mathcal{I}$ and each uppermost edge is labeled by subintervals of $\mathcal{K}$. Now we embed $F^{\mathcal{K}}_{\mathcal{I}}$ into $[0, 1] \times [0, 1]$ such that each lowermost edge $\epsilon(A)$ has lower endpoint $(m(A), 0)$, each uppermost edge $\epsilon(A')$ has upper endpoint $(m(A'), 1)$. By this embedding, we can regard $F^{\mathcal{K}}_{\mathcal{I}}$ as a forest whose roots are $M(\mathcal{K}) \times \{1\}$ and whose leaves are $M(\mathcal{I}) \times \{0\}$.

For each edge $e$ of $F^{\mathcal{K}}_{\mathcal{I}}$, we denote the vertical projection of $e$'s lower vertex as $\phi^{\mathcal{K}}_{\mathcal{I}}(e)$. Now we further isotope $F^{\mathcal{K}}_{\mathcal{I}}$ in $[0, 1] \times [0, 1]$ such that $\phi^{\mathcal{K}}_{\mathcal{I}}(\epsilon(B)) = m(B)$ for any edge $\epsilon(B)$, then we can regard $\phi^{\mathcal{K}}_{\mathcal{I}}$ as a map from $\mathcal{E}(F^{\mathcal{K}}_{\mathcal{I}})$ to $E(\mathcal{I})$. Specifically, $\phi^{\delta}_{\mathcal{I}}: \mathcal{E}(F^{\delta}_{\mathcal{I}}) \to E(\mathcal{I})$ is a bijection, because $\phi^{\delta}_{\mathcal{I}} \circ \epsilon_{\mathcal{I}} = m|_{\mathcal{S}(\mathcal{I})}$ and both of $\epsilon_{I}$, $m|_{\mathcal{S}(\mathcal{I})}$ are bijections. See Figure \ref{fig: embedded forest example} and \ref{fig: embedded tree example} for examples of an embedded forest and an embedded tree.

\begin{figure}
    \centering
    \begin{subfigure}{0.4\textwidth}
        \centering
        \includegraphics[width=\textwidth]{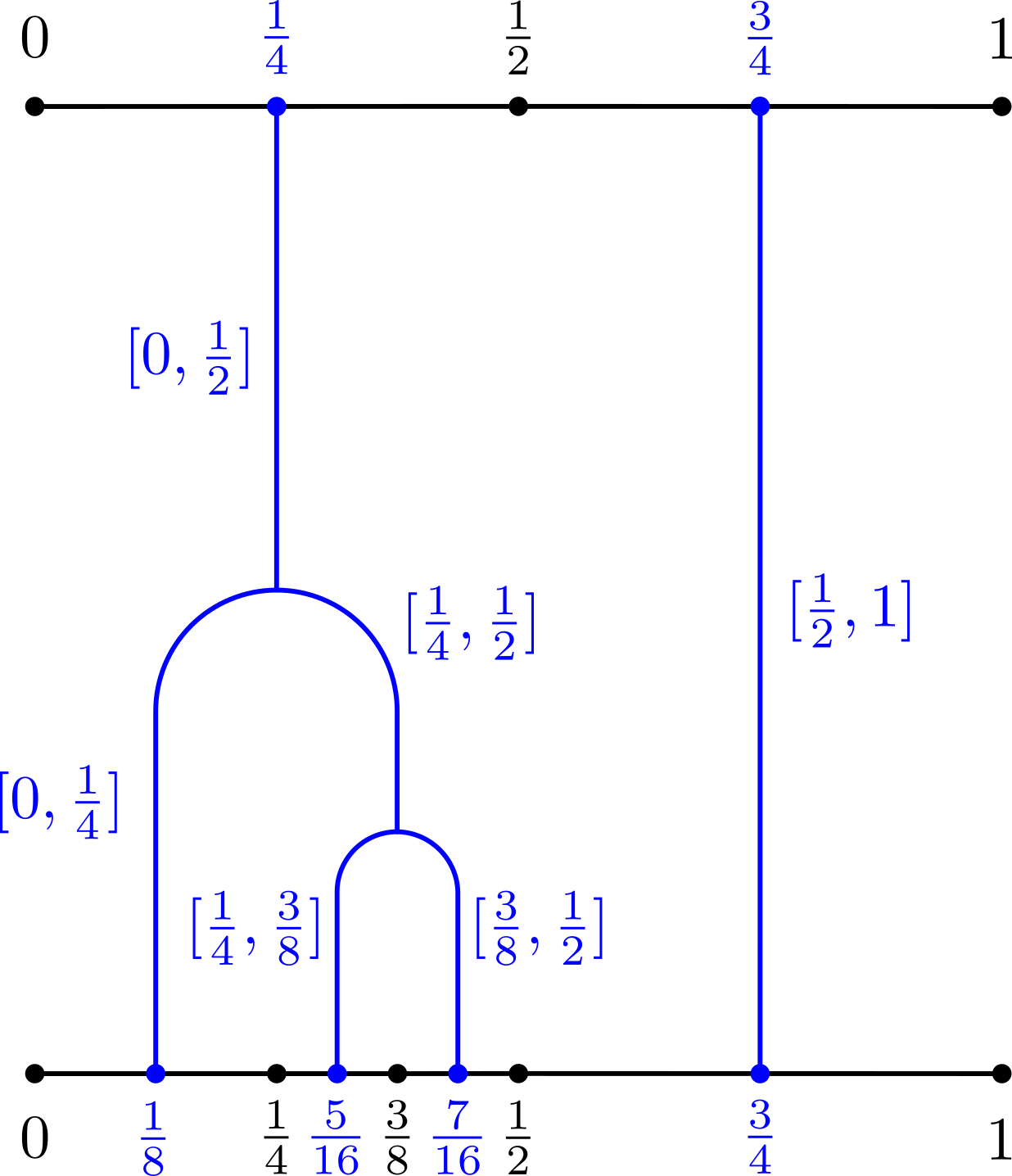}
        \put(-190, 15){$\mathcal{I}$}
        \put(-190, 180){$\mathcal{K}$}
        \put(-190, 120){$\color{blue} F^{\mathcal{K}}_{\mathcal{I}}$}
        \caption{}
        \label{fig: embedded forest example}
    \end{subfigure}
    \hfill
    \begin{subfigure}{0.4\textwidth}
        \centering
        \includegraphics[width=\textwidth]{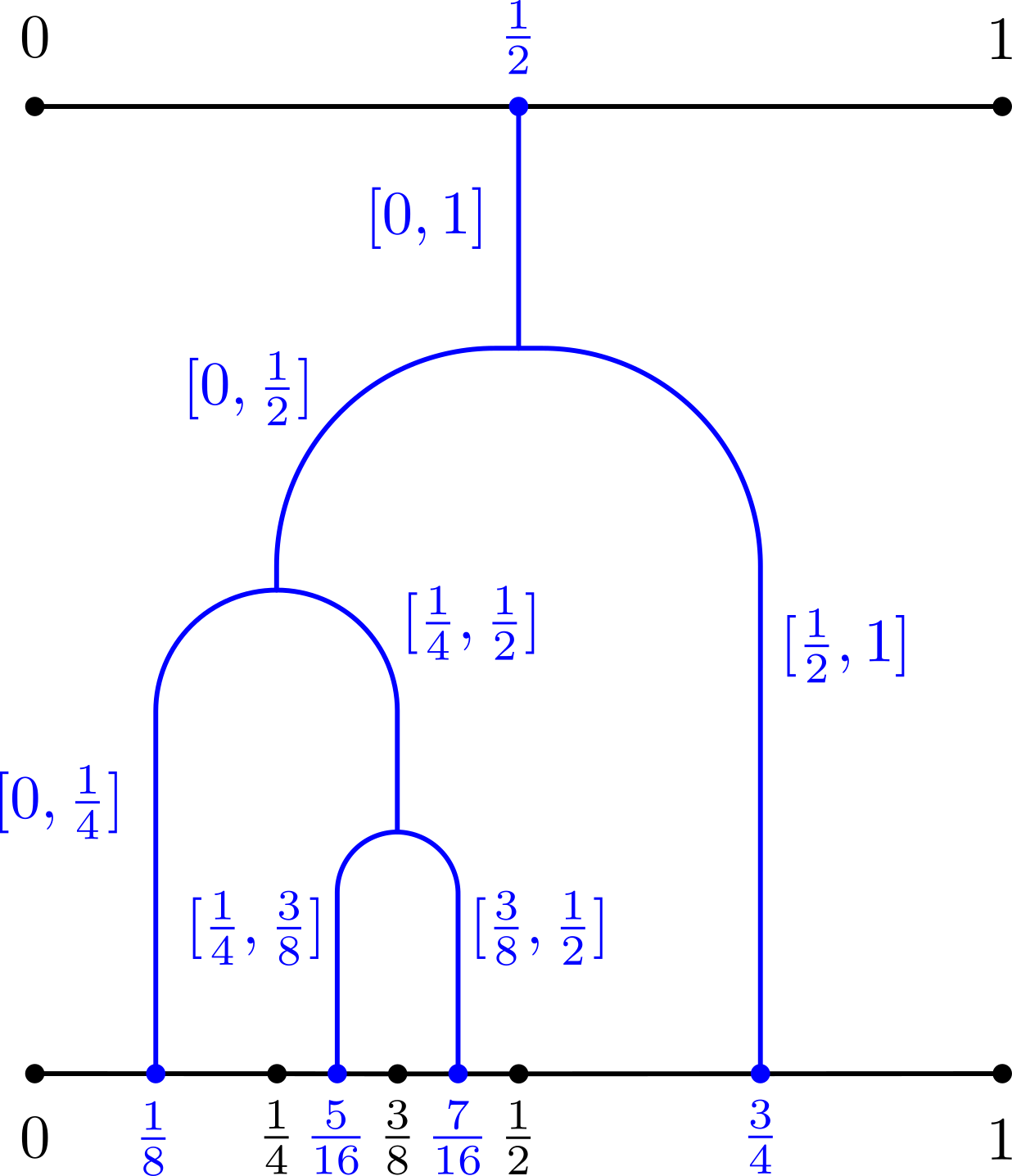}
        \put(-190, 15){$\mathcal{I}$}
        \put(-190, 180){$\delta$}
        \put(-190, 120){$\color{blue} F^{\delta}_{\mathcal{I}}$}
        \caption{}
		\label{fig: embedded tree example}
    \end{subfigure}
    \caption{(a) A forest $F^{\mathcal{K}}_{\mathcal{I}}$ embedded in $[0, 1] \times [0, 1]$ for a pair of s.d. partitions $\mathcal{K} \leq \mathcal{I}$. Breakpoints of $\mathcal{K}$ and $\mathcal{I}$ are marked in black, points in $M(\mathcal{K})$ and $M(\mathcal{I})$ are marked in blue. (b) A tree $F^{\delta}_{\mathcal{I}}$ embedded in $[0, 1] \times [0, 1]$ for an s.d. partition $\mathcal{I}$.}
\end{figure}

\begin{figure}
    \centering
    \includegraphics[width = \textwidth]{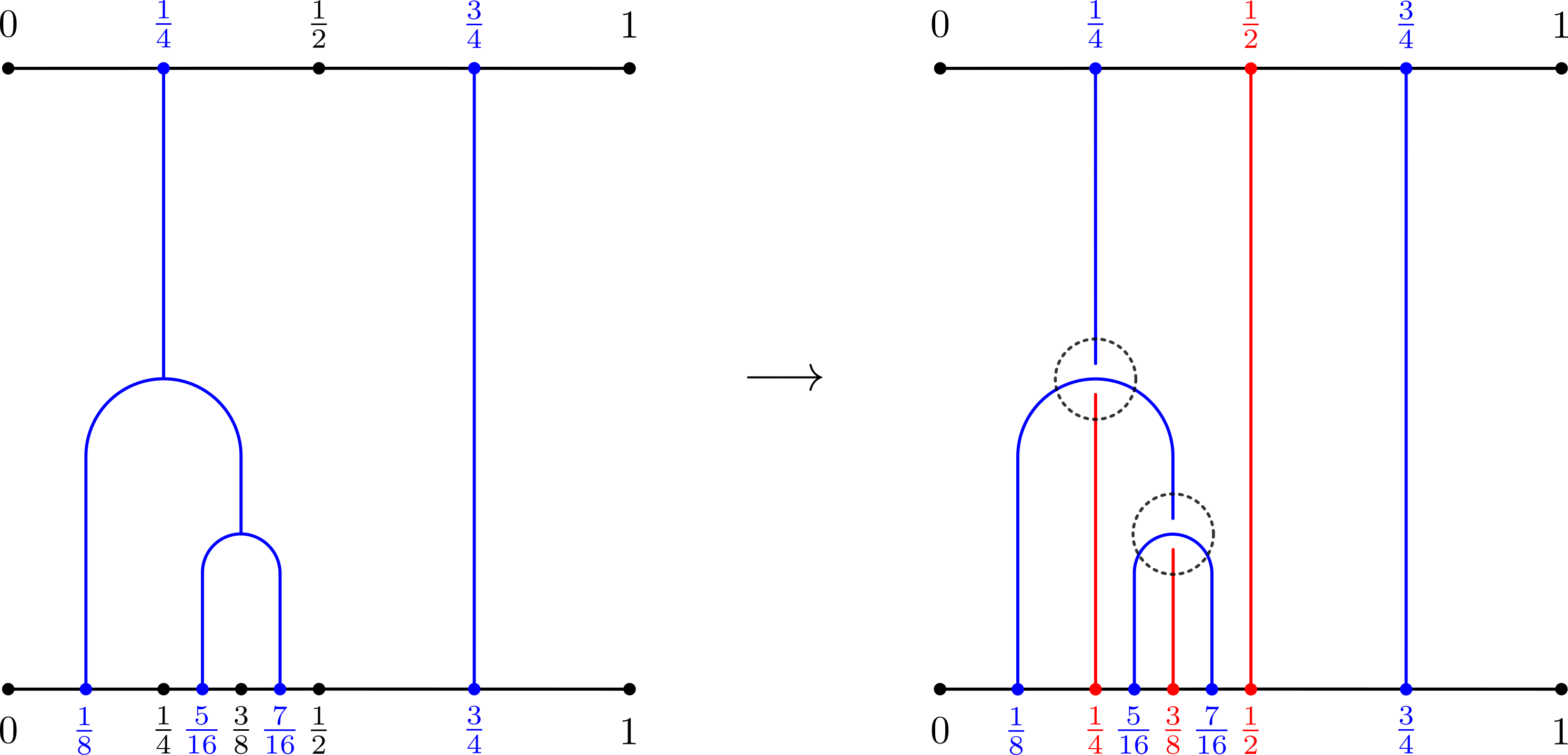}
	\put(-450, 15){$\mathcal{I}$}
	\put(-450, 185){$\mathcal{K}$}
	\put(-450, 120){$\color{blue} F^{\mathcal{K}}_{\mathcal{I}}$}
	\put(-195, 15){$\mathcal{I}$}
	\put(-195, 185){$\mathcal{K}$}
	\put(0, 120){$T^{\mathcal{K}}_{\mathcal{I}}$}
	\put(-120, 115){$R$}
	\put(-100, 75){$R$}
    \caption{Construction of tangle $T^{\mathcal{K}}_{\mathcal{I}}$ for some $\mathcal{K} \leq \mathcal{I}$. In $T^{\mathcal{K}}_{\mathcal{I}}$, Type A arcs are marked in blue, Type B arcs are marked in red.}
    \label{fig: forest to unoriented tangle}
\end{figure}

\begin{definition}
	Given a pair of objects $\mathcal{K} \leq \mathcal{I}$ in $\mathfrak{T}$, a labeled tangle $T^{\mathcal{K}}_{\mathcal{I}}\in Hom(\mathcal{K}, \mathcal{I})$ is defined by replacing each trivalent vertex of the forest $F^{\mathcal{K}}_{\mathcal{I}}$ by an instance of $R\in C_2$, with a vertical straight line joining the bottom of the disc containing $R$ to the breakpoint of $\mathcal{I}$ below the replaced vertex, and vertical lines connecting all the breakpoints of $\mathcal{K}$ to those breakpoints of $\mathcal{I}$ in common with $\mathcal{I}'$.

	In the rest of this section, we choose $R$ to be the crossing with a horizontal strand crossing over a vertical strand, as shown in Figure \ref{fig: forest to unoriented tangle}.
\end{definition}

In the construction of tangle $T^{\mathcal{K}}_{\mathcal{I}}$, every edge $e$ of $F^{\mathcal{K}}_{\mathcal{I}}$ becomes an arc $a(e)$ of $T^{\mathcal{K}}_\mathcal{I}$, called a Type A arc. If $e$ is not an lowermost edge, then we need to add a vertical arc $b(e)$ extending $a(e)$ to breakpoint $\phi^{\mathcal{K}}_{\mathcal{I}}(e)$. Such an arc $b(e)$ is called a Type B arc. At last, we need to add some arcs to connect breakpoints, and these arcs are also called Type B arcs.

In \cite{MR3589908} Jones considered a representation of the category $\mathfrak{T}$, associating each s.d. interval $\mathcal{I}\in \mathfrak{T}$ with $n$ subintervals a vector space $H_{\mathcal{I}} = C_{n}$. Then take the direct limit over the ordered set $(\mathfrak{T}, \leq)$, with linear map from $H_{\mathcal{I}'}$ to $H_{\mathcal{I}}$ induced by the canonical morphism $T_{\mathcal{I}'}^{\mathcal{I}}$. The resulting space $\mathcal{V}=\lim_{\mathcal{I} \in \mathfrak{T}} H_{\mathcal{I}}$ has an inner product induced by the inner product on $C_n$.

\begin{definition}
	Suppose that $g\in F$ is represented by a pair of s.d. partitions $(\mathcal{I}, \mathcal{J})$, not necessarily reduced. Then define $g^{\mathcal{I}}_{\mathcal{J}}\in Hom(\mathcal{I}, \mathcal{J})$ to be the tangle with straight lines connecting points of $E(\mathcal{I})$ to points $E(\mathcal{J})$ one by one.
\end{definition}

\begin{figure}
	\centering
	\includegraphics[width = 0.4\textwidth]{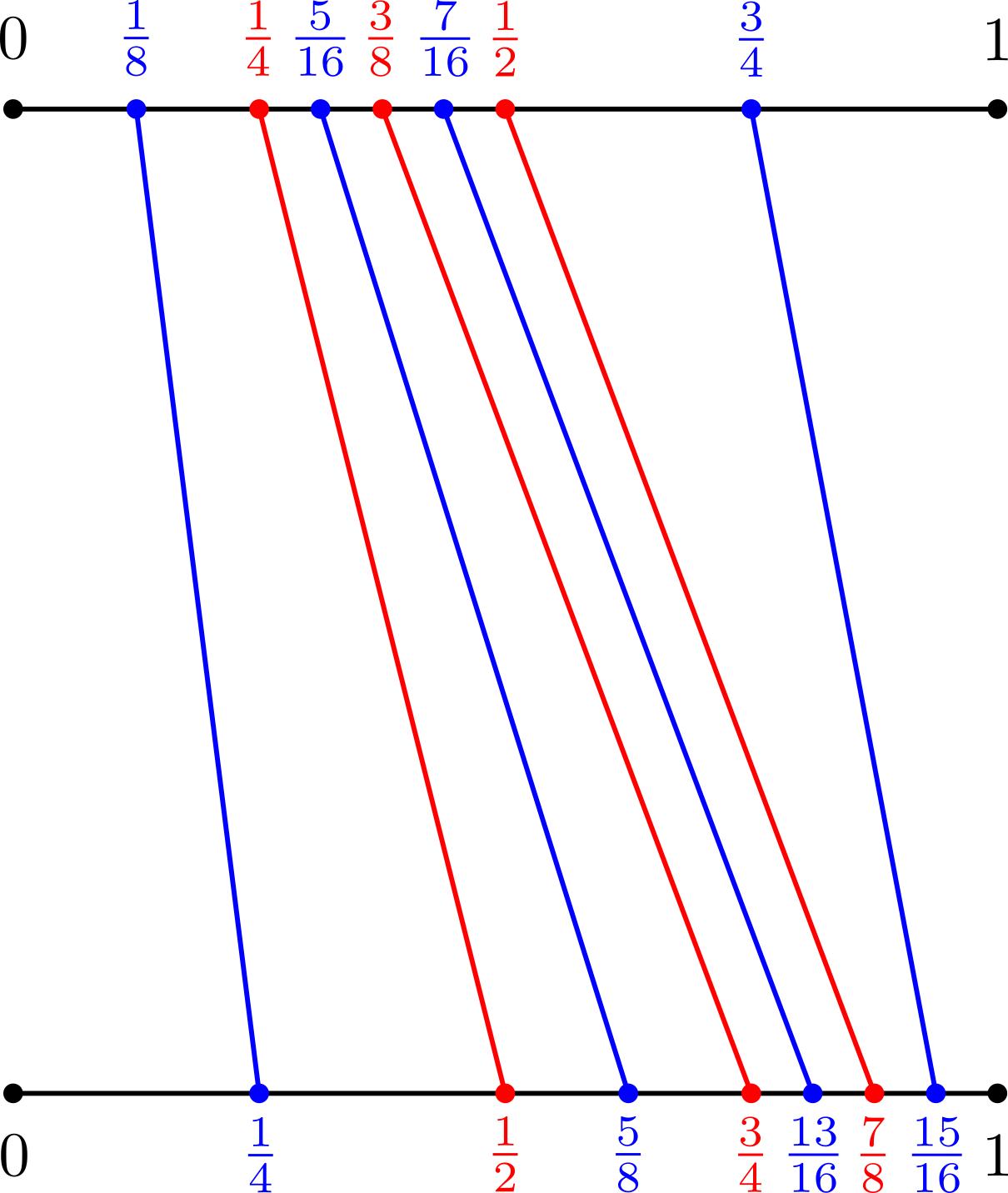}
	\put(-190, 15){$\mathcal{J}$}
    \put(-190, 180){$\mathcal{I}$}
	\put(-190, 120){$g^{\mathcal{I}}_{\mathcal{J}}$}
	\caption{An example of $g^{\mathcal{I}}_{\mathcal{J}}\in Hom(\mathcal{I}, \mathcal{J})$. Breakpoints of $\mathcal{I}$ and $\mathcal{J}$ (not including $0$ and $1$) are marked in red, points in $M(\mathcal{I})$ and $M(\mathcal{J})$ are marked in blue.}
	\label{fig: shift example}
\end{figure}

\begin{remark}
	In Lemma \ref{lem: g restricted on E is bijection}, we showed that $g|_{E(\mathcal{I})}: E(\mathcal{I}) \to E(\mathcal{J})$ is a bijection. Here $g^{\mathcal{I}}_{\mathcal{J}}$ visualizes this bijection. $g^\mathcal{I}_\mathcal{J}$ connects any $x \in E(\mathcal{I})$ to $g(x) \in E(\mathcal{J})$.
\end{remark}

Jones also defined a representation $\pi$ of $F$ on $\mathcal{V}$. Suppose that $g\in F$ corresponds to a reduced pair of s.d. partitions $(\mathcal{I}, \mathcal{J})$, given a vector $v\in \mathcal{V}$, we first find a representative $v\in H_{\mathcal{I'}}$ such that $\mathcal{I} \leq \mathcal{I}'$. Correspondingly, $g$ has an expression $(\mathcal{I}', \mathcal{J}')$ for some $\mathcal{J}'\in \mathfrak{T}$. Then the action of $\pi(g)$ on $v$ is defined to be induced by the trivial tangle $g^{\mathcal{I}'}_{\mathcal{J}'}\in Hom(\mathcal{I'}, \mathcal{J'})$.

As a result, a link class (up to distant unlinked unknots) $\langle \pi(g)(\xi), \xi \rangle \in C_0$ arises as a coefficient of $\pi(g)(\xi)$.

\begin{proposition} \cite[Theorem 5.3.1]{MR3589908} \label{prop: unoriented Thompson Alexander theorem ver 1}
	For any $[L] \in C_0$, there exists $g\in F$ such that $\langle \pi(g)(\xi), \xi \rangle = [L]$.
\end{proposition}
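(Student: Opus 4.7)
The plan is to show, given any link class $[L] \in C_0$, how to extract an element $g \in F$ whose Thompson link equals $[L]$. The approach has two ingredients: a geometric normal form for link diagrams, and a combinatorial translation of that normal form into a pair of s.d.\ partitions.

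First, pick a diagram $D$ of some representative of $[L]$ in the unit square. Using planar isotopy and, when convenient, taking the disjoint union of $D$ with a distant unknot (which does not change the class in $C_0$), the aim is to isotope $D$ into the following normal form: all crossings of $D$ lie on a single horizontal line $\ell$, each crossing is locally modeled on $R$ (a horizontal strand passing over a vertical one), and above and below $\ell$ the diagram consists only of non-crossing embedded arcs with endpoints on $\ell$. This is a planar combing argument in the spirit of Alexander's theorem: one slides crossings along strands to collect them on a common horizontal line, and then rotates each crossing into $R$-form, using the freedom of planar isotopy together with the $C_0$-triviality of distant unknots to create room.

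Next, I would translate this picture into the Thompson framework. Below $\ell$, the non-crossing arcs together with the lower halves of the crossings form a planar forest; this forest can be refined by inserting adjacent pairs of arcs (equivalently, adding carets to the associated binary tree) so that it becomes isotopic to the canonical embedded forest $F_{\mathcal{I}}$ for some s.d.\ partition $\mathcal{I}$. Symmetrically, the upper half becomes $F_{\mathcal{J}}$ for some s.d.\ partition $\mathcal{J}$. After passing to a common refinement $(\mathcal{I}', \mathcal{J}')$, the middle strip at $\ell$ becomes a straight-line tangle matching $g^{\mathcal{I}'}_{\mathcal{J}'}$ with $g = \gamma^{\mathcal{I}}_{\mathcal{J}} \in F$. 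By the definition of $\pi$ on $\mathcal{V}$ and the $C_0$-valued inner product, the resulting closed diagram is exactly $\langle \pi(g)(\xi), \xi \rangle$, which by construction represents $[L]$.

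The main obstacle is the first step: bringing an arbitrary link diagram into the prescribed normal form, simultaneously aligning all crossings on one line in $R$-form and organizing the upper and lower non-crossing arcs into shapes compatible with dyadic binary trees. The key flexibilities are refinement by carets, which allows the dyadic breakpoints and midpoints of $\mathcal{I}$ and $\mathcal{J}$ to be made as fine as necessary, and the ability to add distant unknots to create room for non-crossing arcs to isotope into the canonical forest shape. A careful version of this step would proceed by induction on the number of crossings of $D$, at each step placing one new crossing at the next available breakpoint and growing the two binary trees accordingly. Once the normal form is secured, the combinatorial translation to $g \in F$ is immediate from the definitions given earlier in this section.
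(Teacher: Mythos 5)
The proposal has the structure of Jones' construction exactly backwards, and this is fatal. In $L_{(\mathcal{I},\mathcal{J})} = \overline{T_{\mathcal{J}}}\, g^{\mathcal{I}}_{\mathcal{J}}\, T_{\mathcal{I}}$, the crossings live \emph{inside} the top and bottom tangles: each trivalent vertex of $F_{\mathcal{I}}$ is replaced by an instance of $R$ in $T_{\mathcal{I}}$, and likewise (mirrored) in $\overline{T_{\mathcal{J}}}$, while the middle strip $g^{\mathcal{I}}_{\mathcal{J}}$ is a crossing-free trivial tangle, since $g$ is increasing and the straight lines joining $E(\mathcal{I})$ to $E(\mathcal{J})$ never cross. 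Your normal form demands the opposite: all crossings concentrated on one horizontal line $\ell$, with only non-crossing embedded arcs above and below. Consequently, when you then try to identify the crossing-free region below $\ell$ with the embedded forest $F_{\mathcal{I}}$, you run into an inconsistency: under Jones' construction, turning $F_{\mathcal{I}}$ into $T_{\mathcal{I}}$ introduces a crossing at every trivalent vertex, so the tangle below $\ell$ cannot simultaneously be non-crossing and be $T_{\mathcal{I}}$ for a nontrivial tree. (A non-crossing collection of embedded arcs with endpoints on $\ell$ is a planar matching, not a binary forest with trivalent vertices, and refining by carets cannot create the missing crossings.) A second, related mismatch: in $\overline{T_{\mathcal{J}}}$ the crossings are vertical mirrors of $R$, i.e.\ horizontal \emph{under} vertical, so even if the crossings were moved to a single line they could not all be of $R$-type as you require.

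Independently of the structural issue, the step you yourself flag as the ``main obstacle'' --- isotoping an arbitrary diagram into your proposed normal form --- is asserted, not proved; the ``planar combing'' and ``induction on crossings'' sketch does not address why one can both align all crossings on a single line \emph{and} keep the complement crossing-free, nor why the resulting upper/lower planar pieces would be isotopic to embedded dyadic forests. What you would actually need (and what Jones' argument supplies) is a normal form in which the diagram splits into a trivial middle band together with two halves each organized as a binary-tree pattern with one crossing per trivalent vertex; that is the shape of $\widehat{T_{\mathcal{I}}}$ and $\overline{\widehat{T_{\mathcal{J}}}}$, and it is the half grid picture developed later in the paper. As written, the proposal does not establish the proposition.
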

    
Next, we will see an explicit construction of a link in the class $\langle \pi(g)(\xi), \xi \rangle$. Notice that any representative of $\xi$ in $\mathfrak{T}$ can be written as $T^\delta_{\mathcal{I}} \in Hom(1, \mathcal{I})$ for some $\mathcal{I}\in \mathfrak{T}$, so it is not surprising that we can rephrase the above construction using only trees. For convenience, we will denote $F^{\delta}_{\mathcal{I}}$ as $F_{\mathcal{I}}$, $\phi^\delta_{\mathcal{I}}$ as $\phi_{\mathcal{I}}$ and $T^\delta_{\mathcal{I}}$ as $T_{\mathcal{I}}$.

Given $g\in F$, suppose that $(\mathcal{I}, \mathcal{J})$ is a pair of s.d. partitions representing $g$. Then we have a pair of binary trees $F_{\mathcal{I}}$ and $F_{\mathcal{J}}$. Furthermore, we have $\overline{T_\mathcal{J}} g^{\mathcal{I}}_{\mathcal{J}} T_\mathcal{I} \in Hom(\delta, \delta)$, as shown in Figure \ref{fig: construction of link with shift example}. Then we close it up to get a link $L_{(\mathcal{I}, \mathcal{J})}$  in the class $\langle \pi(g)(\xi), \xi \rangle$, where $T_{\mathcal{J}}$ is a representative of $\xi$, and $g^{\mathcal{I}}_{\mathcal{J}} T_{\mathcal{I}}$ is a representative of $\pi(g)(\xi)$.

We let $L_g$ be $L_{(\mathcal{I}, \mathcal{J})}$, where $(\mathcal{I}, \mathcal{J})$ is unique reduced pair of s.d. partitions representing $g$. Notice that for a non-reduced pair $(\mathcal{I}', \mathcal{J}')$ representing $g$, $L_{(\mathcal{I}', \mathcal{J}')}$ contains some distant unknots compared to $L_g$ as shown in Figure \ref{fig: non-reduced pair with shift example}, but $L_{(\mathcal{I}', \mathcal{J}')}$ and $L_g$ are in the same link class in $C_0$.

At last, if we consider $T_{\mathcal{I}}$ and $T_{\mathcal{J}}$ as $(1, 2n-1)$-tangles with unmarked endpoints, and $g^{\mathcal{I}}_{\mathcal{J}}$ as a trivial $(2n-1, 2n-1)$-tangle with unmarked endpoints, then $L_{(\mathcal{I}, \mathcal{J})}$ is simply the closing-up of $\overline{T_{\mathcal{J}}}T_{\mathcal{I}}$. Equivalently, we add a trivial strand on the leftmost of $T_{\mathcal{I}}$ and $T_{\mathcal{J}}$ respectively, then cap them off on the top to get $(0,2n)$-tangles $\widehat{T_{\mathcal{I}}}$ and $\widehat{T_{\mathcal{J}}}$. Then $L_{(\mathcal{I}, \mathcal{J})}  = \overline{\widehat{{T_{\mathcal{J}}}}}\widehat{T_{\mathcal{I}}}$, as shown in Figures \ref{fig: construction of link with no shift example} and \ref{fig: non-reduced pair no shift example}. Notice that this notion is irrelevant to the Thompson group, $L_{(\mathcal{I}', \mathcal{J}')}$ is completely determined by a pair of s.d. partitions $(\mathcal{I}', \mathcal{J}')$ with the same number of breakpoints.

\begin{figure}
    \centering
    \begin{subfigure}{0.4\textwidth}
       \centering
       \includegraphics[width = \textwidth]{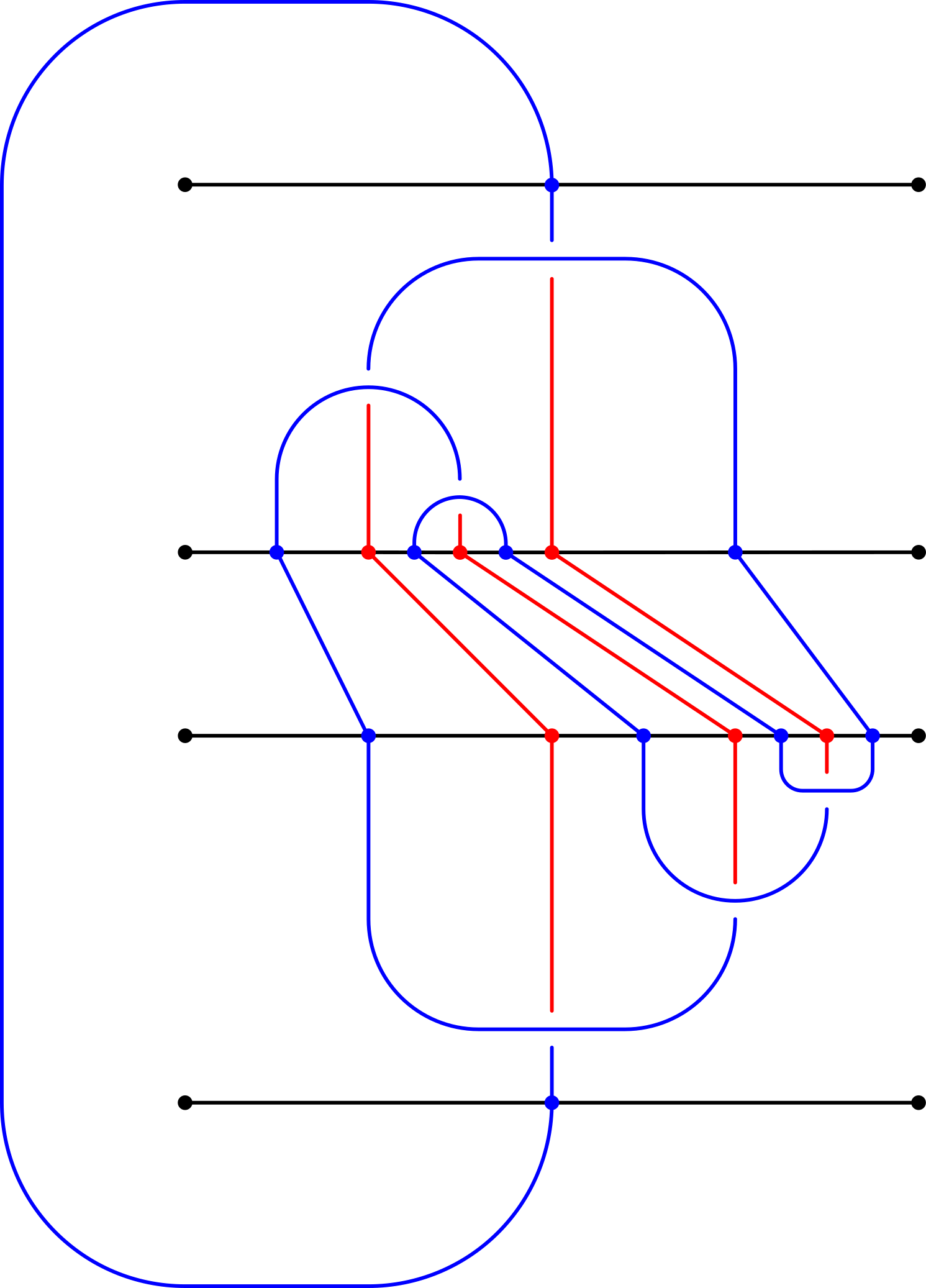}
       \put(-160, 30){$\delta$}
       \put(-160, 95){$\mathcal{J}$}
       \put(-160, 135){$\mathcal{I}$}
       \put(-160, 200){$\delta$}
       \put(0, 60){$\overline{T_\mathcal{J}}$}
       \put(0, 115){$g^{\mathcal{I}}_{\mathcal{J}}$}
       \put(0, 170){$T_\mathcal{I}$}
       \caption{$L_{(\mathcal{I}, \mathcal{J})} = L_g$}
       \label{fig: construction of link with shift example}
   \end{subfigure}
   \hfill
   \begin{subfigure}{0.4\textwidth}
       \centering
       \includegraphics[width = \textwidth]{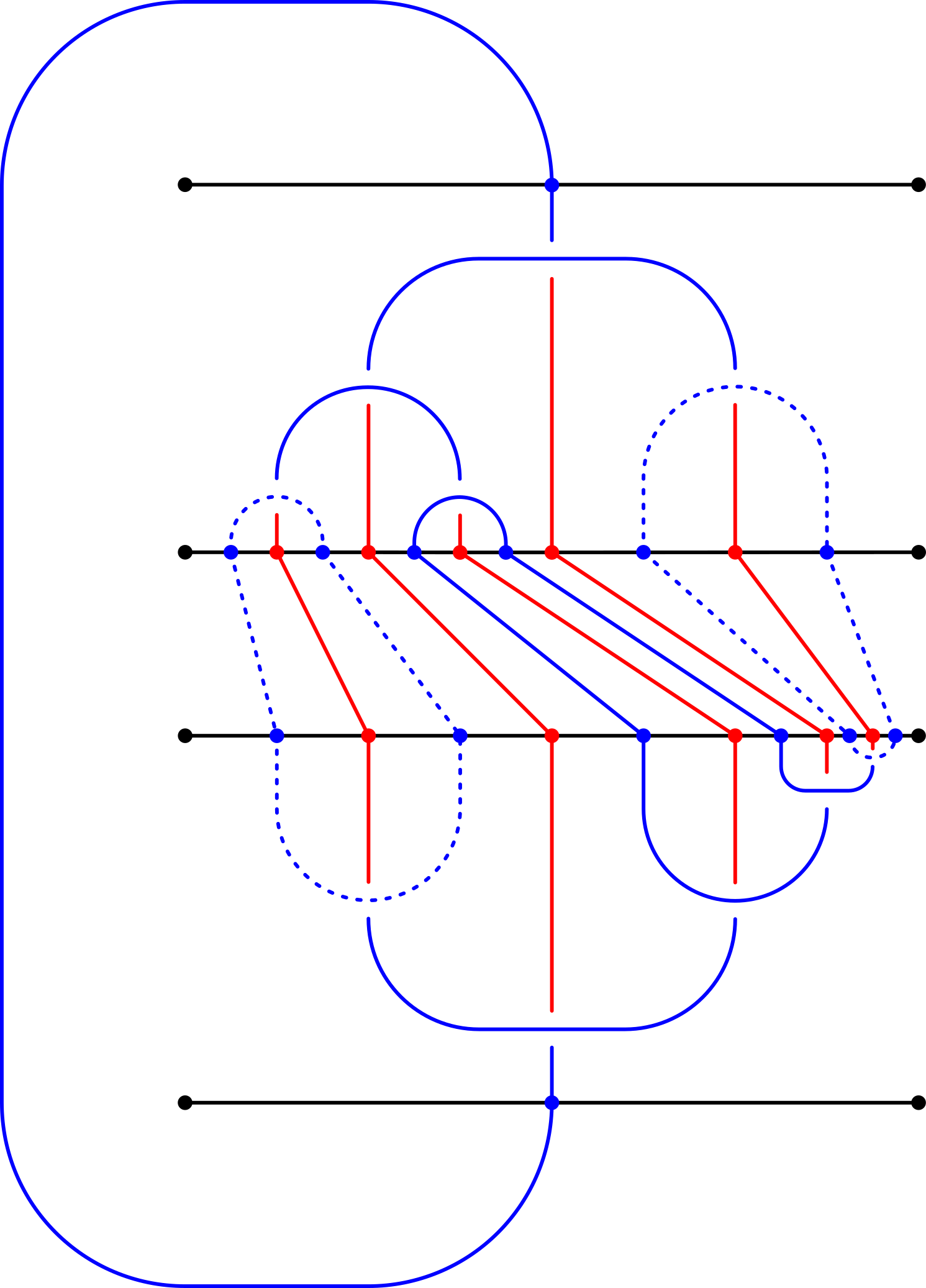}
       \put(-160, 30){$\delta$}
       \put(-160, 95){$\mathcal{J}'$}
       \put(-160, 135){$\mathcal{I}'$}
       \put(-160, 200){$\delta$}
       \put(0, 60){$\overline{T_{\mathcal{J}'}}$}
       \put(0, 115){$g^{\mathcal{I}'}_{\mathcal{J}'}$}
       \put(0, 170){$T_{\mathcal{I}'}$}
       \caption{$L_{(\mathcal{I}', \mathcal{J}')}$}
       \label{fig: non-reduced pair with shift example}
   \end{subfigure}
   \caption{(a) $\overline{T_\mathcal{J}} g^{\mathcal{I}}_{\mathcal{J}} T_\mathcal{I} \in Hom(\delta, \delta)$ for a pair of s.d. partitions $(\mathcal{I}, \mathcal{J})$ representing $g$. Then close up $\overline{T_\mathcal{J}} g^{\mathcal{I}}_{\mathcal{J}} T_\mathcal{I}$ to get link $L_{(\mathcal{I}, \mathcal{J})}$. In this figure, $(\mathcal{I}, \mathcal{J})$ is a reduced pair, so $L_{(\mathcal{I}, \mathcal{J})} = L_g$. (b) $\overline{T_{\mathcal{J}'}} g^{\mathcal{I}'}_{\mathcal{J}'} T_{\mathcal{I}'} \in Hom(\delta, \delta)$ for some non-reduced pair of s.d. partitions $(\mathcal{I}', \mathcal{J}')$ representing $g$. Then close up $\overline{T_{\mathcal{J}'}} g^{\mathcal{I}'}_{\mathcal{J}'} T_{\mathcal{I}'}$ to get link $L_{(\mathcal{I'}, \mathcal{J'})}$. Compared to $L_g$, the extra distant unknots in $L_{(\mathcal{I'}, \mathcal{J'})}$ are marked in dashed arcs.}
\end{figure}

\begin{figure}
	\centering
    \begin{subfigure}{0.4\textwidth}
        \centering
        \includegraphics[width = \textwidth]{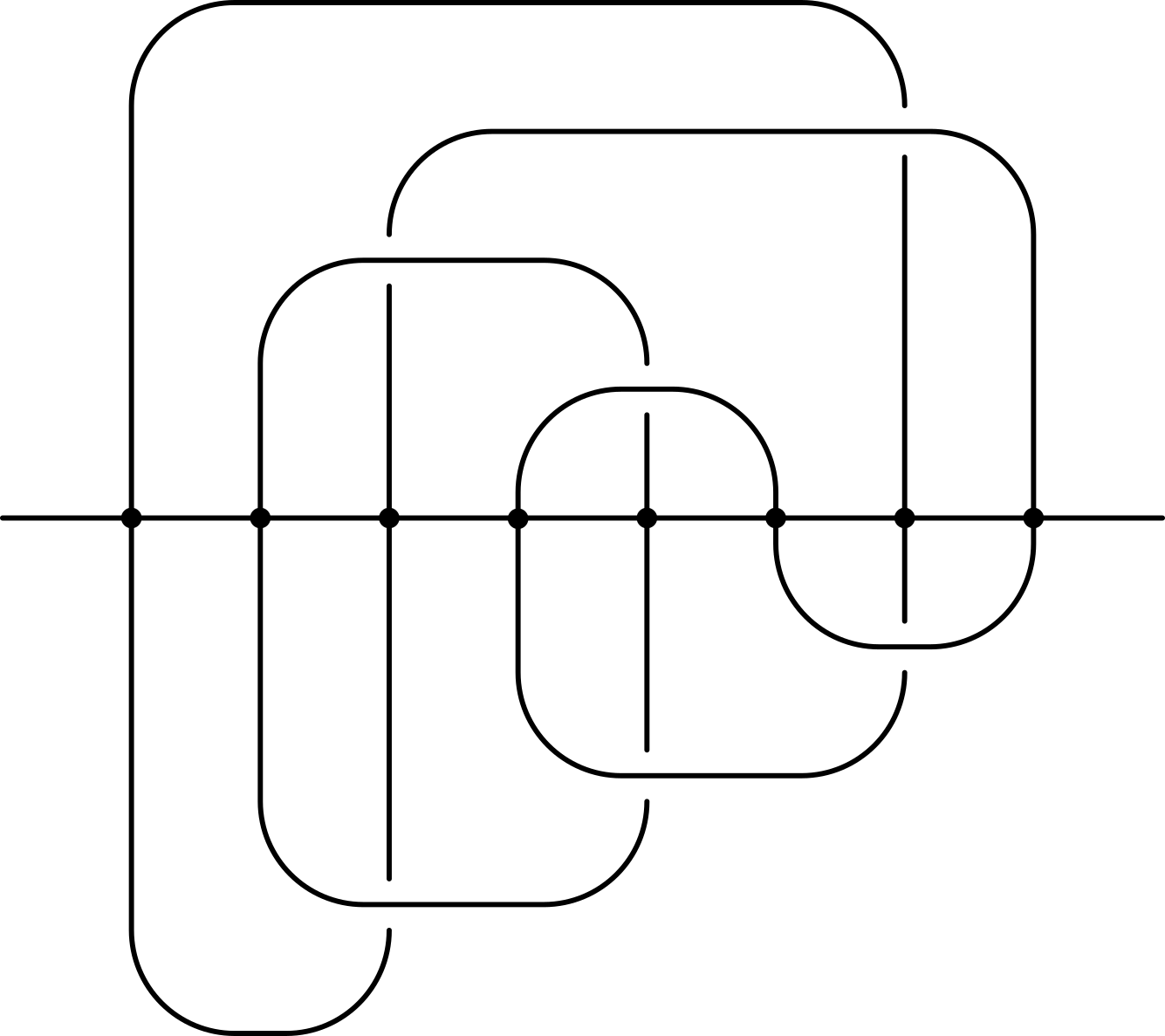}
        \put(0, 30){$\overline{\widehat{{T_{\mathcal{J}}}}}$}
        \put(0, 110){$\widehat{T_{\mathcal{I}}}$}
        \caption{$L_{(\mathcal{I}, \mathcal{J})} = L_g$}
		\label{fig: construction of link with no shift example}
    \end{subfigure}
    \hfill
    \begin{subfigure}{0.4\textwidth}
        \centering
        \includegraphics[width = \textwidth]{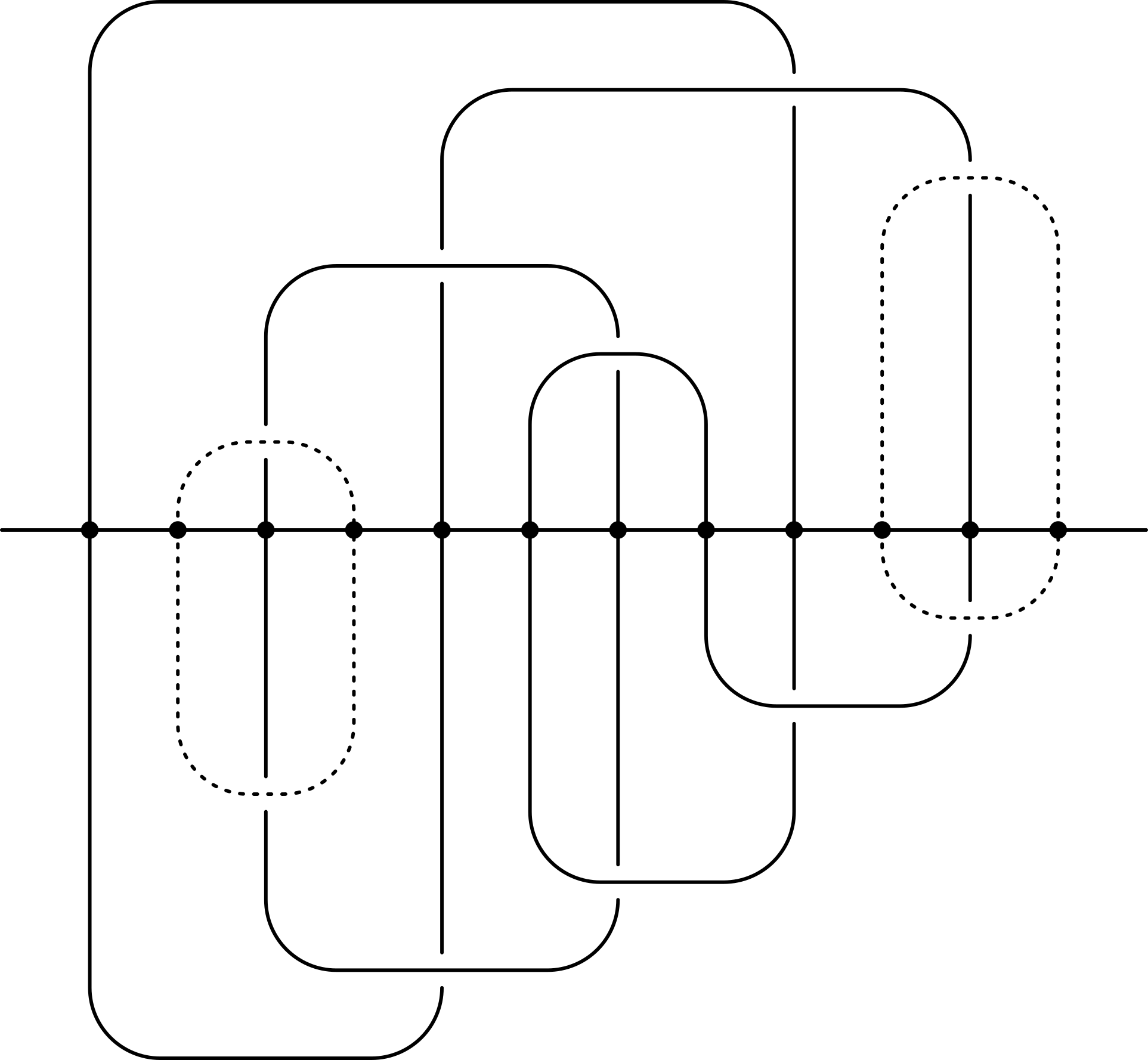}
        \put(0, 30){$\overline{\widehat{{T_{\mathcal{J}'}}}}$}
        \put(0, 110){$\widehat{T_{\mathcal{I}'}}$}
        \caption{$L_{(\mathcal{I}', \mathcal{J}')}$}
		\label{fig: non-reduced pair no shift example}
    \end{subfigure}
    \caption{Equivalently, $L_g = L_{(\mathcal{I}, \mathcal{J})}  = \overline{\widehat{{T_{\mathcal{J}}}}} \widehat{T_{\mathcal{I}}}$, where $(\mathcal{I}, \mathcal{J})$ is the reduced s.d. partition pair representing $g$, and $\widehat{T_{\mathcal{I}}}$ and $\widehat{T_{\mathcal{J}}}$ are considered as $(0, 2n)$-tangles with unmarked endpoints. In general, $L_{(\mathcal{I}', \mathcal{J}')}  = \overline{\widehat{{T_{\mathcal{J}'}}}} \widehat{T_{\mathcal{I}'}}$ for any s.d. partition pair $(\mathcal{I}', \mathcal{J}')$ with same number of breakpoints.}
\end{figure}

\bigskip

In fact, what Jones proved in \cite{MR3589908} is slightly stronger than Proposition \ref{prop: unoriented Thompson Alexander theorem ver 1}, as the following theorem states.

\begin{theorem} \label{thm: unoriented Thompson Alexamder theorem ver 2}
	Given any unoriented link $L$, there exists $g\in F$ such that $L_g = L$.
\end{theorem}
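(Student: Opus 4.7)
The plan is to strengthen Proposition~\ref{prop: unoriented Thompson Alexander theorem ver 1} by iteratively removing stray distant unknots from $L_g$. First I would apply Proposition~\ref{prop: unoriented Thompson Alexander theorem ver 1} to obtain some $g_0 \in F$ with $\langle \pi(g_0)\xi,\xi\rangle=[L]$ in $C_0$. Since the explicit model $L_{g_0}=\overline{\widehat{T_{\mathcal{J}}}}\,\widehat{T_{\mathcal{I}}}$ (for the reduced pair $(\mathcal{I},\mathcal{J})$ representing $g_0$) describes an actual link in $S^3$, the $C_0$-class equality translates into an honest link equality $L_{g_0}=L\sqcup k\cdot O$ for some $k\geq 0$ distant unknot components. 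If $k=0$ we are done; otherwise I would induct on $k$, with the inductive step being the following key reduction.

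\emph{Reduction Lemma.} If $h\in F$ satisfies $L_h=M\sqcup O$ with $O$ a distant unknot, then there exists $h'\in F$ with $L_{h'}=M$.

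To prove the Reduction Lemma, take the reduced pair $(\mathcal{I},\mathcal{J})$ representing $h$. The unknot $O$ is realized as a closed loop in the planar tangle $\overline{\widehat{T_{\mathcal{J}}}}\,\widehat{T_{\mathcal{I}}}$ bounding an embedded disk disjoint from $M$. Its constituent Type A arcs correspond, via the bijections $\epsilon_{\mathcal{I}}$ and $\epsilon_{\mathcal{J}}$, to edges of the forests $F_{\mathcal{I}}$ and $F_{\mathcal{J}}$; applying Lemma~\ref{lem: conjugate in SDI} repeatedly, these edges should assemble into a matched pair of full subtrees $F_{\mathcal{I}_0}\subset F_{\mathcal{I}}$ and $F_{\mathcal{J}_0}\subset F_{\mathcal{J}}$ whose sets of leaf labels coincide as subintervals of a common s.d.\ partition. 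Pruning both subtrees yields a new pair of s.d.\ partitions $(\mathcal{I}',\mathcal{J}')$ with the same (strictly smaller) number of breakpoints, defining $h':=\gamma^{\mathcal{I}'}_{\mathcal{J}'}\in F$, and by construction $L_{h'}=M$. Iterating this reduction $k$ times starting from $g_0$ yields the desired $g\in F$ with $L_g=L$.

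The main obstacle will be verifying that a distant unknot $O$ really localizes onto a matched pair of prunable subtrees. Type B arcs (vertical segments attaching trivalent vertices to breakpoints of $\mathcal{I}$ or $\mathcal{J}$) can weave through the diagram in subtle ways, so tracing $O$ requires carefully tracking how the edge labels on the two sides match up along the horizontal midline where $\widehat{T_{\mathcal{I}}}$ meets $\overline{\widehat{T_{\mathcal{J}}}}$. A natural simplification is to first isotope $O$ to be innermost on $S^2$, so that it involves only two Type A arcs (one from each of $T_{\mathcal{I}}$, $T_{\mathcal{J}}$) joined by matched Type B segments; in this innermost case the pruning reduces to removing a single matching pair of carets from $F_{\mathcal{I}}$ and $F_{\mathcal{J}}$, and the general case should then follow by induction on the number of intersections of $O$ with the midline.
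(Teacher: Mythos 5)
The paper does not prove this theorem; it simply attributes it to Jones, remarking that what Jones established in \cite{MR3589908} is a strengthening of Proposition~\ref{prop: unoriented Thompson Alexander theorem ver 1}. (Jones's own argument is a direct Alexander-theorem-style construction: starting from an arbitrary link diagram, he performs explicit diagrammatic moves to exhibit it in the tree-diagram form $\overline{\widehat{T_{\mathcal{J}}}}\,\widehat{T_{\mathcal{I}}}$ for a reduced pair, so the result holds \emph{on the nose}, not just up to $C_0$-equivalence.) Your strategy of deducing the on-the-nose statement from the $C_0$-class statement via a ``Reduction Lemma'' is therefore a genuinely different route, and I do not think it can be carried out as written.

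The central gap is the Reduction Lemma and your proposed proof of it. You assert that a distant unknot $O$ in $L_h$ has its Type A arcs assembling into ``a matched pair of full subtrees $F_{\mathcal{I}_0}\subset F_{\mathcal{I}}$ and $F_{\mathcal{J}_0}\subset F_{\mathcal{J}}$'' that can be pruned. But $(\mathcal{I},\mathcal{J})$ is taken to be the \emph{reduced} pair representing $h$, which by definition has no matching carets in its tree diagram; your proposed ``innermost'' base case, removing a matching pair of carets, therefore cannot arise at all, and the inductive step would produce a nested matching subtree structure that again contradicts reducedness. The distant unknots that do arise from matching carets are exactly the extra unknots in $L_{(\mathcal{I}',\mathcal{J}')}$ for a \emph{non-reduced} pair $(\mathcal{I}',\mathcal{J}')$ (see Figure~\ref{fig: non-reduced pair with shift example}); they are precisely the ones already absent from $L_g$. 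A distant unknot component of $L_g$ itself for a reduced $g$ is a purely topological feature (the component bounds a disk in $S^3$ away from the rest), and there is no reason its arcs should be concentrated in a local subtree of the diagram -- in general they wind through $T_{\mathcal{I}}$ and $T_{\mathcal{J}}$ and only become visibly unlinked after Reidemeister moves that take you out of Thompson tree-diagram form. Your suggestion to ``first isotope $O$ to be innermost'' does not fix this, since such an isotopy is not compatible with remaining in the image of Jones's construction for a reduced pair. Finally, note a secondary gap: $[L_{g_0}]=[L]$ in $C_0$ only gives that $L_{g_0}$ and $L$ differ by distant unknots, with the extra components possibly on either side; if $L$ itself has more distant unknot components than $L_{g_0}$, your reduction goes the wrong way and you would need a mechanism for \emph{adding} distant unknots to a reduced-pair link, which you have not addressed.
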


This theorem is the analog of the ``Alexander theorem" for braids and links. It establishes that the Thompson group $F$ is in fact as good as the braid groups at producing unoriented links. So we can define Thompson index of unoriented links as an analogue of the braid index.

\begin{definition}
	Given an unoriented link $L$, we define its Thompson index $ind_{F}(L)$ to be the smallest leaf number required for an element $g$ of $F$ to give rise to $L_g=L$.
\end{definition}

\subsection{Oriented Thompson group and oriented Thompson link}
\label{sec: oriented thompson group}

In the previous section we saw how to construct $T_{\mathcal{I}}$ from $F_{\mathcal{I}}$ for s.d. $n$-partition $\mathcal{I}$. Now we show how to give the canonical orientation to $T_{\mathcal{I}}$ from the signed region of $F_{\mathcal{I}}$'s complement, following \cite{MR3827807}. Given binary tree $F_{\mathcal{I}}$ embedded in $[0,1] \times [0,1]$, the complement of $F_{\mathcal{I}}$ has $(n+1)$ regions. We assign $+$ to the leftmost region, then apply the rules shown in Figure \ref{fig: region sign rule} $(n-1)$ times to assign signs to all other regions except the rightmost one. See Figure \ref{fig: signed tree example} for an example.

\begin{figure}
    \centering
    \begin{subfigure}{0.4\textwidth}
        \centering
        \includegraphics[width=\textwidth]{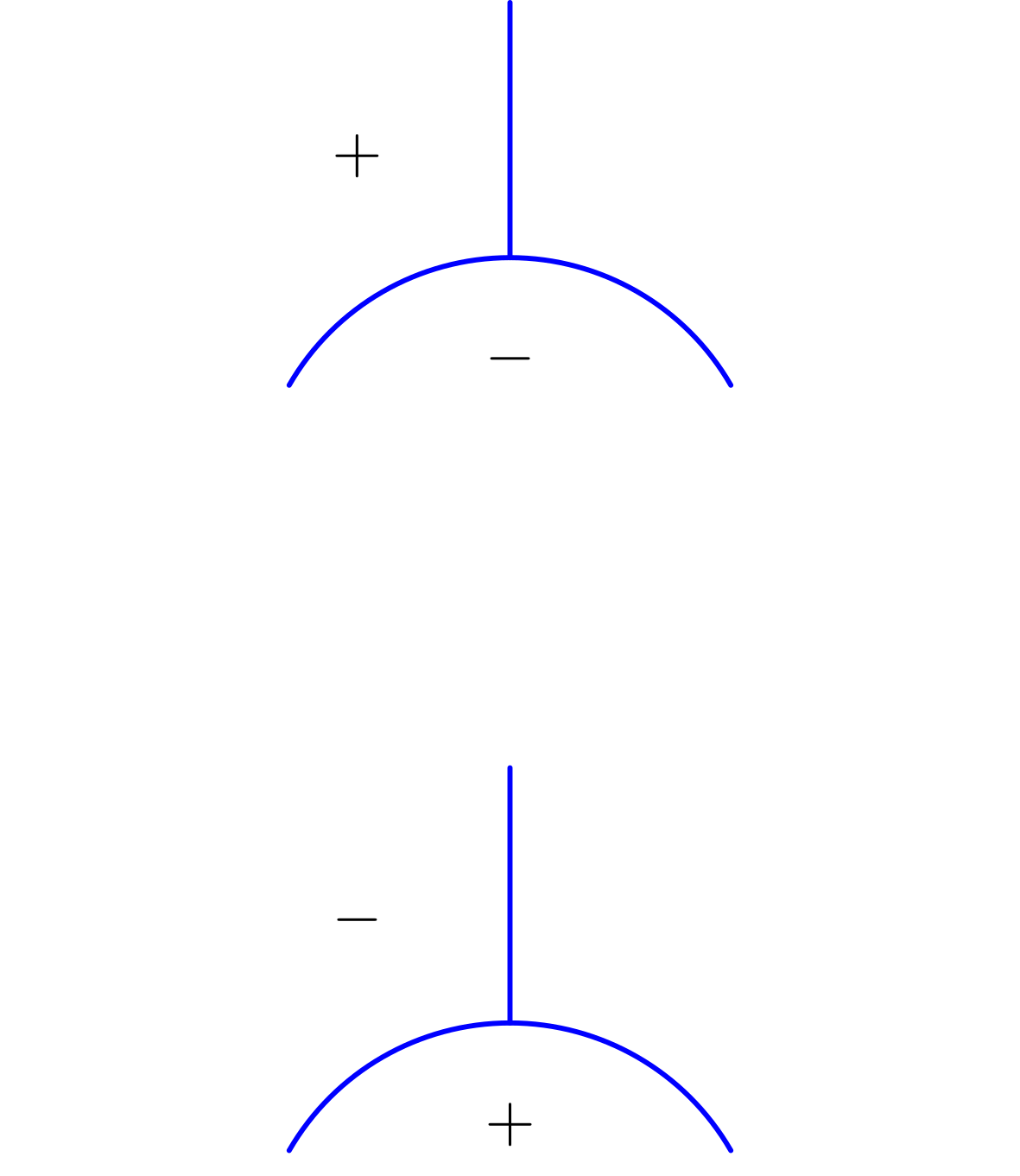}
        \caption{} 
        \label{fig: region sign rule}
    \end{subfigure}
    \hfill
    \begin{subfigure}{0.4\textwidth}
        \centering
        \includegraphics[width=\textwidth]{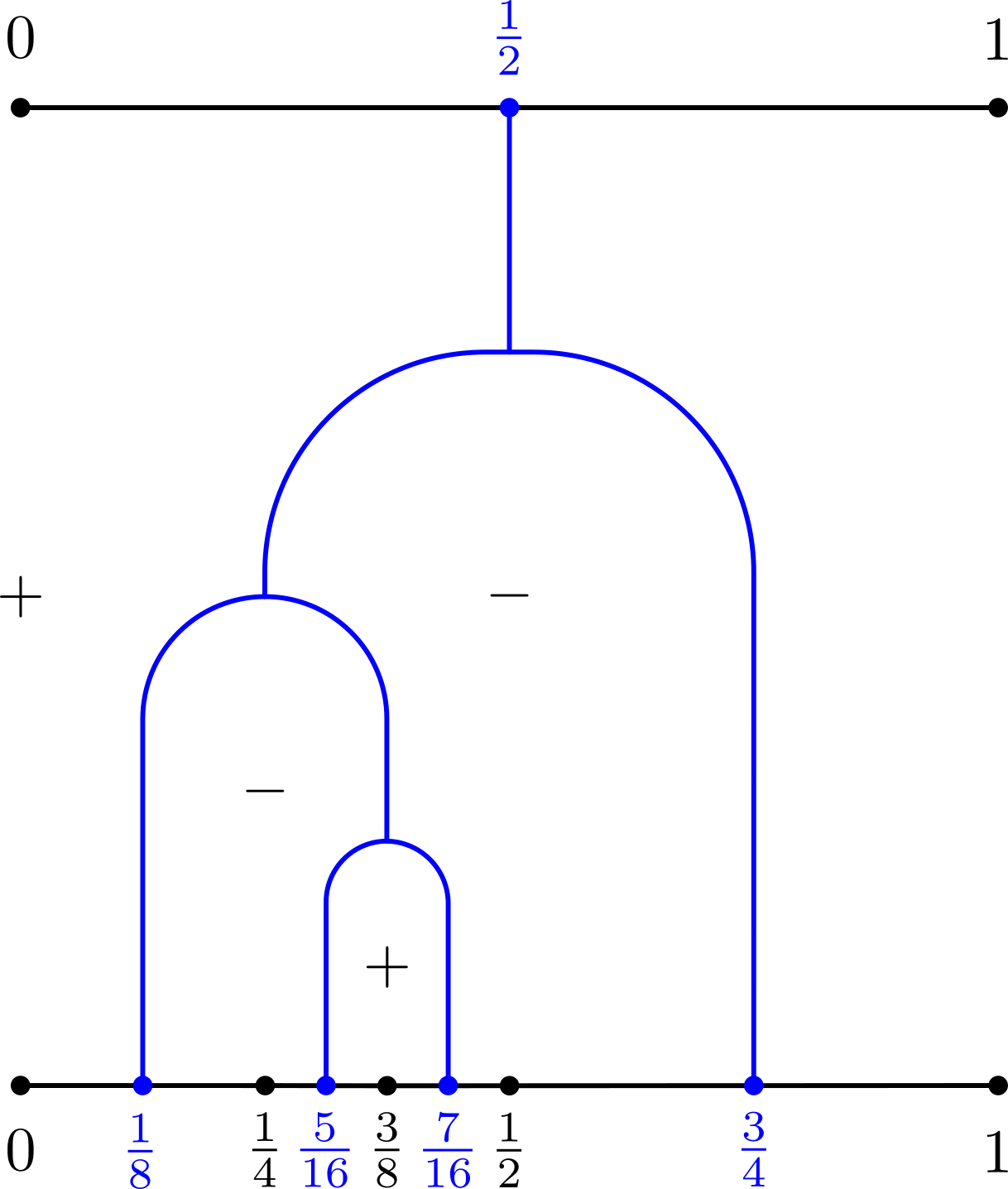}
        \put(-190, 15){$\mathcal{I}$}
        \put(-190, 180){$\delta$}
        \put(-190, 120){$\color{blue} F_{\mathcal{I}}$}
        \caption{}
        \label{fig: signed tree example}
    \end{subfigure}
    \caption{(a) Rules of signing on regions in complement of embedded binary tree. (b) A tree $F_{\mathcal{I}}$ with signs on regions.}
\end{figure}

\begin{definition}
	For $n = 1, 2, 3, ...$, an $n$-sign is a sequence of $n$ $+$'s and $-$'s such that the first sign is $+$ and the second is $-$ (if $n \geq 2$).
\end{definition}

Suppose $\mathcal{I}$ is an s.d. $n$-partition with breakpoints $0 = a_0 < a_1 < ... < a_n = 1$, then $F_{\mathcal{I}}$ induces an $n$-sign $\sigma_{\mathcal{I}}$ such that the $i^{th}$ sign in $\sigma_{\mathcal{I}}$ is the sign of the region above breakpoint $a_{i - 1}$.

\begin{definition} \label{def: oriented Thompson group}
	The oriented Thompson group $\vec{F}$ is the collection of $g$'s in $F$ such that for some s.d. $n$-partition pair $(\mathcal{I}, \mathcal{J})$ representing $g$, their corresponding signed trees $F_{\mathcal{I}}$ and $F_{\mathcal{J}}$ induce same $n$-sign $\sigma_{\mathcal{I}} = \sigma_{\mathcal{J}}$.
\end{definition}

\begin{remark} \label{rmk: replace some by any in def of oriented F}
	In Definition \ref{def: oriented Thompson group}, we can replace ``some s.d. $n$-partition pair" by ``any s.d. $n$-partition pair". Indeed, suppose that the tree diagram $(F_{\mathcal{I}'}, F_{\mathcal{J}'})$ is obtained from $(F_{\mathcal{I}}, F_{\mathcal{J}})$ by adding a caret at the $i^{th}$ leaf. From the rules shown in Figure \ref{fig: region sign rule}, to obtain $\sigma_{\mathcal{I}'}$ from $\sigma_{\mathcal{I}}$, we just need to replace the $i^{th}$ sign of $\sigma_{\mathcal{I}}$ by $(+, -)$ if the original sign is $+$, by $(-,+)$ if the original sign is $-$. So $\sigma_{\mathcal{I}} = \sigma_{\mathcal{J}}$ if and only if $\sigma_{\mathcal{I}'} = \sigma_{\mathcal{J}'}$.
\end{remark}

\begin{remark}
	It is hard to see that $\vec{F}$ is a subgroup of $F$, because it is not obvious that $\vec{F}$ is closed under multiplication from Definition \ref{def: oriented Thompson group}. We will introduce an equivalent algebraic definition of $\vec{F}$ in Section \ref{sec: signs and alg def}, where we can easily see that $\vec{F}$ is a subgroup of $F$.
\end{remark}

The notation of $n$-sign was introduced in \cite{MR3827807}, where $\vec{F}$ is described as group of fractions of a category $\vec{\mathfrak{F}}$. It turns out that Definition \ref{def: oriented Thompson group} is equivalent to the definition of $\vec{F}$ as group of fractions. See Appendix \ref{appx: group of fractions} for more details.

\bigskip

Suppose that $F_{\mathcal{I}}$ is a tree with signed regions. When we add vertical strands to construct $T_{\mathcal{I}}$, each strand separates each signed region into two regions, we move the sign to the right region of the added strand. To obtain the canonical orientation on $T_{\mathcal{I}}$, we let each $+$-signed region induce counterclockwise orientation on its boundary arcs, and each $-$-signed region induce clockwise orientation on its boundary arcs. See Figure \ref{fig: tangle orientation from signed regions example} for an example.

\begin{figure}
    \centering
    \begin{subfigure}{\textwidth}
        \centering
        \includegraphics[width = \textwidth]{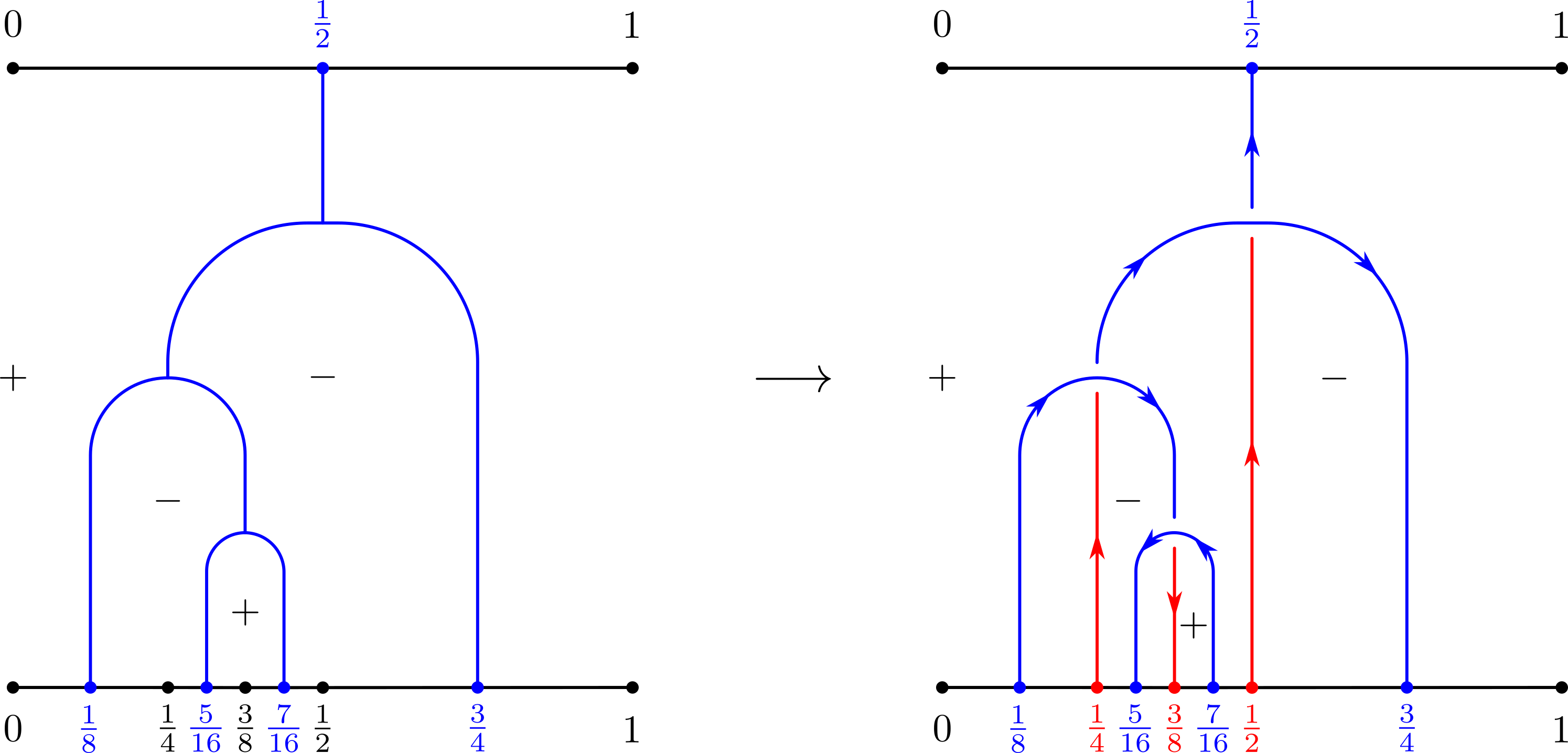}
        \put(-450, 15){$\mathcal{I}$}
	    \put(-450, 185){$\delta$}
	    \put(-450, 120){$\color{blue} F_{\mathcal{I}}$}
	    \put(-195, 15){$\mathcal{I}$}
	    \put(-195, 185){$\delta$}
	    \put(0, 120){$\vec{T_{\mathcal{I}}}$}
        \caption{}
        \label{fig: tangle orientation from signed regions example}
    \end{subfigure}
    \vspace{1cm}
    \vfill
    \begin{subfigure}{0.4\textwidth}
        \centering
        \includegraphics[width = \textwidth]{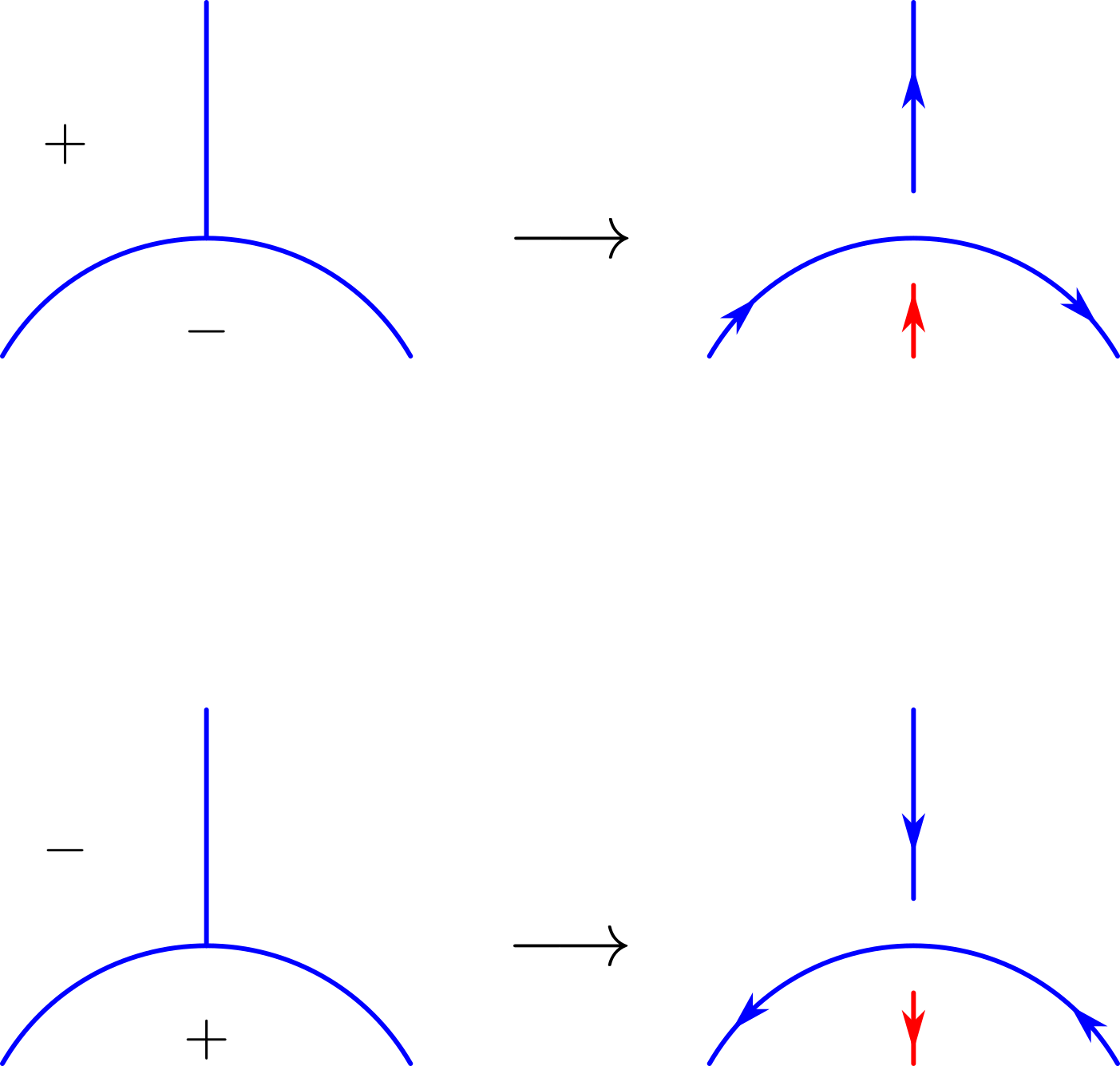}
        \caption{}
        \label{fig: local tangle orientation in terms of signed regions}
    \end{subfigure}
    \hfill
    \begin{subfigure}{0.4\textwidth}
        \centering
        \includegraphics[width = \textwidth]{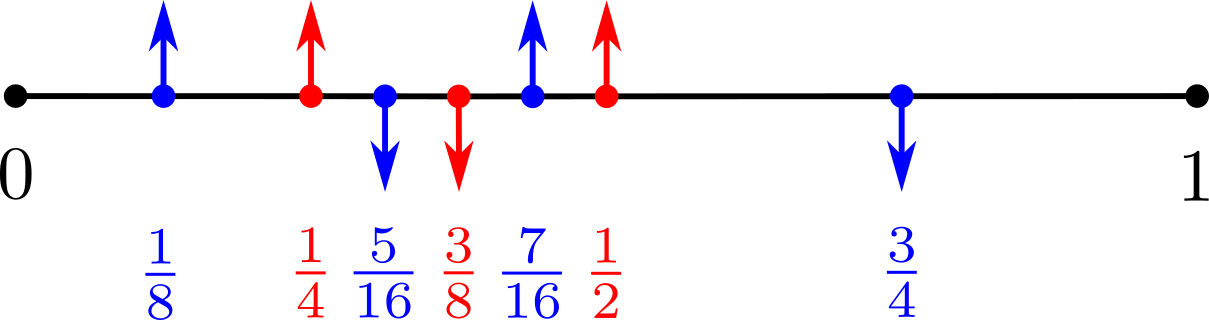}
        \put(-195, 30){$\mathcal{I}$}
        \caption{}
        \label{fig: E(I) signs example}
    \end{subfigure}
    \caption{(a) Signs on regions in complement of $F_{\mathcal{I}}$ give the orientation of $\vec{T_\mathcal{I}}$. (b) Local rules to give the canonical orientation of $\vec{T_\mathcal{I}}$ in terms of signed regions. (c) The signing on $E(\mathcal{I})$ induced by $\vec{T_{\mathcal{I}}}$, where upward arrows mean $+$ signs, downward arrows mean $-$ signs.} 
\end{figure}

Equivalently, the above procedure can be expressed locally as shown in Figure \ref{fig: local tangle orientation in terms of signed regions}. It shows that the induced orientations of arcs are compatible around each crossing. We denote the oriented $T_{\mathcal{I}}$ as $\vec{T}_{\mathcal{I}}$.

Furthermore, $\vec{T_{\mathcal{I}}}$ induces a canonical signing on $E(\mathcal{I})$ as the lower boundary of $\vec{T_{\mathcal{I}}}$. We let upward orientation induce sign $+$, let downward orientation induce sign $-$. For instance, $E(\delta)$ contains a single point $\frac{1}{2}$ with the $+$ sign. See Figure \ref{fig: E(I) signs example} for another example. Notice that the signing on $E(\mathcal{I})$ is determined by the $n$-sign $\sigma_{\mathcal{I}}$ induced by $F_{\mathcal{I}}$. More specifically, the sign on the midpoint $\frac{a_i + a_{i+1}}{2}$ is the ame as the $(i+1)^{th}$ sign, and the sign on breakpoint $a_i$ is opposite of the sign on midpoint $\frac{a_i + a_{i+1}}{2}$. Conversely, $\sigma_{\mathcal{I}}$ can be recovered by taking signs on midpoints in $E(\mathcal{I})$.

Recall if a pair of s.d. partitions $(\mathcal{I}, \mathcal{J})$ represents an element $g \in F$, then $L_{(\mathcal{I}, \mathcal{J})}$ is the close-up of $\overline{T_\mathcal{J}} g^{\mathcal{I}}_{\mathcal{J}} T_\mathcal{I}$. However, if we replace $T_{\mathcal{I}}$ by $\vec{T}_{\mathcal{I}}$ and replace $\overline{T_{\mathcal{J}}}$ by $-\overline{\vec{T_{\mathcal{J}}}}$, we need $g^{\mathcal{I}}_{\mathcal{J}}$ to have some orientation compatible with the orientations of $\vec{T}_{\mathcal{I}}$ and $-\overline{\vec{T_{\mathcal{J}}}}$. In other words, we need $E(\mathcal{I})$ and $E(\mathcal{J})$ to have compatible signings.

\begin{definition} \label{def: compatible pair of s.d. partitions}
    Let $\mathcal{I}$ and $\mathcal{J}$ be a pair of s.d. partitions with the same number of breakpoints, we say $\mathcal{I}$ and $\mathcal{J}$ are compatible, or $(\mathcal{I}, \mathcal{J})$ is a compatible s.d. partition pair if $(\mathcal{I},\mathcal{J})$ represents an element $g \in \vec{F}$.
\end{definition}

By Remark \ref{rmk: replace some by any in def of oriented F}, if $(\mathcal{I}, \mathcal{J})$ is a compatible pair representing $g \in \vec{F}$, then $F_{\mathcal{I}}$ and $F_{\mathcal{J}}$ induce same $n$-sign, which determines compatible signings on $E(\mathcal{I})$ and $E(\mathcal{J})$. Then $g_{\mathcal{I}}^{\mathcal{J}}$ can be assigned a unique compatible orientation, denoted as $\vec{g}_{\mathcal{I}}^{\mathcal{J}}$. We let $\vec{L}_{(\mathcal{I}, \mathcal{J})}$ be the oriented link obtained by closing up the oriented tangle $(-\overline{\vec{T_\mathcal{J}}}) \vec{g^{\mathcal{I}}_{\mathcal{J}}} \vec{{T_\mathcal{I}}}$. Similar to the unoriented scenario, we have a description in terms of $(0,2n)$-tangles. We can first add a trivial strand on the leftmost of $\vec{T}_\mathcal{I}$ and $\vec{T}_\mathcal{J}$ respectively, with orientation from bottom to top. Then cap them off to get $(0,2n)$-tangles $\widehat{\vec{T}_\mathcal{I}}$ and $\widehat{\vec{T}_\mathcal{J}}$. We have $\vec{L}_{(\mathcal{I}, \mathcal{J})} = (-\overline{\widehat{\vec{T_\mathcal{J}}}}) \widehat{\vec{{T_\mathcal{I}}}}$.

\begin{figure}
    \centering
    \begin{subfigure}{0.4\textwidth}
        \centering
        \includegraphics[width=\textwidth]{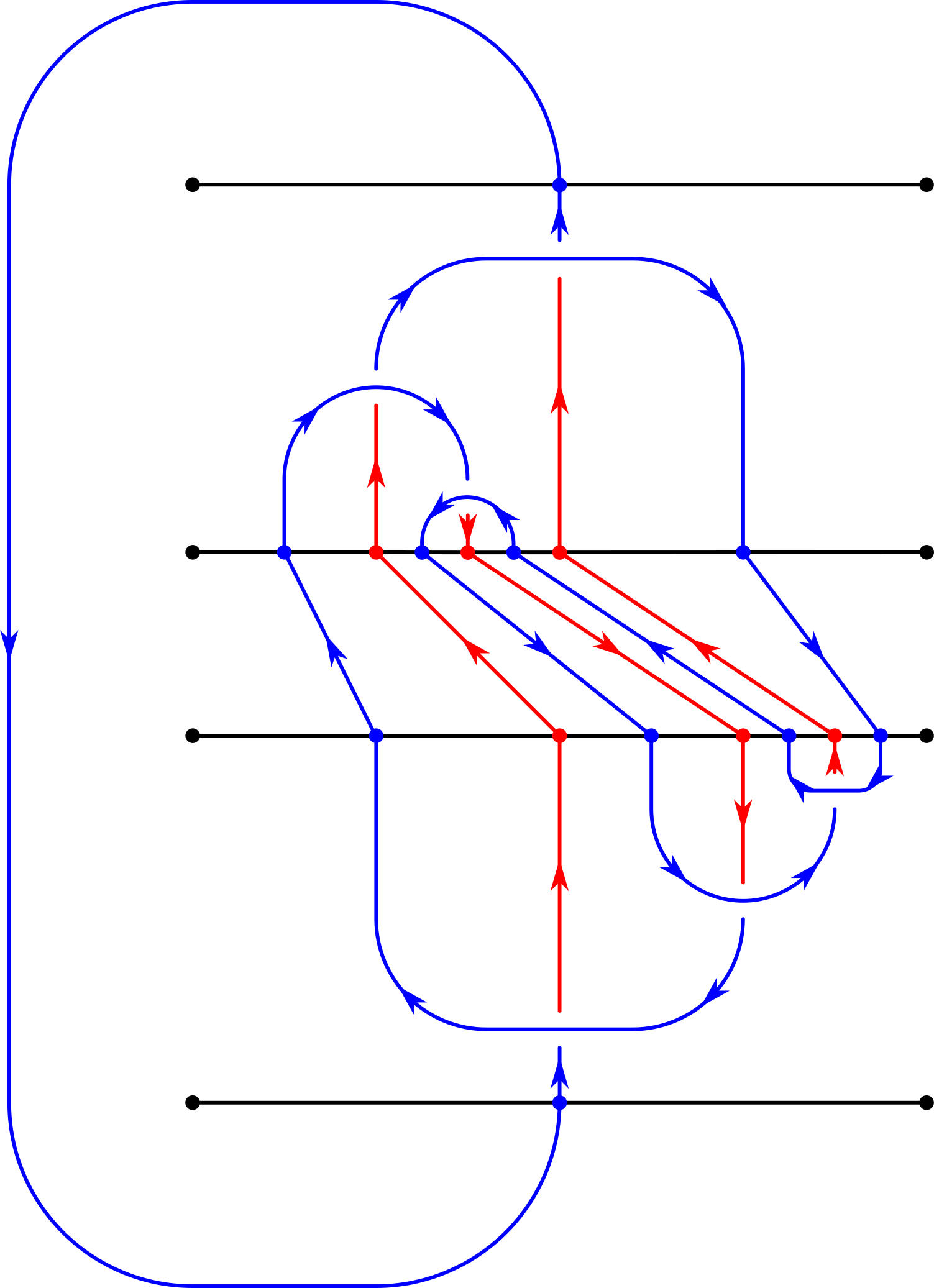}
        \put(-160, 30){$\delta$}
        \put(-160, 95){$\mathcal{J}$}
        \put(-160, 135){$\mathcal{I}$}
        \put(-160, 200){$\delta$}
        \put(-5, 60){$-\overline{\vec{T_\mathcal{J}}}$}
        \put(0, 115){$\vec{g}^{\mathcal{I}}_{\mathcal{J}}$}
        \put(0, 170){$\vec{T_\mathcal{I}}$}
        \caption{$\vec{L}_{(\mathcal{I}, \mathcal{J})}$}
        \label{fig: construction of oriented link with shift example}
    \end{subfigure}
    \hfill
    \begin{subfigure}{0.4\textwidth}
        \centering
        \includegraphics[width=\textwidth]{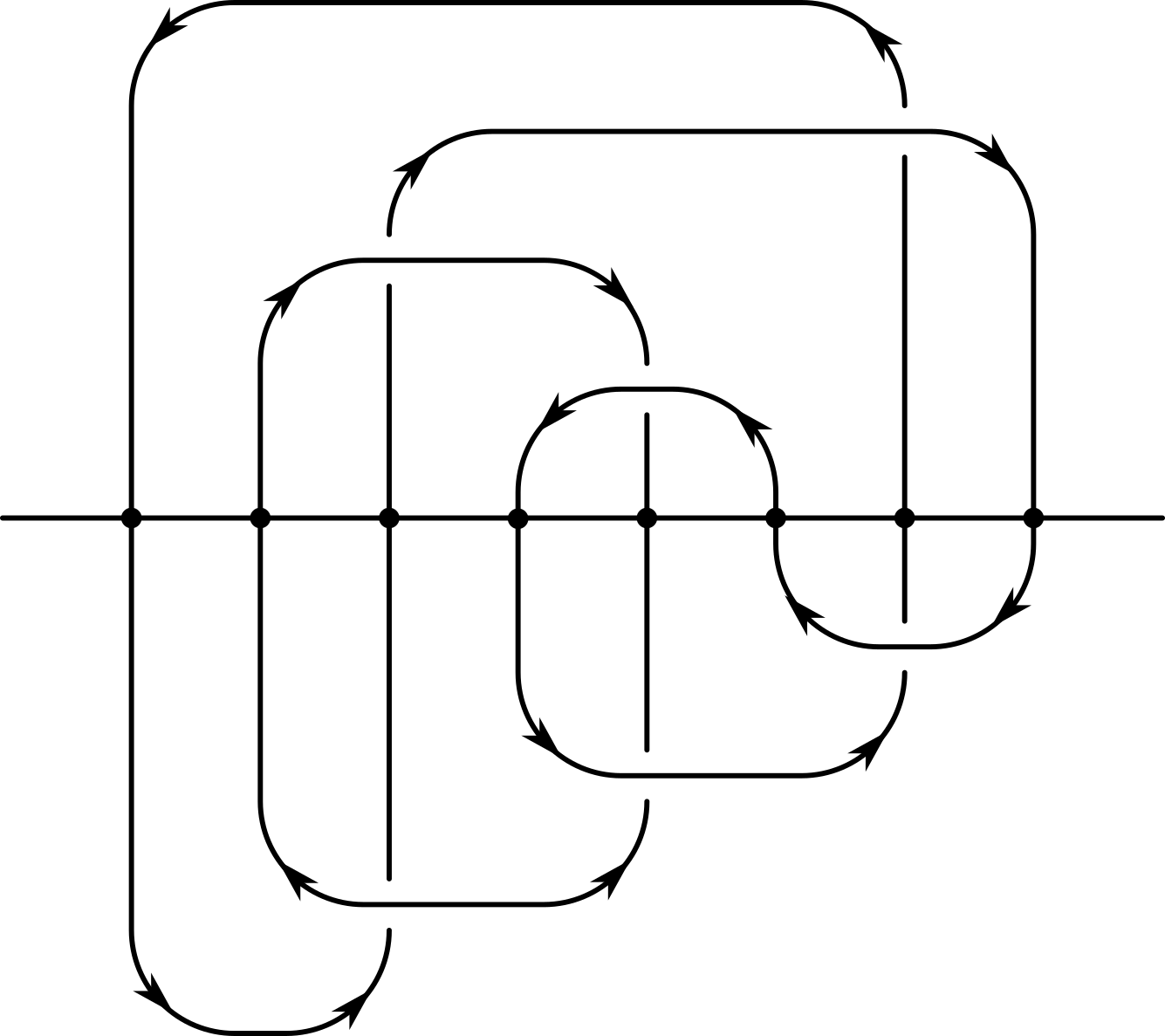}
        \put(-5, 30){$-\overline{\widehat{{\vec{T_{\mathcal{J}}}}}}$}
        \put(0, 110){$\widehat{\vec{T_{\mathcal{I}}}}$}
        \caption{$\vec{L}_{(\mathcal{I}, \mathcal{J})}$}
        \label{fig: construction of oriented link with no shift example}
    \end{subfigure}
    \caption{(a) $\vec{L}_{(\mathcal{I}, \mathcal{J})}$ is the closing-up of $(-\overline{\vec{T_\mathcal{J}}}) \vec{g}_{\mathcal{I}}^{\mathcal{J}} \vec{{T_\mathcal{I}}}$. (b) Equivalently, $\vec{L}_{(\mathcal{I}, \mathcal{J})}  = (-\overline{\widehat{\vec{T_\mathcal{J}}}}) \widehat{\vec{{T_\mathcal{I}}}}$ where $\widehat{\vec{T_{\mathcal{I}}}}$ and $\widehat{\vec{T_{\mathcal{J}}}}$ are $(0, 2n)$-oriented tangles with unmarked endpoints.}
\end{figure}

\begin{lemma} \label{lem: writhe of Thompson link is 0}
    Let $\mathcal{I}$ be an s.d. $n$-partition, then the oriented $(0, 2n)$-tangle $\widehat{\vec{T}_\mathcal{I}}$ has $(n-1)$ crossings and all of them are positive. Furthermore, if $(\mathcal{I}, \mathcal{J})$ is a compatible s.d. partition pair, then $\vec{L}_{(\mathcal{I}, \mathcal{J})} = (-\overline{\widehat{\vec{T_\mathcal{J}}}}) \widehat{\vec{{T_\mathcal{I}}}}$ has writhe $0$.
\end{lemma}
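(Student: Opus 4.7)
The plan is to first count the crossings in $\widehat{\vec{T}_\mathcal{I}}$, then show each is positive via the local orientation rule on signed regions, and finally deduce the writhe statement from how the operation $-\overline{(\cdot)}$ acts on crossing signs.

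First, I would observe that $F_\mathcal{I}$ has $n$ leaves and hence exactly $n-1$ trivalent vertices. By the construction of $T_\mathcal{I}$, each trivalent vertex is replaced by a single instance of $R$ — one crossing (horizontal over vertical) — while the Type B arcs, the added leftmost trivial strand, and the cap on top are all disjoint vertical or simple curved arcs that create no further crossings. Hence $\widehat{\vec{T}_\mathcal{I}}$ has exactly $n-1$ crossings.

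To see each crossing is positive, fix a trivalent vertex $v$ of $F_\mathcal{I}$ with left-region sign $s_L$. By Remark \ref{rmk: replace some by any in def of oriented F} (the sign rule of Figure \ref{fig: region sign rule}), $v$'s middle region has sign $-s_L$. The Type B arc from $v$ splits this middle region into the SE quadrant of $v$'s crossing (retaining sign $-s_L$) and the unsigned SW quadrant. A short tree-traversal argument shows NE is unsigned as well: $v$'s right region equals either the rightmost (unsigned) region or the middle region of the lowest ancestor $w$ with $v$ in $w$'s left subtree, and in the latter case $v$'s crossing lies strictly to the left of $w$'s Type B arc, placing the NE quadrant in the unsigned left half of $w$'s middle region. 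Therefore only NW (sign $s_L$) and SE (sign $-s_L$) impose constraints at the crossing. Applying the rule that a $+$-region induces region-on-left (counterclockwise) and a $-$-region induces region-on-right (clockwise) boundary orientations forces: if $s_L=+$, the horizontal strand is oriented west-to-east and the vertical strand south-to-north; if $s_L=-$, horizontal is east-to-west and vertical north-to-south. In both cases the over-strand (horizontal) rotates a quarter-turn counterclockwise to the under-strand (vertical), so the crossing is positive. This agrees with the two local pictures in Figure \ref{fig: local tangle orientation in terms of signed regions}.

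For the writhe, the same argument shows $\widehat{\vec{T}_\mathcal{J}}$ has $n-1$ positive crossings. The vertical mirror $\overline{(\cdot)}$ negates the $y$-component of each tangent vector while preserving over/under, hence flips the sign of every crossing; the subsequent global orientation reversal reverses both tangent vectors at each crossing and therefore preserves the crossing sign. So $-\overline{\widehat{\vec{T}_\mathcal{J}}}$ contains $n-1$ negative crossings. Since $\vec{L}_{(\mathcal{I},\mathcal{J})}=(-\overline{\widehat{\vec{T}_\mathcal{J}}})\,\widehat{\vec{T}_\mathcal{I}}$ is obtained by gluing the two tangles along their $2n$ boundary points, producing no new crossings, the writhe totals $(n-1)+(-(n-1))=0$.

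The main obstacle is the local orientation analysis at each crossing, and within it the key subtlety that the NE quadrant is always unsigned; verifying this requires following how successive Type B arcs in the tangle split the signed tree regions. Once NE-unsigned is in hand, positivity of each crossing follows from the two-case check above, and the writhe computation reduces to tracking how $-\overline{(\cdot)}$ acts on crossing signs.
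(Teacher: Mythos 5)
Your proof is correct and follows the same approach as the paper: count the $n-1$ trivalent vertices as the only crossing sources, use the local orientation rules to show each crossing is positive, and conclude that the vertically-mirrored tangle $-\overline{\widehat{\vec{T}_{\mathcal{J}}}}$ contributes $n-1$ negative crossings so the total writhe vanishes. The paper simply cites Figure~\ref{fig: local tangle orientation in terms of signed edges} for positivity, whereas you re-derive the local picture from the signed-region conventions of Figure~\ref{fig: local tangle orientation in terms of signed regions} via a four-quadrant analysis (your extra observation that the NE and SW quadrants are unsigned is sound, though not strictly needed, since the NW and SE constraints already pin down and agree on the orientations) --- a more detailed but equivalent argument.
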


\begin{proof}
    Each crossing in $\widehat{\vec{T}_{\mathcal{I}}}$ comes from one of $(n - 1)$ trivalent vertices of $F_{\mathcal{I}}$, and it must be one of the two cases shown in Figure \ref{fig: local tangle orientation in terms of signed edges}. Notice that either case generates a positive crossing, so $\widehat{\vec{T}_{\mathcal{I}}}$ has $(n - 1)$ positive crossings.

    Now suppose $(\mathcal{I}, \mathcal{J})$ is a compatible s.d. $n$-partition pair, then $\widehat{\vec{T}_{\mathcal{I}}}$ has $(n - 1)$ positive crossings, $\overline{\widehat{\vec{T_\mathcal{J}}}}$ has $(n - 1)$ negative crossings. So $\vec{L}_{(\mathcal{I}, \mathcal{J})}  = \overline{\widehat{\vec{T_\mathcal{J}}}} \widehat{\vec{{T_\mathcal{I}}}}$ has writhe $0$.
\end{proof}

Recall that when $(\mathcal{I}, \mathcal{J})$ is the reduced pair of s.d. partitions representing $g$, we denote $L_{(\mathcal{I}, \mathcal{J})}$ as $L_g$. So $L_g$ also has an orientation $\vec{L}_g$. Unless otherwise stated, we will consider every link and tangle as oriented and omit arrow notations in the rest of this paper, except Section \ref{sec: sym rep and unoriented grid}.

There is also an Alexander theorem for the oriented Thompson group $\vec{F}$.

\begin{theorem} \cite{MR4071378} \label{thm: oriented Thompson Alexander theorem}
	Given any oriented link $L$, there exists $g\in \vec{F}$ such that $L = L_g$.
\end{theorem}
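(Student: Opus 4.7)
The plan is to deduce the statement from Alexander's classical theorem (every oriented link is a closed braid) by showing how a braid closure can be realised as $L_g$ for some $g \in \vec{F}$. Since Theorem \ref{thm: unoriented Thompson Alexamder theorem ver 2} already supplies an unoriented version, the real content here is only the orientation compatibility: upgrading from some $g_0 \in F$ produced by Jones' construction to a genuine element of $\vec{F}$ whose associated oriented link is the given $L$.

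First, present the oriented link $L$ as the closure $\hat{\beta}$ of a braid $\beta \in B_k$. In such a diagram, all $k$ strands crossing any horizontal ``waist'' are oriented upward. Next, choose an s.d.\ $n$-partition $\mathcal{I}$ fine enough that the $k$ braid strands sit at midpoints of subintervals of $\mathcal{I}$ and whose tree $F_{\mathcal{I}}$ provides the required ``rainbow'' closing arcs on top; let $\mathcal{J}$ be obtained from $\mathcal{I}$ by an iterated sequence of caret insertions/deletions that encodes the braid word $\beta$ between the two trees, in the spirit of the Jones embedding $B_k \hookrightarrow F$. After adjusting by trivial ``dummy'' carets if necessary, the tangle sandwiched between $F_{\mathcal{I}}$ and $F_{\mathcal{J}}$ realises $\beta$, and $\widehat{\vec{T_{\mathcal{I}}}}$ together with $-\overline{\widehat{\vec{T_{\mathcal{J}}}}}$ realises the plat-type closure of $\beta$, which is isotopic to $\hat{\beta}$.

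The orientation check then becomes automatic: since every waist strand of $\hat{\beta}$ points upward, the sign at each midpoint in $E(\mathcal{I})$ and in $E(\mathcal{J})$ is $+$, and hence the signs at the interior breakpoints are all $-$. Reading off the $n$-signs from the midpoint signs as explained in Section \ref{sec: oriented thompson group}, one obtains $\sigma_{\mathcal{I}} = (+,-,+,-,\dots) = \sigma_{\mathcal{J}}$, so that $(\mathcal{I},\mathcal{J})$ is a compatible pair and the corresponding $g = \gamma^{\mathcal{I}}_{\mathcal{J}}$ lies in $\vec{F}$ by Definition \ref{def: oriented Thompson group}. Therefore $L_g = \hat{\beta} = L$ as oriented links.

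The main obstacle is the middle step: encoding an arbitrary braid word in $B_k$ as the tangle between two explicitly chosen s.d.\ partitions, while respecting the paper's crossing convention (horizontal over vertical). This amounts to writing down the Jones embedding $B_k \hookrightarrow F$ concretely in the s.d.-partition language and verifying that its image is contained in the oriented subgroup $\vec{F}$. Once this embedding is in hand, combining it with Alexander's theorem yields the conclusion for every oriented link, and the unoriented theorem (Theorem \ref{thm: unoriented Thompson Alexamder theorem ver 2}) serves as a sanity check that no links are missed.
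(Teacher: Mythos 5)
Your proposal has several genuine gaps that would need to be repaired, and some of them are fatal as written.

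First, the ``Jones embedding $B_k \hookrightarrow F$'' that your argument leans on does not exist for $k \geq 3$: by the Brin--Squier theorem $F$ contains no non-abelian free subgroup, whereas $B_k$ for $k\geq 3$ does. So there is no group embedding of braid groups into $F$ (let alone into $\vec{F}$). If you only mean a set-theoretic encoding of braid words, that might be salvageable, but then you cannot appeal to an ``embedding'' to conclude anything, and you would have to construct the trees by hand.

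Second, the picture of ``encoding the braid word between the two trees'' misreads the Thompson link construction. In $L_{(\mathcal{I},\mathcal{J})} = \overline{T_{\mathcal{J}}}\, g^{\mathcal{I}}_{\mathcal{J}}\, T_{\mathcal{I}}$ the middle piece $g^{\mathcal{I}}_{\mathcal{J}}$ is always a trivial (crossingless) tangle; every crossing is manufactured at the trivalent vertices of $T_{\mathcal{I}}$ and $T_{\mathcal{J}}$. There is nothing ``sandwiched between the two trees'' in which a braid word could live. Encoding a braid word in the shape of the two trees is exactly the nontrivial content of the theorem, and your sketch replaces it by an assertion.

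Third, the orientation computation is internally inconsistent. You first claim all midpoint signs in $E(\mathcal{I})$ are $+$, and in the very next sentence state $\sigma_{\mathcal{I}}=(+,-,+,-,\dots)$. But the midpoint signs \emph{are} the entries of $\sigma_{\mathcal{I}}$, so those two claims contradict each other, and in fact the first is impossible: every $n$-sign with $n\geq 2$ has second entry $-$ by definition. The underlying geometric claim also fails: in a braid closure diagram, the closing arcs are oriented opposite to the braid strands, so a horizontal waist of $\hat{\beta}$ does not consist of uniformly upward-oriented strands. And, finally, the ``plat-type closure of $\beta$ is isotopic to $\hat{\beta}$'' step is false in general --- plat closures and braid closures of the same braid are not isotopic links.

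The theorem as stated is cited to \cite{MR4071378}; that argument does not pass through a braid-group embedding at all, but instead manipulates an oriented link diagram directly into the tree-pair form, with careful bookkeeping of how the region signs (the $n$-signs of $\sigma_{\mathcal{I}}$, $\sigma_{\mathcal{J}}$) track the given orientation of $L$. If you wish to keep the Alexander-theorem starting point, you would still need to carry out that diagrammatic reduction for a braid closure; the orientation bookkeeping will not be ``automatic'' in the way your sketch suggests.
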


So similar to the unoriented Thompson index of unoriented links, we can define oriented Thompson index of oriented links.

\begin{definition}
	Given an oriented link $L$, we define its oriented Thompson index $ind_{\vec{F}}(L)$ to be the smallest leaf number required for an element $g$ of $\vec{F}$ to give rise to $L_g = L$.
\end{definition}

\section{Grid Construction for oriented Thompson link}
\label{grid construction and equivalence}

In this section we will introduce half grid construction from s.d. partitions, and grid construction from elements in $\vec{F}$. In Section \ref{sec: signs and alg def} we first introduce signs on s.d. intervals, and we give an algebraic definition of $\vec{F}$ based on those signs. Section \ref{sec: grid construction} is our main construction, using ``midpoint order'' and ``length order'' along with signs of s.d. intervals, to construct half grid diagram. Then we show the algebraic definition of $\vec{F}$ implies compatibility of two half grid diagrams given by $g \in \vec{F}$. In Section \ref{sec: equivalence} we show this grid construction is equivalent to Jones' construction of oriented Thompson link. In the end, we prove our main theorem that every oriented link is half grid presentable.

\subsection{Signs of standard dyadic intervals and an algebraic definition of oriented Thompson group}
\label{sec: signs and alg def}

Given an s.d. partition $\mathcal{I}$ with breakpoints $0 = a_0 < a_1 < ... < a_n = 1$. Recall that $\mathcal{S}(\mathcal{I})$ is the collection of $[a_i, a_j]$'s such that $[a_i, a_j]$ is an s.d. interval. We want to construct a half grid diagram $\mathbb{H}_{\mathcal{I}}$, such that each s.d. interval $A \in \mathcal{S}({\mathcal{I}})$ corresponds to a marking in $\mathbb{H}_{\mathcal{I}}$.

To achieve this we need to determine what and where these markings are. We first introduce the sign function on $\mathcal{S}$, the set of all s.d. intervals.

\begin{definition} \label{def: sign on s.d. interval}
	Define $s:S\rightarrow \{+,-\}$ to be the map uniquely determined by the following conditions.
	\begin{enumerate} 
		\item $s([0,1])=+$
		\item For any $A\in \mathcal{S} \setminus \{[0,1]\}$, we have $$s(A)=
		\begin{cases}
			+ & \text{if } A \in \mathcal{S}_L \text{ and } s(A \cup \overline{A})=+\\
			- & \text{if } A \in \mathcal{S}_L  \text{ and } s(A \cup \overline{A})=-\\
			+ & \text{if } A \in \mathcal{S}_R \text{ and } s(A \cup \overline{A})=-\\
			- & \text{if } A \in \mathcal{S}_R \text{ and } s(A \cup \overline{A})=+
		\end{cases}$$  
	\end{enumerate}
\end{definition}

Note that the above conditions in fact give us an inductive definition of $s$. Condition 1 tells us $s([0,1])=+$, then we know $s([0,\frac{1}{2}])=+$ and $s([\frac{1}{2},1])=-$ by condition 2. Generally, we can apply the second condition $n$ times to get the value of $s$ on an s.d. interval of length $\frac{1}{2^n}$.

Recall that $E$ is the set of rational dyadic numbers except $0$ and $1$, and we have a bijection $m: \mathcal{S} \to E$, sending s.d. intervals to their midpoints. Then we put signs on $E$ so that $A$ and $m(A)$ have the same sign for any $A \in \mathcal{S}$. Also recall Lemma \ref{lem: g restricted on E is bijection}, stating that any $g \in F$ restricted on $E$ is a bijection from $E$ to itself.

\begin{definition}
	Define $\hat{F}$ to be the subset of $F$ consisting of homeomorphisms that preserve signs on $E$.
\end{definition}

Note that if $g, h \in F$ preserve signs on $E$, then $g^{-1} h$ also preserves signs on $E$. Thus, $\hat{F}$ is a subgroup of $F$.

\begin{theorem} \label{thm: equivalent definition of oriented Thompson group}
	$\hat{F} = \vec{F}$ as subgroups of $F$.
\end{theorem}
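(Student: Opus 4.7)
The plan is to reduce Theorem \ref{thm: equivalent definition of oriented Thompson group} to a single bridging lemma: for every s.d.\ partition $\mathcal{I}$, the intrinsic sign function $s \circ m^{-1}$ restricted to $E(\mathcal{I})$ coincides with the sign function induced on $E(\mathcal{I})$ by the embedded signed tree $F_{\mathcal{I}}$ (the signing described after Figure \ref{fig: signed tree example} and packaged by $\sigma_{\mathcal{I}}$). Once this is established, both $\hat{F}$ and $\vec{F}$ are recognized as the subgroup of $F$ whose elements preserve a common signing on $E$, and the theorem follows quickly.

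I will prove the bridging lemma by induction on the leaf number of $F_{\mathcal{I}}$. For the base case $\mathcal{I} = \delta$, the set $E(\delta) = \{1/2\}$, and both signings assign $+$ (intrinsically because $s([0,1])=+$, tree-theoretically by the leftmost-region convention). For the inductive step, let $\mathcal{I}'$ be obtained from $\mathcal{I}$ by adding a caret at the $k$-th leaf, splitting $[a_{k-1},a_k]$ at its midpoint $b=(a_{k-1}+a_k)/2$. The only new points of $E(\mathcal{I}')$ to check are $(a_{k-1}+b)/2$, $(b+a_k)/2$, and $b$. On the intrinsic side, $[a_{k-1},b]\in \mathcal{S}_L$ and $[b,a_k]\in \mathcal{S}_R$ with common union $[a_{k-1},a_k]$, so Definition \ref{def: sign on s.d. interval} assigns signs $s([a_{k-1},a_k])$ and $-s([a_{k-1},a_k])$ to the two new midpoints; and $b = m([a_{k-1},a_k])$ carries sign $s([a_{k-1},a_k])$. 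On the tree side, by Remark \ref{rmk: replace some by any in def of oriented F} the $k$-th sign $\sigma_k$ of $\sigma_{\mathcal{I}}$ is replaced by $(\sigma_k,-\sigma_k)$, and the stated ``breakpoint equals the opposite of the midpoint immediately to its right'' rule gives $b$ the sign $-(-\sigma_k)=\sigma_k$. By the inductive hypothesis $s([a_{k-1},a_k])=\sigma_k$, so all three new values match. No other sign in $E(\mathcal{I})$ is affected, completing the induction.

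With the bridging lemma in hand, the containment $\vec{F}\subseteq \hat{F}$ goes as follows. Take $g\in \vec{F}$ represented by $(\mathcal{I},\mathcal{J})$ with $\sigma_{\mathcal{I}}=\sigma_{\mathcal{J}}$. The order-preserving bijection $g|_{E(\mathcal{I})}\colon E(\mathcal{I})\to E(\mathcal{J})$ from Lemma \ref{lem: g restricted on E is bijection} matches the $i$-th breakpoint and midpoint of $\mathcal{I}$ with those of $\mathcal{J}$, and since $\sigma_{\mathcal{I}}=\sigma_{\mathcal{J}}$ these correspond to equal tree-induced signs, hence equal intrinsic signs by the bridging lemma; so $g$ preserves signs on $E(\mathcal{I})$. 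For arbitrary $x\in E$, pass to a refinement $(\mathcal{I}',\mathcal{J}')\geq (\mathcal{I},\mathcal{J})$ with $x\in E(\mathcal{I}')$; Remark \ref{rmk: replace some by any in def of oriented F} preserves $\sigma_{\mathcal{I}'}=\sigma_{\mathcal{J}'}$, so the argument applies and $g\in \hat{F}$. Conversely, for $\hat{F}\subseteq \vec{F}$, take $g\in \hat{F}$ and any pair $(\mathcal{I},\mathcal{J})$ representing $g$: $g$ preserves intrinsic signs on $E(\mathcal{I})$, hence tree-induced signs by the bridging lemma, and since these signs on midpoints recover $\sigma_{\mathcal{I}}$ and $\sigma_{\mathcal{J}}$ respectively and $g$ is order-preserving, we conclude $\sigma_{\mathcal{I}}=\sigma_{\mathcal{J}}$, so $g\in \vec{F}$.

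The main technical obstacle is the caret-by-caret bookkeeping in the inductive step, lining up the tree rule ``left child keeps parent's sign, right child flips it'' from Remark \ref{rmk: replace some by any in def of oriented F} with the intrinsic rule ``left s.d.\ subinterval keeps parent's sign, right s.d.\ subinterval flips it'' built into Definition \ref{def: sign on s.d. interval}, and separately checking the new breakpoint. Because these local rules are literally the same, the induction is essentially a syntactic comparison, and no further hidden combinatorial input is needed.
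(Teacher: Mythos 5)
Your proof is correct and follows essentially the same route as the paper. You identify the same bridging lemma — that the intrinsic signs $s\circ m^{-1}$ on $E(\mathcal{I})$ agree with the tree-induced signs recorded by $\sigma_{\mathcal{I}}$ — and your two containment arguments are identical in structure to the paper's (pick a representing pair with $a$ a breakpoint for $\vec{F}\subseteq\hat{F}$, compare signs via order-preservation of $g|_{E(\mathcal{I})}$ for both directions). The only difference is cosmetic: you prove the bridging lemma by induction on leaf number via caret addition, whereas the paper factors the same observation through the two sign-preserving bijections $\epsilon_{\mathcal{I}}\colon \mathcal{S}(\mathcal{I})\to\mathcal{E}(F_{\mathcal{I}})$ and $\phi_{\mathcal{I}}\colon \mathcal{E}(F_{\mathcal{I}})\to E(\mathcal{I})$; both amount to matching the local recursion in Definition \ref{def: sign on s.d. interval} against the local region/edge-signing rules. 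One minor slip worth noting: $b=(a_{k-1}+a_k)/2$ is not a new point of $E(\mathcal{I}')$ — it was already in $E(\mathcal{I})$ as the midpoint of $[a_{k-1},a_k]$ and merely changes role to a breakpoint, so your inductive step should phrase this as checking that its sign is preserved under the role change (which your computation in fact does), with the genuinely new points being the two new midpoints.
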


Before we prove Theorem \ref{thm: equivalent definition of oriented Thompson group}, we first give an equivalent definition of the canonical orientation of $T_I$. Recall that given an s.d. $n$-partition $\mathcal{I}$, the embedded tree $F_{\mathcal{I}}$ separates $[0,1]\times [0,1]$ into $(n+1)$ regions, and each region has a sign except the rightmost one. Now we assign signs to all of the edges of $F_{\mathcal{I}}$ instead, by letting each edge have the same sign as the region on its left. Correspondingly, rules of assigning signs on regions become rules of assigning signs on edges as Figure \ref{fig: signing rules on edges} shows, and the requirement that the leftmost region has sign $+$ becomes the requirement that the uppermost edge has sign $+$.

In Section \ref{sec: some general theories of sdp} we saw a bijection $\epsilon_{\mathcal{I}}: \mathcal{S}(\mathcal{I}) \to \mathcal{E}(F_{\mathcal{I}})$ sending left intervals to left edges, right intervals to right edges. The signs on $S(\mathcal{I})$ are given by the sign function $s$, defined by the recursive rules in Definition \ref{def: sign on s.d. interval}. Note that those rules are the same as the recursive signing rules (Figure \ref{fig: signing rules on edges}) on $\mathcal{E}(F_{\mathcal{I}})$ up to the bijection $\epsilon_{\mathcal{I}}$. Then we get the following lemma.

\begin{lemma} \label{lem: interval to edge is sign preserving}
	$\epsilon_{\mathcal{I}}: \mathcal{S}(\mathcal{I}) \to \mathcal{E}(F_{\mathcal{I}})$ is a sign-preserving bijection.
\end{lemma}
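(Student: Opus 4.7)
The plan is to induct on the depth of the edge in $F_{\mathcal{I}}$ (equivalently, on $-\log_2$ of the length of the s.d. interval, starting from the top), using the fact that both the sign function $s$ on $\mathcal{S}(\mathcal{I})$ and the edge-signing rules on $\mathcal{E}(F_{\mathcal{I}})$ are defined by the same recursive prescription once we identify ``left child / right child'' of an edge with ``left conjugate / right conjugate'' of an s.d. interval.

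First I would set up the base case. The uppermost edge of $F_{\mathcal{I}}$ is $\epsilon_{\mathcal{I}}([0,1])$ and carries sign $+$ by convention (the requirement on Figure \ref{fig: signing rules on edges} that the uppermost edge is $+$). On the interval side, $s([0,1]) = +$ by condition (1) of Definition \ref{def: sign on s.d. interval}. So $\epsilon_{\mathcal{I}}$ is sign-preserving on $\{[0,1]\}$.

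Next, for the inductive step, let $A \in \mathcal{S}(\mathcal{I}) \setminus \{[0,1]\}$, and let $B = A \cup \overline{A}$. By Lemma \ref{lem: conjugate in SDI}, $B \in \mathcal{S}(\mathcal{I})$, and $B$ is strictly larger (hence strictly shallower in $F_{\mathcal{I}}$) than $A$. By the inductive hypothesis, $s(B)$ equals the sign assigned to $\epsilon_{\mathcal{I}}(B)$. Now I would invoke the already-noted fact that $\epsilon$ sends left (resp.\ right) s.d. intervals to left (resp.\ right) edges, and that the splitting $\epsilon(B) \to \epsilon(A), \epsilon(\overline{A})$ at a trivalent vertex matches the decomposition $B = A \cup \overline{A}$. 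The recursive rules in Figure \ref{fig: signing rules on edges} say: the left child of a $+$-edge is $+$, its right child is $-$; the left child of a $-$-edge is $-$, its right child is $+$. This is exactly the four-case prescription in condition (2) of Definition \ref{def: sign on s.d. interval}, read with ``$A \in \mathcal{S}_L$'' meaning ``$\epsilon(A)$ is a left child'' and ``$A \in \mathcal{S}_R$'' meaning ``$\epsilon(A)$ is a right child''. Hence $s(A)$ equals the sign of $\epsilon_{\mathcal{I}}(A)$, completing the induction.

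The only step that requires any care is verifying the translation between the two recursive rules, which amounts to matching Figure \ref{fig: signing rules on edges} case by case with the four clauses of Definition \ref{def: sign on s.d. interval}; this is a direct check using that $\epsilon$ respects the left/right distinction. Bijectivity of $\epsilon_{\mathcal{I}}$ is already given, so sign-preservation is the entire content of the lemma, and the induction terminates after finitely many steps because $F_{\mathcal{I}}$ is a finite tree.
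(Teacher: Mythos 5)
Your proof is correct and takes essentially the same approach as the paper: the paper simply observes (in the paragraph preceding the lemma) that the recursive prescription for $s$ in Definition \ref{def: sign on s.d. interval} coincides with the edge-signing rules of Figure \ref{fig: signing rules on edges} under the bijection $\epsilon_{\mathcal{I}}$, and treats that as the proof. Your version merely makes the implicit induction explicit, anchoring it at $[0,1]$ and descending the tree via $B = A \cup \overline{A}$, which is a faithful (and slightly more careful) write-up of the same idea.
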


Notice that we can get the canonical orientation of $T_{\mathcal{I}}$ using signs on the edges of $F_{\mathcal{I}}$ instead, because we just moved signs from regions to edges. So similar to the local rules to assign orientation of $T_{\mathcal{I}}$ in terms of signed regions (see Figure \ref{fig: local tangle orientation in terms of signed regions}), we have corresponding local rules based on signed edges, shown in Figure \ref{fig: local tangle orientation in terms of signed edges}. See Figure \ref{fig: tangle orientation from signed edges example} for an example of recovering the canonical orientation of $T_{\mathcal{I}}$ from the signed edges of $F_{\mathcal{I}}$.

\begin{figure}
    \centering
    \begin{subfigure}{0.4\textwidth}
        \includegraphics[width = \textwidth]{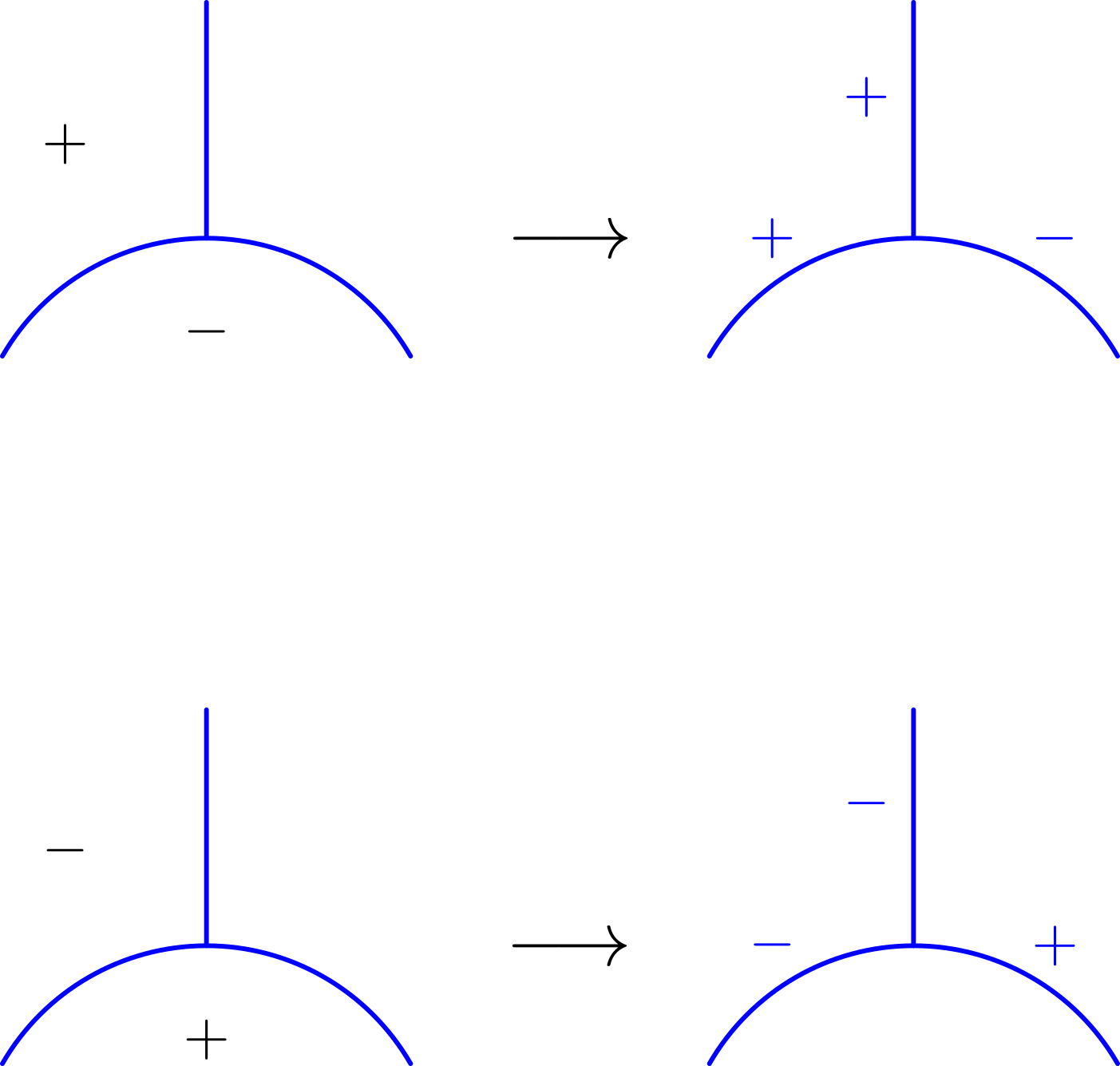}
        \caption{}
        \label{fig: signing rules on edges}
    \end{subfigure}
    \hfill
    \begin{subfigure}{0.4\textwidth}
        \includegraphics[width = \textwidth]{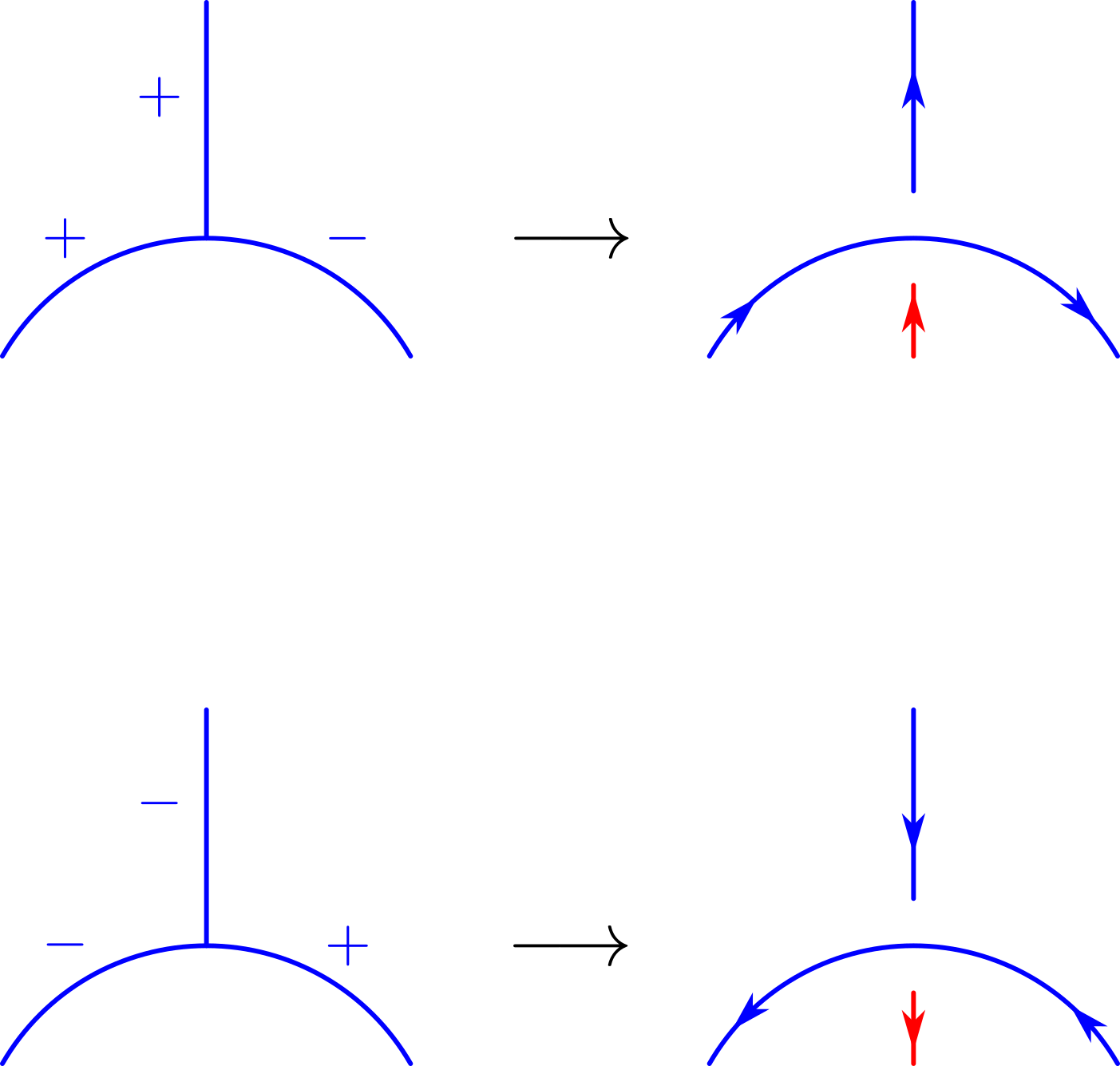}
        \caption{}
        \label{fig: local tangle orientation in terms of signed edges}
    \end{subfigure}
	\vspace{1cm}
	\vfill
	\begin{subfigure}{\textwidth}
		\includegraphics[width = \textwidth]{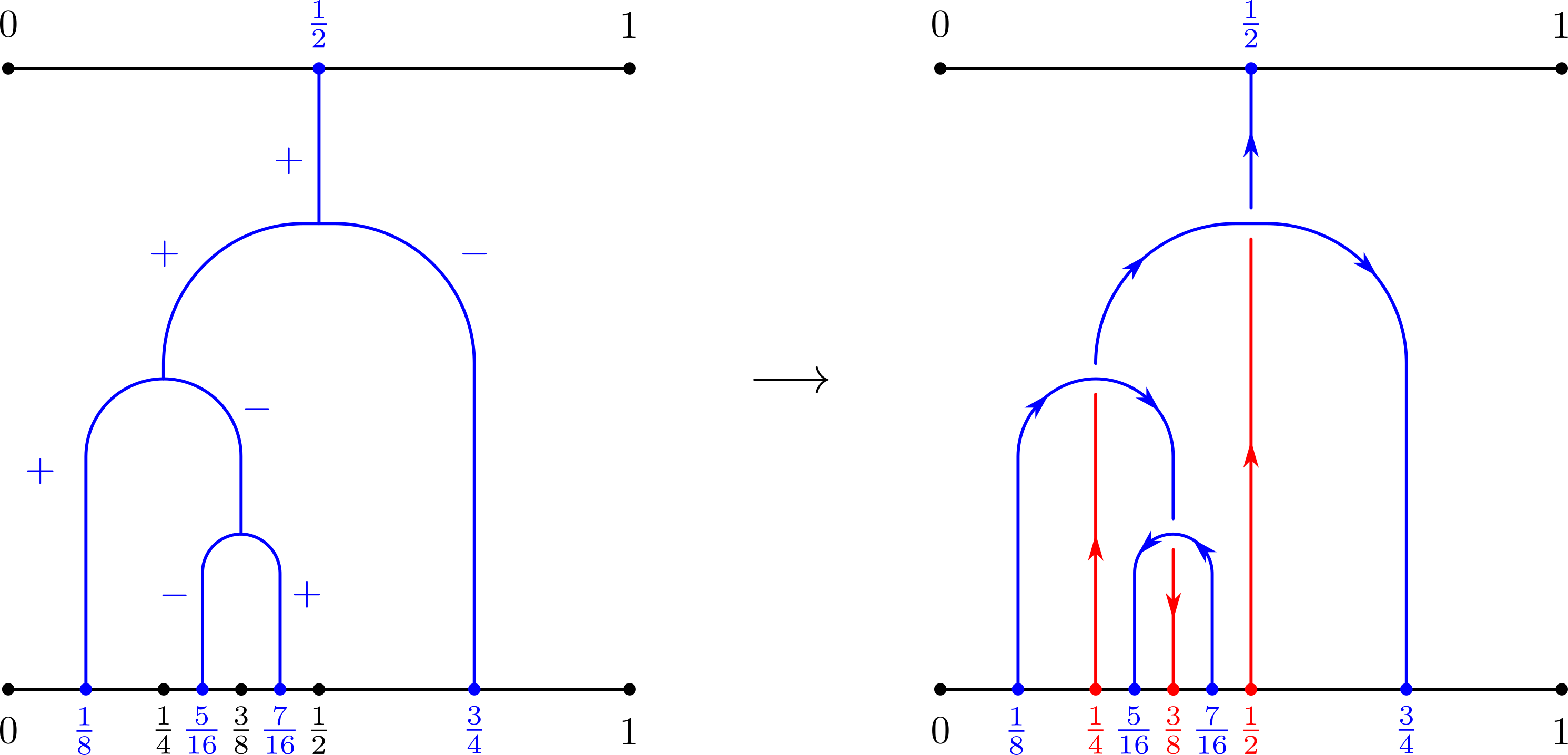}
		\put(-450, 15){$\mathcal{I}$}
	    \put(-450, 185){$\delta$}
	    \put(-450, 120){$\color{blue} F_{\mathcal{I}}$}
	    \put(-195, 15){$\mathcal{I}$}
	    \put(-195, 185){$\delta$}
	    \put(0, 120){$T_{\mathcal{I}}$}
		\caption{}
		\label{fig: tangle orientation from signed edges example}
	\end{subfigure}
    \caption{(a) Two local pictures on the left are the signing rules of $F_{\mathcal{I}}$ on regions. Two local pictures on the right are the corresponding signing rules of $F_{\mathcal{I}}$ on edges, obtained by letting each edge have the same sign as the region on its left. (b) Local rules to give the canonical orientation of $T_\mathcal{I}$ in terms of signed edges, obtained by combining Figure \ref{fig: local tangle orientation in terms of signed regions} and \ref{fig: signing rules on edges}. (c) An example that signs on edges of $F_{\mathcal{I}}$ give orientation of $T_{\mathcal{I}}$. Notice the correspondence between the sign on an edge of $F_{\mathcal{I}}$ and the sign on the lower endpoint of $T_{\mathcal{I}}$ below this edge.}
\end{figure}

\bigskip

Recall that each point in $E(\mathcal{I})$ has a sign induced by the canonical orientation of $T_{\mathcal{I}}$.

\begin{lemma}
	$\phi_{\mathcal{I}}: \mathcal{E}(F_{\mathcal{I}}) \to E(\mathcal{I})$ is a sign-preserving bijection.
\end{lemma}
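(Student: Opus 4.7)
The plan is to handle bijectivity and sign-preservation separately, with the former being essentially a restatement of what the text has already observed, and the latter being a case-by-case local verification.

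For bijectivity, I will appeal to the factorization $\phi_{\mathcal{I}} \circ \epsilon_{\mathcal{I}} = m|_{\mathcal{S}(\mathcal{I})}$ noted in the discussion preceding the definition of $\phi_{\mathcal{I}}$. Since $\epsilon_{\mathcal{I}} : \mathcal{S}(\mathcal{I}) \to \mathcal{E}(F_{\mathcal{I}})$ is a bijection by construction and $m|_{\mathcal{S}(\mathcal{I})} : \mathcal{S}(\mathcal{I}) \to E(\mathcal{I})$ is a bijection by Lemma \ref{lem: taking midpoint is a bijection from S to E}, it follows immediately that $\phi_{\mathcal{I}} = m|_{\mathcal{S}(\mathcal{I})} \circ \epsilon_{\mathcal{I}}^{-1}$ is a bijection.

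For sign-preservation, I will fix an edge $e \in \mathcal{E}(F_{\mathcal{I}})$ and unwind the two signs to be compared. On the left, the sign of $e$ is the one produced by the recursive rules of Figure \ref{fig: signing rules on edges}, equivalently the sign of the region immediately to the left of $e$. On the right, the sign of $\phi_{\mathcal{I}}(e)$ is the one induced by the canonical orientation of $T_{\mathcal{I}}$ on its lower boundary, where upward equals $+$ and downward equals $-$. The plan is to trace how the sign on $e$ controls the orientation of the strand of $T_{\mathcal{I}}$ whose lower endpoint is $\phi_{\mathcal{I}}(e)$, distinguishing the two ways an edge can sit in $F_{\mathcal{I}}$. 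If $e$ is a lowermost edge, then $e$ itself is extended directly to the Type A arc $a(e)$ terminating at $\phi_{\mathcal{I}}(e)$, and the local rules in Figure \ref{fig: local tangle orientation in terms of signed edges} show that a $+$-signed edge yields an upward strand at the endpoint (sign $+$) and a $-$-signed edge yields a downward strand (sign $-$). If $e$ is non-lowermost, the strand consists of $a(e)$ followed by the Type B vertical arc $b(e)$ joining the lower trivalent vertex of $e$ to the breakpoint $\phi_{\mathcal{I}}(e)$ on the bottom; because a vertical Type B arc carries orientation faithfully from top to bottom, the sign at $\phi_{\mathcal{I}}(e)$ still agrees with the sign of $e$.

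I expect the argument to be essentially mechanical: there is no genuine obstacle, since the canonical orientation of $T_{\mathcal{I}}$ was constructed precisely using the edge-signing rules of Figure \ref{fig: signing rules on edges} together with the local conversion rules of Figure \ref{fig: local tangle orientation in terms of signed edges}. The only thing that needs a moment of care is the non-lowermost case, where one must explicitly note that the Type B vertical arc does not flip the orientation (it is a straight vertical segment). Once this is observed, the identification $s(e) = s(\phi_{\mathcal{I}}(e))$ for every $e \in \mathcal{E}(F_{\mathcal{I}})$ follows at once, finishing the lemma.
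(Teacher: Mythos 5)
Your proposal is correct and takes essentially the same approach as the paper: bijectivity follows from the factorization $\phi_{\mathcal{I}} \circ \epsilon_{\mathcal{I}} = m|_{\mathcal{S}(\mathcal{I})}$ already noted in the text, and sign-preservation is verified by the same two-case analysis (the paper splits on whether $p \in E(\mathcal{I})$ is a midpoint or a breakpoint, which is exactly your lowermost/non-lowermost edge dichotomy seen from the other side of the bijection), with the Type~B vertical arc observed to carry the orientation faithfully from $a(e)$ down to the breakpoint.
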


\begin{proof}
	Suppose that $p\in E(\mathcal{I})$, then $p$ is either a breakpoint of $\mathcal{I}$ or a midpoint of a subinterval of $\mathcal{I}$. If $p$ is a breakpoint, then it is attached to a Type B arc $b$ of $T_{\mathcal{I}}$, which extends Type A arc $a(\phi_{\mathcal{I}}^{-1}(p))$ to $p$. From Figure \ref{fig: local tangle orientation in terms of signed edges} and \ref{fig: tangle orientation from signed edges example}, we can observe that a Type A arc $a(e)$ has an upward orientation if $e$ has sign $+$, downward orientation if $e$ has sign $-$. Since the sign of $p$ is induced by $b$, which extends the orientation of $a(\phi_{\mathcal{I}}^{-1}(p))$, we must have the same sign on $p$ and $\phi_{\mathcal{I}}^{-1}(p)$. If $p$ is a midpoint, then it is attached to Type A arc $a(\phi_{\mathcal{I}}^{-1}(p))$ directly, so $p$ and $\phi_{\mathcal{I}}^{-1}(p)$ have the same sign.
\end{proof}

Thus, we have a sign preserving bijection $m|_{\mathcal{S}(\mathcal{I})} = \phi_{\mathcal{I}} \circ \epsilon_{\mathcal{I}}: \mathcal{S}(\mathcal{I}) \to E(\mathcal{I})$, where the signs on $\mathcal{S}(\mathcal{I})$ are given by the sign function $s$, the signs on $E(\mathcal{I})$ are induced by the canonical orientation of $T_\mathcal{I}$. In other words, the signs on $E(\mathcal{I})$ can be equivalently obtained from the signs on $\mathcal{S}(\mathcal{I})$, by taking midpoints. Notice that it is exactly how we defined signs on $E$.

\begin{proof} [Proof of Theorem \ref{thm: equivalent definition of oriented Thompson group}]
	Suppose that $g \in \vec{F}$. For any $a \in E$, we can find an s.d. partition pair $(\mathcal{I}, \mathcal{J})$ representing $g$ such that $a$ is a breakpoint of $\mathcal{I}$. Remember that the signs on $E(\mathcal{I})$ and $E(\mathcal{J})$ are determined by $\sigma_{\mathcal{I}}$ and $\sigma_{\mathcal{J}}$ , respectively. So $\sigma_{\mathcal{I}} = \sigma_{\mathcal{J}}$ implies that the $i^{th}$ sign of $E(\mathcal{I})$ and the $i^{th}$ sign of $E(\mathcal{J})$ are same for any $i$. By Lemma \ref{lem: g restricted on E is bijection} and the fact that $g$ is increasing, we know that $g$ sends the $i^{th}$ point of $E(\mathcal{I})$ to the $i^{th}$ point of  $E(\mathcal{J})$ for any $i$. Thus $a \in E(\mathcal{I})$ and $g(a) \in E(\mathcal{J})$ have the same sign. It is true for any $a \in E$, so $g \in \hat{F}$.
	
	Conversely, suppose that $g \in \hat{F}$ preserves signs on $E$. Take an s.d. partition pair $(\mathcal{I}, \mathcal{J})$ representing $g$, then for any $a \in E(\mathcal{I})$, $a$ and $g(a)$ have the same sign. In other words, the $i^{th}$ sign of $E(\mathcal{I})$ and the $i^{th}$ sign of $E(\mathcal{J})$ are same for any $i$. Recall that $\sigma_{\mathcal{I}}$ and $\sigma_{\mathcal{J}}$ can be obtained by taking signs of midpoints in $E(\mathcal{I})$ and $E(\mathcal{J})$ respectively, so we have $\sigma_{\mathcal{I}} = \sigma_{\mathcal{J}}$. Thus $g \in \vec{F}$.
\end{proof}

Before we move on to the next section, let's first introduce some notations related to signs on $\mathcal{S}$. We denote $\mathcal{S}_+ = \{A\in \mathcal{S}: s(A)=+\}$ and $\mathcal{S}_- = \{A\in \mathcal{S}: s(A)=-\}$. An element in $\mathcal{S}_+$ is called a positive s.d. interval, an element in $\mathcal{S}_-$ is called a negative s.d. interval. From the definition of $s$, we can easily observe that taking conjugate of $A\in \mathcal{S}$ will change its sign.

Combine this observation with Lemma \ref{lem: cardinality of SDI}, and notice that $[0,1]\in \mathcal{S}_+$ has no conjugate, we have the following lemma.

\begin{lemma} \label{lem: cardinality of signed SDI}
	Suppose that $\mathcal{I}$ is an s.d. $n$-partition, then
	\begin{enumerate}
		\item $|\mathcal{S}_+(\mathcal{I})| = n$
		\item $|\mathcal{S}_-(\mathcal{I})| = n - 1$
	\end{enumerate}
\end{lemma}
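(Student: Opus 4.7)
The plan is to pair up elements of $\mathcal{S}(\mathcal{I}) \setminus \{[0,1]\}$ via the conjugation involution $A \mapsto \overline{A}$, observing that within each pair exactly one element is positive and one is negative. Since $[0,1] \in \mathcal{S}_+(\mathcal{I})$ has no conjugate, this contributes one unpaired positive interval, which accounts for the asymmetry between $|\mathcal{S}_+(\mathcal{I})|$ and $|\mathcal{S}_-(\mathcal{I})|$.

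First I would check that conjugation restricts to an involution on $\mathcal{S}(\mathcal{I}) \setminus \{[0,1]\}$. Given $A \in \mathcal{S}(\mathcal{I}) \setminus \{[0,1]\}$, Lemma \ref{lem: conjugate in SDI} gives $A \cup \overline{A} \in \mathcal{S}(\mathcal{I})$. The interval $A \cup \overline{A}$ is not a subinterval of $\mathcal{I}$ because it properly contains the breakpoint separating $A$ and $\overline{A}$ in its interior; hence the second part of Lemma \ref{lem: conjugate in SDI} yields $A \cup \overline{A} = A' \cup \overline{A'}$ for some $A' \in \mathcal{S}(\mathcal{I})$. Necessarily $\{A',\overline{A'}\} = \{A, \overline{A}\}$, so $\overline{A} \in \mathcal{S}(\mathcal{I})$, and conjugation is a well-defined involution on the set in question.

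Next, I would observe that this involution reverses the sign function $s$. This is immediate from Definition \ref{def: sign on s.d. interval}: reading off the four cases, $A$ and $\overline{A}$ share the same parent $A \cup \overline{A}$ but lie on opposite sides (left versus right), so they receive opposite signs. Thus the involution is fixed-point-free on $\mathcal{S}(\mathcal{I}) \setminus \{[0,1]\}$ and exchanges $\mathcal{S}_+ \cap (\mathcal{S}(\mathcal{I}) \setminus \{[0,1]\})$ with $\mathcal{S}_-(\mathcal{I})$ bijectively.

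Finally, by Lemma \ref{lem: cardinality of SDI} we have $|\mathcal{S}(\mathcal{I})| = 2n - 1$, so $|\mathcal{S}(\mathcal{I}) \setminus \{[0,1]\}| = 2n - 2$, which partitions into $n-1$ conjugation orbits. Each orbit contributes exactly one element to $\mathcal{S}_+(\mathcal{I})$ and one to $\mathcal{S}_-(\mathcal{I})$, and then including $[0,1] \in \mathcal{S}_+(\mathcal{I})$ yields $|\mathcal{S}_+(\mathcal{I})| = n$ and $|\mathcal{S}_-(\mathcal{I})| = n - 1$. The only real subtlety is confirming that conjugation preserves $\mathcal{S}(\mathcal{I})$ (as opposed to just preserving $\mathcal{S}$); this is the step that forces the appeal to Lemma \ref{lem: conjugate in SDI} rather than working purely from the recursive definition of $s$.
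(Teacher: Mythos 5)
Your overall strategy---pairing elements of $\mathcal{S}(\mathcal{I}) \setminus \{[0,1]\}$ under the sign-reversing conjugation involution and observing that $[0,1]$ is the unpaired positive interval---is precisely the argument the paper intends; the paper states it only as a one-line remark combining the sign-reversal observation with Lemma \ref{lem: cardinality of SDI} and the fact that $[0,1]$ has no conjugate. You are right to isolate the point that conjugation actually preserves $\mathcal{S}(\mathcal{I})$, which the paper glosses over.

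However, the step by which you deduce $\overline{A} \in \mathcal{S}(\mathcal{I})$ has a logical gap. The second part of Lemma \ref{lem: conjugate in SDI} gives you \emph{some} $A' \in \mathcal{S}(\mathcal{I})$ with $A' \cup \overline{A'} = A \cup \overline{A}$, whence $A' \in \{A, \overline{A}\}$. But that lemma only asserts $A' \in \mathcal{S}(\mathcal{I})$, not $\overline{A'} \in \mathcal{S}(\mathcal{I})$; in the case $A' = A$ (perfectly consistent with the existential quantifier) you have learned nothing beyond $A \in \mathcal{S}(\mathcal{I})$, which you already knew, and $\overline{A} \in \mathcal{S}(\mathcal{I})$ does not follow. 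The claim is nonetheless true and can be obtained directly: since $A$ and $A \cup \overline{A}$ both lie in $\mathcal{S}(\mathcal{I})$, all four of their endpoints are breakpoints of $\mathcal{I}$, and the two endpoints of $\overline{A}$ (one shared with $A$, one shared with $A \cup \overline{A}$) are among these, so $\overline{A} \in \mathcal{S}(\mathcal{I})$. Equivalently, through the bijection $\epsilon_{\mathcal{I}}$, the edge $\epsilon(A)$ is non-uppermost in $F_{\mathcal{I}}$ and so has a sibling edge $\epsilon(\overline{A})$ in $F_{\mathcal{I}}$. With this replacement, the rest of your count goes through exactly as written.
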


\subsection{Half grid construction}
\label{sec: grid construction}

Given an s.d. $n$-partition $\mathcal{I}$, now we are ready to define position functions $p_x$ and $p_y$ on $\mathcal{S}(\mathcal{I})$, through the following two orders on $\mathcal{S}$.

\begin{definition}
	Let $(\mathcal{S}, \leq_{mid}, \leq_{len})$ represent the set $\mathcal{S}$ with two partial orders $\leq_{mid}$ and $\leq_{len}$. For $A,B\in \mathcal{S}$, we define $A\leq_{mid} B$ if $m(A) \leq m(B)$. Define $A \leq_{len} B$ if either the length of $A$ is strictly less than the length of $B$, or the length of $A$ equals the length $B$ and $A \leq_{mid} B$.
\end{definition}

It is easy to check that both of $\leq_{mid}$ and $\leq_{len}$ are total orders on $\mathcal{S}$.

We first restrict the midpoint order $\leq_{mid}$ on $\mathcal{S}(\mathcal{I})$. Since $\leq_{mid}$ is a total order, elements in $\mathcal{S}(\mathcal{I})$ can be ordered as $A_1 <_{mid} A_2 <_{mid} {...} <_{mid} A_{2n-1}$. We define a bijection

\begin{align*}
	p_x: \mathcal{S}(\mathcal{I})  &\to \{2,3,...,2n\} \\
	A_i &\mapsto i+1, \quad i = 1,2,...,2n-1
\end{align*}

Then restrict the length order on $\mathcal{S}_+(\mathcal{I})$ and reorder elements in $S_+(I)$ as $B_1 <_{len} B_2 <_{len} ... <_{len} B_{n}$. Notice that $B_n$ is always $[0, 1]$. We define

\begin{align*}
	p_y: & \mathcal{S}(\mathcal{I}) \to \{1,2,...,n\} \\
	&\begin{cases}
		B_i \mapsto i, & i=1,2,...,n\\
		 \overline{B_i} \mapsto i, & i=1,2,...,n-1
	\end{cases}
\end{align*}

Then $p_y|_{\mathcal{S}_+(\mathcal{I})}: \mathcal{S}_+(\mathcal{I}) \to \{1,2,...,n\}$ and $p_y|_{\mathcal{S}_-(\mathcal{I})}: \mathcal{S}_-(\mathcal{I}) \to \{1,2,...,n-1\}$ are both bijections.

Next, we will show how to construct an $n \times 2n$ half grid diagram associated to $\mathcal{I}$. First, we always assign marking $O$ to the $(1,n)^{th}$ entry by default. Then we let each s.d. interval $A\in \mathcal{S}(\mathcal{I})$ assign a marking to the $(p_x(A), p_y(A))^{th}$ entry in the  $n \times 2n$ grid, where the marking is $X$ if $A\in \mathcal{S}_+(\mathcal{I})$, $O$ if $A\in \mathcal{S}_-(\mathcal{I})$. We denote the resulting diagram as $\mathbb{H}_\mathcal{I}$.

\begin{proposition} {\label{prop: half grid diagram from SDI}}
	$\mathbb{H}_{\mathcal{I}}$ is a half grid diagram.
\end{proposition}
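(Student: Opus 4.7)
The proposition follows by directly verifying the two clauses of Definition \ref{def: half grid diagram}, namely that each row of $\mathbb{H}_\mathcal{I}$ contains exactly two nonempty entries (one $X$ and one $O$) and each column contains exactly one nonempty entry. The proof is essentially a bookkeeping argument leveraging the bijective properties of $p_x$ and $p_y$ together with the cardinality counts in Lemma \ref{lem: cardinality of signed SDI}, so I will lay out the plan in two short steps and flag where a mild subtlety appears.

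\smallskip

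\textbf{Row condition.} For each $i \in \{1, \ldots, n\}$, the markings in the $i$-th row of $\mathbb{H}_\mathcal{I}$ are precisely those $(p_x(A), p_y(A))$ with $p_y(A) = i$, together with the default $O$ at $(1,n)$ when $i = n$. I would first handle $1 \leq i \leq n-1$: by construction $p_y|_{\mathcal{S}_+(\mathcal{I})}$ is a bijection onto $\{1,\ldots,n\}$ and $p_y|_{\mathcal{S}_-(\mathcal{I})}$ is a bijection onto $\{1,\ldots,n-1\}$, so row $i$ contains exactly one marking $X$ (coming from the unique positive interval mapped to $i$) and exactly one marking $O$ (coming from the unique negative interval mapped to $i$); injectivity of $p_x$ places them in distinct columns. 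For $i = n$, no negative interval maps to row $n$, and the only positive interval with $p_y$-value equal to $n$ is $[0,1]$ (since $[0,1]$ is the largest element of $\mathcal{S}_+(\mathcal{I})$ under $\leq_{len}$), so row $n$ receives the $X$ at $(p_x([0,1]), n)$ together with the default $O$ at $(1,n)$.

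\smallskip

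\textbf{Column condition.} Since $p_x: \mathcal{S}(\mathcal{I}) \to \{2,3,\ldots,2n\}$ is a bijection, every column $j \in \{2,\ldots,2n\}$ receives exactly one marking from the unique interval with $p_x(A) = j$. Column $1$ receives no contribution from $\mathcal{S}(\mathcal{I})$ because $1 \notin \mathrm{image}(p_x)$, so it contains only the default $O$ at $(1,n)$. This gives exactly one nonempty square in each column.

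\smallskip

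\textbf{Main point of care.} The only place where the default $O$ could cause trouble is row $n$: one must ensure the $X$ from $[0,1]$ and the default $O$ lie in different columns. This is automatic from the codomain of $p_x$: since $p_x$ sends $\mathcal{S}(\mathcal{I})$ into $\{2,\ldots,2n\}$, we have $p_x([0,1]) \geq 2 \neq 1$. Everything else is driven by the bijection statements, which in turn rest on Lemma \ref{lem: cardinality of signed SDI} giving $|\mathcal{S}_+(\mathcal{I})| = n$ and $|\mathcal{S}_-(\mathcal{I})| = n-1$, together with $|\mathcal{S}(\mathcal{I})| = 2n-1$ from Lemma \ref{lem: cardinality of SDI}, so that the codomains of $p_x$ and $p_y|_{\mathcal{S}_\pm(\mathcal{I})}$ have matching cardinalities. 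I do not anticipate any substantive obstacle; the proof is a clean verification once these bijections and cardinalities are recorded.
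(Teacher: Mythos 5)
Your proof is correct and follows essentially the same argument as the paper: both rely on the bijectivity of $p_x$ onto $\{2,\ldots,2n\}$ for the column condition and the bijectivity of $p_y$ restricted to $\mathcal{S}_+(\mathcal{I})$ and $\mathcal{S}_-(\mathcal{I})$ (with $[0,1]$ and the default $O$ handling row $n$) for the row condition. The only difference is organizational, and your explicit remark that $p_x([0,1]) \geq 2$ prevents a collision with the default $O$ at $(1,n)$ is a small subtlety the paper leaves implicit.
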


\begin{proof}
	First, it is impossible that there exists any entry with two markings, because $p_x: \mathcal{S}(\mathcal{I}) \to \{2, 3, ..., 2n\}$ is a bijection.

	Then we need to show that each column has exactly one marking and each row has an $X$ and an $O$. Indeed, the $O$ at the $(1,n)^{th}$ entry is the only marking in the first column. For any $i \geq 2$, let $A = p_x^{-1}(i)$, then the $X$ or $O$ at the $(p_x(A), p_y(A))$ entry is the only marking in the $i^{th}$ column, because $p_x$ is a bijection.
	For any $j \geq 1$, let $B = (p_y|_{\mathcal{S}_+(\mathcal{I})})^{-1}(j) \in \mathcal{S}_+(\mathcal{I})$, then the $X$ at the $(p_x(B), p_y(B))$ entry is the only $X$ in the $j^{th}$ row, because $p_y|_{\mathcal{S}_+(\mathcal{I})}$ is a bijection. If $j\leq n-1$, then $B \neq [0,1]$. By the definition of $p_y$, we have $p_y(\overline{B})=p_y(B)=j$ . Then the $O$ at the $(p_{x}(\overline{B}), p_y(\overline{B}))^{th}$ entry is the only $O$ in the $j^{th}$ row, because $p_y|_{\mathcal{S}_-(\mathcal{I})}$ is a bijection. At last, the $O$ at the $(1,n)$ entry is the only $O$ in the $n^{th}$ row.
\end{proof}

\begin{figure}
	\centering
	\includegraphics[width = \textwidth]{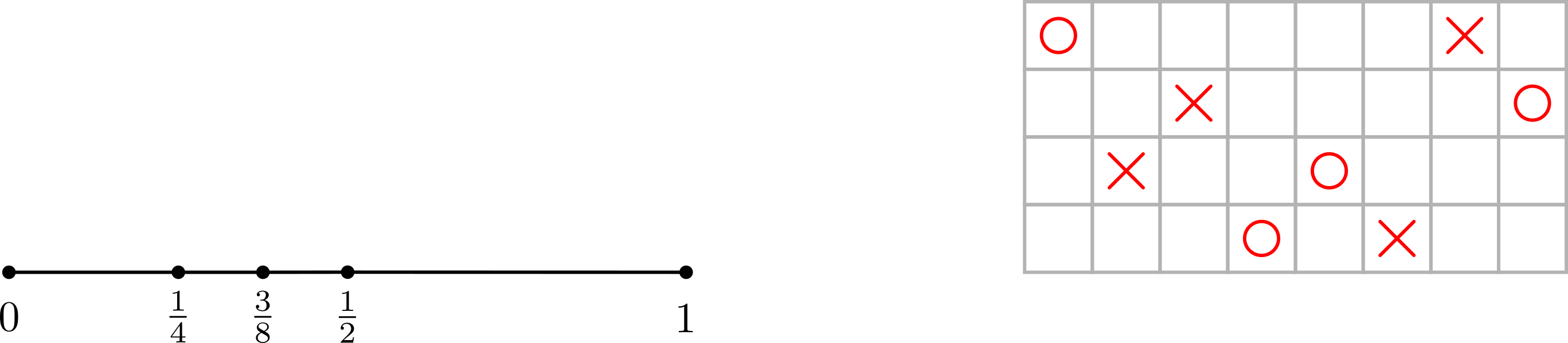}
	\put(-450, 15){$\mathcal{I}$}
	\put(-180, 50){$\mathbb{H}_{\mathcal{I}}$}
	\caption{An s.d. partition $\mathcal{I}$ and its corresponding half grid diagram $\mathbb{H}_{\mathcal{I}}$.}
	\label{fig: sdp to half grid diagram example}
\end{figure}

Now suppose that we have a pair of s.d. partitions $(\mathcal{I}, \mathcal{J})$ with same number of breakpoints, we want to know whether    $\mathbb{H}_{\mathcal{I}}$ and $\mathbb{H}_{\mathcal{J}}$ are compatible, so that we can put them together to obtain a complete grid diagram $(\mathbb{H}_{\mathcal{I}}, \mathbb{H}_{\mathcal{J}})$. It turns out that the answer is affirmative if $(\mathcal{I}, \mathcal{J})$ represents an element $g \in \vec{F}$. Recall that such a pair is called a compatible pair in Definition \ref{def: compatible pair of s.d. partitions}.

\begin{proposition} \label{prop: compatible half grid diagrams}
	If $(\mathcal{I}, \mathcal{J})$ is a compatible s.d. partition pair, then $(\mathbb{H}_{\mathcal{I}},\mathbb{H}_{\mathcal{J}})$ is a compatible half grid pair.
\end{proposition}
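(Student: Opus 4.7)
The plan is to verify compatibility column-by-column. Column $1$ of both $\mathbb{H}_{\mathcal{I}}$ and $\mathbb{H}_{\mathcal{J}}$ contains only the default $O$ at the $(1,n)$-th entry, since every marking produced by $p_x$ lies in columns $2$ through $2n$. So the issue reduces to columns $i \in \{2, \dots, 2n\}$: letting $A_i \in \mathcal{S}(\mathcal{I})$ and $B_i \in \mathcal{S}(\mathcal{J})$ be the unique intervals with $p_x(A_i) = p_x(B_i) = i$, one must show $s(A_i) = s(B_i)$, since by construction the symbol in column $i$ is $X$ when the corresponding interval is positive and $O$ when it is negative.

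Next I would push this sign equality through the midpoint bijection. By Lemma \ref{lem: taking midpoint is a bijection from S to E}, $m$ restricts to bijections $\mathcal{S}(\mathcal{I}) \to E(\mathcal{I})$ and $\mathcal{S}(\mathcal{J}) \to E(\mathcal{J})$, and by the definition of $p_x$ (ordering by midpoint) $m(A_i)$ is precisely the $(i-1)$-th smallest element of $E(\mathcal{I})$, while $m(B_i)$ is the $(i-1)$-th smallest element of $E(\mathcal{J})$. Recall also that the signs assigned to $E$ were defined exactly so that $s(A)$ coincides with the sign of $m(A)$. Thus the goal rewrites as: the $(i-1)$-th smallest elements of $E(\mathcal{I})$ and of $E(\mathcal{J})$ carry the same sign in $E$.

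Finally, I would invoke $g$. Since $(\mathcal{I}, \mathcal{J})$ represents some $g \in \vec{F}$, Theorem \ref{thm: equivalent definition of oriented Thompson group} yields $g \in \hat{F}$, i.e., $g$ preserves signs on $E$. By Lemma \ref{lem: g restricted on E is bijection}, $g|_{E(\mathcal{I})}\colon E(\mathcal{I}) \to E(\mathcal{J})$ is a bijection, and because $g$ is orientation-preserving on $[0,1]$ this bijection is strictly increasing. Hence $g$ carries the $(i-1)$-th smallest element of $E(\mathcal{I})$ to the $(i-1)$-th smallest element of $E(\mathcal{J})$, namely $g(m(A_i)) = m(B_i)$. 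Sign preservation on $E$ then forces $s(A_i) = s(B_i)$, which is what was needed.

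The main conceptual move is the reinterpretation in the second paragraph: once the column-compatibility question is converted into a pointwise sign-preservation question on $E$, Theorem \ref{thm: equivalent definition of oriented Thompson group} does all the work. I do not expect any genuine obstacle beyond keeping track of the triple identification between $\mathcal{S}(\mathcal{I})$ ordered by $\leq_{mid}$, $E(\mathcal{I}) \subset (0,1)$ ordered by value, and the sign function on $\mathcal{S}$ versus its transport to $E$; carefully unraveling these three viewpoints is the only bookkeeping required.
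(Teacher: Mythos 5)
Your proposal is correct and follows essentially the same route as the paper's own proof: both convert column-compatibility into a sign-preservation statement on $E$ via the midpoint bijection and the ordering underlying $p_x$, then invoke $g|_{E(\mathcal{I})}$ being an increasing bijection together with Theorem \ref{thm: equivalent definition of oriented Thompson group}. The only cosmetic difference is that you handle column $1$ explicitly up front, whereas the paper notes it at the end.
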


\begin{proof}
We reorder s.d. intervals in $\mathcal{S}(\mathcal{I})$ and $\mathcal{S}(\mathcal{J})$ as
\begin{align*}
	\mathcal{S}(\mathcal{I}) &= \{A_1 <_{mid} A_2 <_{mid} ... <_{mid} A_{2n-1} \} \\
	\mathcal{S}(\mathcal{J}) &= \{B_1 <_{mid} B_2 <_{mid} ... <_{mid} B_{2n-1} \}
\end{align*}

Reorder points in $E(\mathcal{I})$ and $E(\mathcal{J})$ as
\begin{align*}
	E(\mathcal{I}) &= \{a_1 < a_2 < ... < a_{2n-1} \} \\
	E(\mathcal{J}) &= \{b_1 < b_2 < ... < b_{2n-1} \}
\end{align*}

By Lemma \ref{lem: taking midpoint is a bijection from S to E}, we have a bijection $m|_{\mathcal{S}(\mathcal{I})}: \mathcal{S}(\mathcal{I}) \to E(\mathcal{I})$ sending an s.d. interval to its midpoint. Furthermore, $m|_{\mathcal{S}(\mathcal{I})}$ preserves the midpoint order on $\mathcal{S}(\mathcal{I})$. So $a_i$ must be the midpoint of $A_i$, and $b_i$ must be the midpoint of $B_i$.

Now suppose $(\mathcal{I}, \mathcal{J})$ is a compatible s.d. partition pair representing $g \in \vec{F}$. By Lemma \ref{lem: g restricted on E is bijection} and the fact that $g$ is increasing, $g|_{E(\mathcal{I})}: E(\mathcal{I}) \to E(\mathcal{J})$ is a bijection sending $a_i$ to $b_i$ (for an illustration, see Figure \ref{fig: shift example}). By Theorem \ref{thm: equivalent definition of oriented Thompson group}, we know that $a_i$ and $b_i$ have the same sign for any $i$.

Recall that we assigned signs on $E$ through the bijection $m: \mathcal{S} \to E$, so $A_i$ and $B_i$ also have the same sign for any $i$. By the construction of a half grid diagram, the $(i+1)^{th}$ column of $\mathbb{H}(\mathcal{I})$ contains a single marking $X$ if $A_i \in \mathcal{S}_+(\mathcal{I})$, $O$ if $A_i \in \mathcal{S}_-(\mathcal{I})$. It's similar for the $(i+1)^{th}$ column of $\mathbb{H}(\mathcal{J})$. Also notice that the first column of $\mathbb{H}(\mathcal{I})$ and the first column $\mathbb{H}(\mathcal{J})$ both contain an $O$, so $\mathbb{H}_{\mathcal{I}}$ and $\mathbb{H}_{\mathcal{J}}$ are compatible.
\end{proof}

\begin{figure}
	\centering
	\includegraphics[width = \textwidth]{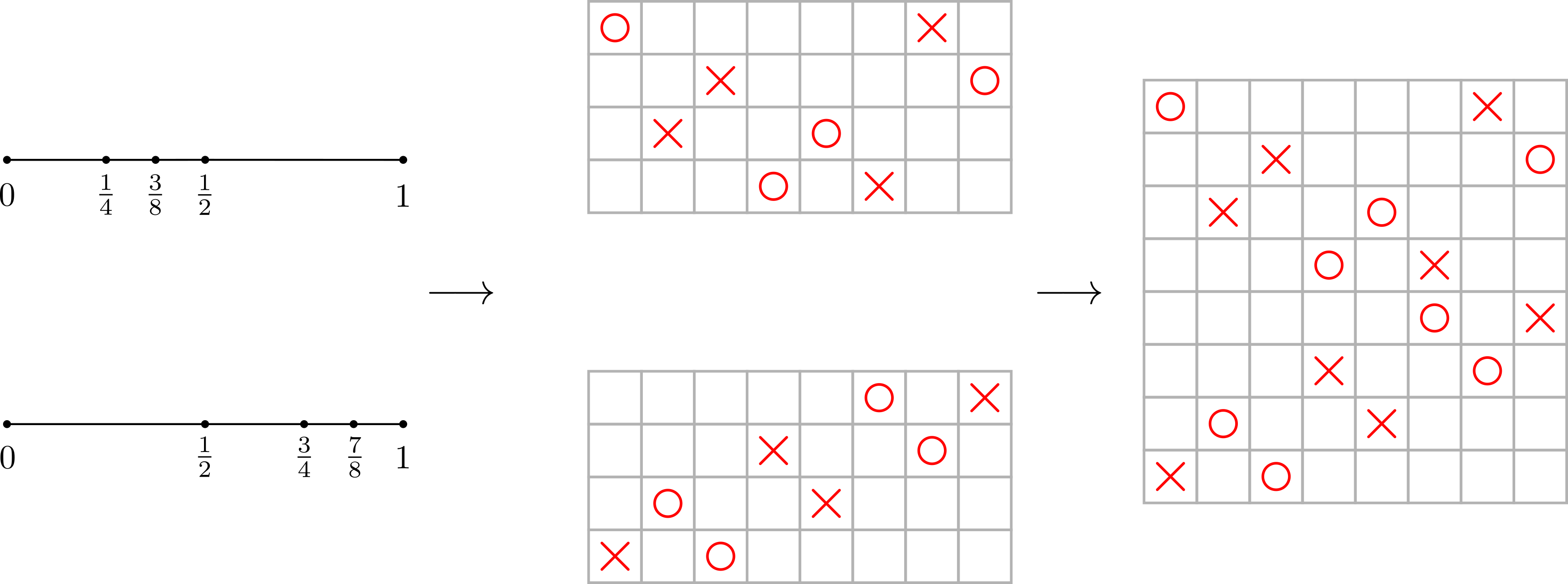}
	\put(-450, 110){$\mathcal{I}$}
	\put(-450, 40){$\mathcal{J}$}
	\put(-295, 120){$\mathbb{H}_{\mathcal{I}}$}
	\put(-302, 30){$-\overline{\mathbb{H}_{\mathcal{J}}}$}
	\put(-80, 8){$(\mathbb{H}_{\mathcal{I}}, \mathbb{H}_{\mathcal{J}})$}
	\caption{Construction of grid diagram $(\mathbb{H}_{\mathcal{I}}, \mathbb{H}_{\mathcal{J}})$ from a compatible s.d. partition pair $(\mathcal{I}, \mathcal{J})$.}
	\label{fig: grid construction example}
\end{figure}

\begin{theorem} \label{thm: equivalent oriented links}
	Suppose that $(\mathcal{I}, \mathcal{J})$ is a compatible s.d. partition pair, then
	$$
	\mathcal{L}(\mathbb{H}_{\mathcal{I}}, \mathbb{H}_{\mathcal{J}})= L_{(\mathcal{I},\mathcal{J})}
	$$
	as oriented links. Especially if $(\mathcal{I}, \mathcal{J})$ is the reduced s.d. partition pair representing some $g \in \vec{F}$, then $\mathcal{L}(\mathbb{H}_{\mathcal{I}}, \mathbb{H}_{\mathcal{J}})= L_g$.
\end{theorem}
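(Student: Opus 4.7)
The strategy is to reduce the theorem to a tangle-level identity and then proceed by induction. Using the formulas
$$\mathcal{L}(\mathbb{H}_{\mathcal{I}},\mathbb{H}_{\mathcal{J}}) = \bigl(-\overline{\mathcal{T}(\mathbb{H}_{\mathcal{J}})}\bigr)\,\mathcal{T}(\mathbb{H}_{\mathcal{I}}) \quad \text{and} \quad L_{(\mathcal{I},\mathcal{J})} = \bigl(-\overline{\widehat{\vec{T}_{\mathcal{J}}}}\bigr)\,\widehat{\vec{T}_{\mathcal{I}}}$$
recalled from the introduction and Section~\ref{sec: oriented thompson group}, it suffices to prove the oriented $(0,2n)$-tangle identity
$$\mathcal{T}(\mathbb{H}_{\mathcal{I}}) = \widehat{\vec{T}_{\mathcal{I}}}$$
for every s.d.\ $n$-partition $\mathcal{I}$. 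Once this is established, the first claim of the theorem follows by stacking and mirroring, and the last sentence follows from the definition $L_g = L_{(\mathcal{I},\mathcal{J})}$ for the reduced pair $(\mathcal{I}, \mathcal{J})$ representing $g$.

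I would prove the tangle identity by induction on $n$. The base case $n=1$, that is $\mathcal{I}=\delta$, is a direct check: both tangles collapse to a single oriented cap joining a $+$ strand on the right to a $-$ strand on the left. For the inductive step, pick a pair of sibling leaves of $F_{\mathcal{I}}$, namely two subintervals $[a_{j-1},m]$ and $[m,a_j]$ of $\mathcal{I}$ with common parent $[a_{j-1},a_j]$, and let $\mathcal{K}$ be the s.d.\ partition obtained from $\mathcal{I}$ by deleting the breakpoint $m$. Then $\mathcal{I}$ refines $\mathcal{K}$ by a single caret, $\mathcal{K}$ has one fewer subinterval than $\mathcal{I}$, and the inductive hypothesis yields $\mathcal{T}(\mathbb{H}_{\mathcal{K}})=\widehat{\vec{T}_{\mathcal{K}}}$. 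The task is then to show that the same local surgery connects the two sides.

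On the Thompson tangle side, the new trivalent vertex of $F_{\mathcal{I}}$ replaces the single strand of $\widehat{\vec{T}_{\mathcal{K}}}$ ending at the midpoint $m([a_{j-1},a_j])$ by two Type~A arcs going to $m([a_{j-1},m])$ and $m([m,a_j])$, a Type~B arc descending to the new breakpoint $m$, and one positive crossing (positive by Lemma~\ref{lem: writhe of Thompson link is 0}) arranged so that the horizontal strand joining the two Type~A arcs lies over the vertical strand going to $m$. On the grid side, the defining formulas for $p_x$ and $p_y$ imply that $\mathbb{H}_{\mathcal{I}}$ is obtained from $\mathbb{H}_{\mathcal{K}}$ by replacing the single column previously occupied by $[a_{j-1},a_j]$ with three consecutive columns in midpoint order $p_x([a_{j-1},m])$, $p_x([a_{j-1},a_j])$, $p_x([m,a_j])$, and by inserting one new row at height $p_y(P)$, where $P$ is whichever of $[a_{j-1},m]$ or $[m,a_j]$ is positive. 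This new row carries a single $X$--$O$ bar between the two outer of the three new columns, crossing over the middle vertical strand exactly once. A direct diagrammatic comparison now matches this bar-plus-crossing picture against the Type~A/B-plus-crossing picture above: the sign-preserving bijection $m|_{\mathcal{S}(\mathcal{I})}\colon \mathcal{S}(\mathcal{I}) \to E(\mathcal{I})$ of Section~\ref{sec: signs and alg def} guarantees that the bottom endpoints agree in sign and position, and both crossings are positive.

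The main technical obstacle is the bookkeeping for $p_y$: inserting $P$ into the length order on $\mathcal{S}_+(\mathcal{I})$ generally shifts the row index of every strictly longer positive interval upward by one, so $\mathbb{H}_{\mathcal{I}}$ is not literally $\mathbb{H}_{\mathcal{K}}$ with a row appended at the bottom. I would address this with the observation that relabelling rows of a half grid diagram --- provided the affected $X$--$O$ bars have pairwise disjoint $x$-intervals --- realises an ambient isotopy of $\mathcal{T}(\mathbb{H})$ fixing the bottom boundary. Since the new bar from the surgery sits strictly inside the $x$-range of the old column for $[a_{j-1},a_j]$, it never collides with any pre-existing bar during this isotopy, so the grid-side local picture is genuinely isotopic to the Thompson-side local surgery. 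This closes the induction and yields the theorem.
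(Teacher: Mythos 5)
Your proposal is correct, but it follows a genuinely different route from the paper. You both reduce the theorem to the tangle identity $\mathcal{T}(\mathbb{H}_{\mathcal{I}}) = \widehat{\vec{T}_{\mathcal{I}}}$ (the paper's Proposition~\ref{prop: equivalent oriented tangles}), but the paper proves that identity by a single global construction: pre-isotope $F_{\mathcal{I}}$ so that trivalent vertices are sorted by depth, redraw each Type~A arc as an L-shaped corner, mark the corners by edge sign, and then read off a half grid diagram $\mathbb{T}_{\mathcal{I}}$ whose $p_x$- and $p_y$-coordinates are computed directly from the midpoint and length orders and shown to coincide with $\mathbb{H}_{\mathcal{I}}$. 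You instead run an induction on the leaf number, doing a single-caret surgery on both sides and matching them locally. Both approaches are sound. What the paper's proof buys is a short, uniform argument with no case analysis; what your induction buys is that everything is localized to one caret at a time, at the cost of the extra bookkeeping you flag.

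One place your sketch leans on a claim that deserves a full argument: you assert that the new $X$--$O$ bar for $P$ has $x$-interval disjoint from every pre-existing bar with strictly smaller $p_y$. This is true but not immediate. One can prove it as follows: if $p_y(B) < p_y(P)$ then $B <_{\mathrm{len}} P$, so $|B| \le |P|$; both $B \cup \overline{B}$ and $P \cup \overline{P} = [a_{j-1},a_j]$ lie in $\mathcal{S}(\mathcal{I})$, hence are nested or disjoint. Nesting in either direction forces $|B| \ge |P|$ (because $P$ and $\overline{P}$ are leaves, so $B \cup \overline{B}$ cannot sit strictly inside $P$ or $\overline{P}$), contradicting the length bound unless $B = P$. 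Disjointness of $B\cup\overline{B}$ and $P\cup\overline{P}$ in $[0,1]$ then translates, via the midpoint-order definition of $p_x$, to disjoint column ranges for the two bars. A symmetric check is also needed for rows above $p_y(P)$: bars for longer intervals do not meet the two new vertical strands because those strands terminate at height $p_y(P)$ and never reach those rows, and the middle column's vertical strand (for $[a_{j-1},a_j]$) is crossed by the new bar exactly once since $p_y(P) < p_y([a_{j-1},a_j])$. With these details supplied, your induction closes and gives a valid alternative to the paper's direct isotopy.
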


We will prove the above theorem in the next section. Assuming Theorem \ref{thm: equivalent oriented links}, our main theorem follows immediately.

\begin{proof} [Proof of Theorem \ref{thm: link is half grid presentable}]
	Given an oriented link $L$, we can find some $g \in \vec{F}$ such that $L_g = L$ by Theorem \ref{thm: oriented Thompson Alexander theorem}. Suppose that $(\mathcal{I}, \mathcal{J})$ is the reduced s.d. partition pair representing $g$, then we have $\mathcal{L}(\mathbb{H}_{\mathcal{I}}, \mathbb{H}_{\mathcal{J}})= L_g = L$ by Theorem \ref{thm: equivalent oriented links}. Thus, $(\mathbb{H}_{\mathcal{I}}, \mathbb{H}_{\mathcal{J}})$ is a half grid representative of $L$.
\end{proof}

\subsection{Equivalence to Jones' construction}
\label{sec: equivalence}

Given an s.d. $n$-partition $\mathcal{I}$, in Section \ref{sec: oriented thompson group} we have an oriented $(0,2n)$-tangle $\widehat{T_\mathcal{I}}$ coming from Jones' construction, and in Section \ref{sec: grid construction} we also have an oriented $(0,2n)$-tangle $\mathcal{T}(\mathbb{H}_{\mathcal{I}})$ coming from half grid diagram construction. We claim that these two oriented tangles are actually same.

\begin{proposition} {\label{prop: equivalent oriented tangles}}
	Let $\mathcal{I}$ be an s.d. partition, then $\widehat{T_\mathcal{I}} = \mathcal{T}(\mathbb{H}_{\mathcal{I}})$ as oriented $(0,2n)$-tangles.
\end{proposition}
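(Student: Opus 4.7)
The plan is to construct a planar isotopy rel boundary identifying $\widehat{T_\mathcal{I}}$ with $\mathcal{T}(\mathbb{H}_\mathcal{I})$ as oriented tangles. First I would fix the bijection on the $2n$ bottom boundary points: column $1$ of $\mathbb{H}_\mathcal{I}$ is matched with the leftmost point added in the cap-off of $T_\mathcal{I}$, and for each $A \in \mathcal{S}(\mathcal{I})$ the column $p_x(A) = i+1$ is matched with $m(A) = a_i \in E(\mathcal{I})$. Since both $p_x$ and $m|_{\mathcal{S}(\mathcal{I})}$ respect the midpoint order, this is the unique order-preserving bijection on the $2n$ bottom points. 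Orientations agree at every boundary point: the default $O$ at column $1$ and the cap strand both point downward; and for each $A$, the $X/O$ convention (up for $X$, down for $O$) matches the sign of $m(A)$ in $\vec{T_\mathcal{I}}$, via the sign-preserving identification $m|_{\mathcal{S}(\mathcal{I})} = \phi_\mathcal{I} \circ \epsilon_\mathcal{I}$ from Section \ref{sec: signs and alg def}.

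Next I would pair each horizontal segment of $\mathbb{H}_\mathcal{I}$ with a distinguished feature of $\widehat{T_\mathcal{I}}$: for $i = 1, \ldots, n-1$, the row $i$ horizontal segment (from the $X$ at column $p_x(B_i)$ to the $O$ at column $p_x(\overline{B_i})$) is paired with the crossing at the trivalent vertex of $F_\mathcal{I}$ where $B_i \cup \overline{B_i}$ splits, and the row $n$ segment (from the default $O$ at column $1$ to the $X$ at column $p_x([0,1])$) is paired with the top cap of $\widehat{T_\mathcal{I}}$. The key claim, to be verified, is that in each row $i \leq n-1$ the horizontal segment crosses exactly one vertical strand of $\mathcal{T}(\mathbb{H}_\mathcal{I})$, namely the one at column $p_x(B_i \cup \overline{B_i})$ (which reaches row $i$ since $|B_i \cup \overline{B_i}| = 2|B_i|$ forces $p_y(B_i \cup \overline{B_i}) > i$), and row $n$ crosses no vertical strand. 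Together with the horizontal-over-vertical convention on both sides agreeing with Jones' choice of $R$, this yields a one-to-one match between the $n-1$ crossings on each side and between the strands entering each crossing.

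The main obstacle is this key claim, which I would reduce to the combinatorial statement: for any $A \in \mathcal{S}(\mathcal{I})$ with $m(A)$ strictly between $m(B_i)$ and $m(\overline{B_i})$, either $A = B_i \cup \overline{B_i}$ or $|A| < |B_i|$. In the latter case $p_y(A) < i$ by the length order on $\mathcal{S}_+(\mathcal{I})$, so the vertical strand at column $p_x(A)$ terminates below row $i$. To prove the statement, I would observe that $(m(B_i), m(\overline{B_i}))$ is an open interval of length $|B_i|$ centered at $m(B_i \cup \overline{B_i})$, and that the midpoint $m(B_i \cup \overline{B_i})$ in units of $|B_i|$ is an odd integer (since $B_i \cup \overline{B_i}$ has length $2|B_i|$ and starts at an even multiple of $|B_i|$). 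Midpoints of s.d. intervals of length $2^\ell |B_i|$, in units of $|B_i|$, are precisely $2^{\ell-1}(2k+1)$ for $\ell \geq 1$ and half-integers for $\ell = 0$; a parity comparison shows that the only such midpoint lying in the open interval $(a-1/2, a+1/2)$ for odd $a$ is $a$ itself, corresponding to $\ell = 1$ and hence to $B_i \cup \overline{B_i}$ (whose inclusion in $\mathcal{S}(\mathcal{I})$ is guaranteed by Lemma \ref{lem: conjugate in SDI}). For row $n$, $[0,1]$ is the unique element of $\mathcal{S}(\mathcal{I})$ with $p_y = n$, so no other vertical strand reaches row $n$, matching the structure of the top cap. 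With strands, crossings, and orientations all matched, the resulting planar isotopy rel boundary completes the identification of the two oriented tangles.
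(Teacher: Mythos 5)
Your approach is correct in outline but takes a genuinely different route from the paper's. The paper proves the proposition by an explicit geometric transformation: it pre-isotopes $F_\mathcal{I}$ so that trivalent vertices sit at heights determined by depth, redraws each Type~A arc as an $\ulcorner$- or $\urcorner$-shaped arc with a labeled corner $c(e)$, and then observes, after rescaling, that the resulting diagram is literally $\mathbb{H}_\mathcal{I}$ (the $x$-coordinate of $c(\epsilon_{\mathcal{I}}(A))$ being $p_x(A)$, the $y$-coordinate $p_y(A)$, and the marking determined by the sign, using Lemma \ref{lem: interval to edge is sign preserving}). This is a constructive isotopy. You instead set up a purely combinatorial bijection between crossings and boundary data of $\mathcal{T}(\mathbb{H}_\mathcal{I})$ and $\widehat{T_\mathcal{I}}$, and prove the key fact --- that each horizontal segment in row $i$ crosses exactly the vertical at column $p_x(B_i\cup\overline{B_i})$ --- via a clean parity argument on midpoints measured in units of $|B_i|$. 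This parity lemma is genuine content not present in the paper, and it automatically explains why there are precisely $n-1$ crossings without counting them separately.

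The one soft spot is the final step ``with strands, crossings, and orientations all matched, the resulting planar isotopy rel boundary completes the identification.'' Matching the set of crossings (with over/under data and the two strands incident to each) and the boundary points is necessary but not, a priori, sufficient to pin down the tangle up to planar isotopy: you also need the arcs between consecutive crossings/endpoints to connect up in the same combinatorial pattern on both sides. In your setting this does hold --- tracing the over strand at the crossing for $B_i\cup\overline{B_i}$, one recovers exactly the $\sqcap$-shaped strand from $m(B_i)$ to $m(\overline{B_i})$ in both diagrams, and the vertical leg at column $p_x(A)$ threads under exactly one horizontal (the one for the positive child of $A$, if it exists), in both diagrams --- but you should state and check this arc-connectivity explicitly rather than assert it. The paper sidesteps this issue by producing the isotopy directly through the L-shape redrawing; if you want your version to stand alone, add a sentence or two verifying that the edge structure of the two planar diagrams agrees, not just the crossing set.
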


\begin{proof}
	First we move the trivalent vertices of $F_{\mathcal{I}}$ vertically corresponding to their depth on the tree, so that vertex of smaller depth always has larger $y$-coordinate. If two vertices are of same depth, we let the vertex on the right has larger $y$-coordinate. See Figure \ref{fig: pre-isotoped tree} for an example of pre-isotoped tree $F_{\mathcal{I}}$.
	
	Recall that when we turn a tree $F_{\mathcal{I}}$ into a tangle $T_{\mathcal{I}}$, each edge $e$ of $F_{\mathcal{I}}$ becomes a Type A arc $a(e)$ of $T_\mathcal{I}$. If $e$ is not a lowermost edge, then we add vertical Type B arc $b(e)$ extending $a(e)$ to $\phi_{\mathcal{I}}(e)$. See Figure \ref{fig: tree to tangle} for an example, where Type A arcs are marked in blue and Type B arcs are marked in red. Now we keep Type B arcs unchanged and redraw Type A arcs as follows.
	
	If $e$ is a left edge, we redraw $a(e)$ as $\ulcorner$, and let $c(e)$ be the upper-left corner. If $e$ is a right edge, we redraw $a(e)$ as $\urcorner$, and let $c(e)$ be the upper-right corner. Then we consider the top edge $e_0$ attached to $(\frac{1}{2}, 1)$ as a right edge and put an upper-right corner $c(e_0)$ at $(\frac{1}{2},1)$. So the corresponding arc $\urcorner$ first goes upward to $(\frac{1}{2},1)$ then turn left. We let it stop at $(0,1)$ and put an upper-left corner $\ulcorner$ there to make it turn downwards until it hits $(0,0)$.

	After we redraw Type A arc $a(e)$, we label its corner $c(e)$ by $X$ if $e$ has sign $+$, by $O$ if $e$ has sign $-$. For example, the corner at $(\frac{1}{2},1)$ always has marking $X$. As the last step, we mark the corner at $(0,1)$ by $O$. See Figure \ref{fig: tree to tangle} and Figure \ref{fig: corner isotopy} for the described isotopy and markings.

	\begin{figure}
		\centering
    	\begin{subfigure}{0.4\textwidth}
        	\centering
        	\includegraphics[width=\textwidth]{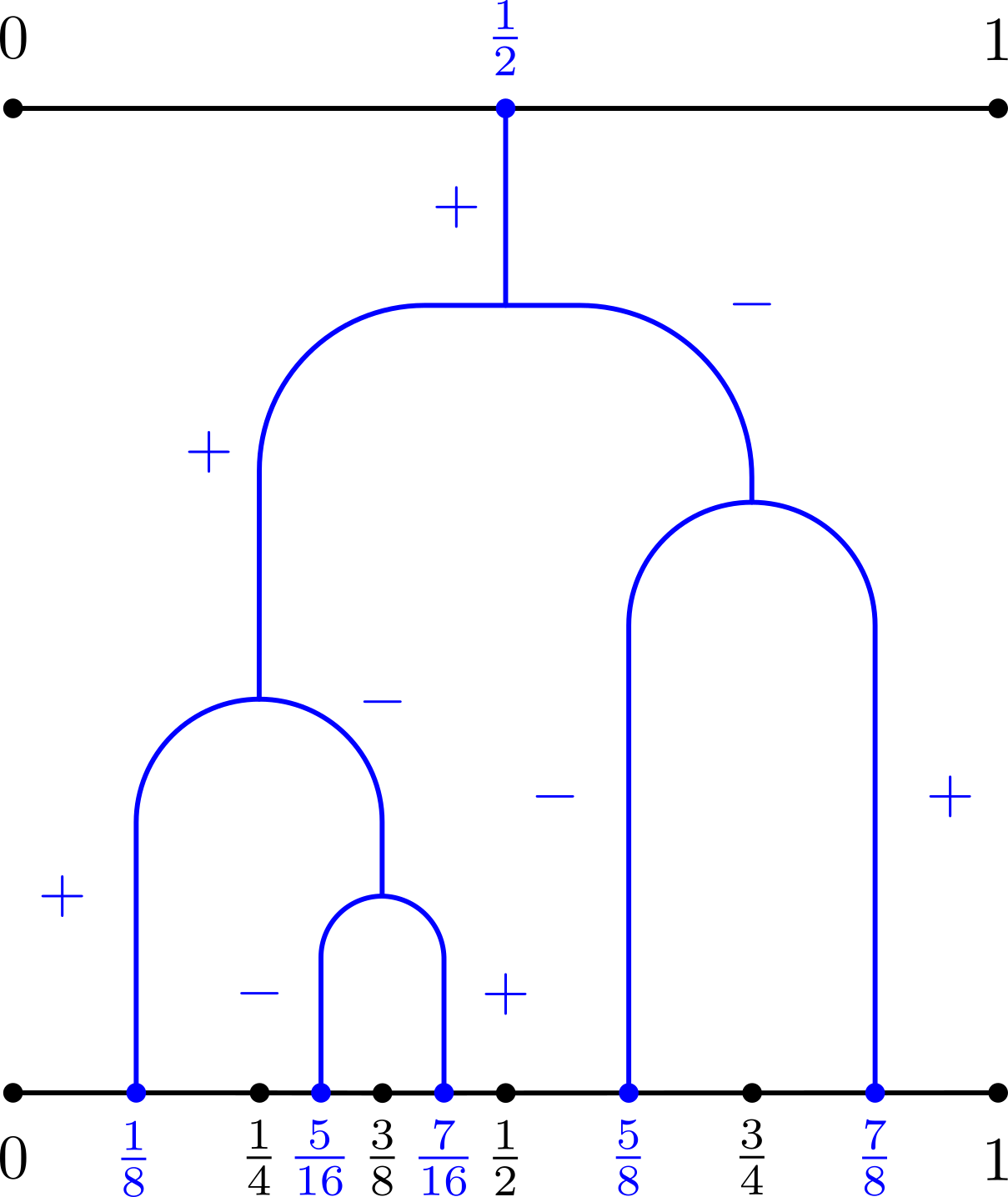}
			\put(-190, 120){$\color{blue} F_{\mathcal{I}}$}
        	\caption{}
        	\label{fig: pre-isotoped tree}
    	\end{subfigure}
		\hfill
		\begin{subfigure}{0.4\textwidth}
        	\centering
        	\includegraphics[width=\textwidth]{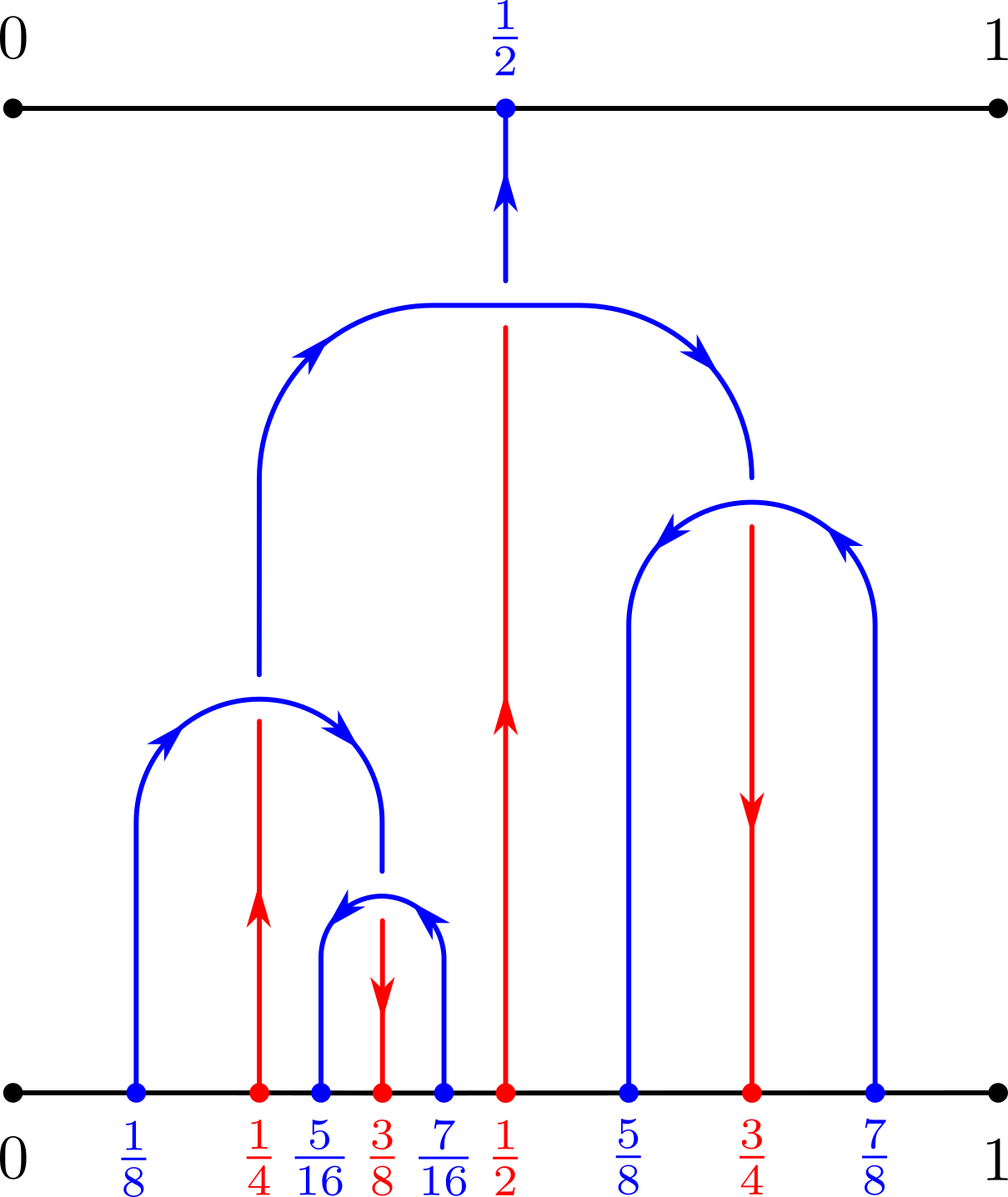}
			\put(-190, 120){$T_{\mathcal{I}}$}
        	\caption{}
        	\label{fig: tree to tangle}
    	\end{subfigure}
		\vspace{1cm}
		\vfill
		\begin{subfigure}{0.4\textwidth}
        	\centering
        	\includegraphics[width=\textwidth]{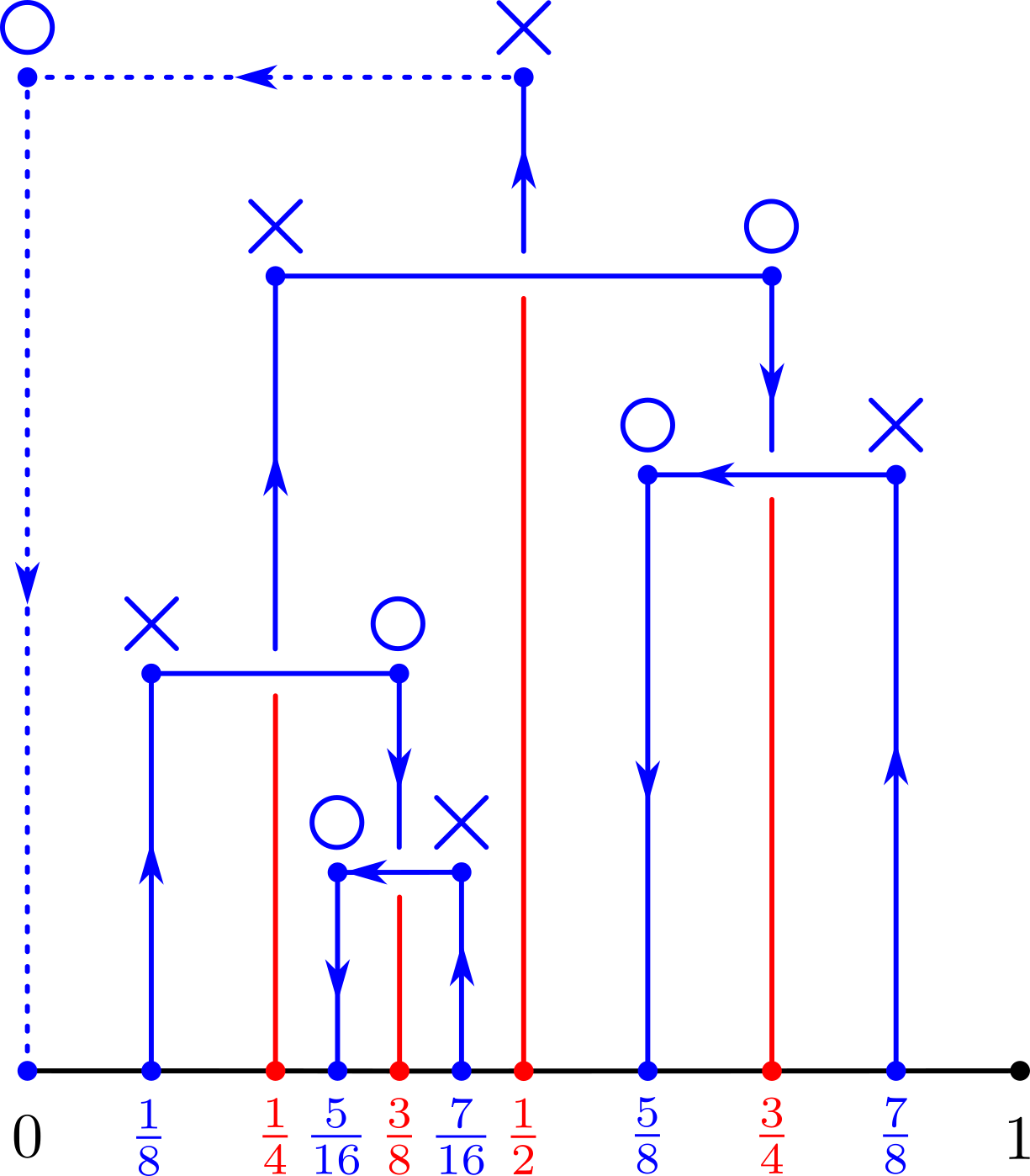}
			\put(-190, 120){$\widehat{T_{\mathcal{I}}}$}
        	\caption{}
        	\label{fig: corner isotopy}
    	\end{subfigure}
		\hfill
		\begin{subfigure}{0.4\textwidth}
        	\centering
        	\includegraphics[width=\textwidth]{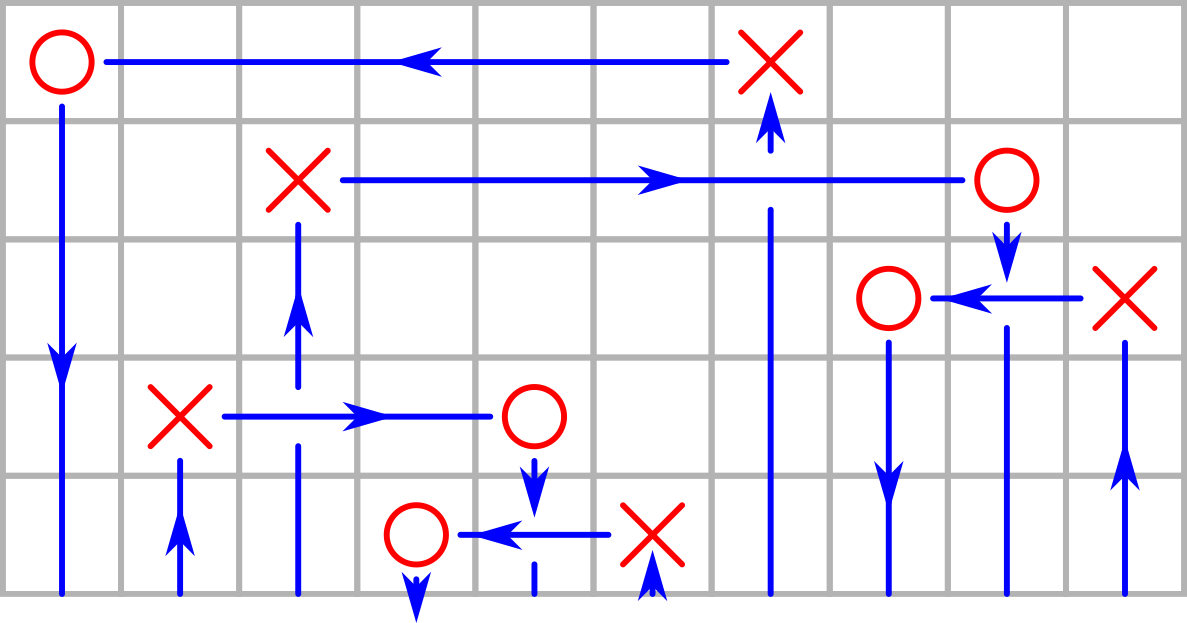}
			\put(-200, 50){$\mathbb{T}_{\mathcal{I}}$}
        	\caption{}
        	\label{fig: tangle to half grid}
    	\end{subfigure}
		\caption{(a) A pre-isotoped tree $F_{\mathcal{I}}$. (b) The oriented tangle $T_{\mathcal{I}}$ associated with the tree $F_{\mathcal{I}}$. Type A arcs are marked in blue, Type B arcs are marked in red. (c) Redraw each Type B arc of $T_{\mathcal{I}}$ as an L-shaped arc with corners marked by $X$ or $O$. If we extend $a(e_0)$ from $(\frac{1}{2}, 1)$ to $(0, 1)$, then to $(0, 0)$, we can obtain $\widehat{T_{\mathcal{I}}}$. (d) Rescale $\widehat{T_{\mathcal{I}}}$ into a tangle generated by a half grid diagram $\mathbb{T}_{\mathcal{I}}$.}
	\end{figure}

	Now, note that the $x$-coordinate of corner $c(e)$ is the midpoint of $\epsilon^{-1}(e)$, which is exactly $\phi(e)$. So for each point in $E(\mathcal{I})$, there is exactly one marking above it, and there is one marking $O$ at $(0,1)$ above $0$. Also, for each trivalent vertex, there is exactly one marking on its left and one marking on its right, corresponding to its left edge and right edge, respectively. These two markings must be different, because the left edge and right edge of a same vertex have different signs (see Figure \ref{fig: local tangle orientation in terms of signed edges}). At last, there is an $X$ at $(\frac{1}{2},1)$ and an $O$ at $(0,1)$ with $y$-coordinate $1$. Thus, if we rescale $[0,1] \times [0,1]$ into $[1,2n] \times [1,n]$ vertically and horizontally to make sure that each marking has integer coordinates and regard $\{1,...,2n\} \times \{1,...,n\}$ as a half grid, then we get a half grid diagram $\mathbb{T}_{\mathcal{I}}$. See Figure \ref{fig: corner isotopy} and Figure \ref{fig: tangle to half grid} for an example.

	The above transformation can also be expressed locally as Figure \ref{fig: tangle to half grid local transformation} shows, and capping off $T_{\mathcal{I}}$ is equivalent to adding corners at $(\frac{1}{2},1)$ and $(0,1)$. So we have $\mathcal{T}(\mathbb{T}_{\mathcal{I}})=\widehat{T_{\mathcal{I}}}$ 
	
	\begin{figure}
		\centering
		\includegraphics[width = \textwidth]{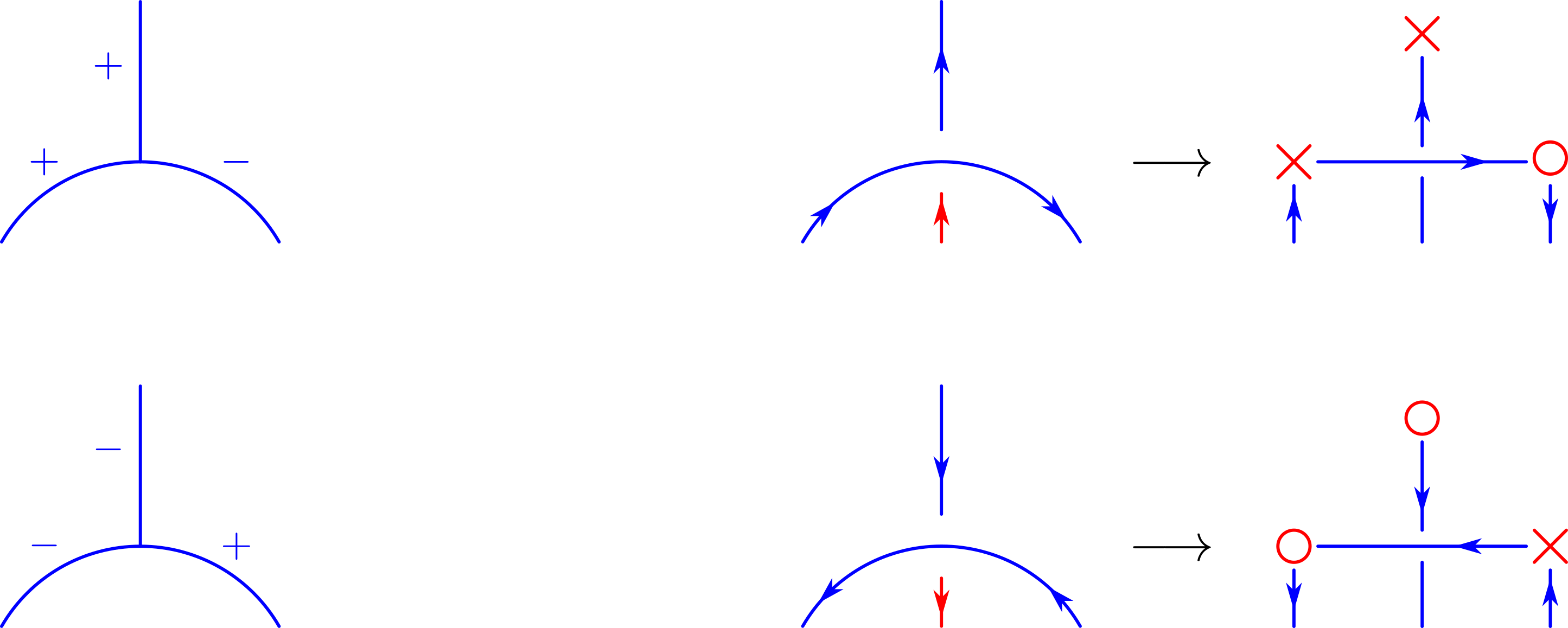}
		\put(-400, -30){$\color{blue} F_{\mathcal{I}}$}
		\put(-175, -30){$T_{\mathcal{I}}$}
		\put(-45, -30){$\mathbb{T}_{\mathcal{I}}$}
		\put(-385, 150){$\color{blue} e_1$}
		\put(-415, 110){$\color{blue} e_2$}
		\put(-375, 110){$\color{blue} e_3$}
		\put(-385, 45){$\color{blue} e_1$}
		\put(-415, 5){$\color{blue} e_2$}
		\put(-375, 5){$\color{blue} e_3$}
		\put(-165, 160){$a(e_1)$}
		\put(-210, 130){$a(e_2)$}
		\put(-160, 130){$a(e_3)$}
		\put(-200, 100){$b(e_1)$}
		\put(-165, 55){$a(e_1)$}
		\put(-210, 25){$a(e_2)$}
		\put(-160, 25){$a(e_3)$}
		\put(-200, -5){$b(e_1)$}
		\put(-50, 180){$c(e_1)$}
		\put(-85, 145){$c(e_2)$}
		\put(-15, 145){$c(e_3)$}
		\put(-50, 75){$c(e_1)$}
		\put(-85, 40){$c(e_2)$}
		\put(-15, 40){$c(e_3)$}
		\caption{Local transformation from $T_{\mathcal{I}}$ to $\mathbb{T}_{\mathcal{I}}$.}
		\label{fig: tangle to half grid local transformation}
	\end{figure}

	Now, given any $A \in \mathcal{S}(\mathcal{I})$, notice that the $x$ coordinate of $c(\epsilon_{\mathcal{I}}(A))$ is the position of midpoint $m(A) \in E(\mathcal{I})$ in the normal order (the first index is $2$, because of the $O$ marking at the $(1,n)^{th}$ entry). It is same as the position of $A \in \mathcal{S}(\mathcal{I})$ in the midpoint order, which is exactly $p_x(A)$. The $y$ coordinate of $c(\epsilon_{\mathcal{I}}(A))$ is the position of $\epsilon_{\mathcal{I}}(A)$'s upper endpoint in the reversed depth order. It is also the position of $A$ or $\overline{A}$ in the length order, which is $p_y(A)$.

	To show $\mathbb{T}_{\mathcal{I}} = \mathbb{H}_{\mathcal{I}}$, it remains to show that they have the same marking in the $(p_x(A), p_y(A))^{th}$ entry. For $\mathbb{T}_{\mathcal{I}}$, this marking is given by the sign of $\epsilon_{\mathcal{I}}(A)$. For $\mathbb{H}_{\mathcal{I}}$, this marking is given by the sign of $A$. By Lemma \ref{lem: interval to edge is sign preserving}, $\epsilon_{\mathcal{I}}(A)$ and $A$ have the same sign. So we have $\mathbb{T}_{\mathcal{I}} = \mathbb{H}_{\mathcal{I}}$.

	Thus, finally we have $\widehat{T_{\mathcal{I}}} = \mathcal{T}(\mathbb{T}_{\mathcal{I}}) = \mathcal{T}(\mathbb{H}_{\mathcal{I}})$.
\end{proof}

Now we put two half grid diagrams together to prove Theorem \ref{thm: equivalent oriented links}.

\begin{proof}[Proof of Theorem \ref{thm: equivalent oriented links}]
	Given $g \in \vec{F}$ and an s.d partition pair $(\mathcal{I}, \mathcal{J})$ representing $g$, a grid diagram $(\mathbb{H}_{\mathcal{I}}, \mathbb{H}_{\mathcal{J}})$ is well-defined by Proposition \ref{prop: compatible half grid diagrams}. It generates an oriented link $\mathcal{L}(\mathbb{H}_{\mathcal{I}}, \mathbb{H}_{\mathcal{J}}) = (-\overline{\mathcal{T}(\mathbb{H}_{\mathcal{J}})}) \mathcal{T}(\mathbb{H}_{\mathcal{I}})$. We've shown that $\mathcal{T}(\mathbb{H}_{\mathcal{I}}) = \widehat{T_{\mathcal{I}}}$ and $\mathcal{T}(\mathbb{H}_{\mathcal{J}}) = \widehat{T_{\mathcal{J}}}$, it follows that
	$$
	\mathcal{L}(\mathbb{H}_{\mathcal{I}}, \mathbb{H}_{\mathcal{J}}) = (-\overline{\mathcal{T}(\mathbb{H}_{\mathcal{J}})}) \mathcal{T}(\mathbb{H}_{\mathcal{I}}) = (-\overline{\widehat{T_{\mathcal{J}}}}) \widehat{T_{\mathcal{I}}}=\vec{L}_{(\mathcal{I},\mathcal{J})}
	$$
\end{proof}

\section{Applications}

\subsection{Legendrian, transverse knot and half grid diagram} \label{sec: Legendrian knot and half grid diagram}

We first give a very brief introduction to the contact 3-manifold and Legendrian, transverse knot (see \cite{MR2179261} for details).
\begin{definition}
	A contact 3 manifold $(Y,\xi)$ is a smooth 3 manifold $Y$ together with a 2-plane field distribution $\xi$ such that for any $1$-form $\alpha$ with $\ker(\alpha)=\xi$, we have $\alpha \wedge d\alpha > 0$.
\end{definition}

\begin{definition}
	A Legendrian link $L$ in $(Y,\xi)$ is a disjoint union of embedded $S^1$'s that are always tangent to $\xi$.
\end{definition} 

\begin{definition}
	A transverse link $L$ in $(Y,\xi)$ is a disjoint union of oriented embedded $S^1$'s that are always positively transverse to $\xi$.
\end{definition}

We will only consider the oriented Legendrian knot $Y$ in standard tight contact structure in $\mathbb{R}^3$ given by $\alpha=dz-ydx$. We can explicitly describe an oriented Legendrian link using the front projection $\Pi: (x,y,z) \rightarrow (x,z)$.

\begin{figure}
	\centering
	\includegraphics[width = 0.5\textwidth]{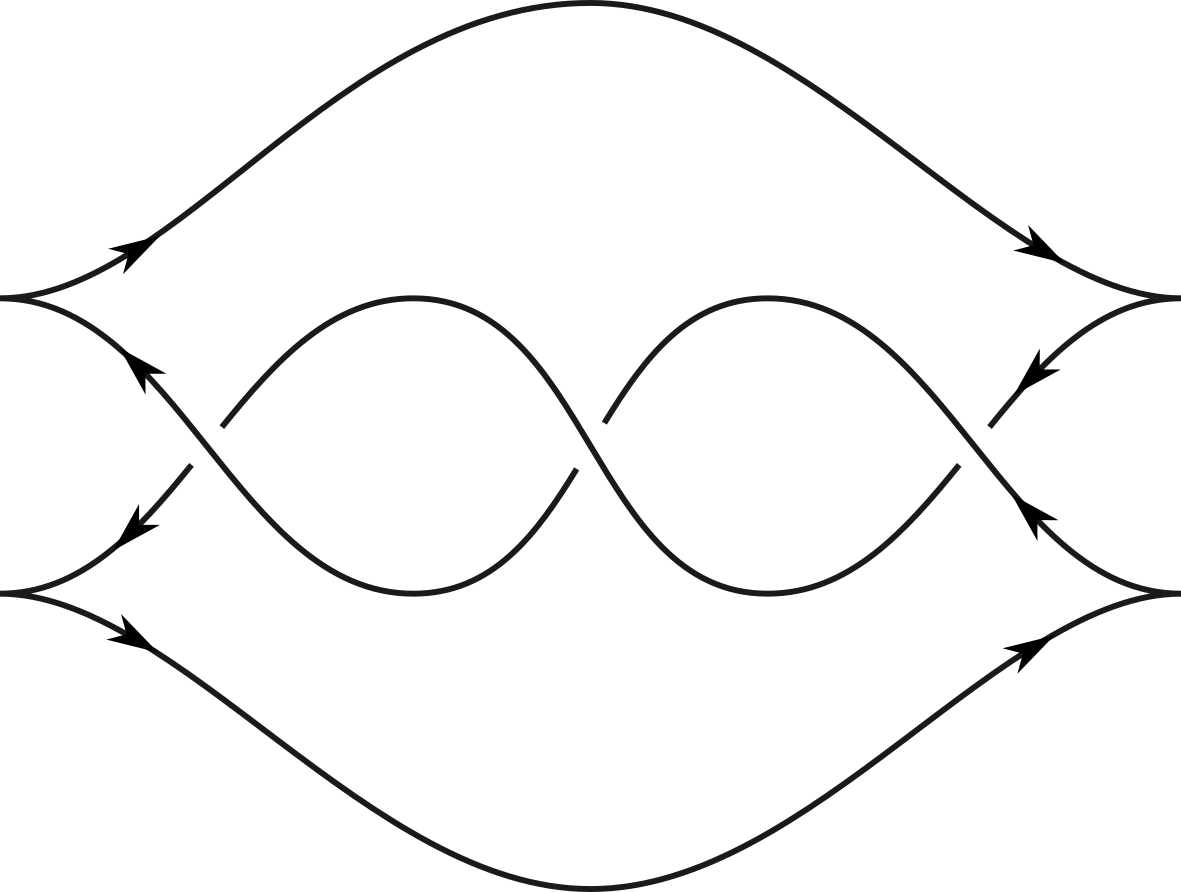}
	\put(-290, 105){upward cusp}
	\put(-302, 53){downward cusp}
	\put(10, 105){downward cusp}
	\put(10, 53){upward cusp}
	\caption{A front projection of right handed trefoil.}
	\label{fig: front projection example}
\end{figure}

Here are some basic properties of a front projection $\Pi(L)$ of a Legendrian link $L$ (see Figure \ref{fig: front projection example}).

\begin{enumerate}
	\item There is no vertical tangency in $\Pi(L)$. 
	\item The vertical tangencies change to generalized cusps, and the generalized cusps are the only non-smooth points.
	\item At each crossing the slope of the overcrossing is smaller than the undercrossing.
\end{enumerate}

Moreover, using the front projection, we can combinatorially define two classical invariants of Legendrian link. 

\begin{definition}
	Given a front projection of $\Pi(L)$ of a Legendrian link $L$,  the Thurston-Bennequin number of $\Pi(L)$ can be defined to be
	$$
	tb(\Pi(L))= \text{writhe}(\Pi(L))- \frac{1}{2} \left(\text{\# cusps in } \Pi(L) \right),
	$$
	and the rotation number of $\Pi(L)$ to be
	$$
	rot(\Pi(L))=\frac{1}{2} \left( \text{\# downward cusps in }\Pi(L) - \text{\# upward cusps in }\Pi(L) \right)
	$$
The max Thurston-Bennequin number $\overline{tb}(K)$ of a $K$ is the maximum value of $tb(\Pi(L))$ among front projections of Legendrian knot $L$ having the same knot type as $K$.
\end{definition}

We also have a classical invariant associate to a transverse link called the self-linking number $sl(L)$ (we omit the definition for $sl(L)$ and refer readers to see \cite{MR2179261} for details). For an oriented knot $K$ the max self-linking number $\overline{sl}(K)$ of $K$ is the maximum value of self-linking numbers among all transverse representatives of $K$. Moreover, We can always take the transverse push-off $T$ of a Legendrian link $L$ and their classical invariants are related by $sl(T)=tb(L)-rot(L)$. 

Now we are moving to the lemmas we need to prove Theorem \ref{thm: parity between link component and leaf number}. 

\begin{lemma} \label{lem: parity of Legendrian link}
	Let $\Pi(L)$ be any front projection of a Legendrian link $L$ with $m$ components, we have
	$$
	tb(\Pi(L))-rot(\Pi(L))= m \pmod{2}
	$$
\end{lemma}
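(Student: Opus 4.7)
The plan is to rewrite $tb - rot$ in a combinatorially transparent form, decompose it across the connected components of $L$ so that the lemma reduces to a parity statement about individual Legendrian knots, and then handle the knot case via the braid-closure description of transverse knots.

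First, let $u$ and $d$ denote the total numbers of upward and downward cusps in $\Pi(L)$. Substituting the definitions and simplifying gives
\[
tb(\Pi(L)) - rot(\Pi(L)) = \text{writhe}(\Pi(L)) - \tfrac12(u+d) - \tfrac12(d-u) = \text{writhe}(\Pi(L)) - d.
\]
Now write $L = L_1 \sqcup \cdots \sqcup L_m$. Grouping the crossings of $\Pi(L)$ by the pair of components they involve,
\[
\text{writhe}(\Pi(L)) = \sum_{i=1}^m \text{writhe}(\Pi(L_i)) + 2\sum_{i<j} lk(L_i,L_j),
\]
and the downward cusps split as $d = \sum_i d_i$, where $d_i$ is the number of downward cusps on $L_i$. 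Reducing modulo $2$,
\[
tb(\Pi(L)) - rot(\Pi(L)) \equiv \sum_{i=1}^m \bigl(\text{writhe}(\Pi(L_i)) - d_i\bigr) = \sum_{i=1}^m \bigl(tb(\Pi(L_i)) - rot(\Pi(L_i))\bigr) \pmod 2.
\]
The lemma therefore reduces to proving $tb(\Pi(K)) - rot(\Pi(K))$ is odd for each Legendrian knot component $K$.

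For this last step I would invoke the identity $sl(T) = tb(K) - rot(K)$ for the transverse pushoff $T$ of $K$, already recalled in the excerpt, and then apply Bennequin's theorem: every transverse knot in $(S^3,\xi_{std})$ is transversely isotopic to the closure of some braid $\beta$ on $n$ strands, with $sl(T) = e(\beta) - n$, where $e(\beta)$ is the exponent sum of $\beta$ in the Artin generators. Since $T$ is a knot, the image of $\beta$ in $S_n$ is a single $n$-cycle, and thus has sign $(-1)^{n-1}$. Because each Artin generator maps to a transposition, this sign also equals $(-1)^{e(\beta)}$, so $e(\beta) \equiv n-1 \pmod 2$, whence $sl(T) \equiv -1 \equiv 1 \pmod 2$. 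Summing the odd contributions from the $m$ components yields $tb(\Pi(L)) - rot(\Pi(L)) \equiv m \pmod 2$.

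The main obstacle is precisely this last reduction: the combinatorial manipulations in the first two steps are routine, but the parity statement for a single Legendrian knot relies on a genuinely non-combinatorial input. The braid-closure route is the cleanest, though the same parity can alternatively be extracted from the Seifert-surface identity $sl(T) \equiv -\chi(\Sigma) \pmod 2$, which for a knot immediately gives $sl(T) \equiv 2g-1 \equiv 1 \pmod 2$.
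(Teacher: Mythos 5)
Your decomposition matches the paper's exactly: split $L$ into components, observe that the cross-component crossings contribute an even amount to the writhe, reduce $\operatorname{rot}$ termwise, and reduce the whole claim to the parity of $tb(\Pi(K_i)) - rot(\Pi(K_i))$ for each Legendrian knot component. The one genuine difference is the final step. The paper simply cites that $tb - rot$ is odd for any Legendrian knot (referencing the proof of Theorem 1 in Ng's paper), whereas you supply a self-contained argument via the transverse pushoff: identify $tb - rot$ with the self-linking number $sl(T)$, realize $T$ as a braid closure with $sl = e(\beta) - n$, and compare the parity of the exponent sum $e(\beta)$ with $n - 1$ through the sign of the underlying $n$-cycle. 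This is correct and is arguably cleaner for a reader who does not want to chase the citation; the paper's route is shorter on the page but relies on an external result. Your alternative parenthetical route through $sl(T) \equiv -\chi(\Sigma) \pmod 2$ for a Seifert surface $\Sigma$ is also valid and even more elementary, since it avoids the transverse Markov/Bennequin machinery entirely. One small remark: your preliminary simplification $tb - rot = \text{writhe} - d$ is a nice observation, though not strictly needed once you pass to the component decomposition; the paper keeps $tb$ and $rot$ separate and decomposes them individually, arriving at the same sum.
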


\begin{proof}
	Suppose $L = K_1\cup K_2\cup ... \cup K_m$, where $K_i$ is Legendrian knot component of $L$. Let $\Pi(K_i)$ be the front projection of $K_i$ by deleting other components in $\Pi(L)$. Then we can decompose $tb(\Pi(L)$ as follows.
	\begin{align*}
		tb(\Pi(L)) =& \text{writhe}(\Pi(L))- \frac{1}{2} \text{(\# cusps in } \Pi(L) )\\   
		=& \sum_{i=1}^m \text{writhe}(\Pi(K_i))+ \left(\text{writhe}(\Pi(L)) -\sum_{i=1}^m \text{writhe}(\Pi(K_i)) \right) \\ 
		&- \sum_{i=1}^m \frac{1}{2} (\text{\# cusps in } \Pi(K_i)) \\
		=& \sum_{i=1}^m tb(\Pi(K_i)) + \left(\text{writhe}(\Pi(L)) -\sum_{i=1}^m \text{writhe}(\Pi(K_i )) \right),
	\end{align*}
	   
	For $rot(\Pi(L))$, we can directly decompose it as
	\begin{align*}
		rot(\Pi(L)) = \sum_{i=1}^m rot(\Pi(K_i))
	\end{align*}
	
	Then
	\begin{align} 
		&tb(\Pi(L)) - rot(\Pi(L)) \nonumber \\
		= &\sum_{i=1}^m (tb(\Pi(K_i))-rot(\Pi(K_i)))+ \left( \text{writhe}(\Pi(L)) -\sum_{i=1}^m \text{writhe}(\Pi(K_i )) \right) \label{eq: link tb - rot}
	\end{align}
	
	For any Legendrian knot $K$, $tb(\Pi(K))-rot(\Pi(K))$ is odd (see \cite[proof of Theorem 1]{MR2077671}), we have
	\begin{equation} \label{eq: knot tb - rot}
		\sum_{i=1}^n (tb(\Pi(K_i))-rot(\Pi(K_i)))=m \pmod{2}
	\end{equation}

	Moreover, observe that  $\left( \text{writhe}(\Pi(L)) -\sum_{i=1}^m   \text{writhe}(\Pi(K_i)) \right)$ is the writhe coming from the crossings that correspond to two different $K_i$'s, so we infer that
	\begin{equation} \label{eq: link writhe - knot writhe}
		\text{writhe}(\Pi(L)) -\sum_{i=1}^m \text{writhe}(\Pi(K_i )) = 0 \pmod{2}
	\end{equation}

	Combine Equations (\ref{eq: link tb - rot}), (\ref{eq: knot tb - rot}) and (\ref{eq: link writhe - knot writhe}) we conclude the lemma.
\end{proof}

Next, we recall that given a link $\mathcal{L}(\mathbb{G})$ for some grid diagram $\mathbb{G}$, if we rotate the diagram clockwise by $\frac{\pi}{4}$ radians, we get a front projection $\Pi({\mathcal{L}(\mathbb{G})})$ of a Legendrian representative of $\mathcal{L}(\mathbb{G})$. Notice that after rotation, the upper right corner $\urcorner$ and lower left corner $\llcorner$ of the original grid diagram become the cusps in the front projection (see Figure \ref{fig: grid to legendrian example}).

\begin{figure}
	\centering
	\includegraphics[width = \textwidth]{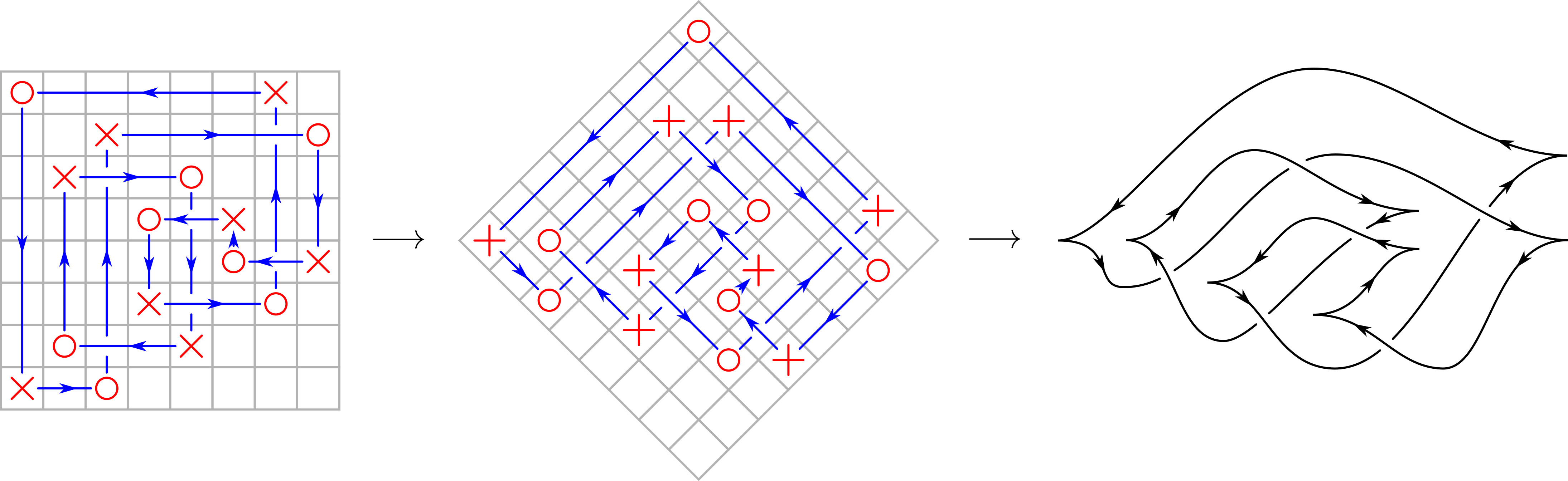}
	\put(-420, 0){$\mathbb{G} = (\mathbb{H}_+, \mathbb{H}_-)$}
	\put(-90, 0){$\Pi(\mathcal{L}(\mathbb{G}))$}
	\caption{We can rotate a grid diagram clockwise by $\frac{\pi}{4}$ radians to get a front projection of the Legendrian link.}
	\label{fig: grid to legendrian example}
\end{figure}

Thus, if we have a compatible half grid pair $(\mathbb{H}_+,\mathbb{H}_-)$, then we get a Legendrian link with front projection $\Pi(\mathcal{L}((\mathbb{H}_+,\mathbb{H}_-))$. The next Proposition gives us $tb$ and $rot$ of such Legendrian link. We first need a lemma.

\begin{lemma} \label{lem: right interval sign compatibility}
	Let $(\mathcal{I}, \mathcal{J})$ be a compatible s.d. $n$-partition pair. Then $\mathcal{S}_{R}(\mathcal{I})$ and $S_R(\mathcal{J})$ have the same number of $+$ signs and the same number of $-$ signs.
\end{lemma}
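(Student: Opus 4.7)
The plan is to reduce the statement to a count on the binary tree $F_\mathcal{I}$ by proving the stronger identity
\[
|\mathcal{S}_R(\mathcal{I}) \cap \mathcal{S}_+| \;=\; |\sigma_\mathcal{I}|_+,
\]
where $|\sigma_\mathcal{I}|_+$ denotes the number of $+$ signs appearing in the $n$-sign $\sigma_\mathcal{I}$. Granted this identity, compatibility of $(\mathcal{I}, \mathcal{J})$ forces $\sigma_\mathcal{I} = \sigma_\mathcal{J}$ by Remark \ref{rmk: replace some by any in def of oriented F}, so $|\sigma_\mathcal{I}|_+ = |\sigma_\mathcal{J}|_+$. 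The counts of $-$ signs in $\mathcal{S}_R(\mathcal{I})$ and $\mathcal{S}_R(\mathcal{J})$ then also agree, because $|\mathcal{S}_R(\mathcal{I})| = |\mathcal{S}_R(\mathcal{J})| = n$.

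To prove the identity, I would transport everything to $F_\mathcal{I}$ via the sign-preserving bijection $\epsilon_\mathcal{I}: \mathcal{S}(\mathcal{I}) \to \mathcal{E}(F_\mathcal{I})$ of Lemma \ref{lem: interval to edge is sign preserving}, under which left (resp.\ right) s.d.\ intervals map to left (resp.\ right) edges and subintervals of $\mathcal{I}$ map to the leaves of $F_\mathcal{I}$. Under this identification $\sigma_\mathcal{I}$ is just the left-to-right sequence of leaf signs, so $|\sigma_\mathcal{I}|_+$ equals the number $\ell_+$ of positive leaves. Now observe that every non-leaf edge labelled by $A$ has children $A_L \in \mathcal{S}_L$ with $s(A_L) = s(A)$ and $A_R \in \mathcal{S}_R$ with $s(A_R) = -s(A)$, by Definition \ref{def: sign on s.d. interval}. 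Hence every left edge has the same sign as its parent, so the number $L_+$ of positive left edges equals the number of positive non-leaf edges, which is $|\mathcal{S}_+(\mathcal{I})| - \ell_+ = n - \ell_+$ by Lemma \ref{lem: cardinality of signed SDI}. Since $L_+ + R_+ = n$, we get $R_+ = \ell_+$. The case $n=1$ is immediate, with $\mathcal{I} = \delta$ and $\mathcal{S}(\delta) = \{[0,1]\}$.

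The main obstacle is the bookkeeping around the root edge $[0,1]$, which is a positive non-leaf edge but is not a child of anything: one must keep track of it as contributing to the count of positive non-leaf edges without adding a left or right child. Once this is set up carefully, the identity $R_+ = \ell_+$ comes out of a single pass of counting on $F_\mathcal{I}$, and the lemma follows at once.
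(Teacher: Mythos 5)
Your proof is correct, and it takes a genuinely different route from the paper's. The paper establishes the count $|\mathcal{S}_R(\mathcal{I}) \cap \mathcal{S}_+| = \#\{\text{negative breakpoints in }E(\mathcal{I})\} + 1$ by chaining two explicit bijections: positive non-root right intervals $A$ correspond to the negative intervals $A\cup\overline{A}$ via the conjugate map, and non-leaf intervals correspond to breakpoints via the midpoint bijection $m$; it then closes the argument by invoking Theorem~\ref{thm: equivalent definition of oriented Thompson group} (the element $g\in\vec{F}$ preserves signs on $E$ and maps breakpoints of $\mathcal{I}$ bijectively onto breakpoints of $\mathcal{J}$). You instead prove $|\mathcal{S}_R(\mathcal{I}) \cap \mathcal{S}_+| = |\sigma_\mathcal{I}|_+$ by a single counting pass on $F_\mathcal{I}$: each non-leaf edge has a unique left child of the same sign, giving $L_+ = n - \ell_+$ from Lemma~\ref{lem: cardinality of signed SDI}, whence $R_+ = \ell_+$; the lemma then follows from $\sigma_\mathcal{I}=\sigma_\mathcal{J}$, which is immediate from Definition~\ref{def: oriented Thompson group} together with Remark~\ref{rmk: replace some by any in def of oriented F}. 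The two identities are equivalent (negative breakpoints correspond to positive non-first signs in $\sigma_\mathcal{I}$), but your version is lighter: it stays inside a single tree and appeals only to the definition of $\vec{F}$ rather than to the analytic characterization $\hat{F}=\vec{F}$. One small remark: the worry you flag about the root edge is in fact harmless, since the root, though not a child of anything, \emph{does} have a left child; the bijection ``non-leaf edge $\mapsto$ its left child'' is exactly what makes $L_+$ equal the number of positive non-leaf edges, and the root is accounted for on the $R_+$ side by the convention $[0,1]\in\mathcal{S}_R$ with $s([0,1])=+$. The $n=1$ base case you note is correct.
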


\begin{proof}
	By the Definition \ref{def: sign on s.d. interval}, if $A\in \mathcal{S}_R(\mathcal{I}) \backslash \{[0,1]\}$ satisfies $s(A)=+$ then there is a $B=A\cup \overline{A}$ such that $s(B)=-$, conversely if $B=A\cup \overline{A}$ for some $A\in \mathcal{S}(\mathcal{I}) \backslash \{[0,1]\} $ then there is an element $C=A$ or $C=\overline{A}$ such that $C\in \mathcal{S}_R(\mathcal{I}) \backslash \{[0,1]\} $ and $s(C)=+$. Together with the fact from Lemma \ref{lem: conjugate in SDI} that $A\cup \overline{A}\in S(\mathcal{I})$, we know that there is a bijection between
	\begin{align*}
		\{A \in \mathcal{S}_R(\mathcal{I}) \setminus [0,1] &\mid s(A) = +\} \\
	  	&\updownarrow \\
	  	\{B \in \mathcal{S}(\mathcal{I}) \mid B = A \cup \overline{A} \text{ for some }& A \in \mathcal{S}(\mathcal{I}) \setminus [0,1], \text{ and } s(B) = -\}
	\end{align*}

	Then by the bijection between $S(\mathcal{I})$ and $E(\mathcal{I})$ from the proof of Lemma \ref{lem: taking midpoint is a bijection from S to E} we know $A\cup \overline{A}\in S(\mathcal{I})$ correspond to the breakpoints in $E(\mathcal{I})$, so there is another bijection between
	\begin{align*}
		\{B \in \mathcal{S}(\mathcal{I}) \mid B = A \cup \overline{A} \text{ for some }& A \in \mathcal{S}(\mathcal{I}) \setminus [0,1], \text{ and } s(B) = -\} \\
		&\updownarrow \\
		\{ \text{breakpoints} &\text{ with $-$ sign in } E(\mathcal{I})\}
	\end{align*}
		
	Combine these two bijections we get 
	$$
	\text{\# $+$ signs on $\mathcal{S}_R(\mathcal{I})$} = \left(\text{\# $-$ signs on breakpoints in $E(\mathcal{I})$} \right) + 1
	$$
	where the added ``$1$" comes from the convention that  $[0,1] \in \mathcal{S}_R(\mathcal{I})$, and $s([0,1])=+$. The same argument works for $\mathcal{J}$, so we have
	$$
	\text{\# $-$ signs on $\mathcal{S}_R(\mathcal{J})$} = \left( \text{\# $-$ signs on breakpoints in $E(\mathcal{J})$} \right) + 1
	$$
		
	Now recall that by Definition \ref{def: compatible pair of s.d. partitions}, $(\mathcal{I},\mathcal{J})$ compatible means that the element $g$ represented by $(\mathcal{I},\mathcal{J})$ is in oriented Thompson group $\vec{F}$, and by Theorem \ref{thm: equivalent definition of oriented Thompson group} $g$ is sign preserving on $E$ and maps the breakpoints of $\mathcal{I}$ to breakpoints of $\mathcal{J}$ bijectively. Thus
	$$
	\text{\# $-$ signs on breakpoints in $E(\mathcal{I})$} =  \text{\# $-$ signs on breakpoints in $E(\mathcal{J})$}
	$$
	
	It tells us $\text{\# $+$ signs on $\mathcal{S}_R(\mathcal{I})$} = \text{\# $+$ signs on $\mathcal{S}_R(\mathcal{J})$}$. The same argument works for the case of $-$ sign.
\end{proof}

Using the above lemma we are able to find the classical invariants of 
the Legendrian $\Pi(\mathcal{L}(\mathbb{H}_{\mathcal{I}},\mathbb{H}_{J}))$.

\begin{proposition} \label{prop: tb and rot of grid (I,J)}
	Let $(\mathcal{I}, \mathcal{J})$ be a compatible s.d. $n$-partition pair. Then
	\begin{enumerate}
		\item $tb(\Pi(\mathcal{L}(\mathbb{H}_{\mathcal{I}},\mathbb{H}_{J})))=-n$
		\item $rot(\Pi(\mathcal{L}(\mathbb{H}_{\mathcal{I}},\mathbb{H}_{J})))=0$
	\end{enumerate}
\end{proposition}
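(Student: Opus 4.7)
My plan is to rotate the combined $2n \times 2n$ grid diagram $(\mathbb{H}_{\mathcal{I}},\mathbb{H}_{\mathcal{J}})$ clockwise by $\pi/4$ and read off both $tb$ and $rot$ from the resulting front projection. By Lemma \ref{lem: writhe of Thompson link is 0} together with Theorem \ref{thm: equivalent oriented links}, the writhe of $\mathcal{L}(\mathbb{H}_{\mathcal{I}},\mathbb{H}_{\mathcal{J}})$ vanishes, so both invariants reduce to cusp counts: $tb = -\tfrac{1}{2}\#\mathrm{cusps}$ and $rot = \tfrac{1}{2}(\#\mathrm{down\ cusps}-\#\mathrm{up\ cusps})$. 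The key geometric observation is that every marking in the top half $\mathbb{H}_{\mathcal{I}}$ has its vertical segment extending downward to its partner in the bottom half, and every marking in $-\overline{\mathbb{H}_{\mathcal{J}}}$ has its vertical extending upward. Consequently, every top-half marking is an NW or NE corner and every bottom-half marking is an SW or SE corner, and under this rotation only NE and SW corners smooth to cusps (the NW/SE corners smooth to regular points).

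For $tb$, each of the $n$ rows in each half contributes exactly one left-in-row and one right-in-row marking, giving $\#\mathrm{NE} = \#\mathrm{SW} = n$, so the total cusp count is $2n$ and $tb = -n$. For $rot$, I will determine which cusps are upward and which are downward by tracking the grid orientation conventions (horizontal oriented $X \to O$, vertical oriented $O \to X$) through the rotation. A direct check on the four corner-by-marking types yields the dictionary: SW $X$ and NE $O$ produce downward cusps, while SW $O$ and NE $X$ produce upward cusps, so
\[
2\cdot rot \;=\; (\#\mathrm{SW}\,X + \#\mathrm{NE}\,O)\;-\;(\#\mathrm{SW}\,O + \#\mathrm{NE}\,X).
\]

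I then count each of the four quantities using s.d.\ interval data. In row $y'<n$ of $\mathbb{H}_{\mathcal{I}}$, the $X$ sits at column $p_x(B_{y'})$ and the $O$ at column $p_x(\overline{B_{y'}})$ with $B_{y'}\in\mathcal{S}_+(\mathcal{I})$, and the midpoint order gives $p_x(B)<p_x(\overline B)$ iff $B\in\mathcal{S}_L$, so $X$ is the right-in-row marking precisely when $B_{y'}\in\mathcal{S}_R$. Incorporating the top row (where $X$ corresponds to $[0,1]\in\mathcal{S}_R$ and sits to the right of the default $O$), this yields $\#\mathrm{NE}\,X = |\mathcal{S}_+(\mathcal{I})\cap\mathcal{S}_R|$ and $\#\mathrm{NE}\,O = |\mathcal{S}_+(\mathcal{I})\cap\mathcal{S}_L|$; the symmetric analysis for the flipped and $X\leftrightarrow O$ swapped $-\overline{\mathbb{H}_{\mathcal{J}}}$ gives $\#\mathrm{SW}\,X = |\mathcal{S}_+(\mathcal{J})\cap\mathcal{S}_R|$ and $\#\mathrm{SW}\,O = |\mathcal{S}_+(\mathcal{J})\cap\mathcal{S}_L|$. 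Substituting these and applying Lemma \ref{lem: right interval sign compatibility} (which gives $|\mathcal{S}_+(\mathcal{I})\cap\mathcal{S}_R| = |\mathcal{S}_+(\mathcal{J})\cap\mathcal{S}_R|$) together with $|\mathcal{S}_+(\mathcal{I})|=|\mathcal{S}_+(\mathcal{J})|=n$ from Lemma \ref{lem: cardinality of signed SDI} forces both sides of the $rot$ formula to coincide, hence $rot=0$. The main obstacle is the careful bookkeeping: justifying the cusp-type dictionary through the $\pi/4$ rotation, and correctly translating the flip-and-swap of $-\overline{\mathbb{H}_{\mathcal{J}}}$ into the stated counts involving $\mathcal{S}_+(\mathcal{J})$.
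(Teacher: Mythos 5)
Your proposal is correct and follows essentially the same route as the paper: establish writhe $0$ via Lemma \ref{lem: writhe of Thompson link is 0} and Theorem \ref{thm: equivalent oriented links}, count $2n$ cusps from the NE corners of $\mathbb{H}_{\mathcal{I}}$ and the SW corners of $-\overline{\mathbb{H}_{\mathcal{J}}}$ to get $tb=-n$, then pair off upward and downward cusps via Lemma \ref{lem: right interval sign compatibility} to get $rot=0$. The only cosmetic difference is that you phrase the count as $|\mathcal{S}_+(\mathcal{I})\cap\mathcal{S}_L|$ versus $|\mathcal{S}_+(\mathcal{I})\cap\mathcal{S}_R|$ (which requires either the conjugate bijection or $|\mathcal{S}_+(\mathcal{I})|=n$ from Lemma \ref{lem: cardinality of signed SDI}), whereas the paper counts $+$ and $-$ signs directly within $\mathcal{S}_R(\mathcal{I})$ and $\mathcal{S}_R(\mathcal{J})$ and applies Lemma \ref{lem: right interval sign compatibility} verbatim; both bookkeepings give the same cancellation.
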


\begin{proof}
	The first statement is straight forward. Notice that the cusps in $\Pi(\mathcal{L}(\mathbb{H}_{\mathcal{I}},\mathbb{H}_{J}))$ correspond to upper right corner and lower left corner in grid diagram $(\mathbb{H}_{\mathcal{I}},\mathbb{H}_{J})$. Since half grid diagram always has $n$ upper right corners and $n$ upper left corners when reverse it, there will be $n$ lower left corner which means there are total $2n$ cusps. Moreover, by Lemma \ref{lem: writhe of Thompson link is 0}, the writhe of $\mathcal{L}(\mathbb{H}_{\mathcal{I}},\mathbb{H}_{J})$ is $0$, so
	\begin{align*}
		tb(\Pi(\mathcal{L}(\mathbb{H}_{\mathcal{I}},\mathbb{H}_{\mathcal{J}}))) &= \text{writhe}(\Pi(\mathcal{L}(\mathbb{H}_{\mathcal{I}},\mathbb{H}_{\mathcal{J}}))) - \frac{1}{2}(\text{\# cusps in } \Pi(\mathcal{L}(\mathbb{H}_{\mathcal{I}},\mathbb{H}_{\mathcal{J}})))\\
		&= 0 - \frac{1}{2}(2n) \\
		&= -n
	\end{align*}

	Next, let's prove the second statement. By comparing the grid diagram $(\mathbb{H}_{\mathcal{I}},\mathbb{H}_{\mathcal{J}})$ and the front projection $\Pi(\mathcal{L}(\mathbb{H}_{\mathcal{I}},\mathbb{H}_{\mathcal{J}}))$ (Figure \ref{fig: grid to legendrian example} for example), we can observe that
	\begin{align*} 
		&\text{\# upward cusps} \\
		=& (\text{\# right } X \text{ on } \mathbb{H}_{\mathcal{I}})+(\text{\# left } O \text{ on } -\overline{\mathbb{H}_{\mathcal{J}}}) \\
		=& (\text{\# right } X \text{ on } \mathbb{H}_{\mathcal{I}})+(\text{\# left } X \text{ on } \mathbb{H}_{\mathcal{J}}) \\
		=& (\text{\# right } X \text{ on } \mathbb{H}_{\mathcal{I}})+(\text{\# right } O \text{ on } \mathbb{H}_{\mathcal{J}})
	\end{align*}

	Similarly, we have
	\begin{align*} 
		&\text{\# downward cusps} \\
		=& (\text{\# right } O \text{ on } \mathbb{H}_{\mathcal{I}})+(\text{\# left } X \text{ on } -\overline{\mathbb{H}_{\mathcal{J}}}) \\
		=& (\text{\# right } O \text{ on } \mathbb{H}_{\mathcal{I}})+(\text{\# left } O \text{ on } \mathbb{H}_{\mathcal{J}}) \\
		=& (\text{\# right } O \text{ on } \mathbb{H}_{\mathcal{I}})+(\text{\# right } X \text{ on } \mathbb{H}_{\mathcal{J}})
	\end{align*}
	
	The correspondence between signed s.d. intervals and markings in half grid diagram tells us that
	$$
	\text{\# right $X$ (resp. $O$) on $\mathbb{H}_{\mathcal{I}}$} = \text{\# $+$ (resp.$-$) signs on $\mathcal{S}_R(\mathcal{I})$}
	$$
	
	Then Lemma \ref{lem: right interval sign compatibility} implies
	$$
	\text{\# right $X$ (resp. $O$) on $\mathbb{H}_{\mathcal{I}}$} = \text{\# left $X$ (resp. $O$) on $\mathbb{H}_{\mathcal{J}}$}
	$$
	
	Finally we have
	\begin{align*}
		&rot(\Pi(\mathcal{L}(\mathbb{H}_{\mathcal{I}},\mathbb{H}_{\mathcal{J}}))) \\
		= &\frac{1}{2}( \text{\# downward cusps } - \text{\# upward cusps} ) \\
		= &( (\text{\# right } O \text{ on } \mathbb{H}_{\mathcal{I}})+(\text{\# right } X \text{ on } \mathbb{H}_{\mathcal{J}}) ) \\
		&- ((\text{\# right } X \text{ on } \mathbb{H}_{\mathcal{I}})+(\text{\# right } O \text{ on } \mathbb{H}_{\mathcal{J}})) \\
		= &((\text{\# right } O \text{ on } \mathbb{H}_{\mathcal{I}}) - (\text{\# right } O \text{ on } \mathbb{H}_{\mathcal{J}})) \\
		&+ ((\text{\# right } X \text{ on } \mathbb{H}_{\mathcal{J}}) - (\text{\# right } X \text{ on } \mathbb{H}_{\mathcal{I}})) \\
		= &0
	\end{align*}
\end{proof}

Now Theorem \ref{thm: parity between link component and leaf number} follows immediately.

\begin{proof} [Proof of Theorem \ref{thm: parity between link component and leaf number}]
	Let $(\mathcal{I},\mathcal{J})$ be a compatible pair of s.d. $n$-partitions. By Proposition \ref{prop: tb and rot of grid (I,J)} we know $tb(\Pi(\mathcal{L}(\mathbb{H}_{\mathcal{I}},\mathbb{H}_{J})))=-n$ and $rot(\Pi(\mathcal{L}(\mathbb{H}_{\mathcal{I}},\mathbb{H}_{J})))=0$. Combine with Lemma \ref{lem: parity of Legendrian link} we have $-n-0=|\mathcal{L}(\mathbb{H}_{\mathcal{I}},\mathbb{H}_{J})| \pmod{2}.$ Finally using Theorem \ref{thm: equivalent oriented links} we conclude $|\vec{L}_{(\mathcal{I},\mathcal{J})}|=n \pmod{2}).$

	Now suppose that $(\mathcal{I}, \mathcal{J})$ is the reduced pair of s.d. $n$-partitions representing $g \in \vec{F}$, then $|\vec{L}_g| = |\vec{L}_{(\mathcal{I},\mathcal{J})}|=n \pmod{2}$, where $n$ is also the leaf number of $g$.
\end{proof}

The first statement of Proposition \ref{prop: tb and rot of grid (I,J)} also implies Theorem \ref{thm: inequality between Thompson index and max tb number}.

\begin{proof} [Proof of Theorem \ref{thm: inequality between Thompson index and max tb number}]
	We will first directly proof the general inequality (\ref{eq: generalized inequality}) for the $\overline{tb}$, and the inequality (\ref{eq: short inequality}) follows immediately. We will show that for any oriented knot $K$, both of $ind_{\vec{F}}(K)$ and $ind_{\vec{F}}(-K)$ are less than or equal to both of $\overline{tb}(K)$ and $\overline{tb}(\overline{K})$.

	By the definition of $ind_{\vec{F}}(K)$, we can pick a compatible s.d. $n$-partition pair $(\mathcal{I},\mathcal{J})$ such that $n = \text{ind}_{\vec{F}}(K)$ and $L_{(\mathcal{I}, \mathcal{J})} = K$. By Theorem \ref{thm: equivalent oriented links}, we have $L_{(\mathcal{I}, \mathcal{J})}= \mathcal{L}(\mathbb{H}_{\mathcal{I}},\mathbb{H}_{J})$, so $\Pi(\mathcal{L}(\mathbb{H}_{\mathcal{I}},\mathbb{H}_{J}))$ is a Legendrian representative of $K$. By Proposition \ref{prop: tb and rot of grid (I,J)}, we have  $tb(\Pi(\mathcal{L}(\mathbb{H}_{\mathcal{I}},\mathbb{H}_{J}))) = -n$. Therefore,
	$$
	-ind_{\vec{F}}(K) = -n = tb(\Pi(\mathcal{L}(\mathbb{H}_{\mathcal{I}},\mathbb{H}_{J}))) \leq \overline{tb}(K)
	$$

	By Theorem \ref{thm: equivalent oriented links} and Remark \ref{rmk: reverse orientation of mirror}, we have $L_{(\mathcal{J}, \mathcal{I})}=- \overline{L_{(\mathcal{I}, \mathcal{J})}}$, so $\Pi(\mathcal{L}(\mathbb{H}_{\mathcal{J}},\mathbb{H}_{\mathcal{I}}))$ is a Legendrian representative of $- \overline{K}$ with $tb(\Pi(\mathcal{L}(\mathbb{H}_{\mathcal{J}},\mathbb{H}_{\mathcal{I}}))) = -n$. Moreover, since $\overline{tb}$ is invariant under reversing orientation, we have
	$$
	-ind_{\vec{F}}(K) = -n = tb(\Pi(\mathcal{L}(\mathbb{H}_{\mathcal{J}},\mathbb{H}_{\mathcal{I}}))) \leq \overline{tb}(-\overline{K}) = \overline{tb}(\overline{K}).
	$$

	At the end we replace $K$ by $-K$ to get
	\begin{align*}
		&-ind_{\vec{F}}(-K) \leq \overline{tb}(-K) = \overline{tb}(K)\\
		&-ind_{\vec{F}}(-K) \leq \overline{tb}(-\overline{K}) = \overline{tb}(\overline{K}).
	\end{align*}

 Now to prove the situation for the maximal self-linking number, we just recall from Proposition \ref{prop: tb and rot of grid (I,J)} that the special Legendrian representative $\Pi(\mathcal{L}(\mathbb{H}_{\mathcal{J}},\mathbb{H}_{\mathcal{I}}))$ has rotation $0$, thus the transverse push-off of $\Pi(\mathcal{L}(\mathbb{H}_{\mathcal{J}},\mathbb{H}_{\mathcal{I}}))$ has self-linking number equal to $tb(\Pi(\mathcal{L}(\mathbb{H}_{\mathcal{J}},\mathbb{H}_{\mathcal{I}})))-rot(\Pi(\mathcal{L}(\mathbb{H}_{\mathcal{J}},\mathbb{H}_{\mathcal{I}})))=-n$. The rest follows.
\end{proof}

\subsection{Half grid diagram and topological invariants}

\subsubsection{Symmetric representation of half grid diagram, unoriented grid diagram, and Link group}
\label{sec: sym rep and unoriented grid}

In this section we will first discuss the relationship between half grid diagrams and elements in symmetric groups and prove Theorem \ref{thm: half grid and element of symmetric group}. Then we will introduce the notion of unoriented grid diagrams and describe how to combine any two half grid diagrams into an unoriented grid diagram. We will use the fact that every (unoriented) link is half presentable to prove Theorem \ref{thm: half grid presentation of link group}. After that we will see how to apply the unoriented grid construction from $g\in F$ instead of $\vec{F}$, and apply it to prove Theorem \ref{thm: grid number inequality}.

Given an $n \times 2n$ half grid diagram $\mathbb{H}$, we can view $\mathbb{H}$ as two functions
\begin{align*}
	&X_{\mathbb{H}}:\{1,2,...,n\}\rightarrow \{1,2,...,2n\} \\
	&O_{\mathbb{H}}:\{1,2,...,n\}\rightarrow \{1,2,...,2n\}
\end{align*}
such that $(X_{\mathbb{H}}(i),i)$ and $(O_{\mathbb{H}}(j),j)$ are the coordinates of $X$ and $O$ respectively. Notice that the conditions in Definition \ref{def: half grid diagram} are equivalent to the requirement that $X_{\mathbb{H}}$ and $O_{\mathbb{H}}$ are both injective and their images are disjoint (so their union is the whole set $\{1,2,...,2n\}$).

\begin{proof} [Proof of Theorem \ref{thm: half grid and element of symmetric group}]
	Let's first start with an $n \times 2n$ half grid diagram $\mathbb{H}$, we have two functions $X_{\mathbb{H}}$ and $O_{\mathbb{H}}$ as described above. We define an element $\sigma_{\mathbb{H}}\in \text{Sym}(2n)$ to be
	$$
	\sigma_{\mathbb{H}} =
	\begin{pmatrix}
		1 & 2 & 3 & 4 & ... & 2n - 1 & 2n\\
		X_{\mathbb{H}}(1) & O_{\mathbb{H}}(1) & X_{\mathbb{H}}(2) & O_{\mathbb{H}}(2) & ... & X_{\mathbb{H}}(n) & O_{\mathbb{H}}(n)
	\end{pmatrix}
	$$

	Since $X_{\mathbb{H}}$ and $O_{\mathbb{H}}$ are both injective and the disjoint union of their images is $\{1, 2, ..., 2n\}$, $\sigma_{\mathbb{H}}$ is a well-defined element in $\text{Sym}(2n)$. If $\mathbb{H}_1$ and $\mathbb{H}_2$ are different half grid diagrams, then $(X_{\mathbb{H}_1}, O_{\mathbb{H}_1}) \neq (X_{\mathbb{H}_2}, O_{\mathbb{H}_2})$, which implies $\sigma_{\mathbb{H}_1} \neq \sigma_{\mathbb{H}_2}$.

	It remains to show any element $\sigma \in \text{Sym}(2n)$ can be obtained as $\sigma_{\mathbb{H}}$ for some half grid diagram $\mathbb{H}$. Define $X: \{1,2,...,n\} \to \{1,2,...,2n\}$ sending $i$ to $\sigma(2i - 1)$ and $O:\{1,2,...,n\}\rightarrow \{1,2,...,2n\}$ sending $j$ to $2j$. Then both $X$ and $O$ are injective and the disjoint union of their images is $\{1, 2, ..., 2n\}$, so they describe a half grid diagram $\mathbb{H}$. At last, we have $\sigma_{\mathbb{H}} = \begin{pmatrix} 1 & 2 & 3 & 4 & ... & 2n - 1 & 2n \\ \sigma(1) & \sigma(2) & \sigma(3) & \sigma(4) & ... & \sigma(2n - 1) & \sigma(2n) \end{pmatrix} = \sigma$.
\end{proof}

\begin{figure}
	\centering
	\includegraphics[width = \textwidth]{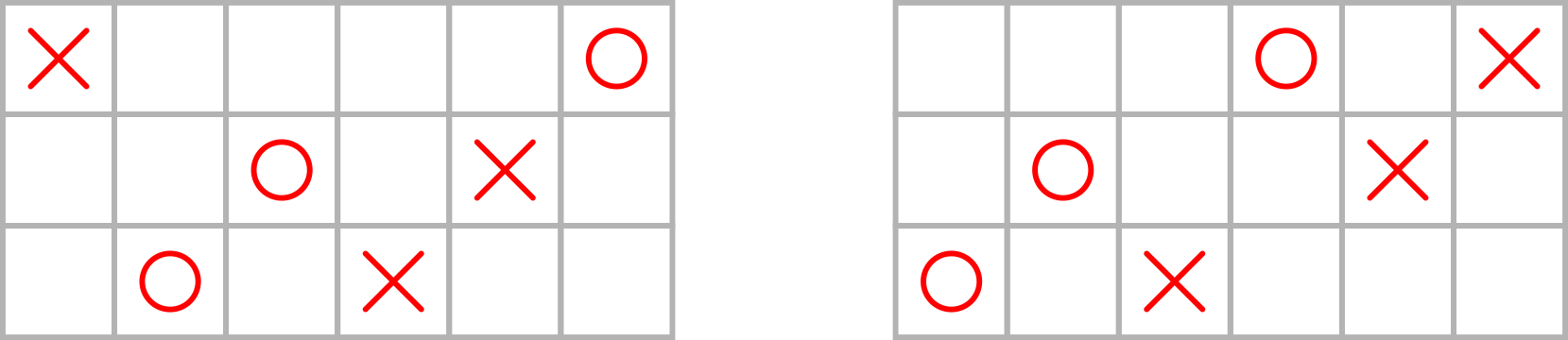}
	\put(-417, 100){$1$}
	\put(-387, 100){$2$}
	\put(-356, 100){$3$}
	\put(-326, 100){$4$}
	\put(-295, 100){$5$}
	\put(-265, 100){$6$}
	\put(-171, 100){$1$}
	\put(-141, 100){$2$}
	\put(-111, 100){$3$}
	\put(-81, 100){$4$}
	\put(-50, 100){$5$}
	\put(-20, 100){$6$}
	\put(-428, -15){\small$\sigma_+(5)$}
	\put(-397, -15){\small$\sigma_+(2)$}
	\put(-366, -15){\small$\sigma_+(4)$}
	\put(-336, -15){\small$\sigma_+(1)$}
	\put(-305, -15){\small$\sigma_+(3)$}
	\put(-274, -15){\small$\sigma_+(6)$}
	\put(-181, -15){\small$\sigma_-(2)$}
	\put(-151, -15){\small$\sigma_-(4)$}
	\put(-121, -15){\small$\sigma_-(1)$}
	\put(-91, -15){\small$\sigma_-(6)$}
	\put(-60, -15){\small$\sigma_-(3)$}
	\put(-29, -15){\small$\sigma_-(5)$}
	\put(-455, 50){$\mathbb{H}_+$}
	\put(-210, 50){$\mathbb{H}_-$}
	\caption{Example of $\sigma_+$ and $\sigma_-$ corresponding to half grid diagram $\mathbb{H}_+$ and $\mathbb{H}_-$ respectively. Note that they are not compatible.}
	\label{fig: sym representation example}
\end{figure}

\begin{example} \label{ex: sym representation}
	Let's observe the pair of half grid diagrams $(\mathbb{H}_+, \mathbb{H}_-)$ in Figure \ref{fig: sym representation example}. First, we try to write down their symmetric representations $\sigma_{\mathbb{H}_+}$ and $\sigma_{\mathbb{H}_-}$ (simply denoted as $\sigma_+$ and $\sigma_-$). Take $\sigma_+$ as an example, for odd inputs, $\sigma_+(2i - 1) = X_{\mathbb{H}}(i)$ is the column index of $X$ in the $i^{th}$ row, so $\sigma_+(1) = 4, \sigma_+(3) = 5, \sigma_+(5) = 1$. For even inputs, $\sigma_+(2i) = O_{\mathbb{H}}(i)$ is the column index of $O$ in the $i^{th}$ row, so $\sigma_+(2) = 2, \sigma_+(4) = 3, \sigma_+(6) = 6$. We can similarly write down $\sigma_-$. The result is
	$$
	\sigma_{+} =
	\begin{pmatrix}
		1 & 2 & 3 & 4 & 5 & 6 \\
		4 & 2 & 5 & 3 & 1 & 6
	\end{pmatrix}
	, \quad
	\sigma_{-} = 
	\begin{pmatrix}
		1 & 2 & 3 & 4 & 5 & 6 \\
		3 & 1 & 5 & 2 & 6 & 4
	\end{pmatrix}
	$$

	Notice that $(\mathbb{H}_+, \mathbb{H}_-)$ is not a compatible pair, because the first column of $\mathbb{H}_+$ contains an $X$, but the first column of $\mathbb{H}_-$ contains an $O$. In the language of representation representation, $\sigma_+^{-1}(1) = 5$ is odd, but $\sigma_-^{-1}(1) = 2$ is even.

	To make sure that $(\mathbb{H}_+, \mathbb{H}_-)$ is a compatible pair, we just need to require that $\sigma_+^{-1}(j)$ and $\sigma_-^{-1}(j)$ have the same parity for any $j$. An equivalent condition is that $\{\sigma_{\mathbb{H}_+}(i) \mid i = 2, 4, ..., 2n\} = \{\sigma_{\mathbb{H}_-}(i) \mid i = 2, 4, 6, ..., 2n\}$ as two sets. Or, $\{\sigma_{\mathbb{H}_+}(i) \mid i = 1, 3, ..., 2n - 1\} = \{\sigma_{\mathbb{H}_-}(i) \mid i = 1, 3, ..., 2n - 1\}$ as two sets.
\end{example}

Next, we will study the link group given by a pair of half grid diagrams. Since the link group is independent of the orientation of the link, it is natural to introduce an unoriented grid diagram and an associated unoriented link.

\begin{definition}
	An unoriented grid diagram $\mathbb{G}$ is an $n\times n$ grid of squares, each of which is either empty or contains a marking $\bigotimes$. We require that in each row and column there are exactly two nonempty squares.
\end{definition}

To distinguish (oriented) grid diagrams and unoriented grid diagrams, in the rest of this section we will denote oriented grid diagram as $\vec{\mathbb{G}}$.

Any unoriented grid diagram $\mathbb{G}$ has an associated unoriented link $\mathcal{L}(\mathbb{G})$ as follows.

\begin{definition}
	Let $\mathbb{G}$ be an $n \times n$  unoriented grid diagram. The unoriented link $\mathcal{L}(\mathbb{G})$ associated to $\mathbb{G}$ is obtained by connecting two $\bigotimes$'s in each row and connecting two $\bigotimes$'s in each column, such that horizontal segments are always above vertical segments.
\end{definition}

\begin{remark}
	Given an oriented grid diagram $\vec{\mathbb{G}}$, we can replace every $X$ and $O$ with $\bigotimes$, to obtain an unoriented grid diagram $\mathbb{G}$. Conversely, given an unoriented diagram diagram $\mathbb{G}$ and associated unoriented link $\mathcal{L}(\mathbb{G})$, we can always manually replace each $\bigotimes$ with an $X$ or an $O$, so that the resulting diagram can give any orientation on $\mathcal{L}(\mathbb{G})$. The procedure is shown as follows.

	Start with an arbitrary $\bigotimes$ in an arbitrary link component of $\mathcal{L}(\mathbb{G})$, replace it with $O$, then it will automatically specify an $X$ and an $O$ on any other $\bigotimes$ on this component, to make sure that each row and each column has exactly one $O$ and exactly one $X$. We repeat this procedure on all link components of $\mathcal{L}(\mathbb{G})$ to obtain an oriented grid diagram $\vec{\mathbb{G}}$, which gives an orientation on $\mathcal{L}(\mathbb{G})$. To get other orientations, we just need to change the orientations of some link components by flipping all of $X$'s and $O$'s on those components.
\end{remark}

A nice thing of unoriented grid diagram is that even if we have two non-compatible half grid diagrams $\mathbb{H}_+$ and $\mathbb{H}_-$, we still have an unoriented grid diagram $(\mathbb{H}_+,\mathbb{H}_-)$ by replacing every $X$ and $O$ in $\mathbb{H}_+$ and $\overline{\mathbb{H}_-}$ by $\bigotimes$, then stacking $\mathbb{H}_+$ over $\overline{\mathbb{H}_-}$ (see Figure \ref{fig: non compatible pair to unoriented grid example} for an example). If we regard $\mathcal{T}(\mathbb{H}_{\mathcal{I}})$ and $\mathcal{T}(\mathbb{H}_{\mathcal{J}})$ as unoriented tangles, then we have $\mathcal{L}(\mathbb{H}_{\mathcal{I}}, \mathbb{H}_{\mathcal{J}}) = \overline{\mathcal{T}(\mathbb{H}_{\mathcal{J}})} \mathcal{T}(\mathbb{H}_{\mathcal{I}})$. Furthermore, unoriented links $\mathcal{L}(\mathbb{H}_+,\mathbb{H}_-)$ and $\mathcal{L}(\mathbb{H}_-,\mathbb{H}_+)$ are mirrors of each other.

\begin{figure}
	\centering
	\includegraphics[width = \textwidth]{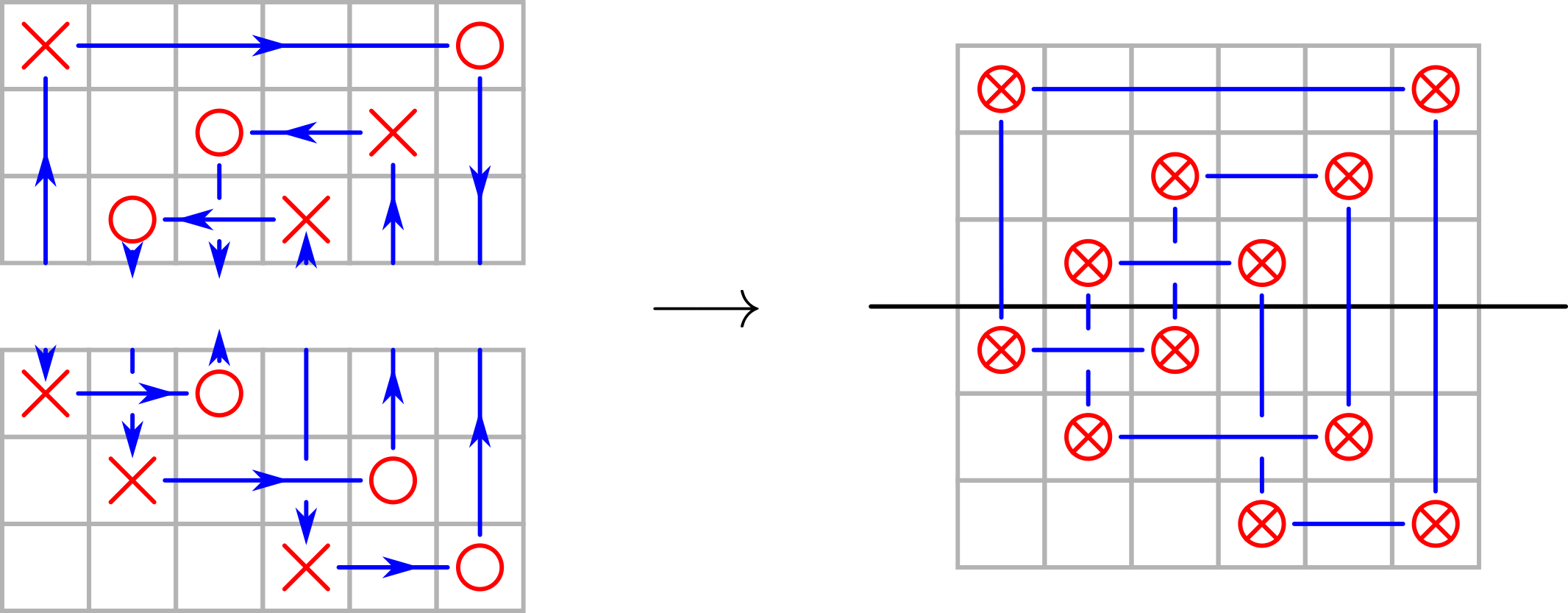}
	\put(-455, 140){$\mathbb{H}_+$}
	\put(-465, 30){$-\overline{\mathbb{H}_-}$}
	\put(-120, -10){$(\mathbb{H}_+, \mathbb{H}_-)$}
	\caption{Two half grid diagram $\mathbb{H}_+$ and $\mathbb{H}_-$ are the same as the ones in Figure \ref{fig: sym representation example}, which are not compatible, but we can still make a well-defined unoriented grid diagram $(\mathbb{H}_+,\mathbb{H}_-)$ and associated unoriented link $\mathcal{L}(\mathbb{H}_+,\mathbb{H}_-)$. In this specific example, $\mathcal{L}(\mathbb{H}_+,\mathbb{H}_-)$ is the right handed trefoil.}
	\label{fig: non compatible pair to unoriented grid example}
\end{figure}

We have unoriented counterparts of Definition \ref{def: half grid presentable} and Theorem \ref{thm: link is half grid presentable}.

\begin{definition}
	An unoriented link $L$ is said to be half grid presentable if there exists a pair of (not necessarily compatible) half grid diagrams $\mathbb{H}_+$ and $\mathbb{H}_-$ such that $\mathcal{L}(\mathbb{H}_+,\mathbb{H}_-)=L$ as unoriented links, and we call $(\mathbb{H}_+,\mathbb{H}_-)$ a half grid representative of $L$.
\end{definition}

\begin{proposition} \label{prop: unoriented link is half grid presentable}
	Every unoriented link is half grid presentable. 
\end{proposition}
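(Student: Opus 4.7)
The plan is to mimic the oriented argument (Theorem \ref{thm: link is half grid presentable}) but in the unoriented setting, using Theorem \ref{thm: unoriented Thompson Alexamder theorem ver 2} (Jones' unoriented Alexander-type theorem) in place of Theorem \ref{thm: oriented Thompson Alexander theorem}. Given an unoriented link $L$, first choose $g \in F$ with $L_g = L$, and let $(\mathcal{I}, \mathcal{J})$ be the reduced s.d. partition pair representing $g$. Apply the half grid construction from Section \ref{sec: grid construction} to each of $\mathcal{I}$ and $\mathcal{J}$ separately; note that this construction (and Proposition \ref{prop: half grid diagram from SDI}) depends only on the sign function $s:\mathcal{S}\to\{+,-\}$ and on the midpoint/length orders on $\mathcal{S}(\mathcal{I})$, so $\mathbb{H}_{\mathcal{I}}$ and $\mathbb{H}_{\mathcal{J}}$ are well-defined half grid diagrams regardless of whether $g$ lies in $\vec{F}$.

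Next I would observe that even if $(\mathbb{H}_{\mathcal{I}},\mathbb{H}_{\mathcal{J}})$ fails to be compatible (which happens precisely when $g\notin\vec{F}$), we still obtain a well-defined \emph{unoriented} grid diagram by stacking $\mathbb{H}_{\mathcal{I}}$ over $\overline{\mathbb{H}_{\mathcal{J}}}$ and replacing every $X$ and $O$ by $\bigotimes$, as described earlier in this subsection. The associated unoriented tangle then satisfies $\mathcal{L}(\mathbb{H}_{\mathcal{I}},\mathbb{H}_{\mathcal{J}}) = \overline{\mathcal{T}(\mathbb{H}_{\mathcal{J}})}\,\mathcal{T}(\mathbb{H}_{\mathcal{I}})$ at the level of unoriented links.

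The final step is to invoke the equivalence with Jones' construction. The proof of Proposition \ref{prop: equivalent oriented tangles} proceeds by a local isotopy that turns each trivalent vertex of the pre-isotoped tree $F_{\mathcal{I}}$ into an L-shaped corner; orientations/signs are only used there to decide whether a corner is labeled $X$ or $O$, so the same isotopy shows $\widehat{T_{\mathcal{I}}} = \mathcal{T}(\mathbb{H}_{\mathcal{I}})$ as \emph{unoriented} $(0,2n)$-tangles for every s.d. partition $\mathcal{I}$, with no compatibility hypothesis. Combining this with the description $L_g = L_{(\mathcal{I},\mathcal{J})} = \overline{\widehat{T_{\mathcal{J}}}}\,\widehat{T_{\mathcal{I}}}$ yields
\[
\mathcal{L}(\mathbb{H}_{\mathcal{I}},\mathbb{H}_{\mathcal{J}}) = \overline{\mathcal{T}(\mathbb{H}_{\mathcal{J}})}\,\mathcal{T}(\mathbb{H}_{\mathcal{I}}) = \overline{\widehat{T_{\mathcal{J}}}}\,\widehat{T_{\mathcal{I}}} = L_g = L
\]
as unoriented links, so $(\mathbb{H}_{\mathcal{I}},\mathbb{H}_{\mathcal{J}})$ is the desired half grid representative of $L$.

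The argument is essentially a bookkeeping adaptation of the oriented case; the only point that needs genuine care is confirming that the isotopy underlying Proposition \ref{prop: equivalent oriented tangles} is orientation-agnostic, so that it still identifies $\widehat{T_{\mathcal{I}}}$ with $\mathcal{T}(\mathbb{H}_{\mathcal{I}})$ when $g\in F\setminus\vec{F}$ and compatibility is lost. Once this is noted, the rest of the proof is immediate.
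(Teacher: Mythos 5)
Your proof is correct, but it takes a genuinely different route from the paper's. The paper's proof is a one-liner: orient $L$ arbitrarily to obtain $\vec{L}$, apply Theorem \ref{thm: link is half grid presentable} to get a \emph{compatible} half grid pair $(\mathbb{H}_+, \mathbb{H}_-)$ with $\mathcal{L}(\mathbb{H}_+, \mathbb{H}_-) = \vec{L}$, and then forget the orientation. You instead invoke the unoriented Alexander theorem (Theorem \ref{thm: unoriented Thompson Alexamder theorem ver 2}) to produce some $g \in F$ possibly outside $\vec{F}$, build the possibly non-compatible pair $(\mathbb{H}_{\mathcal{I}}, \mathbb{H}_{\mathcal{J}})$, and re-derive the unoriented identification $\mathcal{L}(\mathbb{H}_{\mathcal{I}}, \mathbb{H}_{\mathcal{J}}) = L_{(\mathcal{I},\mathcal{J})}$ --- which is exactly the content of Proposition \ref{prop: equivalence of unoriented link}, proved later in the same subsection. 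Both routes are valid; yours is longer for this particular statement, but it is more closely aligned with what the paper actually needs afterward, namely Theorem \ref{thm: grid number inequality}, where the bound is in terms of the \emph{unoriented} index $ind_F$ and one genuinely needs non-compatible pairs. One clarification on the step you flagged as needing ``genuine care'': Proposition \ref{prop: equivalent oriented tangles} already holds for an arbitrary s.d.\ partition $\mathcal{I}$ with no compatibility hypothesis, because the canonical orientation on $\widehat{T_{\mathcal{I}}}$ is defined from the signed tree $F_{\mathcal{I}}$ alone and never references a second partition. Compatibility only enters when one tries to glue $\mathbb{H}_{\mathcal{I}}$ and $\mathbb{H}_{\mathcal{J}}$ into an \emph{oriented} grid diagram, which your unoriented gluing sidesteps entirely; so the worry is automatic, not a gap.
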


\begin{proof}
	Suppose $L$ is an unoriented link, we give it an arbitrary orientation and denoted the resulting oriented link as $\vec{L}$. By Theorem \ref{thm: link is half grid presentable}, there exists a pair of compatible half grid diagrams $\mathbb{H}_+$ and $\mathbb{H}_-$ such that $\vec{L} = \mathcal{L}(\mathbb{H}_+, \mathbb{H}_-)$ as oriented links. Then $L = \mathcal{L}(\mathbb{H}_+, \mathbb{H}_-)$ as unoriented links.
\end{proof}

To prove Theorem \ref{thm: half grid presentation of link group}, we first recall how to give a presentation of link group $\mathcal{L}(\vec{\mathbb{G}})$ for some $m \times m$ grid diagram $\vec{\mathbb{G}}$.

\begin{lemma} \cite[Lemma 3.5.1]{MR3381987} \label{lem: grid presentation of link group}
	The link group $\pi_1(S^3 \backslash \mathcal{L}(\vec{\mathbb{G}}))$ of $\mathcal{L}(\vec{\mathbb{G}})$ has a presentation $\langle x_1,...,x_m \ | \ r_1,...,r_{m-1} \rangle$, called a grid presentation of link group. The generators $\{x_1,..., x_m\}$ correspond to the vertical segments in $\vec{\mathbb{G}}$ (connecting an $X$ and an $O$ in a column). The relations $\{r_1, ..., r_{m - 1}\}$ correspond to the horizontal lines separating the rows. The relation $r_j$ is the product of the generators corresponding to those vertical segments of the link diagram that meet the $j^{th}$ horizontal line, in the order they are encountered from left to right.
\end{lemma}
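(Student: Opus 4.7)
The plan is to compute $\pi_1(S^3 \setminus \mathcal{L}(\vec{\mathbb{G}}))$ by van Kampen's theorem, exploiting the fact that in our crossing convention every horizontal segment passes over every vertical segment. First I would isotope the link so that the $m$ horizontal segments sit strictly above the plane $P = \{z = 0\}$, the $m$ vertical segments sit strictly below $P$, and they are joined by $2m$ short vertical connecting arcs through $P$ at the $X$ and $O$ markings. This realizes $\mathcal{L}(\vec{\mathbb{G}})$ as a kind of plat presentation, with $U := \{z \geq 0\} \setminus L$ containing $m$ disjoint ``U-shaped'' bridges (each horizontal segment together with its two connectors) and $V := \{z \leq 0\} \setminus L$ containing $m$ disjoint ``$\cap$-shaped'' tunnels, meeting along the $2m$-punctured plane $P \setminus L$.

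Each of $U$ and $V$ deformation retracts onto a wedge of $m$ circles, so $\pi_1(U) = F_m$ is free on horizontal meridians $h_1, \ldots, h_m$ and $\pi_1(V) = F_m$ is free on vertical meridians $x_1, \ldots, x_m$. For the intersection, since $P \cup \{\infty\} \cong S^2$ we have $\pi_1(P \setminus L) \cong F_{2m-1}$; I would take as basis $m$ small meridians $\alpha_j$, one around the $X$ marking in each row $j$, together with $m-1$ ``sweep loops'' $\gamma_1, \ldots, \gamma_{m-1}$, where $\gamma_j$ is a simple closed curve in $P$ running along the $j$-th horizontal grid line and enclosing exactly the $2j$ punctures in rows $1, \ldots, j$.

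Next I would identify the images of these generators in $\pi_1(U)$ and $\pi_1(V)$. The loop $\alpha_j$ is a meridian of the connector at the $X$ of row $j$, so in $U$ it equals $h_j^{\pm 1}$ and in $V$ it equals $x_{a_j}^{\pm 1}$, where $a_j$ is the column of that $X$; the resulting van Kampen relation $h_j = x_{a_j}^{\pm 1}$ simply eliminates $h_j$ from the presentation. The loop $\gamma_j$, on the other hand, is null-homotopic in $U$: for each row $r$, both endpoints of the corresponding U-shape lie either inside $\gamma_j$ (if $r \leq j$) or outside (if $r > j$), so the U-shape does not link $\gamma_j$, and indeed $\gamma_j$ can be slid upward in $U$ past all the bridges to infinity. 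In $V$, by contrast, the column-$i$ $\cap$-shape has one endpoint on each side of $\gamma_j$ precisely when the vertical segment $v_i$ crosses the $j$-th horizontal grid line; therefore $\gamma_j$ reads in $\pi_1(V)$ as a product of $x_i^{\pm 1}$ over exactly those columns, in the order the verticals are met left-to-right.

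Applying van Kampen and eliminating the $h_j$ using the $\alpha_j$-relations, the surviving relations are exactly $r_j = \prod_i x_i^{\pm 1}$ for $j = 1, \ldots, m-1$, with each $x_i$ appearing with exponent determined by whether $v_i$ crosses the $j$-th grid line upward or downward. This is the presentation claimed in the lemma. The main technical obstacle is the bookkeeping of signs: one must track the orientations of meridians under the van Kampen identifications to confirm that each $x_i$ appears with the correct exponent dictated by the orientation of the vertical segment at the crossing with $\gamma_j$, and a direct check on a small example (such as the $2 \times 2$ unknot, where one gets $r_1 = x_1^{-1} x_2$, or a $5 \times 5$ trefoil) fixes the convention.
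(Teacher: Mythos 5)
Your van Kampen decomposition along the separating plane, with $U$ above containing the horizontal bridges and $V$ below containing the vertical tunnels, is a reasonable strategy (the statement is cited from \cite{MR3381987} and not proved in this paper, so there is no in-paper proof to compare against), and the overall scaffolding --- handlebody complements, $\alpha_j$-relations eliminating the $h_j$'s, and $\gamma_j$-relations producing the $r_j$'s --- is sound.

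The genuine gap is in the sign analysis, and it is not merely a convention to be fixed by an example. The lemma asserts that every generator appears in $r_j$ with exponent $+1$ (see the paper's Figure~\ref{fig: grid presentation of link group example}, where $r_1 = x_1 x_4$ and so on), and this all-positive form is exactly what the paper needs to make Theorem~\ref{thm: half grid presentation of link group} apply to unoriented grid diagrams. Your argument instead produces exponents $\pm 1$ determined by the orientation of the vertical segment $v_i$, i.e.\ by the link orientation, and your $2 \times 2$ example $r_1 = x_1^{-1} x_2$ is not the relation the lemma gives (it gives $x_1 x_2$). The fix is to orient each meridian $x_i$ by a fixed geometric rule independent of $L$'s orientation: declare $x_i$ to be the positive meridian of the column-$i$ arc when that arc is traversed from its endpoint in the lower-numbered row to its endpoint in the higher-numbered row. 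With this choice, every column arc that meets the $j$-th grid line passes through a pushed-down Seifert disk for $\gamma_j$ in the same direction (from the inside of $\gamma_j$ out), so all intersection signs agree and can be taken to be $+1$. Two smaller points: the claim that $\{\alpha_j\} \cup \{\gamma_j\}$ forms a free basis of $\pi_1(P \backslash L) \cong F_{2m-1}$ deserves at least a line (it is a change of basis from the $2m-1$ standard puncture meridians), and the argument that $\gamma_j$ dies in $\pi_1(U)$ is cleanest if you first isotope the bridges to lie at height at most $\epsilon$ and then observe that the disk in $P$ bounded by $\gamma_j$, pushed up into a dome of height $2\epsilon$, is an embedded disk in $U$ disjoint from $L$ with boundary $\gamma_j$.
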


Note that link group is independent of link orientations, so we can replace the oriented grid diagram $\vec{\mathbb{G}}$ in Lemma \ref{lem: grid presentation of link group} by unoriented grid diagram $\mathbb{G}$. See Figure \ref{fig: grid presentation of link group example} for an example of grid presentation of $\mathcal{L}(\mathbb{G})$'s link group.

\begin{figure}
	\centering
	\begin{subfigure}{0.25\textwidth}
		\centering
		\includegraphics[width = \textwidth]{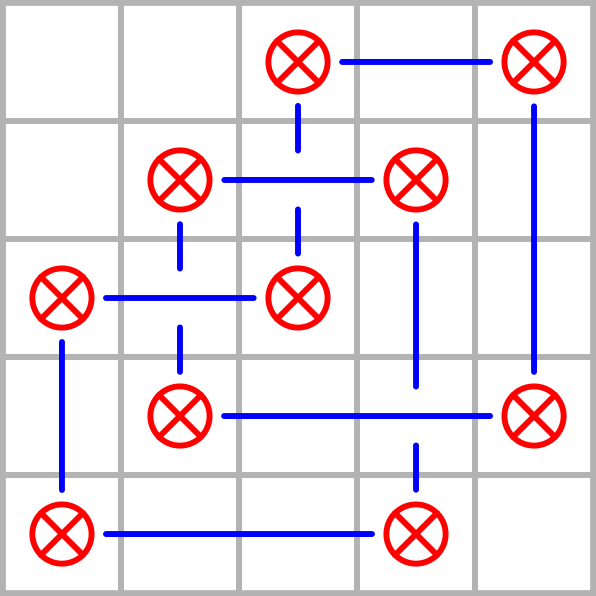}
		\put(-101, -15){$x_1$}
		\put(-80, -15){$x_2$}
		\put(-59, -15){$x_3$}
		\put(-38, -15){$x_4$}
		\put(-16, -15){$x_5$}
		\put(-170, 20){$r_1 = x_1 x_4$}
		\put(-193, 42){$r_2 = x_1 x_2 x_4 x_5$}
		\put(-193, 63){$r_3 = x_2 x_3 x_4 x_5$}
		\put(-170, 84){$r_4 = x_3 x_5$}
		\caption{}
		\label{fig: grid presentation of link group example}
	\end{subfigure}
	\hspace{1cm}
	\begin{subfigure}{0.3\textwidth}
		\centering
		\includegraphics[width = \textwidth]{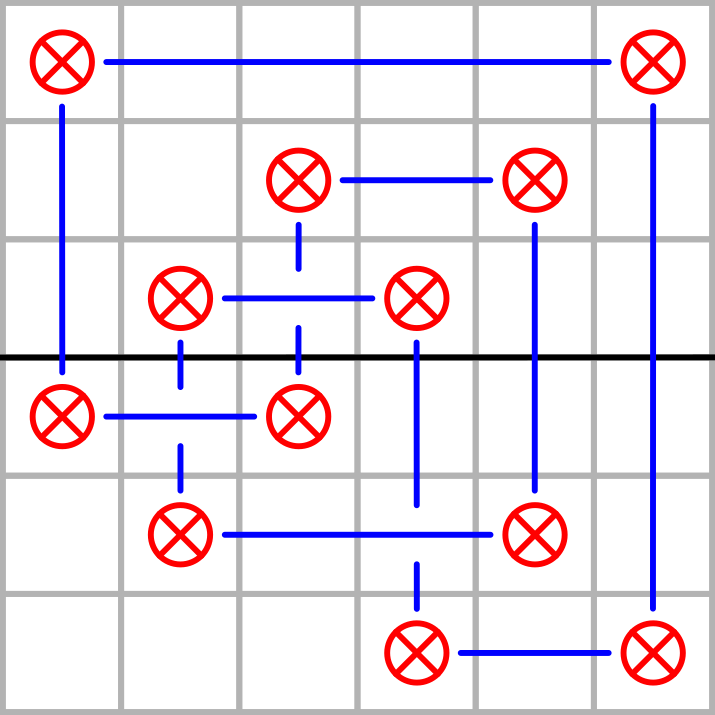}
		\put(-123, -15){$x_1$}
		\put(-101, -15){$x_2$}
		\put(-80, -15){$x_3$}
		\put(-59, -15){$x_4$}
		\put(-38, -15){$x_5$}
		\put(-16, -15){$x_6$}
		\put(12, 20){$r_{-2} = x_4 x_6$}
		\put(12, 42){$r_{-1} = x_2 x_4 x_5 x_6$}
		\put(12, 63){$r_0 = x_1 x_2 x_3 x_4 x_5 x_6$}
		\put(12, 83){$r_{+1} = x_1 x_3 x_5 x_6$}
		\put(12, 105){$r_{+2} = x_1 x_6$}
		\caption{}
		\label{fig: half grid presentation of link group example}
	\end{subfigure}
	\caption{(a) Given an unoriented $5 \times 5$ grid diagram $\mathbb{G}$, the link group of $\mathcal{L}(\mathbb{G})$ has a presentation $\langle x_1, x_2, x_3, x_4, x_5 \mid r_1, r_2, r_3, r_4 \rangle$. (b) Given two $3 \times 6$ half grid diagrams $\mathbb{H}_+, \mathbb{H}_-$ in Figure \ref{fig: non compatible pair to unoriented grid example}, we have an unoriented $6 \times 6$ grid diagram $(\mathbb{H}_+, \mathbb{H}_-)$. The link group of $\mathcal{L}(\mathbb{H}_+, \mathbb{H}_-)$ has a presentation $\langle x_1, x_2, x_3, x_4, x_5, x_6 \mid r_{-2}, r_{-1}, r_0, r_{+1}, r_{+2} \rangle$, where $r_0 = x_1 x_2 x_3 x_4 x_5 x_6$  is given by the middle horizontal line, and $r_{+1}, r_{+2}$ are given by the two horizontal lines in $\mathbb{H}_+$, $r_{-1}, r_{-2}$ are given by the two horizontal lines in $\mathbb{H}_-$.}
\end{figure}

Now we are ready to prove Theorem \ref{thm: half grid presentation of link group}.  

\begin{proof}[Proof of Theorem \ref{thm: half grid presentation of link group}]
	Let's first prove that given any link, its link group has a half grid presentation (\ref{eq: half grid presentation}). Since the link group does not depend on the orientation of link, we can start with an unoriented link $L$. By Proposition \ref{prop: unoriented link is half grid presentable}, there exists a pair of $n \times 2n$ half grid diagrams $\mathbb{H}_+$ and $\mathbb{H}_-$ such that $\mathcal{L}(\mathbb{H}_+,\mathbb{H}_-)=L$.

	Then we claim that the presentation of link group of $\mathcal{L}(\mathbb{H}_+,\mathbb{H}_-)$ given by Lemma \ref{lem: grid presentation of link group} is the wanted half grid presentation. First, we index the horizontal line in the grid diagram $(\mathbb{H}_+,\mathbb{H}_-)$ from bottom to top with $-n$ to $n$ (so the middle lines is the $0^{th}$ one). Observe that the $0^{th}$ horizontal line intersects with all vertical segments (see Figure \ref{fig: half grid presentation of link group example} for an example), so we have
	$$
	r_0 = x_1 x_2 ... x_{2n} = X
	$$

	Other relations can be divided into two pieces $\{r_{+1},..,r_{+(n - 1)}\}$ and $\{r_{-1},..,r_{-(n - 1)}\}$ corresponding to $\mathbb{H}_+$ and $\mathbb{H}_-$ respectively. By the symmetric group representation given by Theorem \ref{thm: half grid and element of symmetric group}, $\mathbb{H}_{\pm}$ can be represented by some $\sigma_{\pm} \in \text{Sym}(2n)$, where the $X$ and $O$ in the $i^{th}$ row of $\mathbb{H}_{\pm}$ are exactly in the $\sigma_{\pm}(2i - 1)^{th}$ and $\sigma_{\pm}(2i)^{th}$ columns. Observe that in the unoriented grid diagram $(\mathbb{H}_+, \mathbb{H}_-)$, the $+1^{st}$ horizontal line intersects with all vertical segments except the $\sigma_+(1)^{th}$ one and the $\sigma_-(1)^{th}$ one, because the $X$ and the $O$ on the first row of $\mathbb{H}_+$ are below the $+1^{st}$ horizontal line of $(\mathbb{H}_+, \mathbb{H}_-)$. Then we take away $x_{\sigma_+(1)}$ and $x_{\sigma_+(2)}$ from $r_0 = x_1 x_2 ... x_{2n} =X$ to obtain $r_{+1} = X(x_{\sigma_+(1)}, x_{\sigma_+(2)})$. In general, we take away $x_{\sigma_+(2i - 1)}$ and $x_{\sigma_+(2i)}$ from $r_{+(i - 1)}$ to obtain $r_{+i}$. This argument works similarly for $r_{-1}, r_{-2},..., r_{-(n - 1)}$. In conclusion, we have
	\begin{align*}
		&r_{\pm 1} = X(x_{\sigma_{\pm}(1)}, x_{\sigma_{\pm}(2)}) \\
		&r_{\pm 2} = X(x_{\sigma_{\pm}(1)}, x_{\sigma_{\pm}(2)}, x_{\sigma_{\pm}(3)}, x_{\sigma_{\pm}(4)}) \\
		& \vdots \\
		&r_{\pm (n - 1)} = X(x_{\sigma_{\pm}(1)}, ..., x_{\sigma_{\pm}(2n-2)})
	\end{align*}
	
	We put relations $r_0, r_{\pm 1}, ..., r_{\pm(n - 1)}$ together to get the half grid presentation of link group.
	
	Conversely, suppose we have a group $G$ with a half grid presentation (\ref{eq: half grid presentation}), for some positive integer $n$ and two elements $\sigma_+$, $\sigma_-$ in $Sym(2n)$. By Theorem \ref{thm: half grid and element of symmetric group}, there is a pair of half grid diagrams $\mathbb{H}_+$ and $\mathbb{H}_-$ corresponding to $\sigma_+$ and $\sigma_-$ respectively. Notice that $(\mathbb{H}_+, \mathbb{H}_-)$ is not necessarily compatible, but we still have an unoriented grid diagram $(\mathbb{H}_+,\mathbb{H}_+)$. Let $L$ be the unoriented link $\mathcal{L}(\mathbb{H}_+,\mathbb{H}_+)$, then by previous discussion we conclude that the link group of $L = \mathcal{L}(\mathbb{H}_+,\mathbb{H}_+)$ has a presentation exactly same as the given half grid presentation of $G$. So $G$ is indeed a link group.
\end{proof}

\begin{example}
	The trefoil knot has a half grid representative $(\mathbb{H}_+,\mathbb{H}_-)$ shown in Figure \ref{fig: non compatible pair to unoriented grid example} and \ref{fig: half grid presentation of link group example}. In Example \ref{ex: sym representation} we saw that the corresponding symmetric group elements are $\sigma_+ = \begin{pmatrix} 1 \ 2 \ 3 \ 4 \ 5 \ 6 \\ 4 \ 2 \ 5 \ 3 \ 1 \ 6 \end{pmatrix}$ and $\sigma_- = \begin{pmatrix} 1 \ 2 \ 3 \ 4 \ 5 \ 6 \\ 3 \ 1 \ 5 \ 2 \ 6 \ 4 \end{pmatrix}$ respectively, so according to Theorem \ref{thm: half grid presentation of link group} the group
	$$
	\left< x_1,x_2,x_3,x_4,x_5,x_6\ \middle\vert \begin{array}{l}
		X=x_1x_2x_3x_4x_5x_6,\\
		X(x_{\sigma_+(1)},x_{\sigma_+(2)})=x_1x_3x_5x_6,\\
		X(x_{\sigma_+(1)},x_{\sigma_+(2)},x_{\sigma_+(3)},x_{\sigma_+(4)})=x_1x_6,\\
		X(x_{\sigma_-(1)},x_{\sigma_-(2)})=x_2x_4x_5x_6,\\
		X(x_{\sigma_-(1)},x_{\sigma_-(2)},x_{\sigma_-(3)},x_{\sigma_-(4)})=x_4x_6
	  \end{array} \right>
	$$
	is the knot group of trefoil.
\end{example}

\subsubsection{Half grid diagram and classical link invariants}

In this subsection we will first prove Theorem \ref{thm: Euler charactersitic of Thompson link}. The result follows directly from the construction assigning a canonical Seifert surface to an oriented Thompson link diagram introduced by Jones in \cite{MR3589908}. 

Given an s.d. partition pair $(\mathcal{I}, \mathcal{J})$, the signed regions in Figure \ref{fig: tangle orientation from signed regions example} give a surface $X_{(\mathcal{I}, \mathcal{J})}$ bounding $L_{(\mathcal{I}, \mathcal{J})}$. Furthermore, the $+$ and $-$ signs represent local orientations of the surface. Suppose that $(\mathcal{I}, \mathcal{J})$ represents an element in $\vec{F}$, then $\sigma_{\mathcal{I}} = \sigma_{\mathcal{J}}$ implies that $X_{(\mathcal{I}, \mathcal{J})}$ is orientable. Let it be the canonical Seifert surface of $L_{(\mathcal{I}, \mathcal{J})}$.

\begin{figure}
    \centering
    \includegraphics[width = 0.4\textwidth]{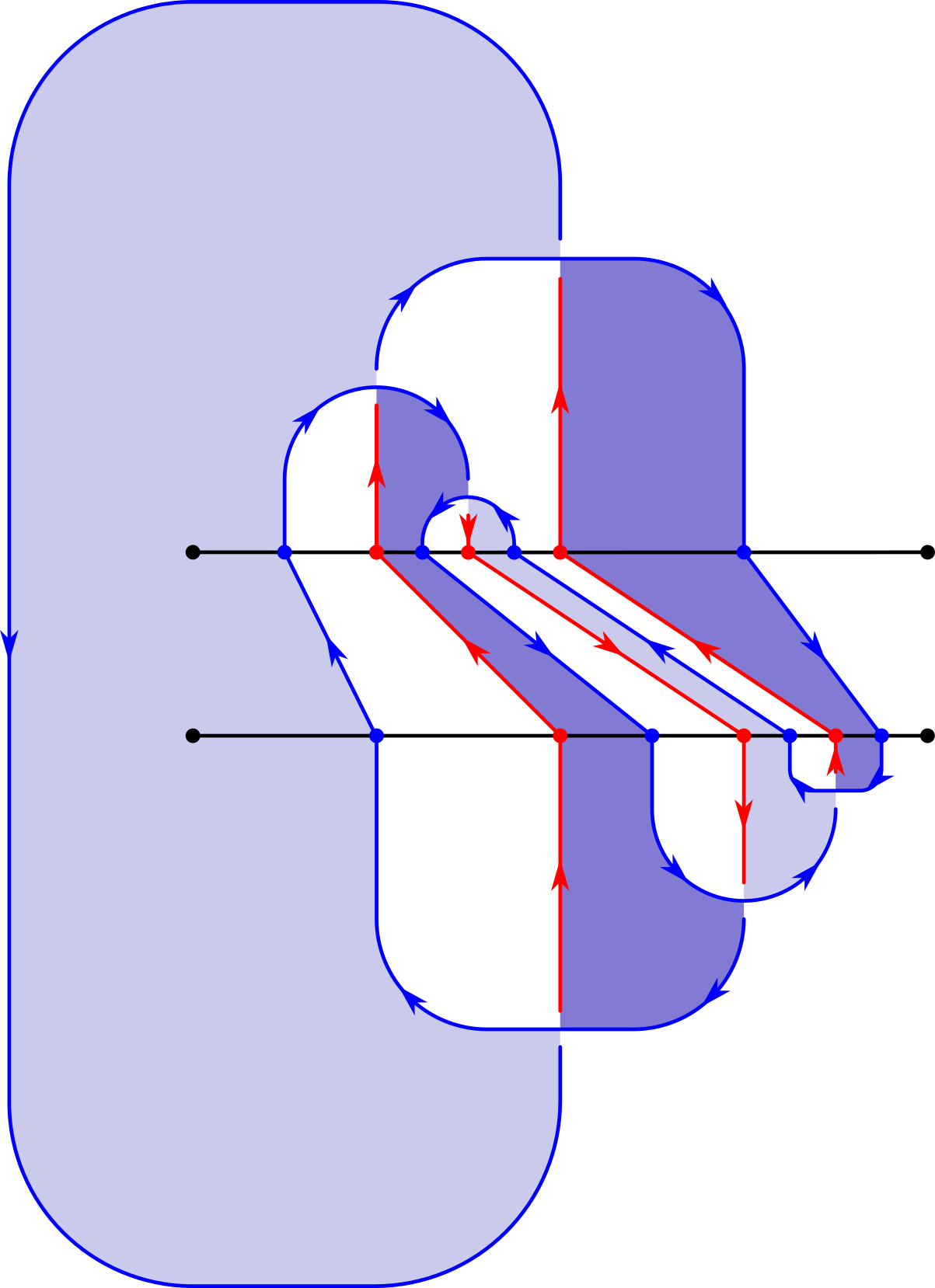}
    \put(10, 95){$\mathcal{J}$}
    \put(10, 135){$\mathcal{I}$}
    \put(-20, 20){$L_{(\mathcal{I}, \mathcal{J})}$}
    \put(-150, 30){$\color{blue} X_{(\mathcal{I}, \mathcal{J})}$}
    \caption{$L_{(\mathcal{I}, \mathcal{J})}$ with its canonical Seifert surface $X_{(\mathcal{I}, \mathcal{J})}$.}
    \label{fig:}
\end{figure}

Now we are ready to see the Proof of Theorem \ref{thm: Euler charactersitic of Thompson link}
\begin{proof}[Proof of Theorem \ref{thm: Euler charactersitic of Thompson link}]

Suppose $L$ has $ind_{\Vec{F}}=n$, let $(\mathcal{I}, \mathcal{J})$ be a compatible s.d. n-partition pair such that $\mathcal{L}(\mathbb{H}_{\mathcal{I}}, \mathbb{H}_{\mathcal{J}})= L_{(\mathcal{I},\mathcal{J})}=L$.Then there are total $2(n-1)$ crossing in such diagram, and by the construction of Seifert surface discussed above it's easy to see after we cut out the surface at the intersection points the surface decomposed into $n$ disks. In other words, the Seifert surface $S_{(\mathcal{I}, \mathcal{J})}$ is obtained by attaching $2(n-1)$ 1-handle to $n$ disks, so $\chi(S_{(\mathcal{I}, \mathcal{J})})=n-2(n-1)=-n+2$. Thus $\chi(L)\geq ind_{\Vec{F}}+2$, which implies when $L$ has one component $g(L)\leq \frac{ind_{\Vec{F}}-1}{2}$.
\end{proof}

Before we give proof of Theorem \ref{thm: grid number inequality}, let's first recall the definition of minimal grid number of oriented link. Suppose $\vec{L}$ is an oriented link. The minimal grid number of $\vec{L}$ is defined to be the minimal number $m$ such that there exists an $m \times m$ grid diagram $\vec{\mathbb{G}}$ satisfying $\mathcal{L}(\vec{\mathbb{G}}) = \vec{L}$. The minimal grid number of $\vec{L}$ is denoted as $grid(\vec{L})$.

Note that $grid(\vec{L})$ is independent of the orientation of $\vec{L}$, because we can always flip some $X$'s and $O$'s in $\vec{\mathbb{G}}$ to obtain any other orientation of $\mathcal{L}(\vec{\mathbb{G}})$. It is natural to define minimal grid number of unoriented link $L$.

\begin{definition}
	Let $L$ be an unoriented link. The minimal grid number of $L$ is defined to be the minimal number $m$ such that there exists an $m \times m$ unoriented grid diagram $\mathbb{G}$ satisfying $\mathcal{L}(\mathbb{G}) = L$. The minimal grid number of $L$ is denoted as $grid(L)$.
\end{definition}

It's easy to see that $grid(L) = grid(\vec{L})$, where $\vec{L}$ has an arbitrary orientation on $L$.

Next, let's see an unoriented version of Theorem \ref{thm: equivalent oriented links}.

\begin{proposition} \label{prop: equivalence of unoriented link}
	Suppose that $(\mathcal{I}, \mathcal{J})$ is any s.d. $n$-partition pair (not necessarily compatible), then $\mathcal{L}(\mathbb{H}_{\mathcal{I}}, \mathbb{H}_{\mathcal{J}})= L_{(\mathcal{I},\mathcal{J})}$ as unoriented links.
\end{proposition}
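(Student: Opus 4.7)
The strategy is to replay the proof of Theorem \ref{thm: equivalent oriented links} in the unoriented category. Compatibility of the pair $(\mathcal{I},\mathcal{J})$ was invoked there for two purposes only: to guarantee via Proposition \ref{prop: compatible half grid diagrams} that the stacked diagram $(\mathbb{H}_\mathcal{I},\mathbb{H}_\mathcal{J})$ is an oriented grid diagram, and to consistently orient the shift $g^\mathcal{I}_\mathcal{J}$ so that boundary points can be glued coherently. Once we forget all orientations, neither issue remains, and the statement should drop out of results already proved.

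First I would observe that the construction of $\mathbb{H}_\mathcal{I}$ in Proposition \ref{prop: half grid diagram from SDI} never references a partner partition, so $\mathbb{H}_\mathcal{I}$ and $\mathbb{H}_\mathcal{J}$ are well-defined for any s.d. $n$-partition pair. The stacking procedure described in Section \ref{sec: sym rep and unoriented grid}, in which every $X$ and $O$ in $\mathbb{H}_\mathcal{I}$ and $\overline{\mathbb{H}_\mathcal{J}}$ is replaced by $\bigotimes$ and the two halves are juxtaposed, then produces a valid unoriented $2n \times 2n$ grid diagram regardless of compatibility: each row contains exactly two markings by Definition \ref{def: half grid diagram}, and each column contains exactly two markings because $\mathbb{H}_\mathcal{I}$ and $\overline{\mathbb{H}_\mathcal{J}}$ each contribute one marking per column. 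Reading off its associated unoriented link gives $\mathcal{L}(\mathbb{H}_\mathcal{I},\mathbb{H}_\mathcal{J}) = \overline{\mathcal{T}(\mathbb{H}_\mathcal{J})}\,\mathcal{T}(\mathbb{H}_\mathcal{I})$.

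Next I would note that Proposition \ref{prop: equivalent oriented tangles} pertains to a single s.d. partition and therefore uses no compatibility assumption; its conclusion $\widehat{T_\mathcal{I}} = \mathcal{T}(\mathbb{H}_\mathcal{I})$ descends, by forgetting orientations, to an equality of unoriented $(0,2n)$-tangles. On the Jones side, the final paragraph of Section \ref{sec: jones construction} already spells out that $L_{(\mathcal{I}',\mathcal{J}')} = \overline{\widehat{T_{\mathcal{J}'}}}\,\widehat{T_{\mathcal{I}'}}$ is defined for any pair of s.d. partitions with the same number of breakpoints once the capped tangles are regarded as having unmarked endpoints, so the shift $g^{\mathcal{I}'}_{\mathcal{J}'}$ becomes superfluous. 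Combining these two facts yields
\[
\mathcal{L}(\mathbb{H}_\mathcal{I},\mathbb{H}_\mathcal{J}) \;=\; \overline{\mathcal{T}(\mathbb{H}_\mathcal{J})}\,\mathcal{T}(\mathbb{H}_\mathcal{I}) \;=\; \overline{\widehat{T_\mathcal{J}}}\,\widehat{T_\mathcal{I}} \;=\; L_{(\mathcal{I},\mathcal{J})}
\]
as unoriented links, which is the claim. There is essentially no technical obstacle: the delicate geometric isotopy was already carried out in Proposition \ref{prop: equivalent oriented tangles}, and the only additional item to verify is the column-count making the stacked diagram a legitimate unoriented grid, which is immediate from Definition \ref{def: half grid diagram}.
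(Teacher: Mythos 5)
Your proof is correct and follows essentially the same route as the paper's: both invoke Proposition \ref{prop: equivalent oriented tangles} (which concerns a single partition and needs no compatibility), forget orientations to obtain $\mathcal{T}(\mathbb{H}_{\mathcal{I}}) = \widehat{T_{\mathcal{I}}}$ and $\mathcal{T}(\mathbb{H}_{\mathcal{J}}) = \widehat{T_{\mathcal{J}}}$ as unoriented tangles, and then appeal to the description $L_{(\mathcal{I},\mathcal{J})} = \overline{\widehat{T_{\mathcal{J}}}}\,\widehat{T_{\mathcal{I}}}$ from Section \ref{sec: jones construction}. Your additional remark confirming that the stack of two arbitrary half grid diagrams is a legitimate unoriented grid diagram is a useful elaboration of a point the paper leaves to its earlier discussion in Section \ref{sec: sym rep and unoriented grid}, but it does not change the substance of the argument.
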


\begin{proof}
	Recall in Section \ref{sec: jones construction}, we had $L_{(\mathcal{I}, \mathcal{J})} = \overline{T_{\mathcal{J}}} T_{\mathcal{I}}$, where $T_{\mathcal{I}}, T_{\mathcal{J}}$ are unoriented $(0, 2n)$-tangles. Later, we assigned canonical orientations on $\vec{T}_{\mathcal{I}}$ and $\vec{T}_{\mathcal{J}}$ and showed $\mathcal{T}(\mathbb{H}_{\mathcal{I}}) = \widehat{\vec{T}_{\mathcal{I}}}$ and $\mathcal{T}(\mathbb{H}_{\mathcal{J}}) = \widehat{\vec{T}_{\mathcal{J}}}$ as oriented tangles (Proposition \ref{prop: equivalent oriented tangles}). Now we regard $\mathcal{T}(\mathbb{H}_{\mathcal{I}})$ and $\mathcal{T}(\mathbb{H}_{\mathcal{J}})$ as unoriented tangles, then we have
	$$
	\mathcal{L}(\mathbb{H}_{\mathcal{I}}, \mathbb{H}_{\mathcal{J}}) = \mathcal{T}(\mathbb{H}_{\mathcal{I}})\overline{\mathcal{T}(\mathbb{H}_{\mathcal{J}})} = \widehat{T_{\mathcal{I}}}\overline{\widehat{T_{\mathcal{J}}}} = {L}_{(\mathcal{I},\mathcal{J})}
	$$
\end{proof}

Now Theorem \ref{thm: grid number inequality} follows easily.

\begin{proof} [Proof of Theorem \ref{thm: grid number inequality}]
	Suppose $ind_F(L)=n$, then there exists an s.d. $n$-partition pair $(\mathcal{I},\mathcal{J})$ such that $L_{(\mathcal{I},\mathcal{J})}=L$. By Proposition \ref{prop: equivalence of unoriented link}, $L_{(\mathcal{I},\mathcal{J})}=\mathcal{L}(\mathbb{H}_{\mathcal{I}},\mathbb{H}_{\mathcal{J}})$ where $(\mathbb{H}_{\mathcal{I}},\mathbb{H}_{\mathcal{J}})$ is a $2n \times 2n$ unoriented grid diagram. Then we have
	$$
	grid(L) \leq 2n = 2 \cdot ind_F(L)
	$$
\end{proof}

\begin{appendices}

\section{Group of fractions}
\label{appx: group of fractions}

Let $\mathfrak{R}$ be a small category with the following 3 properties.

\begin{enumerate}
    \item (Unit) There is an element $1 \in Obj(\mathfrak{R})$ with $Mor_{\mathfrak{R}}(1, a) \neq \emptyset$ for all $a \in Ob(\mathfrak{R})$.
    \item (Stabilization) Let $\mathcal{D} = \bigcup_{a \in Obj(\mathfrak{R})} Mor_{\mathfrak{R}}(1, a)$. Then for each $f, g \in \mathcal{D}$ there are morphisms $p$ and $q$ with $pf = qg$.
    \item (Cancellation) If $pf = qf$ for $f \in \mathcal{D}$ then $p = q$.
\end{enumerate}

Then $\mathcal{D}$ becomes a directed set if we define $f \leq g$ given $g = pf$ for some morphism $p$.

Suppose that $\mathfrak{C}$ is another category and $\Phi: \mathfrak{R} \to \mathfrak{C}$ is a functor. For any $f \in \mathcal{D}$, we let $A(\Phi)_f = Mor_{\mathfrak{C}}(\Phi(1), \Phi(target(f)))$. If $f \leq g$ and $g = pf$, we define $\iota_f^g: A(\Phi)_f \to A(\Phi)_g$ sending $v$ to $\Phi(p) \circ v$. Then $(\{A(\Phi)_f\}, \{\iota_f^g\})$ is a direct system. Notice that the cancellation axiom implies that $p$ is unique, and the stabilization axiom implies that for any $f, g \in \mathcal{D}$ there exists some $h \in \mathcal{D}$ such that $f \leq h$ and $g \leq h$.

Let $\mathcal{Q} = \{(f, u): f \in \mathcal{D}, u \in A(\Phi)_f\}$. There is bijection from $\bigcup_{f \in \mathcal{D}} A(\Phi)_f$ to $\mathcal{Q}$, sending $u \in A(\Phi)_f$ to $(f, u)$, and its inverse sends $(f, u)$ back to $u \in A(\Phi)_f$.

The inverse limit of the direct system $(\{A(\Phi)_f\}, \{\iota_f^g\})$ is defined to be $\displaystyle\lim_{\begin{subarray}{l} \longrightarrow \\ f \in \mathcal{D} \end{subarray}} A(\Phi)_f = \bigcup_{f \in \mathcal{D}} A(\Phi)_f / \sim$, where $u  \in A(\Phi)_f$ and $v \in A(\Phi)_g$ are equivalent if there exists $h \geq f, g$ such that $\iota_f^h(u) = \iota_g^h(v)$. In other words, there exist morphisms $p, q$ in $\mathfrak{R}$ such that $p f = q g$ and $\Phi(p) \circ u = \Phi(q) \circ v$.

Correspondingly, there is an equivalence relation $\sim$ on $\mathcal{Q}$ such that $(f, u) \sim (g, v)$ if there exists morphisms $p, q$ in $\mathfrak{R}$ such that $(p f, \Phi(p) \circ u) = (q g, \Phi(q) \circ v)$. Then
$$
\displaystyle\lim_{\begin{subarray}{l} \longrightarrow \\ f \in \mathcal{D} \end{subarray}} A(\Phi)_f \cong \mathcal{Q} / \sim
$$
as sets. We don't distinguish them from now on.

Let $\Phi$ be the identity functor $I: \mathfrak{R} \to \mathfrak{R}$, then $\mathcal{Q} = \{(f, g): f, g \in \mathcal{D}, target(f) = target(g) \} $, and $(f_1, g_1) \sim (f_2, g_2)$ if there exists morphisms $p, q$ in $\mathfrak{R}$ such that $(p f_1, p g_1) = (q f_2, q g_2)$. Given $[f_1, g_1], [f_2, g_2] \in \displaystyle\lim_{\begin{subarray}{l} \longrightarrow \\ f \in \mathcal{D} \end{subarray}} A(I)_f$, we know there exist morphisms $p, q$ such that $p g_1 = q f_2$, then we define
$$
[f_1, g_1][f_2, g_2] = [p f_1, q g_2]
$$

This is a well-defined group structure on $\displaystyle\lim_{\begin{subarray}{l} \longrightarrow \\ f \in \mathcal{D} \end{subarray}} A(I)_f$, and this group is called the group of fractions of $\mathfrak{R}$, denoted as $G_{\mathfrak{R}}$.

Now we define the category of forests to recover Thompson group $F$ as a group of fractions.

\begin{definition}
    The category $\mathfrak{F}$ has objects positive numbers, with morphisms from $m$ to $n$ defined to be $(m, n)$-forests. Compositions are concatenations of forests.
\end{definition}

We can check that $\mathfrak{F}$ satisfies properties 1-3, and we have $G_{\mathfrak{F}} \cong F$ \cite[Proposition 2.2.1]{MR3764571}. To recover oriented Thompson group $\vec{F}$ as a group of fractions, we need to consider signed forests instead.

\begin{definition}
    The category $\vec{\mathfrak{F}}$ has objects $n$-signs, with morphisms from $\sigma$ to $\mu$ defined to be signed forests (forests with signing rules shown in Figure \ref{fig: region sign rule}) inducing $\sigma$ on the top and $\mu$ on bottom. Compositions are concatenations of signed forests.
\end{definition}

$\vec{\mathfrak{F}}$ also satisfies properties 1-3. Especially, let the unit of $\vec{\mathfrak{F}}$ be the $1$-sign $(+)$. Then we have $G_{\vec{\mathfrak{F}}} \cong \vec{F}$ \cite[Proposition 3.6]{MR3827807}. Notice that each element in $G_{\vec{\mathfrak{F}}}$ is a pair of signed trees $(f, g)$ from the $1$-sign $(+)$ to $target(f) = target(g)$, where the targets are the $n$-signs induced by $f$ and $g$ respectively. So the definition of $G_{\vec{\mathfrak{F}}}$ as a group of fractions coincides with Definition \ref{def: oriented Thompson group}.

\end{appendices}

\bibliographystyle{plain}
\bibliography{bib}

\end{document}